\definecolor{sangria}{rgb}{0.57, 0.0, 0.04}
\definecolor{royalblue}{rgb}{0.0, 0.22, 0.66}
\tikzset{
  symbol/.style={
    draw=none,
    every to/.append style={
      edge node={node [sloped, allow upside down, auto=false]{$#1$}}}
  }
}
\theoremstyle{plain}
\newtheorem{thm}[subsubsection]{Theorem}
\newtheorem{prop}[subsubsection]{Proposition}
\newtheorem{lem}[subsubsection]{Lemma}
\newtheorem{cor}[subsubsection]{Corollary}
\newtheorem*{thm*}{Theorem}
\newtheorem*{prop*}{Proposition}
\newtheorem*{lem*}{Lemma}
\newtheorem*{cor*}{Corollary}
\newtheorem*{conj*}{Conjecture}
\newtheorem{thmx}{Theorem}
\theoremstyle{definition}
\newtheorem{defn}[subsubsection]{Definition}
\newtheorem{setup}[subsubsection]{Setup}
\newtheorem*{defn*}{Definition}
\newtheorem*{notation*}{Notation}
\newtheorem*{fact*}{Fact}
\newtheorem*{acknowledgement*}{Acknowledgement}
\theoremstyle{remark}
\newtheorem{rmk}[subsubsection]{Remark}
\newtheorem{example}[subsubsection]{Example}
\newtheorem*{rmk*}{Remark}
\newtheorem*{exercise*}{Exercise}
\newtheorem*{observation*}{Observation}
\newtheorem*{convention*}{Convention}
\newtheorem*{example*}{Example}
\newtheorem{rmkx}[thmx]{Remark}
\numberwithin{equation}{subsection}
\numberwithin{figure}{subsection} 
\numberwithin{table}{subsection} 
\let\c@equation\c@subsubsection
\let\c@table\c@subsubsection
\let\c@figure\c@subsubsection
\newcommand{\PR}[1]{\left(#1\right)}
\newcommand{\CB}[1]{\left\{#1\right\}}
\newcommand{\BR}[1]{\left[#1\right]}
\newcommand{\DB}[1]{\llbracket#1\rrbracket}
\newcommand{\RG}[1]{\langle#1\rangle}
\newcommand{\pma}[1]{{\begin{pmatrix}#1\end{pmatrix}}}
\renewcommand{\mod}{\bmod}
\newcommand{\ra}{\rightarrow}
\newcommand{\xra}[1]{\xrightarrow{#1}}
\newcommand{\mono}{\hookrightarrow}
\newcommand{\epi}{\twoheadrightarrow}
\newcommand{\risom}{\buildrel\sim\over\rightarrow} 
\newcommand{\ov}{\overline}
\newcommand{\ud}{\underline}
\renewcommand{\hat}{\widehat}
\newcommand{\til}{\widetilde}
\newcommand{\GL}{\mathrm{GL}}
\newcommand{\Iw}{{\mathrm{Iw}}}
\newcommand{\Fl}{\mathrm{Fl}} 
\newcommand{\uG}{\ud{G}}
\newcommand{\uB}{\underline{B}}
\newcommand{\uT}{\ud{T}}
\newcommand{\uC}{\ud{C}} 
\newcommand{\uPhi}{\underline{\Phi}}
\newcommand{\tilW}{\til{W}}
\newcommand{\uW}{\underline{W}}
\newcommand{\utilW}{\underline{\til{W}}}
\newcommand{\uOm}{\underline{\Omega}}
\newcommand{\tilw}{\til{w}} 
\newcommand{\tilz}{\til{z}} 
\newcommand{\ctimes}{\hat{\otimes}}
\DeclareMathOperator{\Spec}{\mathrm{Spec}}
\DeclareMathOperator{\Spf}{\mathrm{Spf}}
\DeclareMathOperator{\Frob}{\mathrm{Frob}}
\DeclareMathOperator{\Hom}{\mathrm{Hom}}
\DeclareMathOperator{\supp}{\mathrm{supp}}
\DeclareMathOperator{\End}{\mathrm{End}}
\DeclareMathOperator{\Ind}{\mathrm{Ind}}
\DeclareMathOperator{\ind}{c\mathrm{\dash Ind}}
\DeclareMathOperator{\cind}{c\mathrm{\dash Ind}}
\DeclareMathOperator{\Lie}{\mathrm{Lie}}
\DeclareMathOperator{\Gal}{\mathrm{Gal}}
\DeclareMathOperator{\Art}{\mathrm{Art}}
\DeclareMathOperator{\Ext}{\mathrm{Ext}}
\DeclareMathOperator{\HT}{{\mathrm{HT}}}
\newcommand{\rhobar}{\overline{\rho}}
\newcommand{\rbar}{\overline{r}}
\newcommand{\mo}{{-1}}
\newcommand{\Fil}{\mathrm{Fil}}
\newcommand{\cris}{\mathrm{cris}}
\newcommand{\Dcris}{D_\cris}
\newcommand{\Vcris}{V_\cris}
\renewcommand{\ss}{{\mathrm{ss}}}
\newcommand{\gr}{\mathrm{gr}}
\newcommand{\ur}{\mathrm{ur}}
\newcommand{\et}{\normalfont{\text{{\'et}}}}
\newcommand{\Diag}{\mathrm{Diag}}
\newcommand{\JH}{\mathrm{JH}}
\newcommand{\WD}{\mathrm{WD}}
\newcommand{\rec}{\mathrm{rec}}
\newcommand{\univ}{\mathrm{univ}}
\newcommand{\sym}{\mathrm{sym}}
\newcommand{\red}{{\mathrm{red}}}
\newcommand{\bss}{{\backslash}}
\newcommand{\Ad}{\mathrm{Ad}}
\newcommand{\Adm}{{\mathrm{Adm}}}
\newcommand{\loccit}{\emph{loc.~cit.}}
\newcommand{\dash}{\text{-}}
\newcommand{\rmor}{{\mathrm{or}}}    
\newcommand{\ix}[1]{^{(#1)}}
\newcommand{\pgma}{(\varphi,\Gamma)}
\newcommand{\pcp}{{\wedge_p}} 
\newcommand{\jj}{{j\in \cJ}}
\newcommand{\tilcJ}{\til{\cJ}}
\newcommand{\tilj}{\til{j}}
\newcommand{\tiljj}{\til{j}\in \tilcJ}
\newcommand{\osig}{\overline{\sigma}}
\newcommand{\Mat}{\mathrm{Mat}}
\newcommand{\Stab}{\mathrm{Stab}}
\renewcommand{\ev}{\mathrm{ev}}
\renewcommand{\det}{\mathrm{det}}
\newcommand{\mc}{\mathcal}
\newcommand{\mf}{\mathfrak}
\newcommand{\mbf}{\mathbf}
\newcommand{\Q}{\mathbf{Q}}
\newcommand{\Qp}{\mathbf{Q}_p}
\newcommand{\Qpbar}{\overline{\mathbf{Q}}_p}
\newcommand{\Qpf}{\mathbf{Q}_{p^f}}
\newcommand{\Z}{\mathbf{Z}}
\newcommand{\Zp}{\mathbf{Z}_p}
\newcommand{\Zpbar}{\overline{\mathbf{Z}}_p}
\newcommand{\R}{\mathbf{R}}
\newcommand{\C}{\mathbf{C}}
\newcommand{\F}{\mathbf{F}}
\newcommand{\Fp}{\mathbf{F}_p}
\newcommand{\Fpbar}{\overline{\mathbf{F}}_p}
\newcommand{\G}{\mathbf{G}}
\newcommand{\T}{\mathbb{T}}
\newcommand{\A}{\mathbb{A}}
\newcommand{\fm}{{\mf{m}}}
\newcommand{\fp}{{\mf{p}}}
\newcommand{\fA}{{\mf{A}}}
\newcommand{\fI}{{\mf{I}}}
\newcommand{\fM}{{\mf{M}}}
\newcommand{\fN}{{\mf{N}}}
\newcommand{\fP}{{\mf{P}}}
\newcommand{\fS}{{\mf{S}}}
\newcommand{\cC}{{\mc{C}}}
\newcommand{\cG}{{\mc{G}}}
\newcommand{\cH}{{\mc{H}}}
\newcommand{\cI}{{\mc{I}}}
\newcommand{\cJ}{{\mc{J}}}
\newcommand{\cK}{{\mc{K}}}
\newcommand{\cO}{{\mc{O}}}
\newcommand{\cP}{{\mc{P}}}
\newcommand{\cR}{{\mc{R}}}
\newcommand{\cS}{{\mc{S}}}
\newcommand{\cU}{{\mc{U}}}
\newcommand{\cX}{{\mc{X}}}
\newcommand{\cZ}{{\mc{Z}}}
\newcommand{\bfa}{{\mbf{a}}}
\newcommand{\bfn}{{\mbf{n}}}
\newcommand{\rmG}{\mathrm{G}}
\newcommand{\rmK}{\mathrm{K}}
\newcommand{\rmT}{\mathrm{T}}
\newcommand{\rmH}{\mathrm{H}}
\newcommand{\rmM}{\mathrm{M}}
\newcommand{\al}{\alpha}
\newcommand{\be}{\beta}
\newcommand{\ga}{\gamma}
\newcommand{\Del}{\Delta}
\newcommand{\del}{\delta}
\newcommand{\eps}{\epsilon}
\newcommand{\veps}{\varepsilon}
\newcommand{\lam}{\lambda}
\newcommand{\sig}{\sigma}
\newcommand{\om}{\omega}
\newcommand{\Om}{\Omega}
\newcommand{\oom}{\overline{\omega}}
\newcommand{\bal}{\boldsymbol{\alpha}}
\title{Spectral mod $p$ Satake isomorphism for $\GL_n$}
\author{Heejong Lee}
\address[H.~Lee]{Department of Mathematics, Purdue University, 150 N. University Street, West Lafayette, IN 47907-2067, USA}
    \email{\href{mailto:lee4878@purdue.edu}{lee4878@purdue.edu}}
\begin{document}

\maketitle

\begin{abstract}
    Let $K/\Qp$ be a finite extension with residue field $k$. By a work of Emerton--Gee, irreducible components inside the reduced special fibre of the moduli stack of rank $n$ \'etale $\pgma$-modules are labelled by Serre weights of $\GL_n(k)$. Let $\sig$ be a non-Steinberg Serre weight and $\cC_\sig$ be the corresponding irreducible component. Motivated by the categorical $p$-adic local Langlands program, we construct a natural injective map $\cO(\cC_\sig) \mono \cH(\sig)$ from the ring of global functions on $\cC_\sig$ to the Hecke algebra of $\sig$ compatible with the mod $p$ Satake isomorphism by Herzig and Henniart--Vign\'eras in a suitable sense. For sufficiently generic $\sig$, we prove that it is an isomorphism.  As an application, we obtain a natural stratification of the irreducible component whose strata are equipped with a parabolic structure. Our main input is a construction of a morphism from an integral Hecke algebra of a generic tame type to the ring of global functions on a tamely potentially crystalline Emerton--Gee stack.    
\end{abstract}

\setcounter{tocdepth}{2}
\tableofcontents

\section{Introduction}\label{sec:intro}

Let $K/\Qp$ be a finite extension with ring of integers $\cO_K$, uniformizer $\pi_K$, and residue field $k$. The classical Satake isomorphism implies that the spherical Hecke algebra $\cH^{\mathrm{sph}}_{\GL_n,K}$ of $\GL_n(K)$ is isomorphic to the algebra of symmetric Laurent polynomials in $n$-variables $\C[x_1^\pm,\dots,x_n^\pm]^{S_n}$. By the unramified local Langlands correspondence for $\GL_n(K)$, there is a bijection between characters $\cH^{\mathrm{sph}}_{\GL_n,K} \ra \C$ and $n$-dimensional unramified Frobenius-semisimple Weil--Deligne representations given by evaluating symmetric polynomials in $\cH^{\mathrm{sph}}_{\GL_n,K}$ at (unordered) $n$-tuple of Frobenius eigenvalues with a suitable normalization. Following the recent developments in the categorical local Langlands correspondence (\cite{Hellmann-Iwahori,BCHN,Zhu-cohshv,FS-LLC}), we obtain the following geometric interpretation: a Hecke operator in $\cH^{\mathrm{sph}}_{\GL_n,K}$ defines a global function on the moduli stack of unramified Langlands parameters $\mathrm{Loc}_{\GL_n,K}^{\ur}$, and this induces an isomorphism between $\cH^{\mathrm{sph}}_{\GL_n,K}$ and the ring of global functions on $\mathrm{Loc}_{\GL_n,K}^{\ur}$ (see \cite[Conj.~4.3.1 and Thm.~4.4.7]{Zhu-cohshv}). More generally, the categorical local Langlands correspondence for $\GL_n(K)$ provides an isomorphism between the Bernstein center of a Bernstein block $\Om$ and the ring of global functions on $\WD_{n,\tau}$ the stack of certain Weil--Deligne representations with fixed restriction to the inertia subgroup determined by $\Omega$ (see \cite[Prop.~5.2.10]{EGH22} and \cite[Thm.~5.13]{BCHN}).

Motivated by the categorical $p$-adic local Langlands program, we investigate $p$-adic and mod $p$ analogues of such isomorphisms. In the ``$l=p$'' setting, one considers a moduli stack of rank $n$ projective \'etale $\pgma$-modules $\cX_{n}$ constructed by Emerton--Gee \cite{EGstack} instead of moduli of Weil--Delligne representations. In \cite{EGH22}, Emerton--Gee--Hellmann proposes a categorical formulation of the $p$-adic local Langlands correspondence using the stack $\cX_{n}$ (the \textit{Banach} case in \loccit). In particular, they conjecture the existence of a fully faithful functor $\fA$ from a certain derived category of smooth representations of $\GL_n(K)$ to a certain derived category of coherent sheaves on $\cX_{n}$. Although such functor is only established for $\GL_1$ (see \S7.1 in \loccit; for $\GL_2(\Qp)$, this is the work in progress by Dotto--Emerton--Gee \cite{DEG} globalizing the existing $p$-adic local Langalnds correspondence \cite{Col,Pas}), there are certain expected properties of the conjectural functor coming from its connection to the Taylor--Wiles--Kisin patched modules (see \cite[Conj.~6.1.14]{EGH22} and the following remarks).

We set up some notations. For a field $F$ with separable closure $\ov{F}$, we write $G_F := \Gal(\ov{F}/F)$. We write $I_K$ for the inertial subgroup of $G_K$. Let $E/\Qp$ be a sufficiently large finite extension with ring of integers $\cO$ and residue field $\F$. Let $B\subset \GL_n$ be the upper-triangular Borel and $T\subset \GL_n$ be the maximal diagonal torus. We write $\eta=(n-1,n-2,\dots,0)\in X^*(T)^{[K:\Qp]}$ for $[K:\Qp]$-tuple of a shift of half sum of positive roots.

Let $\lam\in X^*(T)^{[K:\Qp]}$ be a regular dominant cocharacter and $\tau:I_K \ra \GL_n(E)$ be an inertial type. Let $\cX_{n}^{\lam,\tau}\subset \cX_n$ be the closed substack whose $\Qpbar$-points parameterize potentially crystalline representations of $G_K$ of Hodge type $\lam$ and inertial type $\tau$. Let $\sig(\tau)$ be the smooth irreducible representation of $\GL_n(\cO_K)$ over $E$ associated to $\tau$ via the inertial local Langlands correspondence \cite[Thm.~3.7]{6author}. We define a locally algebraic type $\sig(\lam-\eta,\tau) := V(\lam-\eta)\otimes_\cO \sig(\tau)$ where $V(\lam-\eta)$ is the irreducible $\Res_{K/\Qp}(\GL_n)$-representation of highest weight $\lam-\eta$ restricted to $\GL_n(\cO_K)$. We denote by $\cH(\GL_n(K),\sig(\lam-\eta,\tau))$ the Hecke algebra of $\sig(\lam-\eta,\tau)$,~i.e.~the endomorphism ring of the compact induction $\cind_{\GL_n(\cO_K)}^{\GL_n(K)}\sig(\lam-\eta,\tau)$. The aforementioned isomorphism between the Bernstein center and the ring of global functions on $\WD_{n,\tau}$ induces a morphism (see \cite[\S6.1.9]{EGH22})\footnote{Note that the target of the map (6.1.12) in \cite{EGH22} is the ring of global functions on the rigid generic fiber of $\cX_{n}^{\lam,\tau}$. It is expected that the image of this map is contained in the ring of bounded functions, which is equal to $\cO(\cX_{n}^{\lam,\tau})[\tfrac{1}{p}]$. Since our discussion here is mostly motivational, we take this for granted.}
\begin{align}\label{psi}
    \Psi^{\lam-\eta,\tau}: \cH(\GL_n(K),\sig(\lam-\eta,\tau)) \ra \cO(\cX_{n}^{\lam,\tau})[\tfrac{1}{p}].
\end{align}
Let $\sig^\circ(\lam-\eta,\tau) \subset \sig(\lam-\eta,\tau)$ be a $\GL_n(\cO_K)$-stable $\cO$-lattice. It is expected that $\fA((\cind_{\GL_n(\cO_K)}^{\GL_n(K)}\sig^\circ(\lam-\eta,\tau))^\pcp)$ is a (pro-)coherent sheaf (i.e.~concentrated in degree $0$) supported on $\cX_n^{\lam,\tau}$ (here, the superscript $\pcp$ denotes the $p$-adic completion). Moreover, the action of $\cH(\GL_n(K),\sig(\lam-\eta,\tau))$ on $\fA((\cind_{\GL_n(\cO_K)}^{\GL_n(K)}\sig^\circ(\lam-\eta,\tau))^\pcp)[1/p]$ given by the functoriality of $\fA$ should coincide with the one given by $\Psi^{\lam-\eta,\tau}$ and the natural action of $\cO(\cX^{\eta,\tau})[1/p]$.

We remark that a version of $\Psi^{\lam-\eta,\tau}$ with the target replaced by $R_{\rhobar}^{\lam,\tau}[1/p]$ the generic fiber of the potentially crystalline deformation ring of a continuous representation $\rhobar: G_K \ra \GL_n(\F)$ was obtained in \cite[Thm.~4.1]{6author}. Furthermore, the image of the integral Hecke algebra $\cH(\GL_n(K), \sig^\circ(\lam-\eta,\tau))$ is expected to be contained in the normalization of $R_{\rhobar}^{\lam,\tau}$ (Rmk.~4.21 in \loccit). In particular, it will be contained in $R_{\rhobar}^{\lam,\tau}$ if the deformation ring is known to be normal. One can make a similar expectation for $\cX_n^{\lam,\eta}$.

Our first main result shows that $\Psi^{\lam-\eta,\tau}(\cH(\GL_n(K),\sig^\circ(\lam-\eta,\tau)))$ is indeed contained in $\cO(\cX_{n}^{\lam,\tau})$ when $\tau$ is tame and sufficiently generic with respect to $\lam$. We remark that the genericity assumption excludes the crystalline case and also force the Hodge--Tate weights to be small with respect to $p$.

\begin{thmx}[Corollary \ref{cor:hecke-global-func}]
    If $\tau$ is tame and sufficiently generic, then there is a morphism 
    \begin{align*}
    \Psi^{\lam-\eta,\tau}: \cH(\GL_n(K),\sig^\circ(\lam-\eta,\tau)) \ra \cO(\cX_{n}^{\lam,\tau})
\end{align*}
    which recovers \eqref{psi} after inverting $p$.
\end{thmx}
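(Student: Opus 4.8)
The plan is to split the assertion into a \emph{boundedness} statement about the Hecke algebra and a \emph{normality} statement about the stack. Precisely, I would establish: (a) every function in the image of $\Psi^{\lam-\eta,\tau}$ on $\cH(\GL_n(K),\sig^\circ(\lam-\eta,\tau))$ is power-bounded (of absolute value $\le 1$) on the rigid generic fibre of $\cX_{n}^{\lam,\tau}$; and (b) under the genericity hypothesis, $\cX_{n}^{\lam,\tau}$ is $\cO$-flat, reduced, and normal. Granting these, one invokes the standard fact that for a quasi-compact $\cO$-flat formal algebraic stack with normal underlying reduced structure, $\cO(\cX_{n}^{\lam,\tau})$ is precisely the ring of functions on the rigid generic fibre that are bounded by $1$; then (a) forces $\Psi^{\lam-\eta,\tau}(\cH(\GL_n(K),\sig^\circ(\lam-\eta,\tau)))\subseteq\cO(\cX_{n}^{\lam,\tau})$, and compatibility with \eqref{psi} is automatic since the new map is the old one with its target cut down. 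Throughout I would first apply a central twist so that $\lam$ has non-negative entries, hence all Hodge--Tate weights are $\ge 0$ and all crystalline Frobenius slopes are $\ge 0$; this is harmless for the statement.

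For (b) I would use the local-model / Breuil--Kisin-module presentation of the tamely potentially crystalline Emerton--Gee stack available in the generic range: a smooth atlas built from moduli of Breuil--Kisin modules with descent data of type $\tau$ and Hodge type bounded by $\lam$, linked by a smooth-local local model diagram to a product, over the embeddings of $k$, of affine Schubert-type varieties. In the generic range these local models are normal and Cohen--Macaulay with reduced special fibre; normality then propagates through the local model diagram, and one upgrades from the normal reduced special fibre to the $\cO$-flat family by Serre's criterion. This presentation also furnishes the explicit universal Breuil--Kisin module and hence a ``universal semisimplified crystalline Frobenius'', which is what feeds into (a). (Alternatively one can build $\Psi^{\lam-\eta,\tau}$ directly on the atlas at the integral level and descend; this is perhaps closer to the actual shape of the construction, and is interchangeable with the argument above.)

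For (a), note that $\cH(\GL_n(K),\sig^\circ(\lam-\eta,\tau))$ is generated as an $\cO$-algebra by finitely many elements: the minuscule spherical operators $\theta_{\om_k}$ ($1\le k\le n-1$; or their analogues for the Levi attached to $\tau$), the generators $T_{s_i}$ of the finite Hecke part, and the central operator $z=\mathbf 1_{\GL_n(\cO_K)\pi_K\GL_n(\cO_K)}$. Since power-bounded functions form a ring, it suffices to bound these. Now $z$ maps to the value at $\pi_K$ of the central character of $\sig(\lam-\eta,\tau)$, which lies in $\cO$; each $T_{s_i}$ satisfies a monic quadratic relation over $\cO$ (in a suitable normalization, $T_{s_i}^{2}=(q-1)T_{s_i}+q$), so its image is integral over $\cO$, a fortiori power-bounded; and $\Psi^{\lam-\eta,\tau}(\theta_{\om_k})$ equals, up to a non-negative power of $q$ from the Satake normalization, the $k$-th elementary symmetric function of the eigenvalues of the universal crystalline Frobenius. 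Weak admissibility of potentially crystalline representations with non-negative Hodge--Tate weights shows these eigenvalues have non-negative valuation at every $\Qpbar$-point, uniformly because $\cX_{n}^{\lam,\tau}$ is quasi-compact; hence the image is power-bounded. This is the geometric heart of the matter: boundedness of the spherical Hecke eigenvalues is weak admissibility of the universal filtered $\varphi$-module over the stack, and that the actual values match \eqref{psi} is the characterization of $\Psi^{\lam-\eta,\tau}$ via the inertial local Langlands correspondence (cf.~\cite[\S6.1]{EGH22} and \cite{6author}).

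The hard part will be (b): pinning down the Breuil--Kisin-module presentation and the normality of the relevant local models throughout the \emph{full} generic tame range, and transporting normality to $\cX_{n}^{\lam,\tau}$, together with the clean identification of $\cO(\cX_{n}^{\lam,\tau})$ with the functions bounded by $1$ on the rigid generic fibre for this (formal, non-proper but quasi-compact) stack. A secondary nuisance is the bookkeeping of the Satake normalization --- the factors $q^{\langle\rho,\mu\rangle}$, which for $\GL_n$ involve a square root of $q$ and must be reconciled with the slope shift coming from the $u^{\lam}$-twist in the universal Breuil--Kisin module --- but since these contribute only non-negative powers they are harmless for boundedness. By contrast, once the universal crystalline Frobenius is in hand, the boundedness of the spherical operators, and with it part (a), is essentially forced.
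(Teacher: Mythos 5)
Your high-level split into boundedness and normality is the right philosophy and matches the paper's reliance on de Jong's theorem, but part (b) targets the wrong object, and this is precisely where the argument would break. You propose to prove normality of the potentially crystalline Emerton--Gee stack $\cX_{n}^{\lam,\tau}$ itself and then apply de Jong there. The paper explicitly warns against this: $\cX_{n}^{\lam+\eta,\tau}$ can fail to be normal (see Remark~\ref{rmk:fail}), and normality is not established there even under the genericity hypothesis. What the paper instead proves normal is the moduli stack $Y^{\le\lam+\eta,\tau^\vee}$ of Breuil--Kisin modules with tame descent data (Corollary~\ref{cor:normal}), via the local model diagram of Theorem~\ref{thm:BK-hodge-localmodel} and normality of Pappas--Zhu local models --- and crucially this needs no genericity on $\tau$ at all. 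The functions $\til{F}_{\ov{\fI},(d_{\ov{I}})}$ are constructed and shown integral on $Y^{\le\lam+\eta,\tau^\vee}$ (Lemma~\ref{lem:global-functions/p}, using de Jong there, with the pointwise bound coming from elementary-divisor estimates on the partial Frobenii rather than from weak admissibility). Genericity enters only afterward, and for a different reason: Proposition~\ref{prop:BK-EG} gives a closed immersion $\cX_n^{\le\lam+\eta,\tau}\hookrightarrow Y^{\le\lam+\eta,\tau^\vee}$ when $\tau$ is $(eh_{\lam+\eta}+2)$-generic, and one pulls the global functions back along it. Being a closed substack of a normal stack does not make $\cX_n^{\lam,\tau}$ normal, so the route through normality of $\cX_n^{\lam,\tau}$ cannot be ``interchangeable'' with this one as you suggest --- it is the wrong route. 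You do gesture at the correct variant in passing, but you treat it as cosmetic when it is actually essential.

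A secondary issue concerns your generating set for $\cH(\GL_n(K),\sig^\circ(\lam-\eta,\tau))$. You list minuscule spherical operators, the Iwahori-type generators $T_{s_i}$ with their quadratic relation, and a central $z$. This is the wrong model: $\sig(\tau)$ is a Bushnell--Kutzko type for a single Bernstein block, the Hecke algebra is commutative, and there are no $T_{s_i}$'s. The paper's presentation (Propositions~\ref{prop:int-hecke-tame-types} and \ref{prop:int-hecke-loc-alg}, via Bushnell--Kutzko, Schneider--Zink, and Dat) is a ring of $\fS_{\ov{\fP}_\tau}$-invariant Laurent polynomials with specified integral generators $T_{\ov{\fI},(d_{\ov{I}})}$, and these --- not Iwahori generators --- are matched with the functions $\til{F}_{\ov{\fI},(d_{\ov{I}})}$. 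Pinning down the correct integral generators is a nontrivial part of the argument (the map $t_P$ does not respect integral structures, cf.\ Lemma~\ref{lem:tame-type-formulas}), and is glossed over by the ``Satake bookkeeping'' remark.
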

We obtain this from a similar morphism with the target replaced by $\cO(Y^{\le\lam,\tau^\vee})$ where $Y^{\le\lam,\tau^\vee}$ is the moduli stack of Breuil--Kisin modules of Hodge type $\le\lam$ and inertial type $\tau$ without any genericity assumption on $\tau$ (Theorem \ref{thm:hecke-global-func}). This is the main content of \S\ref{sec4}. The genericity assumption in the above theorem appears when relating the stacks $\cX_n^{\lam,\tau}$ and $Y^{\le\lam,\tau^\vee}$. 

The morphism \eqref{psi} is obtained from a similar morphism with the target replaced by $\cO(\WD_{n,\tau})$ by taking pullback along a morphism from the rigid generic fiber of $\cX_n^{\lam,\tau}$ to $\WD_{n,\tau}$. This method is not suitable for an integral variant because Weil--Deligne representations do not behave well with coefficient rings in which $p$ is not inverted. Instead, Breuil--Kisin modules behave well integrally, and when the coefficient ring is a finite $\cO$-flat algebra, we can associate a Weil--Deligne representation to a Breuil--Kisin module. Following this idea, for generic tame $\tau$, \cite{LLMPQ-FL} constructed certain functions defined on open neighbourhoods of $Y^{\le\eta,\tau^\vee}$ corresponding to Hecke operators in $\cH(\GL_n(K),\sig(\tau))$. We generalize their construction to any tame $\tau$ and Hodge type bounded by $\lam+\eta$ and show that these functions are defined on $Y^{\le\lam+\eta,\tau^\vee}$ rather than on an open neighbourhood. Furthermore, it is important to show that our functions are divisible by an appropriate power of $p$. A result of de Jong \cite{deJong95} says that the ring of global functions on a formal scheme over $\cO$ is equal to the ring of global functions on its rigid generic fiber bounded by 1 if the formal scheme is $\cO$-flat and normal. Therefore, the desired divisibility condition follows from the normality of $Y^{\le\lam+\eta,\tau^\vee}$. We prove the normality of $Y^{\le\lam+\eta,\tau^\vee}$ (Corollary \ref{cor:normal}) essentially by repeating the argument in \cite{CL-ENS-2018-Kisinmodule-MR3764041} where $\tau$ is assumed to be a principal series. We remark that, in general, $\cX^{\lam+\eta,\tau}$ can fail to be normal. Thus, one cannot hope to prove the above result by proving the normality of $\cX^{\lam+\eta,\tau}$ (also, see Remark \ref{rmk:fail}).

After constructing global functions on $Y^{\le\lam+\eta,\tau^\vee}$, we can define $\Psi^{\lam-\eta,\tau}$ by matching them with Hecke operators explicitly.  In particular, our construction of $\Psi^{\lam-\eta,\tau}$ is independent of the previous ones with $p$ inverted (but is obviously inspired by them). To do this, we give a presentation of the integral Hecke algebra $\cH(\GL_n(K),\sig^\circ(\lam-\eta,\tau))$ using the results of Bushnell--Kutzko \cite{BKsmooth}, Schneider--Zink \cite{SZ}, and Dat \cite{Dat-caracteres}. This is the main content of \S\ref{sec3}.



Our main motivation to extend $\Psi^{\lam-\eta,\tau}$ integrally is to obtain applications to the underlying reduced substack $\cX_{n,\red}\subset \cX_n$. We recall that irreducible components inside $\cX_{n,\red}$ are labelled by isomorphism classes of \textit{Serre weights}, i.e.~irreducible $\F$-representations of $\GL_n(k)$. Let $\sig$ be a Serre weight and $\cC_{\sig}$ be the corresponding irreducible component. When $n=2$ and $\sig$ is generic, the expectation coming from the Taylor--Wiles--Kisin patching implies that $\fA(\cind_{\GL_2(\cO_K)}^{\GL_2(K)}\sig)$ is a line bundle on $\cC_\sig$. By full faithfulness of $\fA$, this induces an isomorphism between the Hecke algebra $\cH(\GL_2(K),\sig) = \End_{\GL_2(K)}(\cind_{\GL_2(\cO_K)}^{\GL_2(K)}\sig)$ and the ring of global functions $\cO(\cC_\sig)$. In general, the support of $\fA(\cind_{\GL_n(\cO_K)}^{\GL_n(K)}\sig)$ is expected to be the conjectural Breuil--M\'ezard cycle associated to $\sig$. We will return to this point later.

\S\ref{sec5} is mostly devoted to our second main result which establishes a natural isomorphism between $\cH(\GL_n(K),\sig)$ and $\cO(\cC_\sig)$ for generic $\sig$. It is natural in the sense that it is compatible with $\Psi^\tau := \Psi^{0,\tau}$ and the mod $p$ Satake isomorphism. To state this result, we introduce some notations.

The structure of $\cH(\GL_n(K),\sig)$ is well-understood by the mod $p$ Satake isomorphism of Herzig \cite{HerzigSatake} and Henniart--Vign\'eras \cite{HV-satake}. They construct an injective morphism $\cS_\sig: \cH(\GL_n(K),\sig) \ra \cH(T(K),\mu)$, where $\mu$ is the highest weight of $\sig$, and identify its image. As a result, $\cH(\GL_n(K),\sig)$ is isomorphic to the polynomial ring $\F[y_1,\dots,y_{n-1},y_n^\pm]$.

Let $\cX_{T,\red}:=\cX_{1,\red}^{\oplus n}$ be the reduced Emerton--Gee stack for $T(K)$ and $\cC_{\mu}$ be its irreducible component corresponding to $\mu$. It follows from the local class field theory that there is a natural isomorphism $\ov{\Psi}_\mu: \cH(T(K),\mu) \risom \cO(\cC_\mu)$ (see \S\ref{subsec:tori}). When $\sig$ is \textit{non-Steinberg} (see Definition \ref{defn:SW-generic}), there is a natural morphism $\cC_{\mu-w_0(\eta)} \ra \cC_\sig$ (here, $w_0$ is the longest element in the Weyl group of $\GL_n$). Also, there is an isomorphism $\cC_\mu\simeq \cC_{\mu-w_0(\eta)}$ given by twisting by a character. We define $S_\sig: \cO(\cC_\sig) \ra \cO(\cC_\mu)$ to be the pullback along the composition $\cC_{\mu} \simeq \cC_{\mu+\eta} \ra \cC_\sig$. This is an avatar of $\cS_\sig$ on the Galois side. 

Finally, let $\til{\mu}$ be the Teichm\"uller lift of $\mu$. By the local class field theory, $\til{\mu}$ defines a principal series tame inertial type $\tau$ and $\sig^\circ(\tau)=\Ind_{B(k)}^{G(k)} \til{\mu}$ is a lattice in a principal series tame type whose cosocle is isomorphic to $\sig$. We introduce a natural morphism $\cR_{\sig^\circ(\tau)}^\sig : \cH(\GL_n(K),\sig^\circ(\tau)) \ra \cH(\GL_n(K),\sig)$ under a mild assumption on $\mu$ (Theorem \ref{thm:hecke-modp-red}). On the Galois side, if $\sig$ is sufficiently generic, $\cC_\sig$ is contained in $\cX_{n}^{\eta,\tau}$, and we write the restriction map $R_{\tau}^\sig: \cO(\cX_{n}^{\eta,\tau}) \ra \cO(\cC_\sig)$.

\begin{thmx}[Proposition \ref{prop:upperbound} and Theorem \ref{thm:spec-satake}]\label{thm:main}
    Suppose that $\sig$ is a non-Steinberg Serre weight. There exists a morphism $\ov{\Phi}_\sig : \cO(\cC_\sig) \ra \cH(\GL_n(K),\sig)$ which fits in the following commutative diagram
    \[
    \begin{tikzcd}
        \cH(\GL_n(K),\sig) \arrow[r, hook, "\cS_\sig"] & \cH(T(K),\mu)  \\
        \cO(\cC_\sig) \arrow[r, hook, "S_\sig"]  \arrow[u, "\ov{\Phi}_\sig"]& \cO(\cC_{\mu}) \arrow[u, "\ov{\Psi}_{\mu}^\mo"].
    \end{tikzcd}
    \]
    Moreover, if $\sig$ is sufficiently generic, then $\ov{\Phi}_\sig$ is an isomorphism and its inverse denoted by $\ov{\Psi}_\sig$ fits in the following commutative diagram
    \begin{equation}\label{diagram2}
        \begin{tikzcd}
        \cH(\GL_n(K),\sig^\circ(\tau)) \arrow[r, two heads, "\cR_{\sig^\circ(\tau)}^\sig"] \arrow[d, "\Psi^{\tau}"] & \cH(\GL_n(K),\sig) \arrow[r, hook, "\cS_\sig"] \arrow[d, "\ov{\Psi}_\sig"] & \cH(T(K),\mu) \arrow[d, "\ov{\Psi}_{\mu}"] \\
        \cO(\cX_n^{\eta,\tau}) \arrow[r, two heads, "R_{\tau}^\sig"] & \cO(\cC_\sig) \arrow[r, hook, "S_\sig"] & \cO(\cC_{\mu}).
    \end{tikzcd}
    \end{equation}
\end{thmx}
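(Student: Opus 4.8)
The statement splits into Proposition~\ref{prop:upperbound}, which constructs $\ov{\Phi}_\sig$ together with the first commutative square for every non-Steinberg $\sig$, and Theorem~\ref{thm:spec-satake}, which upgrades $\ov{\Phi}_\sig$ to an isomorphism and produces diagram~\eqref{diagram2} when $\sig$ is sufficiently generic; the plan is to handle these in turn, deducing as much as possible formally once one ``Satake compatibility'' identity for $\Psi^\tau$ is in place. For the first part I would begin from the geometric description of $\cC_\sig$ for non-Steinberg $\sig$: there is a dominant morphism $\pi\colon\cC_{\mu-w_0(\eta)}\to\cC_\sig$ from the torus component (isomorphic to $\cC_\mu$ by a character twist), exhibiting $\cC_\sig$ as the closure of a locus of successive extensions of the rank-one \'etale $\pgma$-modules parametrized by $\cC_{\mu-w_0(\eta)}$. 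Since $\cC_\sig$ is integral and $\pi$ is dominant, $S_\sig$ --- the composite of $\pi^*$ with the twisting isomorphism --- is injective on global functions. The content of Proposition~\ref{prop:upperbound} is the ``upper bound'' $S_\sig(\cO(\cC_\sig))\subseteq\ov{\Psi}_\mu(\cS_\sig(\cH(\GL_n(K),\sig)))\subseteq\cO(\cC_\mu)$, i.e.\ that functions on $\cC_\sig$ correspond under $\ov{\Psi}_\mu$ to the explicit ``antidominant'' Laurent polynomials making up the mod $p$ Satake image of Herzig and Henniart--Vign\'eras. I would obtain this by analysing the non-generic strata of $\cC_\sig$: a global function pulled back from $\cC_\sig$ must extend over the loci where the successive extension degenerates and must be invariant under the identifications $\pi$ imposes on its fibres, and one checks these two constraints cut out precisely the antidominant monomials --- this is exactly where the non-Steinberg hypothesis is used. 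Granting the containment, set $\ov{\Phi}_\sig:=\cS_\sig^{-1}\circ\ov{\Psi}_\mu^{-1}\circ S_\sig$; the first square commutes by construction, and $\ov{\Phi}_\sig$ is injective because $S_\sig$ is.

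For the second part, assume $\sig$ sufficiently generic, so that $\cC_\sig\subseteq\cX_n^{\eta,\tau}$, giving a restriction map $R_\tau^\sig\colon\cO(\cX_n^{\eta,\tau})\to\cO(\cC_\sig)$, and recall $\Psi^\tau\colon\cH(\GL_n(K),\sig^\circ(\tau))\to\cO(\cX_n^{\eta,\tau})$ from Corollary~\ref{cor:hecke-global-func} with $\sig^\circ(\tau)=\Ind_{B(k)}^{G(k)}\til{\mu}$. The crux, and the only genuinely new input at this stage, is the outer rectangle of \eqref{diagram2}:
\[
S_\sig\circ R_\tau^\sig\circ\Psi^\tau \;=\; \ov{\Psi}_\mu\circ\cS_\sig\circ\cR_{\sig^\circ(\tau)}^\sig .
\]
I would prove this by reducing the $p$-adic picture modulo $p$: by construction in \S\ref{sec4} the map $\Psi^{\eta,\tau}$ is compatible with the classical Satake isomorphism, and the presentation of the integral Hecke algebra of $\sig^\circ(\tau)$ from \S\ref{sec3} relates it to the spherical Hecke algebra; matching the mod $p$ reductions of these explicit formulas with Herzig's mod $p$ Satake, through the coefficient-reduction maps and the local class field theory underlying $\ov{\Psi}_\mu$, produces the identity. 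Genericity enters both to ensure $\cC_\sig\subseteq\cX_n^{\eta,\tau}$ and to keep the relevant mod $p$ reductions non-degenerate.

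The rest is a diagram chase. Since $S_\sig$ is a monomorphism and the right-hand side of the displayed identity annihilates $\ker\cR_{\sig^\circ(\tau)}^\sig$, so does $R_\tau^\sig\circ\Psi^\tau$; as $\cR_{\sig^\circ(\tau)}^\sig$ is an epimorphism this yields a unique $\ov{\Psi}_\sig\colon\cH(\GL_n(K),\sig)\to\cO(\cC_\sig)$ with $R_\tau^\sig\circ\Psi^\tau=\ov{\Psi}_\sig\circ\cR_{\sig^\circ(\tau)}^\sig$ --- the left square of \eqref{diagram2} --- and cancelling $\cR_{\sig^\circ(\tau)}^\sig$ in the displayed identity gives $S_\sig\circ\ov{\Psi}_\sig=\ov{\Psi}_\mu\circ\cS_\sig$, the right square. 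From the right square $\ov{\Psi}_\sig$ is injective, since $\ov{\Psi}_\mu$ is an isomorphism and $\cS_\sig$ is injective. Combining the right square with the first square, $S_\sig\circ(\ov{\Psi}_\sig\circ\ov{\Phi}_\sig)=\ov{\Psi}_\mu\circ\cS_\sig\circ\ov{\Phi}_\sig=\ov{\Psi}_\mu\circ\ov{\Psi}_\mu^{-1}\circ S_\sig=S_\sig$, so $\ov{\Psi}_\sig\circ\ov{\Phi}_\sig=\id$ by injectivity of $S_\sig$; hence $\ov{\Psi}_\sig$ is surjective as well, therefore an isomorphism with inverse $\ov{\Phi}_\sig$. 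This proves $\ov{\Phi}_\sig$ is an isomorphism and assembles all of \eqref{diagram2}.

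I expect the real difficulty to lie in two places. The first is the image computation in Proposition~\ref{prop:upperbound}: pinning down which Laurent polynomials on $\cC_\mu$ descend along $\pi$, and recognizing that subring as the Satake image, demands a hands-on study of the degeneration strata of $\cC_\sig$ and is precisely where non-Steinberg-ness is indispensable. The second is the Satake-compatibility identity, which requires reconciling three a priori independent constructions --- the $p$-adic $\Psi^{\eta,\tau}$ of \S\ref{sec4}, the mod $p$ Satake of Herzig--Henniart--Vign\'eras, and the class-field-theoretic $\ov{\Psi}_\mu$ --- and is where the genericity hypothesis does its essential work; the explicit formulas from \S\ref{sec3} and \S\ref{sec4} make this concrete but laborious. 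Once these two inputs are secured, the bijectivity of $\ov{\Phi}_\sig$ and the commutativity of \eqref{diagram2} follow purely formally, as above.
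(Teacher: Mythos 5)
Your high-level decomposition is exactly the paper's: prove Proposition~\ref{prop:upperbound} (upper bound via the injection $S_\sig$), prove the outer rectangle of~\eqref{diagram2} by an explicit calculation, and then do the formal diagram chase. That diagram chase is spelled out correctly, and its logic matches what the paper leaves implicit in the final sentence of the proof of Theorem~\ref{thm:spec-satake} (factor $R_\tau^\sig\circ\Psi^\tau$ through the epimorphism $\cR^\sig_{\sig^\circ(\tau)}$, deduce the right square, then use $\ov{\Phi}_\sig$ from Proposition~\ref{prop:upperbound} to invert). So the scaffolding is sound.

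The two places you flag as the real content are exactly the right ones, but the sketch you give for the first is a genuine gap and does not reproduce what the paper actually does. For Proposition~\ref{prop:upperbound} you propose to ``analyse the non-generic strata of $\cC_\sig$'' and argue that extension-over-degeneration plus invariance under identifications along $\pi$ cut out the antidominant monomials. This is not how it goes. First, the non-Steinberg hypothesis is not used in the way you imagine: it is used to invoke the $\GL_2$ result of Caraiani--Emerton--Gee--Savitt (\cite{cegsC}) providing the scheme-theoretically dominant partial resolution $Z_\sig \to \cC_\sig$ by Breuil--Kisin modules. Second, the actual proof first reduces from $\GL_n$ to $\GL_2$ by writing $\cO(\cC_\sig)$ inside the intersection over $1\le i\le n-1$ of pullbacks through $\cC_{\mu-w_0(\eta)} \to \cC_{\sig_i}\times\prod_{k\neq i,i+1}\cC_{\mu_k-k+1}\to\cC_\sig$, and then proves the $\GL_2$ case by a concrete argument on $Z_\sig$: if a function $g\in\cO(\cC_\sig)$ had a pole in $x_1$, one lifts an \emph{irreducible} point of $Z_\sig$ to a Breuil--Kisin module $\fM$ over $\cO$, uses Proposition~\ref{prop:reducibility} and Proposition~\ref{prop:glob-func-WD} to conclude $f_1'(\fM)\equiv 0 \pmod{\varpi}$, and derives a contradiction. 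Nothing in your sketch of ``invariance under the identifications $\pi$ imposes'' touches this vanishing-on-irreducibles phenomenon, which is the mechanism actually producing antidominance; as written your argument would more naturally yield Weyl-invariant Laurent polynomials, which is the wrong answer. So Proposition~\ref{prop:upperbound} remains unproved in your proposal.

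For the outer-rectangle identity your plan is closer in spirit to the paper, but still imprecise. The paper does not invoke the classical Satake isomorphism or the spherical Hecke algebra; instead it evaluates $\Psi^\tau(T_\fI)$ directly at a split lift $\rho=\rho'\otimes\veps^{-w_0(\eta)}$ of $\rhobar'\otimes\ov{\veps}^{-w_0(\eta)}$ with $\rho'=\oplus_i\til\chi_i$, computes $\WD(\rho)=\oplus_i\phi_i$ with $\phi_i(\Frob_K)=q^{i-1}\til t_i$, and reads off that $\til F_\fI(\rho)$ reduces mod~$\varpi$ to $\prod_{i\in\fI}t_i$ precisely when $-\veps_\fI$ is antidominant (and to $0$ otherwise), matching Theorem~\ref{thm:hecke-modp-red}. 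You should replace the appeal to ``classical Satake'' with this explicit evaluation; otherwise the compatibility identity is asserted rather than proved.

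Finally, one small inaccuracy: you claim $\cO(\cC_\sig)$ is integral, but $\cC_\sig$ is only known to be irreducible and reduced as a stack; the injectivity of $S_\sig$ in the paper is Lemma~\ref{lem:upperbound-prelim}, which uses the fact that a function on $\cC_\sig$ is determined by its values at closed points via the result that $\rhobar^\ss$ is the unique closed point in the closure of $\{\rhobar\}$, not integrality of the ring of global functions.
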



\begin{rmkx}
    The map $R_{\tau}^{\sig}: \cO(\cX_{n}^{\eta,\tau}) \ra \cO(\cC_\sig)$ can be defined for any tame inertial type $\tau$ and irreducible component $\cC_\sig$ inside $\cX^{\eta,\tau}$. On the representation side, we define $\cR_{\sig^\circ(\tau)}^\sig : \cH(\GL_n(K),\sig^\circ(\tau)) \ra \cH(\GL_n(K),\sig)$ under a mild assumption on $\tau$ and somewhat strong assumption on $\sig \in \JH(\osig(\tau))$ (see Theorem \ref{thm:hecke-modp-red}) (here, $\osig(\tau)$ denotes the mod $p$ reduction of $\sig^\circ(\tau)$). We prove that
    \begin{align*}
        \ov{\Psi}_\sig \circ R_{\tau}^{\sig}  = \cR_{\sig^\circ(\tau)}^\sig \circ \Psi^\tau
    \end{align*}
    when all morphisms are defined (Corollary \ref{cor:spec-hecke-modp}). However, it is not obvious how to generalize $\cR_{\sig^\circ(\tau)}^\sig$ to all Jordan--H\"older factors of $\osig(\tau)$.
\end{rmkx}

\begin{rmkx}
    We discuss previous results on $\cO(\cC_\sig)$. Suppose that $K/\Qp$ is unramified. For $n=2$, the main result of \cite{GKKSW} shows that $\cO(\cC_\sig)\simeq \F[y_1,y_2^\pm]$ for possibly non-generic $\sig$. Moreover, they give a presentation of $\cC_\sig$ as a stack quotient and, in particular, prove that $\cC_\sig$ is smooth. For arbitrary $n$, \cite{HJKLW} proves that $\cO(\cC_\sig)\simeq \F[y_1,\dots,y_{n-1},y_n^\pm]$ for sufficiently generic Serre weight $\sig$ (which is slightly stronger than our assumption) by working purely in characteristic $p$. Furthermore, based on the results in \cite{LLMPQ-FL}, \cite[\S7]{HJKLW} explains an idea of how to match global functions on $\cC_\sig$ restricted to its ordinary locus to mod $p$ reduction of certain normalized Hecke operators. The commutativity of the left square in the diagram \eqref{diagram2} realizes this idea precisely. Also, we remark that our results are independent of those of \cite{GKKSW,HJKLW}. 
\end{rmkx}

\begin{rmkx}
    It is interesting to ask whether there is a similar isomorphism between $\cH(\sig)$ and $\cO(\cC_\sig)$ for groups other than $\GL_n$. For general reductive groups, $\cH(\sig)$ is not necessarily commutative. Still, it is commutative if the group is quasi-split \cite[\S1.8]{HV-satake}. Moreover, the Emerton--Gee stacks for tame groups are constructed by Lin \cite{LinEG1,LinEG2}. So one can still hope to obtain a similar isomorphism in this setting. We hope to return to this in future work.
\end{rmkx}

In \S\ref{sec6}, we discuss an application of the previous theorem. For generic $\sig$ and $i\in \CB{1,2,\dots,n}$, let $\ov{T}_i$ be the unique up-to-scalar double coset operator in $\cH(\GL_n(K),\sig)$ supported on
\begin{align*}
    \GL_n(\cO_K)\Diag(\pi_K^\mo, \dots, \pi_K^\mo,1,\dots,1) \GL_n(\cO_K)
\end{align*}
where $\Diag(\pi_K^\mo, \dots, \pi_K^\mo,1,\dots,1)$ is the diagonal matrix whose first $i$ entries are $\pi_K^\mo$. We define $\ov{f}_i:=\ov{\Psi}_\sig(\ov{T}_i) \in \cO(\cC_\sig)$.

By its construction, $\cC_\sig$ contains a dense open locus $\cU_\sig$ whose $\Fpbar$-points are of the form
\begin{align*}
    \rhobar = \pma{\chi_1 & * & \cdots & * \\ 
     0 & \chi_2 & \cdots & * \\ 
     \text{\rotatebox{90}{$\cdots$}} &  & \text{\rotatebox{135}{$\cdots$}} &  \text{\rotatebox{90}{$\cdots$}} \\ 
     0 & \cdots & 0 & \chi_n}
\end{align*}
where its semisimplification $\rhobar^\ss = \oplus_{i=1}^n \chi_i$ defines a $\Fpbar$-point in $\cC_{\mu-w_0(\eta)}$. Then the function $\ov{f}_i$ maps the above $\rhobar$ to $\prod_{k=1}^i \chi_k(\Frob_K)$ where $\Frob_K \in G_K$ is a  geometric Frobenius. In particular, $\cU_\sig$ is contained in the non-vanishing locus of $\ov{f}_i$ for all $i=1,\dots, n$. For generic $\sig$, we prove that similar non-vanishing loci in $\cC_\sig$ attain a ``parabolic structure'', generalizing $\cU_\sig$ with a ``Borel structure''. 

Let $I= \CB{i_1<i_2<\dots<i_r}\subset \CB{1,2,\dots,n-1}$. We define $\cU_{\sig,I}\subset \cC_\sig$ be the intersection of non-vanishing loci of $\ov{f}_i$ for $i\in I$. This defines a stratification on $\cC_\sig$ with strata given by $\cC_{\sig,I}:= \cU_{\sig,I} \bss \cup_{I\subset I'}\cU_{\sig,I'}$. When $I=\emptyset$, we call $\cC_{\sig}^\ss:=\cC_{\sig,\emptyset}$ the \textit{supersingular locus} of $\cC_\sig$. We say that a continuous representation $\rhobar:G_K \ra \GL_n(\F)$ is \textit{supersingular of weight $\sigma$} if $\rhobar\in \cC_\sig^\ss(\F)$.

Let $P_I\subset \GL_n$ the standard parabolic subgroup with Levi factor $M_I\simeq \prod_{k=1}^{r+1} \GL_{i_{k}-i_{k-1}}$ where we write $i_0=0$ and $i_{r+1}=n$. We can view any $\rhobar$ of the above form as a $G_K$-representation valued in $P_I(\Fpbar)$. By composing the quotient map $P_I\epi M_I$, we get a $\Fpbar$-point in $\cX_{M_I,\red}:= \prod_{k=1}^{r+1} \cX_{i_{k}-i_{k-1},\red}$. The closure of the image of all such $\rhobar$ is an irreducible component $\sig_I$ of $\cX_{M_I,\red}$ which is a product of irreducible components $\cC_{\sig_k}\subset \cX_{i_{k}-i_{k-1},\red}$ for $k=1,\dots,r+1$.

\begin{thmx}[Theorem \ref{thm:modp-parabolic}]
    Suppose that $\sig$ is sufficiently generic. There is a natural morphism
    \begin{align*}
        \cU_{\sig,I} \ra \cC_{\sig_I}
    \end{align*}
    whose restriction to $\cC_{\sig,I}$ lands into $\cC_{\sig_I}^\ss:= \prod_{k=1}^{r+1} \cC_{\sig_k}^{\ss}$.
\end{thmx}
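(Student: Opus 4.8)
The plan is to construct, over the open substack $\cU_{\sig,I}\subseteq\cC_\sig$, a canonical $G_K$-stable filtration of the tautological family of \'etale $\pgma$-modules with graded pieces of ranks $d_k:=i_k-i_{k-1}$ (where $k=1,\dots,r+1$ and $i_0:=0$, $i_{r+1}:=n$), and then to define the morphism $\cU_{\sig,I}\to\cX_{M_I,\red}=\prod_{k=1}^{r+1}\cX_{d_k,\red}$ by sending a filtered object to the tuple of its successive quotients. One then shows, first, that this morphism lands in the component $\cC_{\sig_I}$, and second, that its restriction to $\cC_{\sig,I}$ lands in $\cC_{\sig_I}^{\ss}=\prod_k\cC_{\sig_k}^{\ss}$. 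Throughout, the genericity of $\sig$ is used to ensure that each $\sig_k$ is itself sufficiently generic, so that the functions $\ov f^{(k)}_1,\dots,\ov f^{(k)}_{d_k-1}$ on $\cC_{\sig_k}$ and the loci $\cC_{\sig_k}^{\ss}$ are defined.

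\emph{Constructing the filtration.} Here the explicit presentation of $\cC_\sig$ from \S\ref{sec5}, in terms of Breuil--Kisin modules with tame descent datum of a fixed shape (following \cite{LLMPQ-FL,HJKLW}), enters. Under that presentation $\ov f_i=\ov\Psi_\sig(\ov T_i)$ is, up to a global unit, the coefficient that controls whether the tautological Breuil--Kisin module admits a $\vphi$- and descent-data-stable rank-$i$ subobject which is \'etale-locally the span of the first $i$ vectors of the adapted basis: over the non-vanishing locus of $\ov f_i$ an explicit block of the Frobenius matrix becomes invertible, which produces such a subobject and forces it to be the unique one whose quotient still has the prescribed shape. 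Performing this simultaneously for all $i\in I$ gives a chain of subobjects, automatically nested (each being the span of an initial segment of the adapted basis) and canonical, so it descends along the presentation to a $G_K$-stable filtration of the universal object over $\cU_{\sig,I}$; applying the Emerton--Gee equivalence to the graded pieces defines $\gr=(\gr_1,\dots,\gr_{r+1})\colon\cU_{\sig,I}\to\cX_{M_I,\red}$. Equivalently: the forgetful morphism from the stack of $P_I$-filtered objects to $\cX_{n,\red}$ restricts to an isomorphism over $\cU_{\sig,I}$. I expect this step --- the dictionary between $\ov f_i$ and a Frobenius-matrix block governing the existence and uniqueness of a subobject, over general bases --- to be the main obstacle.

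\emph{Factoring through $\cC_{\sig_I}$.} Since $\ov f_n$ is a unit in $\cO(\cC_\sig)\cong\F[y_1,\dots,y_{n-1},y_n^{\pm}]$, the locus $\cU_\sig$ is contained in every $\cU_{\sig,I}$, and being dense in the irreducible $\cC_\sig$ it is dense in $\cU_{\sig,I}$. On $\cU_\sig$ the filtration just constructed is the evident one on the upper-triangular $\rhobar$, so $\gr(\cU_\sig)$ lies in the image locus whose closure is, by definition of $\sig_I$, the component $\cC_{\sig_I}$. As $\cU_{\sig,I}$ is reduced and irreducible and $\cC_{\sig_I}$ is closed in $\cX_{M_I,\red}$, the morphism $\gr$ factors through $\cC_{\sig_I}$.

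\emph{The deep stratum.} Let $\gr_k\colon\cU_{\sig,I}\to\cC_{\sig_k}$ be the $k$-th component, so $\cC_{\sig_k}^{\ss}=\cC_{\sig_k,\emptyset}$ is the common zero locus of $\ov f^{(k)}_1,\dots,\ov f^{(k)}_{d_k-1}$. With the convention $\ov f_0:=1$, the identity
\[
\ov f^{(n)}_{\,i_{k-1}+j}\big|_{\cU_{\sig,I}}=\ov f^{(n)}_{\,i_{k-1}}\big|_{\cU_{\sig,I}}\cdot\bigl(\ov f^{(k)}_j\circ\gr_k\bigr),\qquad 1\le j\le d_k-1,
\]
holds: both sides are regular on $\cU_{\sig,I}$ and agree on the dense open $\cU_\sig$ by the partial-product formula $\ov f^{(n)}_m(\rhobar)=\prod_{l=1}^m\chi_l(\Frob_K)$ --- this is the Galois-side avatar of the transitivity of the mod $p$ Satake isomorphism through the Levi $M_I$ --- hence agree on $\cU_{\sig,I}$ by reducedness. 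Since $i_{k-1}\in I\cup\{0\}$, the factor $\ov f^{(n)}_{i_{k-1}}$ is a unit on $\cU_{\sig,I}$, so $\ov f^{(k)}_j\circ\gr_k$ and $\ov f^{(n)}_{i_{k-1}+j}$ have the same non-vanishing locus in $\cU_{\sig,I}$, namely $\cU_{\sig,I\cup\{i_{k-1}+j\}}$; as $i_{k-1}+j\notin I$ this is one of the $\cU_{\sig,I'}$ with $I\subsetneq I'$, hence disjoint from $\cC_{\sig,I}$. Therefore each $\ov f^{(k)}_j$ pulls back along $\gr_k$ to a function vanishing on $\cC_{\sig,I}$, so $\gr_k(\cC_{\sig,I})\subseteq\cC_{\sig_k}^{\ss}$ for every $k$, i.e.\ $\gr(\cC_{\sig,I})\subseteq\cC_{\sig_I}^{\ss}$, as required.
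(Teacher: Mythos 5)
Your reduction structure is sound, and the second and third steps match what the paper's proof implicitly does: once the graded-pieces morphism exists, its image lands in $\cC_{\sig_I}$ because $\cU_\sig$ is dense in the irreducible $\cU_{\sig,I}$ and $\sig_I$ is by definition the closure of the image of $\cU_\sig$; and the deep-stratum claim follows from the partial-product factorization of the $\ov f$'s across the Levi, exactly as you write. However, the step you flag as ``the main obstacle'' is precisely where the paper's proof is a theorem rather than a sketch, and your plan of attack there differs from the paper's in a way that matters. The paper deduces Theorem \ref{thm:modp-parabolic} from Theorem \ref{thm:reducibility-in-families}, which constructs the parabolic structure on the \emph{$\cO$-flat} stack $\cX^{\eta,\tau}(\til F_\fI)$, not on its special fiber: it covers $\cX^{\eta,\tau}(\til F_\fI)$ by shape neighborhoods $\cX^{\eta,\tau}(\tilw(\rhobar,\tau^\vee))$, uses the parabolic factorization of the universal matrix over $U(\tilw,\le\eta)$ from \cite[Prop.~4.2.5]{LLLM-extreme}, and glues the local morphisms by comparing $\cO'$-points (which invokes Proposition \ref{prop:reducibility}, i.e.\ weak admissibility). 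Only afterward does one intersect with the special fibre and with $\cC_\sig$.

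Two concrete reasons your special-fiber-only version is not a proof. First, $\ov f_i$ is the reduction mod $\varpi$ of $\til F_{\fI_i}=q^{-i(i-1)/2}F_{\fI_i}$; the $q$-power normalization vanishes mod $p$, so the non-vanishing of $\ov f_i$ is \emph{not} literally the statement that some block of the mod $\varpi$ Frobenius matrix of the universal Breuil--Kisin module is invertible. Making that translation is exactly the content of the identity $\cX^{\eta,\tau}(\til F_\fI)=\cup_{\rhobar}\cX^{\eta,\tau}(\tilw(\rhobar,\tau^\vee))$ in the proof of Theorem \ref{thm:reducibility-in-families}, and it uses the $\cO$-flat lift, the closure relations from \cite[Thm.~6.6.3]{EGstack}, and Proposition \ref{prop:glob-func-WD}. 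Second, a subobject of a Breuil--Kisin module gives a sub-$G_{K_\infty}$-representation, and one still has to show the $G_K$-structure passes to it; the paper again handles this via $\cO'$-points of $\cX^{\le\eta,\tau}$ and uniqueness of $\cO$-flat closed substacks (Lemma \ref{lem:parabolic-morphism-pre-gal}), which is invisible in your sketch. So: correct reduction, correct tail arguments, but the heart of the matter is Theorem \ref{thm:reducibility-in-families}, and your proposal identifies it without proving it, while also suggesting an approach (purely on the special fiber) that the paper deliberately avoids.
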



\begin{rmkx}
    It is an intriguing question that how one can classify all $\Fpbar$-points of $\cC_\sig$. In fact, the answer to this question should be closely related to explicit Serre weight conjectures. By the above result, at least for sufficiently generic $\sig$, this question inductively reduces to the question of classifying all $\Fpbar$-points of $\cC_\sig^\ss$, namely all supersingular $G_K$-representations over $\Fpbar$ of weight $\sig$. Unfortunately, we expect that classifying supersingular $G_K$-representations over $\Fpbar$ of weight $\sig$ is equally difficult to the original question.
\end{rmkx}

The above theorem follows from an analogous parabolic structure of certain non-vanishing loci in $\cX^{\eta,\tau}_n$. In turn, this parabolic structure follows from the parabolic structure of local models of $\cX^{\eta,\tau}_n$ studied in \cite{LLLM-extreme}. There is a notion of \textit{shape} $\tilw = \tilw(\rhobar,\tau)$ of the pair $(\rhobar,\tau)$ of a tame inertial type $\tau$ and a semisimple representation $\rhobar \in \cX^{\eta,\tau}_n(\F)$. It is an element in the ($f$-fold copy of the) extended affine Weyl group and can be thought of as the relative position of $\rhobar$ and $\tau$. For each shape $\tilw$, there is an open neighbourhood $\cX^{\eta,\tau}_n(\tilw) \subset \cX^{\eta,\tau}_n$ and a quasi-projective variety over $\Spec \cO$ (a local model of $\cX^{\eta,\tau}_n(\tilw)$) whose $p$-adic completion is smoothly equivalent to $\cX^{\eta,\tau}_n(\tilw)$. In \loccit, the authors show that whenever the shape $\tilw$ is ``decomposable'', the local model admits a natural parabolic structure. In general, this parabolic structure cannot be transferred to $\cX^{\eta,\tau}_n(\tilw)$. We show that when the decomposability of $\tilw$ follows from the simultaneous decomposability of $\tau$ and $\rhobar$, $\cX^{\eta,\tau}_n(\tilw)$ also attains a parabolic structure (Theorem \ref{thm:reducibility-in-families}). We remark that this parabolic structure for a single potentially crystalline representation can be obtained from its associated weakly admissible filtered $\varphi$-module (see Proposition \ref{prop:reducibility}).

In \S\ref{subsec:LGC}, we discuss a role of the isomorphism $\ov{\Psi}_\sig$ in the mod $p$ local-global compatibility conjecture. Let $F/F^+$ be a CM extension and fix a place $\til{v}$ of $F$ above $p$. Let $\rbar: G_F \ra \GL_n(\F)$ be a continuous representation. Using a certain Hecke isotypic component of a space of algebraic automorphic forms at infinite level at $v:= \til{v}|_{F^+}$ for a definite unitary group of rank $n$, one can construct an admissible smooth representation $\pi(\rbar)$ of $\GL_n(K)$. This is a candidate for a (Galois-to-automorphic) mod $p$ local Langlands correspondence and is expected to be determined by $\rbar|_{G_{F_{\til{v}}}}$. We provide the following small evidence for this conjecture. Let $\sig$ be a Serre weight such that $\rbar|_{G_{F_{\til{v}}}}\in \cC_\sig(\F)$. Under a genericity condition on $\sig$ and some standard hypothesis, we prove that the action of $\cH(\sig)$ on $\Hom_{\GL_n(\cO_K)}(\sig,\pi(\rbar)|_{\GL_n(\cO_K)})$ is given by the character (Theorem \ref{thm:modpLGC})
\begin{align*}
    \cH(\sig) \xra{\ov{\Psi}_\sig} \cO(\cC_\sig) \xra{\ev_{\rbar|_{G_{F_w}}}} \F.
\end{align*}
We deduce this from a stronger version for a Taylor--Wiles--Kisin patched module (Theorem \ref{thm:modpLGC-family}).

Finally in \S\ref{subsec:BMcycle}, we speculate a possible relationship between $\cH(\sig)$ and the ring of global functions on a (conjectural) Breuil--M\'ezard cycle $\cZ_\sig$. As we discussed above, it is expected that $\cZ_\sig$ is given by the support of the conjectural (pro-)coherent sheaf $\fA(\cind_{\GL_n(\cO_K)}^{\GL_n(K)}\sigma)$. By full faithfulness of $\fA$, it is natural to consider a morphism from $\cO(\cZ_\sig)$ to $\cH(\sig)$. For $K/\Qp$ unramified and generic $\sig$, we use some known properties of Breuil--M\'ezard cycles in (strongly) limited cases to prove that $\cO(\cZ_\sig)_\red$ is isomorphic to $\cH(\sig)$ (Proposition \ref{prop:BMcycle}).

\subsection*{Acknowledgements} I would like to thank Florian Herzig and Daniel Le for numerous discussions which have improved the quality of this paper significantly. Special thanks go to Bao V.~Le Hung for sharing his valuable ideas on global functions. I would like to thank Matthew Emerton, Toby Gee, and Chol Park for helpful discussions and their encouragement, Jean-Fran\c{c}ois Dat and Brandon Levin for helpful correspondence, and Kalyani Kansal and Shenrong Wang for inspiring conversation on their works. I would also like to thank Toby Gee for comments on an earlier version of this article.

I learned about the categorical $p$-adic local Langlands program and the question about global functions and Hecke algebras in the lecture series given by Matthew Emerton, Toby Gee, and Eugen Hellmann at the 2022 IH\'ES Summer School on the Langlands Program. I would like to thank the organizers for the pleasant event, and the institution for its hospitality. Part of the work was carried out during the author's visit to the trimester program ``The Arithmetic of the Langlands Program'' at the Hausdorff Institute for Mathematics funded by the Deutsche Forschungsgemeinschaft (DFG, German Research Foundation) under Germany's Excellence Strategy--EXC-2047/1--390685813 in June 2023 and to the Ulsan National Institute of Science and Technology in July 2023. I would like to heartily thank these institutions for their support and excellent working environment.

\section{Notations and preliminaries}\label{sec2}

\subsection{Galois theory}
Let $F$ be a field. We fix once and for all a separable closure $\ov{F}$ of $F$ and write $G_F:=\Gal(\ov{F}/F)$. Let $K/\Qp$ be a finite extension with ring of integers $\cO_K$, uniformizer $\pi_K$, and residue field $k$. We denote by $e$ its ramification index of $K/\Qp$ and by $f$ its inertial degree. We write $q:=p^f = \#k$. We write $I_K \subset G_K$ for the inertia subgroup and $W_K$ for the Weil group of $K$. For $m\ge 1$, we choose a compatible system of $\pi_{K,m}$ a $(q^{m}-1)$-th root of $\pi_K$. We write $K_\infty := \cup_{m\ge 1}K(\pi_{K,m})$. We denote by $\Frob_K$ a geometric Frobenius in $G_K$. We denote by $K_0\subset K$ the maximal unramified subextension and by $\varphi$ the absolute arithmetic Frobenius on $K_0$. 

Let $E\subset \Qpbar$ be a subfield which is finite over $\Qp$ with ring of integers $\cO$, uniformizer $\varpi$, and residue field $\F$. When an integer $n$ is fixed in the body of the paper, we assume that $E$ contains the Galois closures of the unramified extensions of $K$ of degree $\le n$. We denote by $\tilcJ$ the set of embeddings $K \mono E$ and by $\cJ$ the set of embeddings $k\mono \F$. We view $\cJ$ as a quotient of $\tilcJ$ with respect to the equivalence relation $\tilj\sim \tilj'$ if and only if $\tilj|_{K_0} = \tilj'|_{K_0}$ for $\tilj,\tilj' \in \tilcJ$. We fix an embedding $\sig_0 : k \mono \F$ and define $\sig_j = \sig_0 \circ \varphi^{-j}$. This identifies $\cJ$ with $\Z/f\Z$. We also fix an embedding $K\mono E$ that extends $\sig_0$ and often view $K$ as a subfield of $E$ using the embedding.

For $n\ge 1$, we denote by $\omega_{K} : G_K \ra \cO_{K_0}^\times$ the character defined by $g(\pi_K)= \omega_K(g)\pi_K$ for $g\in G_K$. For an embedding $\sig: K_0 \mono E$, we write $\omega_{K,\sig} = \sig \circ \omega_K$.

A \textit{Hodge type} or \textit{Hodge--Tate weight} is a dominant cocharacter in $X_*(T)^{\tilcJ}$. If $\lam$ is a Hodge type, we write $h_\lam := \max_{\tilj \in \tilcJ, \al\in \Phi^+}\CB{\RG{\lam_{\tilj},\al^\vee}}$. 
Let $\veps$ denote the $p$-adic cyclotomic character. If $V$ is a de Rham representation of $G_K$ over $E$, then for each $\tilj \in \Hom_{\Qp}(K,E)$, we let $\HT_{\tilj}(V)$ denote the multiset of Hodge--Tate weights labelled by $\tilj$ normalized so that $\HT_{\tilj}(\veps)=\CB{-1}$. Note that our convention agrees with \cite{6author,EGstack,EGH22}, but is opposite of that of \cite{LLLMlocalmodel,LLLM-extreme}.

We normalize the Artin map $\Art_{F_v} : F_v^\times \risom W^{\mathrm{ab}}_{F_v}$ so that uniformizers are mapped to geometric Frobenius elements. We let $\rec$ denote the local Langlands correspondence as in the introduction to \cite{HT}. Fix once and for all an isomorphism $\iota:\Qpbar\risom \C$. We define the local Langlands correspondence  $\rec_p$ over $\Qpbar$ by $\iota\circ \rec_p = \rec \circ \iota$ and we define $r_p (\pi) := \rec_p(\pi\otimes \abs{\det}^{(1-n)/2})$. Note that $\rec_p$ depends only on $\iota^\mo(\sqrt{p})$ and $r_p$ is independent of the choice of $\iota$.

If $\rho: G_K \ra \GL_n(\cO)$ is a continuous representation, we write $\rhobar := \rho \mod \varpi$.


\subsection{Reductive groups}
Let $G$ be a split connected reductive group over $\Z$. We write $B\subset G$ for a choice of Borel subgroup, $T\subset B$ for a maximal torus, and $U\subset B$ for the unipotent radical of $B$.  Let $\Phi^+ \subset \Phi$ be the subset of positive roots in the set of roots for $(G,B,T)$. We denote by $\Del$ the set of simple roots. We write $X^*(T)$ for the group of characters of $T$ and $X_*(T)$ for the group of cocharacters of $T$.  
We let $W$ denote the Weyl group, $W_a$ denote the affine Weyl group, and $\tilW$ denote the extended affine Weyl group for $G$.

We write $G^\vee$ for the split reductive group over $\Z$ defined by the root datum $(X_*(T),X^*(T),\Phi^\vee,\Phi)$. We write $T^\vee \subset G^\vee$ for the induced maximal split torus. We have isomorphisms $X^*(T^\vee)\simeq X_*(T)$ and $X_*(T^\vee)\simeq X^*(T)$. We let $W^\vee$, $W_a^\vee$, and $\tilW^\vee$ denote the Weyl group, affine Weyl group, and extended affine Weyl group for $G^\vee$.


We define $G_0 := \Res_{\cO_{K_0} / \Z_p}G_{/\cO_{K_0}}$ and $\uG := (G_0)_{/\cO}$.  
We define $B_0, \uB$ and $T_0, \uT$ similarly to $G_0, \uG$. Then $(\uG,\uB,\uT)$ is naturally identified with ($G^\cJ_{/\cO},B^\cJ_{/\cO},T^\cJ_{/\cO})$.
 The root datum of $(\uG,\uB,\uT)$ is given by
 \begin{align*}
    (X^*(\uT),X_*(\uT),\uPhi,\uPhi^\vee) \simeq (X^*(T)^\cJ,X_*(T)^\cJ,\Phi^\cJ,\Phi^{\vee,\cJ}).
 \end{align*}
We have 
$\uW\simeq W^\cJ$, $\uW_a\simeq W_a^\cJ$, $\utilW\simeq \tilW^\cJ$, and similarly for 
$\uW^\vee$, $\uW_a^\vee$, $\utilW^\vee$. We define an automorphism $\pi$ of $X^*(\uT)$ and $X_*(\uT^\vee)$ by the formula $\pi(\lam)_{\sig} =\pi(\lam)_{\sig\circ \varphi^\mo}$ for all $\lam \in X^*(\uT)$. 


Let $G=\GL_n$. We choose $B\subset G$ to be the upper triangular Borel subgroup. This determines the sets of postive roots $\Phi^+\subset \Phi$ and coroots $\Phi^{+,\vee}\subset \Phi^\vee$. We identify $X^*(T) \simeq \Z^n$ in the usual way. We write $X^*(T)_+\subset X^*(T)$ for the subset of dominant characters and $X^*(T)_-\subset X^*(T)$ for the subset of antidominant characters. For $i=1,\dots,n$, we let $\veps_i$ be the $i$-th standard basis and $\om_i$ be the $i$-th fundamental weight, i.e.~$\om_i=(1,\dots,1,0,\dots,0)$ with $i$-many $1$. For $I\subset \CB{1,\dots,n}$, we write $\veps_I := \sum_{i\in I}\veps_i$. We define $\eta:=(n-1,n-2,\dots,0) \in X^*(T)$. Sometimes, we write $\eta$ to denote a finite copy of $(n-1,n-2,\dots,0)$. For any integer $m$, we write $\ud{m}:=(m,m,\dots,m)\in X^*(T)$ as well as for its finite copy. If $\mu$ is a cocharacter and $a$ is a unit scalar, we write $a^\mu$ to denote $\mu(a)$. We denote by $w_0$ the longest element in $W$. We define the lowest $\eta$-shifted $p$-restricted alcove by
\begin{align*}
    \uC_0 := \CB{ \mu \in X^*(T)^\cJ \otimes \R \mid 0< \RG{\mu+\eta,\al^\vee} < p, \ \forall \al\in \uPhi^+}.
\end{align*}
We define the usual dot action of $\utilW$ on $X^*(\uT)$ by
\begin{align*}
    t_\nu w \cdot \mu := \nu+ w(\mu+\eta)-\eta.
\end{align*}
We denote by $\uOm$ the stablizer of $\uC_0$ in $\utilW$.

We write $\rmG := \GL_n(K)$, $\rmK:= \GL_n(\cO_K)$, $\rmK_1 := \ker( \rmK \epi \GL_n(k))$, $\rmT := T(K)$, and $\Iw$ for the inverse image of $B(k)$ under the map $\rmK \epi \GL_n(k)$. When we consider the group $\GL_n$ for various $n$ at the same time, we add subscript $n$ to various objects associated to $\GL_n$ to prevent confusion,~e.g.~$W_n,\tilW_n$, $\eta_n$, $\uC_{n,0}$,  etc. We write $\Ind$ for the unnormalized parabolic induction, $\cind$ for the compact induction, and $i_P^G$ for the normalized parabolic induction. We write $\Diag(a_1,\dots ,a_n)$ for the diagonal matrix in $\GL_n$ with entries $a_1,\dots, a_n$.

\subsection{Tame inertial types}\label{subsec:tame-inertial-types}

Let $A$ be an $\cO$-algebra. A \textit{tame inertial type over $A$} is a representation $\tau: I_K \ra \GL_n(A)$ with an open kernel and extends to a representation $G_K \ra \GL_n(A)$. When $A=\cO$, we have the following data attached to $\tau$ (cf.~\cite[\S2.4 and 5.1]{LLLMlocalmodel}):
\begin{enumerate}
    \item Since the tame inertia is abelian, we can find an unramified extension $K'/K$ of say, degree $r$, with residue field $k'$ and an embedding $\sig'_0:W(k') \ra \cO$ extending $\sig_0$ such that $\tau\simeq \oplus_{i=1}^n \chi_i$ where $\chi_i = \omega_{K',\sig_0}^{\bfa'_i}$ with integers $\bfa'_i \in [0,p^{fr}-1)$ for $i=1,\dots,n$.
    \item We define $s_\tau \in W$ to be an element such that $\chi_{s_\tau(i)} \simeq \chi_i^{-q}$ and if $\chi_{s_\tau^k(i)} \simeq \chi_i$ for $k\in \Z$ then ${s_\tau^k(i)} = i$. Such $s_\tau$ is not necessarily unique, but its choice will not affect our results. The order of $s_\tau$ is called the \textit{level} of $\tau$. We can and do choose $K'$ so that $r = \abs{s_\tau}$.
    \item Let $\cJ':= \CB{\sig: W(k') \mono \cO}$. For each $j'\in \cJ'$, there is $\bal'_{j'} \in [0,p-1]^n$ ($p$-restricted cocharacter) such that for any $1 \le i\le n$, not all $\bal'_{j',i}$ is equal to $p-1$ and $\sum_{j'\in\cJ'} p^{j'}\bal'_{j'} = \bfa':= (\bfa'_1,\dots,\bfa'_n)$. Note that $\bal'_{j'}$ is uniquely characterized by these properties. Also, we have $s_\tau^\mo(\bal'_{j'}) = \bal'_{j'+f}$.
    \item Let $\bfa^{\prime(j')}:= \sum_{k=0}^{f'-1}\bal'_{-j'+k}p^k$; note that $\bfa^{\prime(0)}= \bfa'$ and $\bfa^{\prime(j')}  \equiv \bfa' p^{j'} \mod p^{fr}-1$.
    \item For each $j'\in\cJ'$, let $s_{\rmor,j'}\in W$ be an element such that $s_{\rmor,j'}^\mo(\bfa^{\prime(j')})$ is dominant and $s_{\rmor,j'+f} = s_\tau s_{\rmor,j'}$. Such elements always exist but are not necessarily unique. The element $s_{\rmor}:= (s_{\rmor,j'})_{j'\in\cJ'} \in W^{\cJ'}$ is called an \textit{orientation} of $\tau$.

\end{enumerate}

For each $s_\tau$ orbit $I\subset \{1,2,\dots,n\}$, let $\tau_I := \oplus_{i\in I}\chi_i$. We say \textit{$\tau$ is regular} if $\tau_I$ are pairwise non-isomorphic for all $I$. We say \textit{$\tau$ is a principal series} if $s_\tau=1$.

\begin{rmk}
    In the above notation, suppose that $\bfa^{\prime(f'-1)}$ is dominant. This implies that $\bal'_0$ is dominant. This can be arranged by reordering $\chi_1,\dots,\chi_n$. We choose $s_{\rmor,f'-1}=1$ so that $s_{\rmor,f-1}=s_\tau$.  
    We further define
    \begin{enumerate}[(i)]
        \item $s= (s_{j})_{\jj}\in W^\cJ$ where $s_0 := s_{\rmor,0}$ and $s_{j} := s_{\rmor,j-1}^\mo s_{\rmor,j}$ for $j=1,\dots,f-1$, and
        \item $\mu = (\mu_{j})_{\jj} \in X^*(T)^{\cJ}$ so that $\mu_0 + \eta_0 = \bal'_0$ and $\mu_{j} + \eta_{j} = s_js_{j+1}\cdots s_{f-1}(\bal'_{f-j}) = s_{\rmor,{j-1}}^\mo s_\tau (\bal'_{f-j})$ for $j=1,\dots,f-1$.
    \end{enumerate}
     Note that $\mu_{j} + \eta_{j}$ is dominant by the definition of $s_{\rmor,j'-1}$. If $\bal'_{j'}$ is regular, i.e.~$\RG{\bal'_{j'},\al^\vee}\neq 0$ for all $\al\in \Del$, for all $j'\in\cJ'$, then $(s,\mu)$ is a \textit{lowest alcove presentation} of $\tau$ in the sense of \cite[\S2.4]{LLLMlocalmodel}. In that case, we write $\tau=\tau(s,\mu+\eta)$. 
\end{rmk}

For an integer $m \ge 1$, we say that a tame inertial type $\tau$ is \textit{$m$-generic} if it admits a lowest alcove presentation $\tau\simeq \tau(s,\mu+\eta)$ such that $m< \RG{\mu+\eta,\al^\vee}<p-m$ for all $\al\in \uPhi^+$. For a lowest alcove presentation $(s,\mu)$ of $\tau$ and an element $\tilw = t_\nu w \in \utilW$, we define
\begin{align*}
    \prescript{\tilw}{}{(s,\mu)} = (w s \pi(w)^\mo , \tilw\cdot \mu - w s \pi(w)^\mo\pi(\nu).
\end{align*}

\subsection{Inertial local Langlands}\label{subsec:ILLC} Let $\tau$ be a tame inertial type. There is an association $\tau \mapsto \sig(\tau)$ called a tame \textit{inertial local Langlands correspondence} where $\sig(\tau)$ is a representation of $\GL_n(k)$ and satisfies the following property (\cite[Thm.~3.7]{6author}): {if $\pi$ is any irreducible smooth $\Qpbar$-representation of $\rmG$, then the restriction $\pi|_{\rmK}$ contains $\sig(\tau)$ (viewed as a $\rmK$-representation via inflation) as a $\rmK$-subrepresentation if and only if $\rec_p(\pi)|_{I_K} \simeq \tau$ and the monodromy operator $N$ is equal to $0$ on $\rec_p(\pi)$.} We explain this association in a more concrete term (cf.~\cite[Prop.~2.5.5]{LLLMlocalmodel}).

We define $\bfn:= \CB{1,2,\dots,n}$. Let $\fP_\tau$ be the partition of $\bfn$ given by $s_\tau$-orbits. Let $\sim_\tau$ be an equivalence relation on $\bfn$ defined by $i\sim_\tau j$ if and only if $\chi_i \simeq \chi_j$. We write $\ov{\bfn}_\tau := \bfn/\sim_\tau$ and $\ov{\fP}_\tau:= \fP_\tau/\sim_\tau$.

It is convenient to assume that for any $\ov{I}\in \ov{\fP}_\tau$, any $I\in \ov{I}$ and $\cup_{I\in \ov{I}}I$ are intervals in $\bfn$. Since this can be arranged by reordering the characters $\chi_i$, there is no harm in making this assumption.

For $I\in \fP_\tau$ of size $m$, let $\tau_I := \oplus_{i\in I} \chi_i$. For any $i\in I$, $\chi_i$ defines an $k$-primitive $\F_{q^m}^\times$-character. Then $\sig(\tau_I)$ is the cuspidal Deligne--Lusztig representation of $\GL_m(k)$ corresponding to $\chi_i$ (see \cite[Lem.~4.4]{HerzigDuke} and \cite[Prop.~2.4.1(i)]{EGHweightcyc}).

Let $M \subset \GL_n$ be a standard Levi subgroup corresponding to the partition $\fP_\tau$ and let $\rmM:=M(K)$. The pair $(\rmM,\lam_M)$ where $\lam_M := \oplus_{I\in \fP_\tau} \sig(\tau_I)$ is a semisimple Bushnell--Kutzko type for a Bernstein component $\Omega_M$ of $\rmM$. Let $P\subset \GL_n$ be the upper triangular parabolic subgroup whose Levi factor is $M$. Let $\cP\subset \rmK$ be the parahoric subgroup given by the inverse image of $P(k)\subset \GL_n(k)$ and $\lam$ be the inflation of $\lam_M$ along $\cP\epi M(k)$. The pair $(\cP, \lam)$ is a semisimple Bushnell--Kutzko type for a Bernstein component $\Omega$ of $\GL_n(K)$. Then $\sig(\tau)$ is the irreducible direct summand of $\Ind_\cP^\rmK \lam$ as in \cite[\S3.6]{6author}.

\section{Integral and mod $p$ Hecke algebras}\label{sec3}

This section is devoted to integral and mod $p$ Hecke algebras. In \S\ref{subsec:hecke-generalities} and \ref{subsec:modp-satake}, we recall some basic results on Hecke algebras and the mod $p$ Satake isomorphism. In \S\ref{subsec:int-hecke-alg}, we give a presentation of an integral Hecke algebra $\cH(\rmG,\sig^\circ(\tau))$ (as well as its locally algebraic variant in \S\ref{subsec:loc-alg-hecke}) where $\tau$ is a tame inertial type and $\sig^\circ(\tau)\subset \sig(\tau)$ is a certain $\rmK$-stable $\cO$-lattice. We obtain this presentation from the results of Bushnell--Kutzko \cite{BKsmooth}, Schneider--Zink \cite{SZ}, and Dat \cite{Dat-caracteres} following \cite[\S3.6]{6author} closely. In \S\ref{subsec:hecke-modp}, we construct a morphism $\cH(\rmG,\sig^\circ(\tau)) \ra \cH(\rmG,\sig)$ for certain Jordan--H\"older factors $\sig$ of $\osig(\tau)=\sig^\circ(\tau)\mod \varpi$.

\subsection{Hecke algebras: generalities}\label{subsec:hecke-generalities}

\begin{defn}
    Let $\rmG$ be a locally profinite group and $\rmH\subset \rmG$ be an open compact subgroup. For an $\cO$-algebra $R$ and a smooth $R[\rmH]$-module $V$, we define the \textit{Hecke algebra of $V$} to be
\begin{align*}
    \cH(\rmG,V) := \End_{\rmG}(\ind_\rmH^{\rmG}V).
\end{align*}
We omit $\rmG$ in the notation when it is clear from the context.
\end{defn}

For any subgroup $\rmH\subset \rmG$ and $g\in \rmG$, we write $\rmH\ix{g} := g \rmH g^\mo$. If $V$ is a $\rmH$-representation, then we write $V\ix{g}$ to be the representation of $\rmH\cap \rmH\ix{g^\mo}$ such that $h\in \rmH\cap \rmH\ix{g^\mo}$ acts by $ghg^\mo\in \rmH$ via the original action.

We specialize to the case when $\rmG=\GL_n(K)$ and $\rmH=\rmK = \GL_n(\cO_K)$. Given an antidominant cocharacter $\mu \in X_*(T)$, we define $P_\mu = M_\mu N_\mu$ to be the parabolic subgroup with Levi factor $M_\mu$ and unipotent radical $N_\mu$ given by the condition that $\al$-th entry vanishes for all roots $\al$ such that $\RG{\mu,\al^\vee}<0$. We define $\ov{P}_\mu = M_\mu \ov{N}_\mu$ to be the opposite parabolic. Note that
\begin{itemize}
    \item $\mu(\pi_K)$ centralize $M_\mu(\cO_K)$,
    \item $\ov{N}_\mu(\cO_K) \subset \ov{N}_\mu(\cO_K)\ix{\mu(\pi_K)^\mo}$, and
    \item $\ov{N}_\mu(\cO_K) , N_\mu (\cO_K)\ix{\mu(\pi_K)^\mo}, M_\mu (\cO_K)\subset \rmK \cap \rmK\ix{\mu(\pi_K)^\mo}$.
\end{itemize}


We recall some standard lemmas. 

\begin{lem}\label{lem:cind-decomp}
    Let $R$ be an $\cO$-algebra and $V$ be a smooth $R[\rmK]$-module. We have the following decomposition of $R[\rmK]$-modules
\begin{align*}
    \ind_\rmK^{\rmG} V |_{\rmK} &\simeq \oplus_{\mu \in X_*(T)_-} \ind_{\rmK}^{\rmK \mu(\pi_K)\rmK} V |_{\rmK}
    \\
    &\simeq \oplus_{\mu \in X_*(T)_-} \ind_{\rmK \cap\rmK\ix{\mu(\pi_K)^\mo}}^\rmK V\ix{\mu(\pi_K)}.
\end{align*}
\end{lem}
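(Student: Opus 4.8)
The plan is to prove this by the standard combination of the Cartan decomposition with a Mackey-style analysis of compact induction, so the work is entirely a matter of unwinding definitions. Recall first the Cartan decomposition for $\rmG = \GL_n(K)$: by the theory of elementary divisors over the principal ideal domain $\cO_K$, the double cosets in $\rmK\backslash\rmG/\rmK$ are represented exactly once by the elements $\mu(\pi_K)$ with $\mu \in X_*(T)_-$, so that $\rmG = \bigsqcup_{\mu \in X_*(T)_-} \rmK\,\mu(\pi_K)\,\rmK$.

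For the first isomorphism I would use that, since $\rmK$ is open, $\ind_\rmK^\rmG V$ is the space of functions $f\colon\rmG\to V$ with $f(kg)=kf(g)$ for $k\in\rmK$ and with finite support modulo $\rmK$ on the left, the residual $\rmK$-action being right translation. Right translation by $\rmK$ preserves each double coset $\rmK\mu(\pi_K)\rmK$, so the operator sending $f$ to its restriction to a single such double coset (extended by zero) is a $\rmK$-equivariant projector, and every $f$ is the finite sum of these pieces. This gives $\ind_\rmK^\rmG V|_\rmK \simeq \bigoplus_\mu \ind_\rmK^{\rmK\mu(\pi_K)\rmK}V|_\rmK$, where $\ind_\rmK^{\rmK\mu(\pi_K)\rmK}V$ denotes the left-$\rmK$-equivariant, finitely supported functions on the double coset, with the right-translation action of $\rmK$.

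For the second isomorphism, fix $g := \mu(\pi_K)$. The assignment $k\mapsto \rmK g k$ induces a bijection between $(\rmK\cap\rmK\ix{g^\mo})\backslash\rmK$ and the set of right $\rmK$-cosets contained in $\rmK g\rmK$, since $\rmK g k = \rmK g k'$ if and only if $k(k')^\mo \in g^\mo\rmK g\cap\rmK = \rmK\ix{g^\mo}\cap\rmK$. Hence $f\mapsto\phi$, $\phi(k):=f(gk)$, is an $R$-linear isomorphism from $\ind_\rmK^{\rmK g\rmK}V|_\rmK$ onto the space of functions $\phi\colon\rmK\to V$ satisfying $\phi(k_0k) = (gk_0g^\mo)\,\phi(k)$ for all $k_0\in\rmK\cap\rmK\ix{g^\mo}$ (here $gk_0g^\mo\in\rmK$ because $k_0\in g^\mo\rmK g$), the inverse being $\phi\mapsto f$ with $f(kgk') := k\,\phi(k')$ (well-defined by the same coset computation). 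By the definition of $V\ix{g}$ recalled above, the latter space is exactly $\ind_{\rmK\cap\rmK\ix{g^\mo}}^\rmK V\ix{g}$, and one checks at once that $f\mapsto\phi$ intertwines the right-translation actions of $\rmK$. Taking $g=\mu(\pi_K)$ and summing over $\mu\in X_*(T)_-$ gives the second decomposition.

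I do not expect a genuine obstacle here; the only point that needs care is the bookkeeping of conventions — that we work with \emph{antidominant} cocharacters in the Cartan decomposition, and that one must keep the left action $f\mapsto(g\mapsto kf(g))$ defining the induced module separate from the right-translation action realizing the $\rmK$-module structure when transporting functions between $\rmK g\rmK$ and $\rmK$.
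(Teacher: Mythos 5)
Your proof is correct and follows essentially the same route as the paper: the Cartan decomposition for the first isomorphism, and the map $f \mapsto \phi$, $\phi(k) = f(\mu(\pi_K)k)$, for the second. You simply spell out the coset bookkeeping and equivariance checks that the paper leaves implicit.
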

\begin{proof}
The first isomorphism follows from the Cartan decomposition. The second isomorphism is given by the map sending $f: \rmK \mu(\pi_K) \rmK \ra V $ to $g: \rmK \ra V$ such that $g(k) = f(\mu(\pi_K) k)$ for $k\in \rmK$.
\end{proof}

\begin{lem}\label{lem:hecke-decomp}
    Let $R$ be an $\cO$-algebra and $V$ be a smooth $R[G(k)]$-module. We view $V$ as a $R[\rmK]$-module via inflation. We have the following decomposition of $R$-modules
\begin{align*}
    \End_\rmG(\ind_\rmK^{\rmG}V)  \simeq \oplus_{\mu \in X_*(T)_-} \Hom_{M_\mu(k)}(V_{\ov{N}_\mu(k)}, V^{N_\mu})
\end{align*}
which maps $\phi : \rmG \ra \End_{R}V$ to $\PR{\phi(\mu(\pi_K))}_{\mu \in X_*(T)_-} $. This is well-defined because $\phi(\mu(\pi_K))$ factors through $V \epi V_{\ov{N}_\mu(k)}$ and its image is contained in $V^{N_\mu}\subset V$.
\end{lem}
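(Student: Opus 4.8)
The plan is to pass to the convolution model of the Hecke algebra, reduce to one computation per double coset via the Cartan decomposition, and identify that computation using the explicit description of $\rmK\cap\rmK\ix{\mu(\pi_K)^\mo}$ recorded above.

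\emph{Step 1 (reduction to double cosets).} I would first invoke the standard identification of $\End_\rmG(\ind_\rmK^\rmG V)$ with the $R$-module of functions $\phi\colon\rmG\to\End_R(V)$ that are $\rmK$-biequivariant, $\phi(k_1gk_2)=\rho(k_1)\circ\phi(g)\circ\rho(k_2)$ for $k_1,k_2\in\rmK$, and supported on finitely many double cosets in $\rmK\backslash\rmG/\rmK$, the composition being convolution; equivalently this follows from Frobenius reciprocity for $\ind_\rmK^\rmG$ together with Lemma~\ref{lem:cind-decomp}. By the Cartan decomposition $\rmG=\bigsqcup_{\mu\in X_*(T)_-}\rmK\mu(\pi_K)\rmK$ such a $\phi$ is determined by the elements $\phi(\mu(\pi_K))\in\End_R(V)$, and these are constrained precisely by $\rho(k_1)\phi(\mu(\pi_K))\rho(k_2)=\phi(\mu(\pi_K))$ whenever $k_1\mu(\pi_K)k_2=\mu(\pi_K)$ with $k_1,k_2\in\rmK$; writing $k_2=\mu(\pi_K)^\mo k_1^\mo\mu(\pi_K)$ and unwinding, this says exactly that $\phi(\mu(\pi_K))$ ranges over $\Hom_{\rmK\cap\rmK\ix{\mu(\pi_K)^\mo}}(V,V\ix{\mu(\pi_K)})$, with only finitely many $\mu$-components nonzero. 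Hence
\begin{align*}
    \End_\rmG(\ind_\rmK^\rmG V)\;\simeq\;\bigoplus_{\mu\in X_*(T)_-}\Hom_{\rmK\cap\rmK\ix{\mu(\pi_K)^\mo}}\!\big(V,V\ix{\mu(\pi_K)}\big),\qquad \phi\longmapsto\big(\phi(\mu(\pi_K))\big)_\mu ,
\end{align*}
the sum being direct rather than a product because a $\rmG$-endomorphism restricted to $V$ has image meeting only finitely many double-coset summands (in the cases of interest $V$ is finite-dimensional).

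\emph{Step 2 (identifying each summand).} Fix $\mu\in X_*(T)_-$. From the three itemized properties of $\mu(\pi_K)$ above one obtains the Iwahori-type factorization, with unique factorizations,
\begin{align*}
    \rmK\cap\rmK\ix{\mu(\pi_K)^\mo}=\ov{N}_\mu(\cO_K)\cdot M_\mu(\cO_K)\cdot N_\mu(\cO_K)\ix{\mu(\pi_K)^\mo}.
\end{align*}
I would then trace the two relevant actions through this factorization, using that $V$ is inflated from $\GL_n(k)$ and that both $N_\mu(\cO_K)\ix{\mu(\pi_K)^\mo}=\mu(\pi_K)^\mo N_\mu(\cO_K)\mu(\pi_K)$ and $\mu(\pi_K)\ov{N}_\mu(\cO_K)\mu(\pi_K)^\mo$ lie in $\rmK_1$. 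On $V$ itself, $\ov{N}_\mu(\cO_K)$ acts through $\ov{N}_\mu(k)$, $M_\mu(\cO_K)$ through $M_\mu(k)$, and $N_\mu(\cO_K)\ix{\mu(\pi_K)^\mo}$ trivially; on $V\ix{\mu(\pi_K)}$ (the same $R$-module, with $h$ acting via $\mu(\pi_K)h\mu(\pi_K)^\mo$), $M_\mu(\cO_K)$ still acts through $M_\mu(k)$ because $\mu(\pi_K)$ centralizes it, $\ov{N}_\mu(\cO_K)$ acts trivially, and $N_\mu(\cO_K)\ix{\mu(\pi_K)^\mo}$ acts through $N_\mu(k)$. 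Therefore any $\rmK\cap\rmK\ix{\mu(\pi_K)^\mo}$-equivariant map $V\to V\ix{\mu(\pi_K)}$ must factor through $V\epi V_{\ov{N}_\mu(k)}$ (by $\ov{N}_\mu$-equivariance), land in $V^{N_\mu}\subseteq V$ (by $N_\mu\ix{\mu(\pi_K)^\mo}$-equivariance), and be $M_\mu(k)$-equivariant for the residual actions; conversely any $M_\mu(k)$-equivariant map $V_{\ov{N}_\mu(k)}\to V^{N_\mu}$ gives such a map by pre- and post-composing with $V\epi V_{\ov{N}_\mu(k)}$ and $V^{N_\mu}\hookrightarrow V$. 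This yields $\Hom_{\rmK\cap\rmK\ix{\mu(\pi_K)^\mo}}(V,V\ix{\mu(\pi_K)})\simeq\Hom_{M_\mu(k)}(V_{\ov{N}_\mu(k)},V^{N_\mu})$ and identifies $\phi(\mu(\pi_K))$ with the composite $V\epi V_{\ov{N}_\mu(k)}\to V^{N_\mu}\hookrightarrow V$, which is the parenthetical assertion in the statement. Summing over $\mu$ completes the argument.

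\emph{Main obstacle.} The only non-formal ingredient is Step 2: establishing the Iwahori-type factorization of $\rmK\cap\rmK\ix{\mu(\pi_K)^\mo}$ and carefully tracking which of its three factors acts trivially versus through $M_\mu(k)$, $N_\mu(k)$, or $\ov{N}_\mu(k)$ on $V$ as opposed to on the twist $V\ix{\mu(\pi_K)}$. This is precisely where the antidominance conventions and the itemized contraction/centralization properties of $\mu(\pi_K)$ enter; once those are in place the verification is a direct check. Everything else — the Cartan decomposition, Frobenius reciprocity, and commuting $\Hom$ past the finite direct sum — is routine.
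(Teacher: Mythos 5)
Your proposal is correct and takes essentially the same route as the paper: both arguments first reduce, via the Cartan decomposition and Frobenius reciprocity, to the identification $\End_\rmG(\ind_\rmK^\rmG V)\simeq\oplus_\mu\Hom_{\rmK\cap\rmK\ix{\mu(\pi_K)^\mo}}(V,V\ix{\mu(\pi_K)})$, and then analyze that Hom-space using the fact that $N_\mu(\cO_K)\ix{\mu(\pi_K)^\mo}$ lies in $\rmK_1$ (hence acts trivially on $V$) while $\mu(\pi_K)\ov{N}_\mu(\cO_K)\mu(\pi_K)^\mo$ lies in $\rmK_1$ (hence $\ov{N}_\mu$ acts trivially on $V\ix{\mu(\pi_K)}$). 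Your Step 2 spells out the Iwahori-type factorization explicitly and tracks each factor's action on source and target, where the paper's proof goes through the same content slightly more tersely by quoting the containments $N_\mu(\cO_K)\ix{\mu(\pi_K)^\mo}\subset\rmK_1\cap\rmK\ix{\mu(\pi_K)^\mo}$ and $\ov{N}_\mu(k)\subset\rmK\cap\rmK_1\ix{\mu(\pi_K)^\mo}$ after one more Frobenius-reciprocity step; the underlying verification is identical.
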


\begin{proof}
    This follows from the following series of isomorphisms
\begin{align*}
    \End_\rmG(\ind_\rmK^{\rmG} V) &\simeq \Hom_{\rmK}(V, \ind_\rmK^\rmG V) 
    \\
    &\simeq \oplus_{\mu\in X_*(T)_-} \Hom_{\rmK}(V, \ind_{\rmK \cap\rmK\ix{\mu(\pi_K)^\mo}}^{\rmK} V\ix{\mu(\pi_K)}) \\
    &\simeq \oplus_{\mu\in X_*(T)_-} \Hom_{\rmK\cap \rmK\ix{\mu(\pi_K)^\mo}}(V|_{\rmK\cap \rmK\ix{\mu(\pi_K)^\mo}}, V\ix{\mu(\pi_K)})
    \\
    &\simeq \oplus_{\mu\in X_*(T)_-} \Hom_{M_\mu(k)}(V_{\ov{N}_\mu(k)}, (V\ix{\mu(\pi_K)})^{N_\mu (\cO_K)\ix{\mu(\pi_K)^\mo}})
    \\
    &\simeq \oplus_{\mu \in X_*(T)_-} \Hom_{M_\mu(k)}(V_{\ov{N}_\mu(k)}, V^{N_\mu(\cO_K)}).
\end{align*}
    The first and the last isomorphisms are straightforward. The second isomorphism follows from Lemma \ref{lem:cind-decomp}. The third isomorphism is the Frobenius reciprocity and is given by composing the map sending $f:\rmK \ra V\ix{\mu(\pi_K)}$  to $f(1)$. The fourth isomorphism holds because $\rmK_1 \cap \rmK^{\mu(\pi_K)^\mo}$ acts trivially on $V$ and $N_\mu(\cO_K)\ix{\mu(\pi_K)^\mo} \subset \rmK_1 \cap \rmK^{\mu(\pi_K)^\mo}$ and similarly $\rmK \cap \rmK_1^{\mu(\pi_K)^\mo}$ acts trivially on $V\ix{\mu(\pi_K)}$ and $\ov{N}_\mu(k) \subset \rmK \cap \rmK_1\ix{\mu(\pi_K)^\mo}$.
\end{proof}

We record a variant of the previous lemmas using a parahoric subgroup of $\rmG$ instead of $\rmK$. Let $P=MN$ be a standard upper triangular parabolic subgroup of $\GL_n$. Let $\cP\subset \rmK$ be the parahoric subgroup defined as the preimage of $P(k)$. 

\begin{lem}\label{lem:hecke-decomp-para}
Let $V$ be an $R[P(k)]$-module which we view as an $R[\cP]$-module via inflation.
    \begin{enumerate}
        \item We have $\End_\rmG(\ind_{\cP}^\rmG V) \simeq \oplus_{\tilw \in \tilW_M\bss\tilW/\tilW_M} \ind_{\cP}^{\cP \tilw \cP} V \simeq \oplus_{\tilw \in \tilW_M\bss\tilW/\tilW_M} \ind_{\cP\cap \cP\ix{\tilw^\mo}}^{\cP} V\ix{\tilw} $.
        \item For $\mu \in X_*(T)$ that is antidominant as a cocharacter for $M$, we have
        \begin{align*}
            \Hom_{\cP}(V,\ind_{\cP\cap \cP\ix{\mu(\pi_K)^\mo}}^{\cP} V\ix{\mu(\pi_K)}) &\simeq \Hom_{M\cap M_\mu} (V_{M(k)\cap \ov{N}_\mu(k)}, V^{M(k)\cap N_\mu(k)})\\
            \varphi &\mapsto \varphi(1).
        \end{align*}
    \end{enumerate}
\end{lem}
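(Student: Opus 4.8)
The plan is to run the arguments of Lemmas~\ref{lem:cind-decomp} and~\ref{lem:hecke-decomp} essentially verbatim, with the parahoric Bruhat decomposition of $\rmG$ in place of the Cartan decomposition.

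For part~(1): recall that the double cosets $\cP\backslash\rmG/\cP$ are parametrized by $\tilW_M\backslash\tilW/\tilW_M$, so $\rmG=\bigsqcup_{\tilw}\cP\tilw\cP$ with $\tilw$ running over representatives lifted to the normalizer of $\rmT$ in $\rmG$. Restricting $\ind_\cP^\rmG V$ to $\cP$ and decomposing a function according to the double coset supporting it gives $\ind_\cP^\rmG V|_\cP\simeq\bigoplus_{\tilw}\ind_\cP^{\cP\tilw\cP}V|_\cP$; plugging this into the Frobenius reciprocity isomorphism $\End_\rmG(\ind_\cP^\rmG V)\simeq\Hom_\cP(V,\ind_\cP^\rmG V)$ yields the first identification. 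The second identification is the ``single-coset'' rewriting: for fixed $\tilw$, sending $f$ to $k\mapsto f(\tilw k)$ identifies $\ind_\cP^{\cP\tilw\cP}V|_\cP$ with $\ind_{\cP\cap\cP\ix{\tilw^\mo}}^\cP V\ix{\tilw}$, just as in the proof of Lemma~\ref{lem:cind-decomp}.

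For part~(2): Frobenius reciprocity turns the left-hand side into $\Hom_{H}(V|_{H},V\ix{\mu(\pi_K)})$, $\varphi\mapsto\varphi(1)$, where $H:=\cP\cap\cP\ix{\mu(\pi_K)^\mo}$; the remaining work reproduces the last two isomorphisms in the proof of Lemma~\ref{lem:hecke-decomp}, read through the reduction $\cP\epi P(k)$. The antidominance of $\mu$ for $M$ guarantees that conjugation by $\mu(\pi_K)$ is compatible with an Iwahori-type factorization of $H$ relative to the parabolic $M_\mu$: the subgroups $(M\cap\ov{N}_\mu)(\cO_K)$ and $\mu(\pi_K)^\mo(M\cap N_\mu)(\cO_K)\mu(\pi_K)$ both lie in $H$, and $\mu(\pi_K)(M\cap\ov{N}_\mu)(\cO_K)\mu(\pi_K)^\mo$ and $\mu(\pi_K)^\mo(M\cap N_\mu)(\cO_K)\mu(\pi_K)$ both lie in $\rmK_1$. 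Exactly as in Lemma~\ref{lem:hecke-decomp}, this forces an $H$-equivariant $\varphi\colon V\to V\ix{\mu(\pi_K)}$ to factor through $V\epi V_{(M\cap\ov{N}_\mu)(k)}$ on the source, to have image contained in $V^{(M\cap N_\mu)(k)}$ on the target, and to be equivariant only for the remaining $(M\cap M_\mu)(k)$-action; the converse inclusion follows once one checks that the images of $H$ in $P(k)$ under reduction modulo $\pi_K$ and under $h\mapsto\mu(\pi_K)h\mu(\pi_K)^\mo\bmod\pi_K$ are the two opposite parabolic subgroups of $M(k)$ with common Levi $(M\cap M_\mu)(k)$.

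I expect the one point requiring genuine care---hence the \emph{main obstacle}---to be the combinatorial bookkeeping in part~(2): pinning down the image of $H$ under the two reduction maps and verifying the Iwahori-type factorization of $H$ relative to $M_\mu$. This is precisely where the hypothesis that $\mu$ is antidominant for $M$ is used, and it is the only place where the parahoric case differs in substance from the already-proved Lemma~\ref{lem:hecke-decomp}; everything else is formal.
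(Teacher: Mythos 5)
Your proposal is correct and takes exactly the approach the paper intends: the paper's own proof is the one-line remark that the argument is identical to Lemmas~\ref{lem:cind-decomp} and~\ref{lem:hecke-decomp}, and you supply precisely those details, replacing the Cartan decomposition with the parahoric Bruhat decomposition $\rmG = \coprod_{\tilw\in\tilW_M\backslash\tilW/\tilW_M}\cP\tilw\cP$ and running the same Frobenius-reciprocity and Iwahori-factorization analysis. One small imprecision worth flagging: in part (2) you describe the images of $H=\cP\cap\cP\ix{\mu(\pi_K)^\mo}$ in $P(k)$ under the two reduction maps as ``the two opposite parabolic subgroups of $M(k)$,'' but those images generally contain nontrivial pieces of $N(k)$ and so do not literally sit inside $M(k)$; what is true (and what the argument needs) is that their \emph{projections} to the Levi $M(k)$ via $P(k)\epi M(k)$ are the opposite parabolics of $M(k)$ with common Levi $(M\cap M_\mu)(k)$, while the extra $N(k)$-directions impose no further constraint once $V$ is inflated from an $M(k)$-module (which is the situation in which the lemma is applied). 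This does not change the structure of your argument, only the precise phrasing of the key factorization.
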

\begin{proof}
    This can be proven as Lemma \ref{lem:cind-decomp} and \ref{lem:hecke-decomp}.
\end{proof}

We end this subsection by recording the following well-known result on the transitivity of compact induction.

\begin{lem}\label{lem:cind-transitive}
    Let $V$ be as in the previous Lemma. There is an isomorphism of $R[\rmG]$-modules
    \begin{align*}
         \ind_{\rmK}^\rmG( \ind_\cP^\rmK V))  &\risom \ind_{\cP}^\rmG V 
         \\
         \varphi:\rmG \ra \ind_\cP^\rmK V &\mapsto \psi: (kg\mapsto \varphi(g)(k)) \ \forall k\in \rmK, g\in \rmG.
    \end{align*}
\end{lem}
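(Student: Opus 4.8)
The plan is to exhibit the stated map explicitly and check it is a bijective $R[\rmG]$-module homomorphism, with the inverse given in the same style. Recall that compact induction $\ind$ is realized on functions with compact support modulo the subgroup; here the ambient group is the same for both sides, so no support condition actually changes, only the reorganization of the data. First I would fix notation: an element $\varphi \in \ind_{\rmK}^\rmG(\ind_\cP^\rmK V)$ is a function $\varphi : \rmG \to \ind_\cP^\rmK V$ with $\varphi(hg) = h\cdot\varphi(g)$ for $h \in \rmK$, compactly supported mod $\rmK$, and for each $g$ the value $\varphi(g)$ is itself a function $\rmK \to V$ with $\varphi(g)(pk) = p\cdot \varphi(g)(k)$ for $p \in \cP$, compactly supported mod $\cP$ (automatic since $\rmK$ is compact). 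I would then define $\psi(x)$ for $x \in \rmG$ by writing $x = kg$ — but of course this decomposition is not unique, so the honest definition is $\psi(x) := \varphi(x)(1)$, and one checks the displayed formula $\psi(kg) = \varphi(g)(k)$ holds: indeed $\psi(kg) = \varphi(kg)(1) = (k\cdot\varphi(g))(1) = \varphi(g)(k^{-1}\cdot 1)$ — so I should be careful about the precise convention for the $\rmK$-action on $\ind_\cP^\rmK V$; with the left-translation convention $(k\cdot f)(y) = f(yk)$ one gets $\psi(kg) = \varphi(g)(k)$ exactly as written, and I would adopt that convention throughout.

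Next I would verify the three required properties of $\psi$. Equivariance under $\cP$: $\psi(px) = \varphi(px)(1)$; writing any $x$ locally as $kg$ reduces this to the $\cP$-equivariance of $\varphi(g)$, namely $\varphi(g)(pk) = p\cdot\varphi(g)(k)$, together with the fact that $pk \in \rmK$ so $\varphi(pkg) = (pk)\cdot\varphi(g)$ — combining gives $\psi(pkg) = \varphi(g)(pk)$... here I need $p \in \cP \subset \rmK$, and then $\psi(pkg) = (pk)\cdot\varphi(g)$ evaluated at $1$, i.e. $\varphi(g)(pk)$, which equals $p\cdot\varphi(g)(k) = p\cdot\psi(kg)$. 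So $\psi \in \ind_\cP^\rmG V$. Compact support mod $\cP$: the support of $\psi$ is contained in $(\mathrm{supp}\,\varphi)\cdot\rmK$, a compact-mod-$\cP$ set since $\mathrm{supp}\,\varphi$ is compact mod $\rmK$ and $\rmK$ is compact. $\rmG$-equivariance (right translation): immediate from the definitions since right translation on $\rmG$ commutes with evaluation at $1$ on the outer variable.

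For bijectivity I would write down the inverse map directly: given $\psi \in \ind_\cP^\rmG V$, define $\varphi(g)(k) := \psi(kg)$. One checks $\varphi(g) \in \ind_\cP^\rmK V$ (the $\cP$-equivariance in $k$ follows from the $\cP$-equivariance of $\psi$; support mod $\cP$ inside $\rmK$ is automatic), that $\varphi$ is $\rmK$-equivariant in $g$ with the left-translation convention, and that $\varphi$ is compactly supported mod $\rmK$: $g \in \mathrm{supp}\,\varphi$ forces $kg \in \mathrm{supp}\,\psi$ for some $k \in \rmK$, so $\mathrm{supp}\,\varphi \subset \rmK\cdot(\mathrm{supp}\,\psi)$, which is compact mod $\cP$ and in particular its image in $\cP\backslash\rmG$, hence in $\rmK\backslash\rmG$, is finite. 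The two constructions are visibly mutually inverse, and $R$-linearity of everything is clear. The only genuinely delicate point — and the one I would write out most carefully — is pinning down the action conventions so that the formula $\psi(kg) = \varphi(g)(k)$ (rather than $\varphi(g)(k^{-1})$) is literally correct; once the conventions are fixed the rest is a routine unwinding, which is presumably why the lemma is stated without proof and I would simply say "This is standard; it follows by unwinding the definitions as above" or give the two-line check.
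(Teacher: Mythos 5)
Your argument is correct, and it is the standard one. The paper actually states this lemma without proof, introducing it only as ``recording the following well-known result on the transitivity of compact induction,'' so there is no paper proof to compare against; your write-up is exactly the routine unwinding one would expect. You correctly pin down the action convention (left $\rmH$-equivariance $f(hg)=h\cdot f(g)$ on the subgroup, $\rmK$ acting on $\ind_\cP^\rmK V$ via $(k\cdot f)(y)=f(yk)$) so that the displayed formula $\psi(kg)=\varphi(g)(k)$ comes out without inverses, which is indeed the only point that requires care; this matches the convention visible elsewhere in the paper, e.g.\ the explicit map in the proof of Lemma~\ref{lem:cind-decomp}.

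One small slip, inconsequential but worth noting: in the compact-support check for $\psi$ you write $\supp\psi\subset(\supp\varphi)\cdot\rmK$, but since $x=kg$ has $\rmK$ acting on the left, the set you want is $\rmK\cdot(\supp\varphi)$. As $\supp\varphi$ is already left-$\rmK$-stable, this equals $\supp\varphi$ itself, a finite union of $\rmK$-cosets, hence (since $[\rmK:\cP]<\infty$) a finite union of $\cP$-cosets and thus compact mod $\cP$. Either way the conclusion stands, so this does not affect the proof.
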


\subsection{Hecke algebra of Serre weights}\label{subsec:modp-satake} A \textit{Serre weight} of $\GL_n(k)$ is an irreducible $\F$-representation of $\GL_n(k)$. Recall that the set of $q$-restricted dominant weights
\begin{align*}
    X_f^*(T):= \CB{\lam \in X^*(T) \mid 0 \le \RG{\lam,\al^\vee} \le p^f-1, \  \forall \al^\vee \in \Del^\vee}.
\end{align*}
We also have $X^0(T) := \CB{\lam \in X^*(T) \mid \RG{\lam,\al^\vee}=0, \  \forall \al^\vee \in \Del^\vee}$. 
For $\lam\in X_f^*(T)$, we define $F(\lam)$ to be the socle of the dual Weyl module of highest weight $\lam$, which we view as a $\GL_n(k)$-module via restriction (see \cite[\S3.1]{HerzigDuke} for detail). There is a bijection (Thm.~3.10 in \loccit)
\begin{align*}
    \frac{X_f^*(T)}{(q-1)X^0(T)} &\simeq \CB{\text{Serre weights of $\GL_n(k)$}}/\simeq \\
    \lam  &\mapsto F(\lam).
\end{align*}

Recall that we have the restricted group $G_0 = \Res_{\cO_{K_0}/\Zp}(G_{/\cO_{K_0}})$ and $G_0(\Fp)=\GL_n(k)$. We can classify Serre weights of $\GL_n(k)$ by applying the above formalism to $G_0$. Define the set of $p$-restricted dominant weights
\begin{align*}
    X_1^*(\uT):= \CB{\ud{\lam} \in X^*(\uT)^\cJ \mid 0 \le \RG{\ud{\lam},\al^\vee} \le p-1, \  \forall \al^\vee \in \Del^{\vee,\cJ}}
\end{align*}
and $X^0(\uT):= \CB{\ud{\lam} \in X^*(\uT)^\cJ \mid  \RG{\ud{\lam},\al^\vee} =0, \  \forall \al^\vee \in \Del^{\vee,\cJ}}$. Then we have a bijection (see \cite[Lem.~9.2.4]{GHS-JEMS-2018MR3871496})
\begin{align*}
    \frac{X_1^*(\uT)}{(p-\pi)X^0(\uT)} &\simeq \CB{\text{Serre weights of $\GL_n(k)$}}/\simeq \\
   \ud{\lam}  &\mapsto F(\ud{\lam}).
\end{align*}
If $\sig = F(\ud{\lam})$ for $\ud{\lam} \in X_1^*(\uT)$, then $\sig = F(\lam)$ where $\lam = \sum_{j=0}^{f-1} \lam_j p^{f-j} \in X_f^*(T)$.

\begin{defn}\label{defn:SW-generic}
    Let $\sig$ be a Serre weight of $\GL_n(k)$. We choose $\lam \in X_f^*(T)$ and $\ud{\lam}\in X_1^*(\uT)$ such that $\sig=F(\lam) = F(\ud{\lam})$.
    \begin{enumerate}
        \item We say that $\sig$ is \textit{non-Steinberg} if $\RG{\lam,\al^\vee} < p^f-1$ for all $\al^\vee \in \Del^\vee$.
        \item Let $M\subset \GL_n$ be a standard Levi subgroup. We say that $\sig$ is \textit{$M$-regular} if $\Stab_{W}(\lam)\subset W_M$. We say that $\sig$ is \textit{regular} if it is $T$-regular. In other words, the $W$-orbit of $\lam$ contains $n!$ distinct characters of $T(k)$.
        \item Let $m\in \Z_{\ge0}$ be an integer. We say that $\sig$ is \textit{$m$-deep} if for each $\al \in \Phi^{+,\cJ}$, there exists an integer $n_\al$ such that
        \begin{align*}
                n_\al p + m < \RG{\ud{\lam}+\eta,\al^\vee} < (n_\al +1)p -m.
        \end{align*}
    \end{enumerate}
\end{defn}

\begin{lem}[{\cite[Lem~2.3]{HerzigModp}}]\label{lem:inv-coinv-SW}
    Let $P=MN$ be a standard parabolic of $\GL_n$. Then the natural $M(k)$-equivariant map  $\sig^{N(k)} \ra \sig_{\ov{N}(k)}$ is an isomorphism, and they are isomorphic to $F_M(\lam)$. In particular, when $P=B$, we have $\sig^{U(k)} \simeq \sig_{\ov{U}(k)} \simeq \lam$.
\end{lem}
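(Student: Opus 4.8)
The plan is to follow the strategy of Herzig \cite[Lem.~2.3]{HerzigModp}, which combines the modular representation theory of the finite group $\GL_n(k)$ with that of the algebraic group. Write $\sig = F(\lam)$ with $\lam \in X_f^*(T)$, and let $L(\lam)$ denote the irreducible algebraic $\Fpbar$-representation of $\GL_n$ of highest weight $\lam$, i.e.~the socle of the dual Weyl module $H^0(\lam)$. Since $\lam$ is $q$-restricted, the restriction of $L(\lam)$ to $\GL_n(k)$ is irreducible and equals $\sig$; the same applies to any standard Levi $M$, since $\lam$ is then $q$-restricted for $M$, so that $F_M(\lam)$ is the restriction to $M(k)$ of the irreducible algebraic $M$-representation $L_M(\lam)$ of highest weight $\lam$. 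Fix a highest weight vector $v_\lam \in L(\lam)$: it spans the one-dimensional $\lam$-weight space, it is fixed by the algebraic unipotent radical of $B$, and $T(k)$ acts on it through $\lam$.

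First I would record the algebraic input. By Smith's theorem on parabolic subgroups, the space $L(\lam)^{N}$ of invariants under the algebraic unipotent radical $N$ of $P$ is an irreducible algebraic $M$-representation; it contains $v_\lam$, so has highest weight $\lam$, whence $L(\lam)^{N} = L_M(\lam)$. Applying the dual statement to the opposite parabolic gives $L(\lam)_{\ov{N}} \cong L_M(\lam)$, and the natural composite $L(\lam)^{N} \hookrightarrow L(\lam) \twoheadrightarrow L(\lam)_{\ov{N}}$ is an isomorphism, being nonzero on $v_\lam$. Passing to $k$-points, using $N(k) \subseteq N$ and $\ov{N}(k) \subseteq \ov{N}$, now yields $M(k)$-equivariant maps
\[ F_M(\lam) = L(\lam)^{N} \hookrightarrow \sig^{N(k)}, \qquad \sig_{\ov{N}(k)} \twoheadrightarrow L(\lam)_{\ov{N}} = F_M(\lam), \]
compatible with the natural map $\sig^{N(k)} \to \sig_{\ov{N}(k)}$ of the statement. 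The composite $F_M(\lam) \hookrightarrow \sig^{N(k)} \to \sig_{\ov{N}(k)} \twoheadrightarrow F_M(\lam)$ is nonzero on $v_\lam$, hence an isomorphism by irreducibility of $F_M(\lam)$; in particular the inclusion $F_M(\lam) \hookrightarrow \sig^{N(k)}$ splits, so $\sig^{N(k)} \cong F_M(\lam) \oplus X$ for an $M(k)$-submodule $X$, and likewise $\sig_{\ov{N}(k)} \cong F_M(\lam) \oplus Y$.

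The crux is to show $X = 0$ and $Y = 0$, for which I would invoke the theory of weights for finite groups of Lie type in the defining characteristic: every irreducible $\F$-representation of $\GL_n(k)$ has a one-dimensional space of $U(k)$-invariants, and likewise for the Levi, with $U_M(k)$ the unipotent radical of $B \cap M$. Applying this to $\sig$ gives $\dim_\F \sig^{U(k)} = 1$; since $U = U_M \ltimes N$, so $U(k) = U_M(k) N(k)$, we get $(\sig^{N(k)})^{U_M(k)} = \sig^{U(k)}$, hence $\dim_\F (\sig^{N(k)})^{U_M(k)} = 1 = \dim_\F F_M(\lam)^{U_M(k)}$. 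Combined with the decomposition $\sig^{N(k)} \cong F_M(\lam) \oplus X$ this forces $X^{U_M(k)} = 0$; but $U_M(k)$ is a $p$-group and $\F$ has characteristic $p$, so a nonzero $\F[U_M(k)]$-module has nonzero invariants, whence $X = 0$. Dually, taking $\ov{U}_M(k)$-coinvariants and using that $\sig_{\ov{U}(k)}$ and $F_M(\lam)_{\ov{U}_M(k)}$ are one-dimensional forces $Y = 0$. Therefore $\sig^{N(k)} \cong F_M(\lam) \cong \sig_{\ov{N}(k)}$, and the natural map between them, being a nonzero endomorphism of the irreducible $F_M(\lam)$, is an isomorphism. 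The case $P = B$ is the specialization $M = T$, $N = U$, where $F_T(\lam)$ is the character $\lam|_{T(k)}$.

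The main obstacle, I expect, is the step comparing the finite-group (co)invariants with the algebraic ones: it is false in general that $V^{N(k)} = V^{N}$ for a rational $N$-module $V$, and what makes it work here is the interplay of the irreducibility of $\sig$ with the one-dimensionality of $U(k)$-invariants. The device that makes this clean is to route through the map to $\ov{N}(k)$-coinvariants, producing a splitting and thereby reducing the vanishing of $X$ and $Y$ to an additive count of $U_M(k)$-invariants. Locating precise references for the weight theorem over $\GL_n(k)$ and its Levis and for Smith's theorem, and checking that $q$-restrictedness passes to $M$, is routine.
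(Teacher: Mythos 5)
The paper does not prove this lemma itself; it merely cites \cite[Lem~2.3]{HerzigModp}. Your argument is correct and, as far as I can tell, follows essentially the same route as Herzig's: identify $\sig = F(\lam)$ with the restriction of the irreducible algebraic module $L(\lam)$ using $q$-restrictedness, invoke Smith's theorem to produce the copy of $L_M(\lam)$ inside the algebraic $N$-invariants (and its opposite for coinvariants), and then force equality with the finite-group (co)invariants via the one-dimensionality of $U(k)$-invariants together with the $p$-group fixed-point principle. The one device worth highlighting is your use of the nonzero composite $L(\lam)^N \hookrightarrow L(\lam) \twoheadrightarrow L(\lam)_{\ov N}$ to produce a \emph{splitting} $\sig^{N(k)} \cong F_M(\lam) \oplus X$: this is genuinely needed, since $U_M(k)$-invariants is only left exact, so a bare comparison of dimensions of $U(k)$-invariants would not by itself rule out a nonzero complement. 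Your remark at the end correctly identifies where the argument would break if one tried to equate finite and algebraic invariants naively. Minor points that you flag as routine — $q$-restrictedness descending to $M$, and the extension of Smith's theorem from semisimple to reductive $G$ by splitting off the centre — are indeed routine and fine to leave implicit.
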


As in \S\ref{subsec:hecke-generalities}, we define $\cH(\rmG,\sig):= \End_\rmG(\ind_\rmK^\rmG \sig)$ for a Serre weight $\sig=F(\lam)$. By Lemma \ref{lem:hecke-decomp} and \ref{lem:inv-coinv-SW}, $\cH(\rmG,\sig)$ has a basis $\ov{T}_\mu$ for $\mu\in X_*(T)_-$ where $\ov{T}_\mu$ has support $\rmK\mu(\pi_K)\rmK$ and $\ov{T}_\mu(\mu(\pi_K))$ is the inverse of the natural isomorphism $\sig^{N(k)} \risom \sig_{\ov{N}(k)}$ in Lemma \ref{lem:inv-coinv-SW}.

We also have the Hecke algebra $\cH(\rmT,\lam):= \End_{\rmT}(\ind_{T(\cO_K)}^\rmT \lam)$ of $\lam$ where we view $\lam$ as a character $T(k)\xra{\lam}k^\times \xra{\sig_0}\F^\times$. Similar to the previous paragraph, $\cH(\rmT,\lam)$ has a basis $\ov{T}_\mu$ for $\mu\in X_*(T)$ where $\ov{T}_\mu$ has support $T(\cO_K)\mu(\pi_K)$ and $\ov{T}_\mu(\mu(\pi_K))$ is simply the identity map of $\lam$. 
We define a subalgebra of $\cH(\rmT,\lam)$
\begin{align*}
    \cH^-(\rmT,\lam) := \CB{\phi\in \cH(\rmT,\lam) \mid \text{$\phi$ is supported on $\cup_{\mu\in X_*(T)_-} T(\cO_K)\mu(\pi_K)$}}.
\end{align*}

Now we recall the mod $p$ Satake isomorphism. Note that in the following statement $\sig^{U(k)}\simeq \lam$.

\begin{thm}[Thm.~1.2 in \cite{HerzigSatake}]
    There is an injective $\F$-algebra homomorphism
    \begin{align*}
        \cS: \cH(\rmG,\sig) &\ra \cH(\rmT,\sig^{U(k)})
        \\
        \phi &\mapsto \PR{t \mapsto \sum_{u\in U(K)/U(\cO_K)} \phi(tu)|_{\sig^{U(k)}} }
    \end{align*}
    with image $\cH^-(\rmT,\sig^{U(k)})$.
\end{thm}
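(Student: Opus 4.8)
The plan is to verify first that the averaging formula takes values in $\cH(\rmT,\sig^{U(k)})$, then to recognize the resulting map $\cS$ as an algebra homomorphism by realizing it through a Jacquet (unipotent‑coinvariants) functor, and finally to compute $\cS$ on the standard basis in order to read off injectivity and the description of the image. For \emph{well‑definedness}, fix $\phi\in\cH(\rmG,\sig)$ and $t\in\rmT$. The support of $\phi$ is compact, since $\phi$ is a finite $\F$‑linear combination of the operators $\ov{T}_\mu$ ($\mu\in X_*(T)_-$), each supported on the compact set $\rmK\mu(\pi_K)\rmK$; hence $\{u\in U(K):tu\in\supp\phi\}=t^{-1}\supp\phi\cap U(K)$ is a compact subset of the closed subgroup $U(K)$, so it meets only finitely many cosets of $U(\cO_K)$ and $\sum_{u\in U(K)/U(\cO_K)}\phi(tu)$ is a finite sum. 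On $\sig^{U(k)}$ this sum is independent of the chosen coset representatives because $U(\cO_K)$ acts trivially on $\sig^{U(k)}$ and $\phi$ is right $\rmK$‑equivariant; and for $\tilde u\in U(\cO_K)$ one has $\sig(\tilde u)\phi(tu)=\phi(\tilde u\,t\,u)=\phi\big(t\,(t^{-1}\tilde u\,t)\,u\big)$ with $t^{-1}\tilde u\,t\in U(K)$ (as $t$ normalizes $U(K)$), so after reindexing the sum by left multiplication by $t^{-1}\tilde u\,t$ one gets $\sig(\tilde u)\sum_u\phi(tu)v=\sum_u\phi(tu)v$ for $v\in\sig^{U(k)}$; thus $\sum_u\phi(tu)$ carries $\sig^{U(k)}$ into $\sig^{U(k)}$, which is one‑dimensional by Lemma~\ref{lem:inv-coinv-SW}, so $\cS(\phi)(t)$ is a well‑defined scalar. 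The same substitution with $t$ replaced by $t_0tt_0'$ for $t_0,t_0'\in T(\cO_K)$ (which normalize $U(\cO_K)$) gives $\cS(\phi)(t_0tt_0')=\lam(t_0)\cS(\phi)(t)\lam(t_0')$, so $\cS(\phi)\in\cH(\rmT,\sig^{U(k)})$.

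For \emph{multiplicativity} I would avoid a direct manipulation of convolution products and instead observe that $\cS$ is the map induced on endomorphism algebras by the unipotent‑coinvariants functor, for which multiplicativity is automatic. Using the Iwasawa decomposition $\rmG=\rmK B(K)$, the space $\rmK\backslash\rmG/B(K)$ is a single point with $\rmK\cap B(K)=B(\cO_K)$, so the single‑coset Mackey formula gives $\cind_\rmK^\rmG\sig|_{B(K)}\simeq\cind_{B(\cO_K)}^{B(K)}\sig$; taking $U(K)$‑coinvariants, using transitivity of compact induction (Lemma~\ref{lem:cind-transitive}) and the standard identity $(\cind_H^G V)_G\simeq V_H$ for an open subgroup $H\le G$, one obtains a $\rmT$‑equivariant isomorphism $(\cind_\rmK^\rmG\sig)_{U(K)}\simeq\cind_{T(\cO_K)}^\rmT\big(\sig_{U(\cO_K)}\big)$, and $\sig_{U(\cO_K)}$ is identified with $\sig^{U(k)}$ via Lemma~\ref{lem:inv-coinv-SW} (after matching the upper/lower unipotent conventions dictated by the statement). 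Since $\pi\mapsto\pi_{U(K)}$ is a functor and $\pi\to\pi_{U(K)}$ is natural, the induced map $\cH(\rmG,\sig)=\End_\rmG(\cind_\rmK^\rmG\sig)\to\End_\rmT\big((\cind_\rmK^\rmG\sig)_{U(K)}\big)\simeq\cH(\rmT,\sig^{U(k)})$ is a ring homomorphism, and unwinding the above isomorphisms shows it is given by $\phi\mapsto\big(t\mapsto\sum_u\phi(tu)|_{\sig^{U(k)}}\big)$, i.e. it is $\cS$.

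For \emph{injectivity and the image} it remains to compute $\cS$ on the basis $\{\ov{T}_\mu:\mu\in X_*(T)_-\}$ of $\cH(\rmG,\sig)$ relative to the basis $\{\ov{T}^{\rmT}_\nu:\nu\in X_*(T)\}$ of $\cH(\rmT,\sig^{U(k)})$. The $\nu$‑component of $\cS(\ov{T}_\mu)$ is $\sum_u\ov{T}_\mu(\nu(\pi_K)u)|_{\sig^{U(k)}}$, and a nonzero summand forces $\nu(\pi_K)u\in\rmK\mu(\pi_K)\rmK$; comparing the Cartan and Iwasawa invariants of such an element (the Iwasawa $T$‑invariant lies in the convex hull of the Weyl orbit of the Cartan invariant) confines $\nu$ to a finite set lying below $\mu$ in the relevant partial order, and for the extremal $\nu$ the sum reduces to a single term which, by the normalization of $\ov{T}_\mu$ together with Lemma~\ref{lem:inv-coinv-SW}, is the identity. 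Thus, granting the cancellation discussed below, $\cS$ is triangular with invertible diagonal entries with respect to any total order refining the dominance order, and sends $\ov{T}_\mu$ into the span of basis elements indexed by antidominant cocharacters; since $\{\ov{T}^{\rmT}_\nu:\nu\in X_*(T)_-\}$ is an $\F$‑basis of $\cH^-(\rmT,\sig^{U(k)})$, this simultaneously shows that $\cS$ is injective and that its image is exactly $\cH^-(\rmT,\sig^{U(k)})$.

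The step that is neither formal nor a consequence of classical structure theory — and which I expect to be the main obstacle — is the vanishing, for $\nu$ outside the cone below $\mu$ (in particular for $\nu$ not antidominant), of the sum $\sum_{u}\ov{T}_\mu(\nu(\pi_K)u)|_{\sig^{U(k)}}$ even when individual terms are nonzero. This is the distinctively characteristic‑$p$ phenomenon: one must analyze the Mackey double‑coset intersection $\rmK\mu(\pi_K)\rmK\cap U(K)\nu(\pi_K)\rmK$, organize the contributing cosets of $U(\cO_K)$ into families of $p$‑power cardinality on which the values on $\sig^{U(k)}$ are constant, and conclude that these sums die modulo $p$; in the language of the previous step this says precisely that the $\cH(\rmG,\sig)$‑action on $(\cind_\rmK^\rmG\sig)_{U(K)}$ only translates mass in the antidominant direction. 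In characteristic $0$ there is no such vanishing — the $\delta^{1/2}$‑normalization produces Weyl‑invariance instead — so this combinatorial cancellation has no classical shortcut. (Alternatively one can run the whole argument by a direct computation of the convolution product, but the same cancellation remains the heart of that approach.)
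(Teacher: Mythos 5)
This result is simply quoted in the paper as Theorem 1.2 of \cite{HerzigSatake}; no proof is reproduced, so there is no in-paper argument to compare yours against. Judged on its own terms, your outline correctly isolates the three tasks (well-definedness, multiplicativity, computation of kernel and image), and the well-definedness argument is sound. However, two points prevent this from being a proof rather than a plan.

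The coinvariants route to multiplicativity has a genuine, not merely cosmetic, unipotent mismatch. Applying $U(K)$-coinvariants to $\cind_{B(\cO_K)}^{B(K)}\sig$ yields $\cind_{T(\cO_K)}^{T(K)}\sig_{U(\cO_K)}=\cind_{T(\cO_K)}^{T(K)}\sig_{U(k)}$. But Lemma~\ref{lem:inv-coinv-SW} identifies $\sig^{U(k)}$ with $\sig_{\ov{U}(k)}$, not with $\sig_{U(k)}$: for $\sig$ not a character, $\sig_{U(k)}$ is the $w_0(\lam)$-weight line, and the composite $\sig^{U(k)}\mono\sig\epi\sig_{U(k)}$ is the zero map. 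So the functor as you have set it up lands in $\cH(\rmT,\sig_{U(k)})\simeq\cH(\rmT,w_0(\lam))$, and ``unwinding'' cannot produce the formula $\phi\mapsto\bigl(t\mapsto\sum_u\phi(tu)|_{\sig^{U(k)}}\bigr)$. One must either run the Iwasawa/coinvariants argument with $\ov{U}(K)$ (and then separately check that the resulting transform, which a priori sums over $\ov{U}(K)/\ov{U}(\cO_K)$, coincides with the $U(K)$-sum of the statement), or establish multiplicativity by direct convolution. This is fixable but needs to be done.

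The more serious gap is the one you flag yourself: the vanishing in $\F$ of $\sum_{u}\ov{T}_\mu(\nu(\pi_K)u)|_{\sig^{U(k)}}$ for $\nu\notin X_*(T)_-$. Convexity of Iwasawa vs.\ Cartan invariants only confines the support of $\cS(\ov{T}_\mu)$ to the convex hull of $W\mu$, which still contains many non-antidominant $\nu$; and the unit diagonal entries give injectivity only if one already knows the transform is triangular with respect to an order on a \emph{common} index set. The characteristic-$p$ cancellation (organizing the contributing Iwasawa cosets into $p$-power-sized orbits with constant contribution) is exactly the content of Herzig's theorem and is what separates it from the classical Satake isomorphism. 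Until that step is carried out, both injectivity and the description of the image as $\cH^-(\rmT,\sig^{U(k)})$ remain unproved. As a plan of attack your sketch is faithful to the known strategy, but this combinatorial vanishing is a genuine missing piece, not a routine verification.
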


\begin{rmk}\label{rmk:modp-hecke-presentation}
    We identify $\cH(\rmT,\lam)$ with $\F[x_1^\pm,\dots,x_n^\pm]$ by identifying $\ov{T}_{-\veps_i}$ with $x_i$ for all $1\le i \le n$. Then the subalgebra $\cH^-(\rmT,\lam)$ is identified with $\F[y_1,\dots,y_{n-1},y_n^\pm]$ where $y_i := x_1x_2 \dots x_i$. Suppose that $\sig$ is regular. By \cite[Prop.~1.4]{HerzigSatake}, under the isomorphism
    \begin{align*}
        \cH(\rmG,\sig) \risom \cH^-(\rmT, \sig^{U(k)}) \risom \F[y_1,\dots,y_{n-1},y_n^\pm],
    \end{align*}
    $\ov{T}_{-\om_i}\in \cH(\rmG,\sig)$ is mapped to $\ov{T}_{-\om_i} = y_i\in \cH^-(\rmT,\sig^{U(k)})$ .
\end{rmk}

\subsection{Integral Hecke algebras of tame types}\label{subsec:int-hecke-alg}  
In this subsection, we freely use the notations introduced in \S\ref{subsec:tame-inertial-types} and \ref{subsec:ILLC}. For the remainder of the section, fix a tame inertial type $\tau$ and write $\sig(\tau)$ for the $\rmK$-representation constructed in \S\ref{subsec:ILLC}. We are interested in the Hecke algebra $\cH(\rmG,\sig^\circ(\tau))$ where $\sig^\circ(\tau)$ is a $\rmK$-stable $\cO$-lattice in $\sig(\tau)$.

Let $\cH(\rmM,\lam_M)$ be the Hecke algebra of $\lam_M$. By its construction, $\lam_M$ is \textit{cuspidal}, meaning that for any unipotent subgroup $N'$ of $\rmM$, the $N'$-coinvariant $(\lam_{M})_{N'}$ is trivial. By Lemma \ref{lem:hecke-decomp}, this implies that $\cH(\rmM,\lam_M)$ has a basis $t_\mu$ for $\mu \in X_*(Z(M))$ such that $t_\mu$ has support $M(\cO_K)\mu(\pi_K)$ and $t_\mu(\mu(\pi_K))$ is the identity map of $\lam_M$. Note that we view $X_*(Z(M))$ as a subgroup of $X_*(T)$ via $Z(M)\mono T$. Similarly, there is a unique Hecke operator $T_\mu \in \cH(\rmG,\lam)$ for $\mu\in X_*(Z(M))$ such that it has support $\cP \mu(\pi_K)\cP$ and $T_\mu(\mu(\pi_K))$ is the identity map of $\lam$. 

As explained in \cite[\S3.6]{6author}, we have maps between Hecke algebras
\begin{align*}
    t_P &: \cH(\rmM,\lam_M) \ra \cH(\rmG,\lam) \\
    s_P &: \cH(\rmG,\lam) \ra \cH(\rmG,\sig(\tau)).
\end{align*}
The construction of $t_P$ goes back to \cite[Thm.~7.12]{BKsmooth}. We use its normalized version in \cite{BKssGLn}. It is the unique map such that for $\mu\in X_*(Z(M))$ that is dominant as a cocharacter for $T\subset \GL_n$, $t_P$ maps $t_\mu$ to $\del_P^{1/2}(\mu(\pi_K))T_{\mu}$. Here, $\del_P$ is the modulus character sending $g\in P(K)$ to $\abs{\det(\Ad_N(g))}_{K}$ where $\Ad_N(g)$ is the adjoint action of $g$ on $\Lie N$. By \cite[Prop.~2.1]{Dat-caracteres}, $t_P$ restricts to an isomorphism $\cH(\rmM,\lam_M)^{W_{[M,\omega]}} \risom Z(\cH(\rmG,\lam))$. Here, $W_{[M,\omega]}$ is the relative normalizer defined in \cite[\S3.6]{6author}. It can be identified with the group of permutations on the set $\fP_{\tau}$ that maps $I\in \fP_\tau$ to $J$ such that $J\sim_\tau I$. In other words, 
\begin{align*}
    W_{[M,\omega]} \simeq \fS_{\ov{\fP}_\tau}:= \prod_{\ov{I}\in \ov{\fP}_\tau}  S_{\ov{I}}
\end{align*}
where  $S_{\ov{I}}$ is the symmetric group on the set $\ov{I}$.

Note that $s_P$ is only a morphism of modules. It is simply given by $\phi \mapsto e_K*\phi*e_K$ where $e_K$ is a certain idempotent in $\cH(\rmG,\lam)$ (\cite[\S4]{Dat-caracteres}). Since the image of $\cH(\rmM,\lam_M)^{W_{[M,\omega]}}$ under $t_P$ is contained in the center of the target, $s_P \circ t_P : \cH(\rmM,\lam_M)^{W_{[M,\omega]}} \ra \cH(\rmG,\sig(\tau))$ is an algebra homomorphism and is an isomorphism by Thm.~4.1 in \loccit.

As a conclusion of the above discussion, we have the following presentation of $\cH(\rmG,\sig(\tau))$.



\begin{lem}\label{lem:hecke-tame-type-rational-presentation}
    There is an isomorphism of $E$-algebras
    \begin{align*}
        \cH(\rmG,\sig(\tau)) &\simeq \otimes_{\ov{I}\in \ov{\fP}_\tau}E[x_I^{\prime\pm}]_{I\in \ov{I}}^{S_{\ov{I}}}
    \end{align*}
    which maps $s_P\circ t_P(t_{-\veps_I})$ to $x_I'$ for $I\in \fP_\tau$.
\end{lem}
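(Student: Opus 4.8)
The plan is to make explicit the two facts recalled just above: the description of $\cH(\rmM,\lam_M)$ by the basis $\{t_\mu\}_{\mu\in X_*(Z(M))}$, and the $E$-algebra isomorphism $s_P\circ t_P\colon\cH(\rmM,\lam_M)^{W_{[M,\omega]}}\risom\cH(\rmG,\sig(\tau))$ coming from \cite{Dat-caracteres}. First I would identify $\cH(\rmM,\lam_M)$ with a Laurent polynomial ring. For $\mu\in X_*(Z(M))$ the element $\mu(\pi_K)$ is central in $\rmM$, so $M(\cO_K)\mu(\pi_K)M(\cO_K)$ is a single coset; convolving two basis elements is then again supported on one coset, whence $t_\mu * t_{\mu'}=t_{\mu+\mu'}$. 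Thus $\cH(\rmM,\lam_M)$ is commutative and is canonically the group algebra $E[X_*(Z(M))]$. Under $M\simeq\prod_{I\in\fP_\tau}\GL_{|I|}$ one has $Z(M)\simeq\prod_{I\in\fP_\tau}\G_m$ and $X_*(Z(M))=\bigoplus_{I\in\fP_\tau}\Z\veps_I$, so $\cH(\rmM,\lam_M)\simeq E[x_I^{\prime\pm}]_{I\in\fP_\tau}$ as $E$-algebras, with $x_I':=t_{-\veps_I}$ (and $t_{\veps_I}$ its inverse).

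Next I would pass to $W_{[M,\omega]}$-invariants. As recalled, the relative normalizer $W_{[M,\omega]}$ is identified with $\fS_{\ov{\fP}_\tau}=\prod_{\ov{I}\in\ov{\fP}_\tau}S_{\ov{I}}$, the group of permutations of $\fP_\tau$ that preserve the $\sim_\tau$-classes, and it acts on $\cH(\rmM,\lam_M)$ through this permutation action; in the presentation of the previous paragraph it therefore permutes the variables $\{x_I'\}_{I\in\ov{I}}$ within each class $\ov{I}$ and fixes the rest. Since taking invariants commutes with tensor products over the field $E$ and the $S_{\ov{I}}$ act independently on distinct tensor factors, we get
\[
\cH(\rmM,\lam_M)^{W_{[M,\omega]}}\ \simeq\ \Bigl(\bigotimes_{\ov{I}\in\ov{\fP}_\tau}E[x_I^{\prime\pm}]_{I\in\ov{I}}\Bigr)^{\prod_{\ov{I}}S_{\ov{I}}}\ =\ \bigotimes_{\ov{I}\in\ov{\fP}_\tau}E[x_I^{\prime\pm}]_{I\in\ov{I}}^{S_{\ov{I}}}.
\]
Composing the inverse of $s_P\circ t_P\colon\cH(\rmM,\lam_M)^{W_{[M,\omega]}}\risom\cH(\rmG,\sig(\tau))$ (an $E$-algebra isomorphism by \cite[Prop.~2.1 and Thm.~4.1]{Dat-caracteres}, cf.~\cite[\S3.6]{6author}) with this identification yields the asserted isomorphism, which by construction carries $s_P\circ t_P(t_{-\veps_I})$ to $x_I'$.

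The substantive inputs---the normalized Bushnell--Kutzko transfer $t_P$, the idempotent truncation $s_P$, and Dat's identification of the resulting invariant algebra with $\cH(\rmG,\sig(\tau))$---are all quoted from the literature, so I expect no serious obstacle; the remaining work is bookkeeping. The two points I would take care over are: (a) that $W_{[M,\omega]}$ acts on $\cH(\rmM,\lam_M)$ exactly by permuting the $t_{\veps_I}$ along the permutation of $\fP_\tau$, which is part of the Bushnell--Kutzko theory of covers and is already implicit in the recalled fact that $t_P$ carries $\cH(\rmM,\lam_M)^{W_{[M,\omega]}}$ onto $Z(\cH(\rmG,\lam))$; and (b) that the normalizing scalar $\del_P^{1/2}(\mu(\pi_K))$ in the definition of $t_P$ depends multiplicatively on $\mu$, so that $t_P(t_\mu)=\del_P^{1/2}(\mu(\pi_K))\,T_\mu$ extends from $T$-dominant $\mu$ to all $\mu\in X_*(Z(M))$; this is clear since $\del_P$ is a character and $T_\mu T_{\mu'}=T_{\mu+\mu'}$ on $X_*(Z(M))$.
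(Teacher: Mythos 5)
Your main line of argument is correct and is exactly the paper's (implicit) proof: identify $\cH(\rmM,\lam_M)$ with the group algebra $E[X_*(Z(M))]$ using that cuspidality forces all $t_\mu$ to be supported on single cosets indexed by $X_*(Z(M))$ with $t_\mu * t_{\mu'} = t_{\mu+\mu'}$; pass to $W_{[M,\omega]}\simeq\fS_{\ov{\fP}_\tau}$-invariants; compose with the Dat isomorphism $s_P\circ t_P$. Since the paper gives no explicit proof (the lemma is ``a conclusion of the above discussion''), your unpacking is the intended argument.

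However, your remark (b) contains an error. The identity $T_\mu T_{\mu'} = T_{\mu+\mu'}$ does \emph{not} hold for arbitrary $\mu,\mu'\in X_*(Z(M))$: by Lemma \ref{lem:conv-formula} it holds only when both are dominant or both are antidominant (as cocharacters of $T\subset\GL_n$), and Lemma \ref{lem:tame-type-formulas}(1) gives $T_{\veps_I}^{-1} = q^{-\#I(n-\#I)}T_{-\veps_I}$, which is not $T_{-\veps_I}$. Consequently $t_P(t_\mu)=\del_P^{1/2}(\mu(\pi_K))T_\mu$ does \emph{not} extend from $T$-dominant $\mu$ to all of $X_*(Z(M))$; for instance $t_P(t_{-\veps_I})=q^{-\#I(n-\#I)/2}T_{-\veps_I}$ (Lemma \ref{lem:tame-type-formulas}(2)), whereas $\del_P^{1/2}(-\veps_I(\pi_K))T_{-\veps_I}=q^{+\#I(n-\#I)/2}T_{-\veps_I}$. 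Fortunately this remark is not load-bearing: the lemma only needs the $E$-algebra isomorphism $s_P\circ t_P\colon\cH(\rmM,\lam_M)^{W_{[M,\omega]}}\risom\cH(\rmG,\sig(\tau))$ as a black box, together with the Laurent-polynomial presentation of the source; no explicit formula for $t_P(t_\mu)$ on non-dominant $\mu$ is required. The distinction you missed is, however, exactly what makes the \emph{integral} refinement (Proposition \ref{prop:int-hecke-tame-types}) nontrivial, so it is worth internalizing.
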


We choose a $\rmK$-stable $\cO$-lattice $\sig^\circ(\tau)\subset \sig(\tau)$ in the following way. For each $I\in \fP_\tau$, we choose a $\GL_m(k)$-stable $\cO$-lattice $\sig^\circ(\tau_I)\subset \sig(\tau_I)$. This induces $\cO$-lattices $\lam_M^\circ \subset \lam_M$, $\lam^\circ \subset \lam$, $\Ind_\cP^\rmK \lam^\circ \subset \Ind_\cP^\rmK \lam$, and $\sig^\circ(\tau)\subset \sig(\tau)$. Note that for $\mu\in X_*(Z(M))$, $T_\mu\in \cH(\rmG,\lam)$ stabilizes $\Ind_\cP^\rmK \lam^\circ$ and thus defines an element in $\cH(\rmG,\lam^\circ)$. Since $\sig(\tau)$ is an irreducible direct summand of $\Ind_\cP^\rmK \lam$ and likewise $\cH(\sig(\tau))$ is a direct summand of $\cH(G,\lam)$, the morphism $s_P$ maps $\cH(G,\lam^\circ)$ to $\cH(G,\sig^\circ(\tau))$. 

Lemma \ref{lem:hecke-tame-type-rational-presentation} and \ref{lem:tame-type-formulas}(3) below show that elementary Laurent symmetric polynomials in $T_{\veps_I}$ for $I\in \ov{I}\in \ov{\fP}_\tau$ generate $\cH(\rmG,\sig(\tau))$. This is no longer true for $\cH(\rmG,\sig^\circ(\tau))$ due to the fact that $t_P$ does not respect integral structures, unlike $s_P$. This requires us to understand the convolution on $\cH(\rmG,\lam)$ by comparing it with the convolution on $\cH(M,\lam_M)$. This is given in \cite[Cor.~6.13]{BKsmooth}, from which we deduce the following:

\begin{lem}\label{lem:conv-formula}
    For $\mu_1,\mu_2 \in X_*(Z(M))$, we have
     \begin{align*}
        T_{\mu_1}*T_{\mu_2} = \left\{\begin{array}{rl}
           T_{\mu_1 + \mu_2}  & \text{if $\mu_1,\mu_2$ are both dominant or antidominant }  \\
            q^{\sum_{\al\in \Phi^+\bss \Phi^+_M} \RG{\alpha,\mu_1} - \max\{\RG{\alpha,\mu_1+\mu_2}, 0\} } T_{\mu_1 + \mu_2} & \text{if $\mu_1$ is dominant and $\mu_2$ is antidominant}
        \end{array}\right.
    \end{align*}    
\end{lem}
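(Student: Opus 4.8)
The plan is to compare the convolution on $\cH(\rmG,\lam)$ with the (essentially trivial) convolution on $\cH(\rmM,\lam_M)$ through the map $t_P$, using in addition the general convolution formula of Bushnell--Kutzko. First I would record the base case: since $\lam_M$ is cuspidal and each $\mu(\pi_K)$ lies in $Z(M)(K)$, hence is central in $\rmM$, the product of double cosets $M(\cO_K)\mu_1(\pi_K)M(\cO_K)\mu_2(\pi_K)M(\cO_K)$ collapses to the single coset $M(\cO_K)(\mu_1+\mu_2)(\pi_K)$ with structure constant $1$. Thus $\cH(\rmM,\lam_M)$ is the group algebra $E[X_*(Z(M))]$ with $t_{\mu_1}*t_{\mu_2}=t_{\mu_1+\mu_2}$ for \emph{all} $\mu_1,\mu_2$, with no powers of $q$. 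I would then split into the two cases of the statement.

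For $\mu_1,\mu_2$ both dominant — equivalently, both $(M,P)$-positive, for elements of $X_*(Z(M))$ — I would use that $t_P$ restricted to the positive part is an $E$-algebra homomorphism (\cite[Thm.~7.12]{BKsmooth}, normalized as in \cite{BKssGLn}) with $t_P(t_\mu)=\del_P^{1/2}(\mu(\pi_K))\,T_\mu$. Together with $t_{\mu_1}*t_{\mu_2}=t_{\mu_1+\mu_2}$ this gives
\[
\del_P^{1/2}(\mu_1(\pi_K))\,\del_P^{1/2}(\mu_2(\pi_K))\,T_{\mu_1}*T_{\mu_2}=\del_P^{1/2}((\mu_1+\mu_2)(\pi_K))\,T_{\mu_1+\mu_2},
\]
and since $\del_P$ is a character the $q$-factors cancel, leaving $T_{\mu_1}*T_{\mu_2}=T_{\mu_1+\mu_2}$. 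The both-antidominant case is the identical argument with $P$ replaced by the opposite parabolic $\ov P$, so $\del_P$ replaced by $\del_P^{-1}$; the factors still cancel.

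For $\mu_1$ dominant and $\mu_2$ antidominant I would invoke \cite[Cor.~6.13]{BKsmooth}. The plan is: using the Iwahori factorization of $\cP$ and the standard contraction properties of $\mu_1(\pi_K)$ and $\mu_2(\pi_K)$ with respect to $N$ and $\ov N$ recorded in \S\ref{subsec:hecke-generalities}, show that $\cP\mu_1(\pi_K)\cP\mu_2(\pi_K)\cP$ is the single double coset $\cP(\mu_1+\mu_2)(\pi_K)\cP$, so that $T_{\mu_1}*T_{\mu_2}=c\,T_{\mu_1+\mu_2}$ with $c$ a scalar — not an endomorphism of $\lam$ — because cuspidality of $\lam_M$ forces the relevant invariants and coinvariants to be one-dimensional. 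Evaluating both sides at $(\mu_1+\mu_2)(\pi_K)$ identifies $c$ with a double-coset count, which \loccit{} expresses as a lattice index, and a root-by-root computation over $\Phi^+\bss\Phi^+_M$ turns it into $q^{e}$ with $e=\sum_{\al\in\Phi^+\bss\Phi^+_M}\big(\RG{\al,\mu_1}-\max\{\RG{\al,\mu_1+\mu_2},0\}\big)$, the claimed exponent (which is $\ge 0$ precisely because $\mu_1$ is dominant and $\mu_2$ is antidominant). The hard part is exactly this last step: extracting the explicit power $q^{e}$ from \cite[Cor.~6.13]{BKsmooth}. The obstruction is bookkeeping rather than conceptual — reconciling Bushnell--Kutzko's conventions (which parabolic and parahoric count as ``positive'', which uniformizer normalizes the type, the precise modulus-character normalization) with ours, then confirming the single-double-coset claim and carrying out the index count one root at a time. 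As a cross-check I would, alternatively, compute $t_P(t_{\mu_2})$ for the antidominant $\mu_2$ in terms of the support basis $\{T_\mu\}$ (where inverses of dominant $T_\nu$'s and powers of $q$ appear) and convolve directly with $T_{\mu_1}=\del_P^{-1/2}(\mu_1(\pi_K))\,t_P(t_{\mu_1})$, using that $t_P$ is multiplicative.
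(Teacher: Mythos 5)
The paper gives no proof of this lemma beyond the preceding citation of \cite[Cor.~6.13]{BKsmooth}, so your proposal essentially fills in exactly the intended deduction, and it is sound. The two same-sign cases are correctly reduced to multiplicativity of $t_P$ (resp.\ $t_{\ov P}$): the only subtlety worth flagging is that in the antidominant case, $T_\mu$ is still supported on double cosets of the \emph{same} parahoric $\cP$ (not the opposite one) — this is fine because $\cP$ admits Iwahori decompositions with respect to both $P$ and $\ov{P}$, so both transfer maps $t_P$, $t_{\ov P}$ land in the same $\cH(\rmG,\lam)$, and $t_{\ov P}(t_\mu)=\del_P^{-1/2}(\mu(\pi_K))\,T_\mu$ for $\mu$ antidominant. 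The ``bookkeeping'' you defer in the mixed case is real but routine and is exactly what \cite[Cor.~6.13]{BKsmooth} packages: one verifies the double-coset collapse and that the structure constant is a scalar (your cuspidality/one-dimensionality observation, cf.\ the proof of Theorem~\ref{thm:hecke-modp-red}), and the resulting index factors root-by-root over $\Phi^+\setminus\Phi^+_M$ via the Iwahori decomposition of $\cP$, with the $\al$-root group contributing $q^{\RG{\al,\mu_1}-\max\{\RG{\al,\mu_1+\mu_2},0\}}$ from the contraction/expansion of $\mu_1(\pi_K)$, $\mu_2(\pi_K)$ on that root subgroup. Your closing cross-check — deriving the general mixed case from the inverse formula $T_\nu^{-1}=q^{-\sum_\al\RG{\al,\nu}}T_{-\nu}$ (the sub-case $\mu_2=-\mu_1$) plus multiplicativity of $t_P$ — is also valid and is in fact closer in spirit to how the paper later uses this lemma in the proof of Lemma~\ref{lem:tame-type-formulas}.
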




As applications, we obtain the following useful formulas.  

\begin{lem}\label{lem:tame-type-formulas}
    We have the following formulas:
    \begin{enumerate}
        \item For $I \in \fP_\tau$, $T_{\veps_I}^\mo = q^{-\#I(n-\#I)}T_{-\veps_I}$.
        \item For $I\in\fP_\tau$, $t_P(t_{\pm\veps_{I}}) = q^{-\#I(n-\#I)/2} T_{\pm\veps_{I}}$.
        \item Let $\mu =  \sum_{I\in \fP_\tau} c_I \veps_{I} $ for some integers $c_I \in \Z_{\ge 0}$. Let $<_\mu$ be a strict total order on $\fP_\tau$ such that if $I<_{\mu} J$, then $c_I \le c_J$. Then we have
    \begin{align*}
        \prod_{I\in \fP_\tau} T_{-\veps_{I}}^{c_I} =  q^{\sum_{I\in \fP_\tau} c_I\#I(\sum_{I <_{\mu}J} \#J) } T_{-\mu}.
    \end{align*}
    \end{enumerate}
\end{lem}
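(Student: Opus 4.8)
The plan is to obtain all three identities from the convolution formula of Lemma~\ref{lem:conv-formula}, the defining property of the normalized transfer map $t_P$ (it is an injective algebra homomorphism, \cite[Thm.~7.12]{BKsmooth}, sending $t_\mu$ to $\del_P^{1/2}(\mu(\pi_K))\,T_\mu$ for $\mu\in X_*(Z(M))$ dominant), and the identification $\cH(\rmM,\lam_M) = \cO[X_*(Z(M))]$, in which $t_{\mu_1}*t_{\mu_2} = t_{\mu_1+\mu_2}$. A direct look at the eigenvalues of $\Ad_N(\mu(\pi_K))$ on $\Lie N$ gives $\del_P(\mu(\pi_K)) = q^{-\sum_{\al\in\Phi^+\bss\Phi^+_M}\RG{\al,\mu}}$. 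For $I = \{a,\dots,b\}$ I set $\gamma := \veps_{\{1,\dots,b\}}$ and $\gamma' := \veps_{\{1,\dots,a-1\}}$, which are dominant cocharacters (since $I$ is an interval, the sets $\{1,\dots,b\}$ and $\{1,\dots,a-1\}$ are unions of blocks), so that $\veps_I = \gamma-\gamma'$; I also record the elementary facts $\sum_{\al\in\Phi^+\bss\Phi^+_M}\RG{\al,\veps_{\{1,\dots,N\}}} = N(n-N)$ and $\sum_{\al\in\Phi^+\bss\Phi^+_M}|\RG{\al,\veps_I}| = \#I(n-\#I)$, together with the trivial identities $\max\{x,0\}+\max\{-x,0\} = |x|$ and $\tfrac12 x-\max\{x,0\} = -\tfrac12|x|$. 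The recurring subtlety is that Lemma~\ref{lem:conv-formula} only computes $T_{\mu_1}*T_{\mu_2}$ when $\mu_1,\mu_2$ are both (anti)dominant or $\mu_1$ is dominant and $\mu_2$ antidominant, so one must systematically route through the $\gamma$'s.

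\textbf{(1)} Applying Lemma~\ref{lem:conv-formula} to $T_\gamma * T_{-\gamma}$ shows $T_\gamma$ is invertible with $T_\gamma^{-1} = q^{-\sum_\al\RG{\al,\gamma}}T_{-\gamma}$, and likewise for $\gamma'$. From $\veps_I = \gamma-\gamma'$, Lemma~\ref{lem:conv-formula} expresses $T_{\pm\veps_I}$ as an explicit $q$-power times $T_\gamma^{\pm1}T_{\gamma'}^{\mp1}$; forming $T_{\veps_I}*T_{-\veps_I}$, cancelling the middle factor $T_{\gamma'}^{-1}T_{\gamma'} = q^{\sum_\al\RG{\al,\gamma'}}$, and simplifying with $\max\{x,0\}+\max\{-x,0\} = |x|$, the surviving exponent is $\sum_{\al\in\Phi^+\bss\Phi^+_M}|\RG{\al,\veps_I}| = \#I(n-\#I)$, so $T_{\veps_I}*T_{-\veps_I} = q^{\#I(n-\#I)}$, which is the claim.

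\textbf{(2)} If $I$ is the leftmost block then $\veps_I$ is dominant and the formula is immediate from the defining property of $t_P$ and the value of $\del_P(\veps_I(\pi_K))$; the antidominant case follows by inverting and using (1). For a general $I$, apply the algebra map $t_P$ to $t_{\veps_I} = t_\gamma t_{\gamma'}^{-1}$, substitute $t_P(t_\gamma) = q^{-\frac12\sum_\al\RG{\al,\gamma}}T_\gamma$ (and its analogue for $\gamma'$), rewrite $T_\gamma T_{-\gamma'}$ via Lemma~\ref{lem:conv-formula} in terms of $T_{\veps_I}$, and collapse the exponent using $\tfrac12 x-\max\{x,0\} = -\tfrac12|x|$; this yields $t_P(t_{\veps_I}) = q^{-\frac12\#I(n-\#I)}T_{\veps_I}$, and the antidominant case again follows from (1). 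As a byproduct, every $T_\mu$ ($\mu\in X_*(Z(M))$) is a scalar times an element of the commutative subalgebra $t_P(\cH(\rmM,\lam_M))$, hence these operators commute, which I will use below.

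\textbf{(3)} By (2), $T_{-\veps_I} = q^{\#I(n-\#I)/2}\,t_P(t_{-\veps_I})$, so using that $t_P$ is an algebra map and the $t_{-\veps_I}$ multiply additively, $\prod_{I\in\fP_\tau}T_{-\veps_I}^{c_I} = q^{\frac12\sum_I c_I\#I(n-\#I)}\,t_P(t_{-\mu})$ with $\mu = \sum_I c_I\veps_I$; in particular the left side does not depend on the order of multiplication. To evaluate $t_P(t_{-\mu})$, write $-\mu$ in block coordinates and use Abel summation to get $-\mu = \sum_l e_l\gamma_l$ with $\gamma_l$ the dominant ``initial segment'' cocharacters (indexed by the left-to-right order of the blocks) and $e_l\in\Z$; since the $T_{\gamma_l}$ are dominant and commute, grouping the positive and negative $e_l$ and applying Lemma~\ref{lem:conv-formula} reduces $t_P(t_{-\mu})$ to a single dominant--antidominant product, giving $t_P(t_{-\mu}) = q^{E'}T_{-\mu}$ with $E'$ an explicit expression in the $\RG{\al,\gamma_l}$. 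The remaining, and I expect hardest, step is the purely combinatorial identity
\[
\tfrac12\sum_I c_I\#I(n-\#I) + E' \;=\; \sum_{I\in\fP_\tau} c_I\,\#I\Big(\sum_{I<_\mu J}\#J\Big),
\]
which reconciles the root-theoretic exponent (organized by the left-to-right order of the blocks) with the target exponent (organized by the sizes of the $c_I$, i.e.\ by any $<_\mu$-compatible order); it is proved by reindexing the double sum over positive roots joining two distinct blocks. Granting it, (3) follows.
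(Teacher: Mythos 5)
Your items (1) and (2) are correct and, modulo notation, follow exactly the route of the paper: write $\veps_I$ as a difference $\gamma-\gamma'$ of two dominant ``initial-segment'' cocharacters, then run the convolution formula of Lemma~\ref{lem:conv-formula} together with the defining property of $t_P$ and the computation $\del_P^{1/2}(\mu(\pi_K)) = q^{-\frac12\sum_{\al\in\Phi^+\setminus\Phi^+_M}\RG{\al,\mu}}$; your observation that all the $T_\mu$ for $\mu\in X_*(Z(M))$ commute (being $q$-power multiples of elements of $t_P(\cH(\rmM,\lam_M))$) is also correct and tacitly needed for (3).

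For item (3), however, your argument is not complete. Your overall strategy is the same as the paper's: equate $\prod_I T_{-\veps_I}^{c_I} = q^{\frac12\sum_I c_I\#I(n-\#I)}\,t_P(t_{-\mu})$ (which is exactly the paper's first identity rearranged) with a second evaluation of $t_P(t_{-\mu})$ obtained from a dominant--antidominant decomposition of $-\mu$ and Lemma~\ref{lem:conv-formula}, then compare $q$-exponents. But you stop at precisely the point where the actual work of item (3) begins: the purely combinatorial identity
\[
\tfrac12\sum_{I} c_I\#I(n-\#I) + E' \;=\; \sum_{I\in\fP_\tau} c_I\,\#I\Big(\sum_{I<_\mu J}\#J\Big)
\]
is the entire content of (3), and ``it is proved by reindexing\dots\ granting it, (3) follows'' is not a proof. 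The paper carries out the verification in full: it takes the explicit decomposition $-\mu=\mu_+-\mu_-$ with $\mu_+ = -\sum_I c_I\sum_{I\le_\tau J}\veps_J$ and $\mu_- = -\sum_I c_I\sum_{I<_\tau J}\veps_J$, computes $\del_P^{1/2}(\mu(\pi_K))$ and $T_{\mu_+}T_{-\mu_-}$ explicitly, extracts the coefficient of each $c_I$ in the exponent, and matches it against $\#I\sum_{I<_\mu J}\#J$. Crucially, the paper has to observe that the strictness of $<_\mu$ is what prevents double-counting the contribution from pairs $J,J'$ with $c_J=c_{J'}$; your sketch does not address this tie-breaking, nor does it explain how the left-to-right block order $<_\tau$ (which governs the root-theoretic exponent $E'$) gets replaced by the $<_\mu$-order on the right. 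Until that reindexing is actually carried out and the $<_\mu$-strictness subtlety handled, (3) is unproved.
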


\begin{rmk}
    Item (1) shows that $T_{\veps_I}$ is not invertible in $\cH(G,\lam^\circ)$ unless $m=n$. Since the quantity $q^{-\#I(n-\#I)/2}$ in item (2) only depends on the size of $I$, it shows that the action of $W_{[M,\omega]}$ on the image of $t_P$ is compatible with the action of $\prod_{\ov{I}\in \ov{\fP}_\tau} S_{\ov{I}}$ permuting $T_I$ by its action on indices $I\in \fP_\tau$. The formula in item (3) has the following alternative form: let $w_\mu \in W$  such that for $I,J \in \fP_\tau$, $I <_\mu J$ if and only if $w_\mu(i) < w_\mu(j)$ for any $i\in I$ and $j\in J$. Then
    \begin{align*}
         \prod_{I\in \fP_\tau} q^{c_I \#I(\#I-1)/2}T_{-\veps_{I}}^{c_I} =  q^{\sum_{I\in \fP_\tau}c_I \sum_{i\in I}\eta_{w_\mu(i)}} T_{-\mu}.
    \end{align*}
    The exponents of $q$ can be best understood in the following way: given $c_I$'s, $i\mapsto \eta_{w_\mu(i)}$ defines a bijection between $\bfn$ and $\CB{0,1,\dots,n-1}$ (the entries of $\eta$) such that $\sum_{I\in \fP_\tau}c_I \sum_{i\in I}\eta_{w_\mu(i)}$ is minimal. For $\mu = \veps_{I}$, we get $\#I(\#I-1)/2$ as in the left-hand side. On the Galois side, $\eta_{w_\mu(i)}$ can be thought of as the $q$-adic valuation of ``$i$-th Frobenius eigenvalue'' and the power of $q$ here is related to the normalization of coefficients of the characteristic polynomial of the Frobenius. We will see how the computation here matches the Galois side in \S\ref{subsec:global-func}.
\end{rmk}

\begin{proof}
    Item (1) and (2) follow from Lemma  \ref{lem:conv-formula} by expressing $\veps_{I}$ as a sum of a dominant cocharacter and an antidominant cocharacter. Item (3) follows similarly, but a little more involved. Let us write $I<_\tau J$ for $I,J\in \fP_\tau$ if and only if $i<j$ for any $i\in I$ and $j\in J$. By item (2), we have
    \begin{align*}
        t_P(t_{-\mu}) = t_P(\prod_{I\in \fP_\tau} t_{-\veps_{I}}^{c_I}) = \prod_{I\in \fP_\tau}q^{-c_I\#I (n-\#I)/2}T_{-\veps_{I}}^{c_I}.
    \end{align*}
    On the other hand, we can write $-\mu= \mu_+ - \mu_-$ for dominant cocharacters
    \begin{align*}
        \mu_+ := -\sum_{I\in \fP_\tau} c_I \sum_{I \le_\tau J} \veps_{J}, \ \ 
        \mu_- := -\sum_{I\in \fP_\tau} c_I \sum_{I <_\tau J} \veps_{J}.
    \end{align*}
    We obtain another expression of $t_P(t_{-\mu})$ given by 
    \begin{itemize}
        \item $t_P(t_{-\mu})  = \del_P^{1/2}(-\mu(\pi_K)) T_{\mu_+} T_{\mu_-}^\mo  = \del_P^{1/2}(\mu(\pi_K))  q^{-\sum_{I\in\fP_\tau}c_I (\sum_{J\le_\tau I}\#J)(\sum_{J>_\tau I}\#J)} T_{\mu_+}T_{-\mu_-}$,
        \item $\del_P^{1/2}(\mu(\pi_K)) = q^{\sum_{I\in\fP_\tau} c_I \#I (\sum_{J>_\tau I} \#J- \sum_{J<_\tau I} \#J )/2}$, and  
        \item $T_{\mu_+}T_{-\mu_-} = q^{\sum_{I <_\tau J} \#I \#J (- \max\CB{c_J-c_I,0} + \sum_{I <_\tau L \le_\tau J}c_L )} T_{-\mu}$.
    \end{itemize}
    Note that in the last bullet point, the $c_I$-coefficient in the exponent of $q$ is equal to
    \begin{align*}
        \#I(\sum_{\substack{I<_\tau J \\ I<_\mu J} }\#J -\sum_{\substack{J<_\tau I  \\ J<_\mu I} }\#J) + \sum_{J_1<_\tau I \le_\tau J_2} \#{J_1}\#{J_2}
    \end{align*}
    (here we need $<_\mu$ to be \textit{strict} to prevent double counting). The claimed formula is obtained by comparing the $c_I$-coefficients in the exponents of $q$ in two expressions of $t_P(t_{-\mu})$. 
\end{proof}

Now we give a presentation of $\cH(G,\sig^\circ(\tau))$.  Let $\cO[x_{I}]_{I\in \fP_{\tau}}$ be the polynomial $\cO$-algebra generated by $x_I$ for all $I\in \fP_\tau$. For a subset $\fI\subset \fP_\tau$, we write $\veps_\fI:= \sum_{I\in \fI} \veps_{I}$, $T_{\fI}:=T_{-\veps_\fI}$, $x_{\fI} := \prod_{I\in \fI}x_I$, and
\begin{align*}
    n(\fI) := \RG{\eta,w_0(\omega^\vee_{\#\cup_{I\in \fI}I})} - \sum_{I\in \fI} \RG{\eta,w_0(\omega^\vee_{\#I})}  = \#\cup_{I\in\fI}I(\#\cup_{I\in\fI}I-1)/2 - \sum_{I\in \fI}\#I(\#I-1)/2.
\end{align*}

\begin{prop}\label{prop:int-hecke-tame-types}
    Following the above notations, 
    we have the following isomorphism
    \begin{align*}
        \cH(G,\sig^\circ(\tau)) &\risom \cO[q^{-n(\fI)} x_{\fI}, q^{n(\fP_\tau)}x_{\fP_\tau}^\mo]_{\fI\subset \fP_\tau}^{\fS_{\ov{\fP}_\tau}} \\
        T_{\fI} &\mapsto q^{-n(\fI)} x_{\fI}.
    \end{align*}
\end{prop}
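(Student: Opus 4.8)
The plan is to pin down $\cH(\rmG,\sig^\circ(\tau))$ as an explicit $\cO$-lattice inside the rational algebra $\cH(\rmG,\sig(\tau))$, using the double coset basis of Lemma~\ref{lem:hecke-decomp} on one side and the presentation of Lemma~\ref{lem:hecke-tame-type-rational-presentation} on the other; the whole difficulty is the bookkeeping of the powers of $q$ that separate the integral structure defined by $s_P$ from the one suggested by $t_P$. Since $\cind_\rmK^\rmG\sig^\circ(\tau)$ is an $\cO$-lattice in $\cind_\rmK^\rmG\sig(\tau)$, the ring $\cH(\rmG,\sig^\circ(\tau))$ is the $\cO$-flat subalgebra of $\cH(\rmG,\sig(\tau))$ of endomorphisms preserving it, and inverting $p$ returns $\cH(\rmG,\sig(\tau))$. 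I would first rescale the coordinates of Lemma~\ref{lem:hecke-tame-type-rational-presentation} by $x_I:=q^{\#I(n-\#I)/2}x_I'$: this does not change the $\fS_{\ov{\fP}_\tau}$-action since $\#I$ is constant on each block, and by Lemma~\ref{lem:tame-type-formulas}(2) the element $s_P(T_{-\veps_I})$ then corresponds to $x_I$.

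Next I would compute the images of the candidate generators. As $t_P$ is a ring homomorphism with $t_P(t_{-\veps_I})=q^{-\#I(n-\#I)/2}T_{-\veps_I}$, one has $t_P(t_{-\veps_\fI})=q^{-\frac12\sum_{I\in\fI}\#I(n-\#I)}\prod_{I\in\fI}T_{-\veps_I}$, which by Lemma~\ref{lem:tame-type-formulas}(3) equals $q^{\,n(\fI)-\frac12\sum_{I\in\fI}\#I(n-\#I)}\,T_{-\veps_\fI}$; applying $s_P$ and recombining exponents with the rescaling above shows $T_\fI=s_P(T_{-\veps_\fI})$ corresponds to $q^{-n(\fI)}x_\fI$. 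Since $\veps_{\fP_\tau}$ is the determinant cocharacter, $s_P(T_{\veps_{\fP_\tau}})$ is a unit, inverse to $s_P(T_{-\veps_{\fP_\tau}})$, hence corresponds to $q^{n(\fP_\tau)}x_{\fP_\tau}^{-1}$. As $T_{-\veps_\fI}$ and $T_{\veps_{\fP_\tau}}$ stabilize $\Ind_\cP^\rmK\lam^\circ$ and $s_P$ carries $\cH(\rmG,\lam^\circ)$ into $\cH(\rmG,\sig^\circ(\tau))$, all of these elements lie in $\cH(\rmG,\sig^\circ(\tau))$, giving one inclusion: the ring on the right of the statement maps into $\cH(\rmG,\sig^\circ(\tau))$.

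The real content is the reverse inclusion, that the $T_\fI$ (for $\fI\subseteq\fP_\tau$) and $s_P(T_{\veps_{\fP_\tau}})$ generate $\cH(\rmG,\sig^\circ(\tau))$ over $\cO$. By Lemma~\ref{lem:hecke-decomp} and the cuspidality of $\lam_M$ (which, as in Lemma~\ref{lem:hecke-decomp-para}, confines the support to $X_*(Z(M))$), $\cH(\rmG,\sig^\circ(\tau))$ has an $\cO$-basis of double coset operators $\ov{T}^\circ_\mu$ indexed by the antidominant $\mu\in X_*(Z(M))$; translating by powers of $s_P(T_{\veps_{\fP_\tau}})^{\pm1}$ I may assume $\mu=\sum_{I\in\fP_\tau}c_I\veps_I$ with all $c_I\ge0$. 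For such $\mu$ I would use the nested family $\fI_1\supseteq\fI_2\supseteq\cdots$ given by $\fI_t=\{I\in\fP_\tau:c_I\ge t\}$, so that $\mu=\sum_{t\ge1}\veps_{\fI_t}$ and each $-\veps_{\fI_t}$ is itself antidominant. Using the convolution formula Lemma~\ref{lem:conv-formula} (i.e.\ \cite[Cor.~6.13]{BKsmooth}) together with Lemma~\ref{lem:tame-type-formulas}(3), the product $\prod_{t\ge1}T_{\fI_t}$ should equal $q^{N}\,\ov{T}^\circ_\mu$ plus an $\cO$-linear combination of operators $\ov{T}^\circ_{\mu'}$ with $\mu'$ strictly below $\mu$ in the dominance order, for an explicit integer $N\ge0$; one then has to prove $N=0$ for this greedy level-set decomposition, equivalently that it realizes the extremal value of the relevant $q$-exponent (the minimality of $\sum_I c_I\sum_{i\in I}\eta_{w_\mu(i)}$ noted in the remark following Lemma~\ref{lem:tame-type-formulas}). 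An induction on $\mu$ along the dominance order, after clearing the determinant direction, then puts each $\ov{T}^\circ_\mu$ in the subalgebra generated by the $T_\fI$ and $s_P(T_{\veps_{\fP_\tau}})$, which is the reverse inclusion.

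Putting the two inclusions together with Lemma~\ref{lem:hecke-tame-type-rational-presentation} identifies $\cH(\rmG,\sig^\circ(\tau))$ with the $\cO$-subalgebra generated by the $q^{-n(\fI)}x_\fI$ and $q^{n(\fP_\tau)}x_{\fP_\tau}^{-1}$; since $\cH(\rmG,\sig^\circ(\tau))\subseteq\cH(\rmG,\sig(\tau))$ is $\fS_{\ov{\fP}_\tau}$-invariant by construction, this is precisely the invariant subring in the statement (when $\tau$ is regular $\fS_{\ov{\fP}_\tau}$ is trivial, and in general passing to invariants is harmless because $n(\fI)$ depends only on the multiset $\{\#I:I\in\fI\}$ and so is $\fS_{\ov{\fP}_\tau}$-stable). \textbf{The main obstacle} is the exhaustion step above: showing that the greedy level-set decomposition is exactly the one whose leading power of $q$ vanishes, so that the natural integral basis elements $\ov{T}^\circ_\mu$ are recovered on the nose rather than only up to a power of $p$, and controlling the lower-order terms that enter the induction; the rest is routine manipulation with the explicit convolution formulas.
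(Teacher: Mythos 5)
Your proposal follows the same route as the paper: identify $\cH(\rmG,\sig^\circ(\tau))$ as the $\cO$-lattice $s_P\bigl(\cO\text{-span of symmetrized }T_\mu\bigr)$ inside $\cH(\rmG,\sig(\tau))\simeq E[x_I']^{\fS_{\ov{\fP}_\tau}}$, verify the images of the candidate generators, and exhaust the basis via the level-set decomposition using Lemma~\ref{lem:tame-type-formulas}(3). The rescaling bookkeeping and the computation $T_\fI\mapsto q^{-n(\fI)}x_\fI$ are correct. The one thing to fix is that what you flag as the ``main obstacle'' is not actually an obstacle, and your treatment of it suggests a slight misconception about the ambient ring.

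The key point is that all products should be computed inside the image of $\cO[T_\mu]_{\mu\in X_*(Z(M))}\subset\cH(\rmG,\lam^\circ)$, where Lemma~\ref{lem:conv-formula} (the cuspidality of $\lam_M$ being the underlying reason) shows that $T_{\mu_1}\ast T_{\mu_2}$ is a single basis element $q^{c}T_{\mu_1+\mu_2}$ with \emph{no} lower-order terms in the dominance order, and $c=0$ whenever $\mu_1,\mu_2$ are both antidominant. After translating by $T_{\fP_\tau}^{\pm 1}$ so that $-\mu=\sum_I c_I\veps_I$ with $c_I\ge 0$ and the $c_I$ weakly decreasing along $<_\tau$ (this is exactly the antidominance of $\mu$, using that the blocks $I$ are intervals), your level sets $\fI_t=\{I:c_I\ge t\}$ are initial segments, hence each $-\veps_{\fI_t}$ is honestly antidominant, and Lemma~\ref{lem:conv-formula} gives immediately
\begin{align*}
\prod_{t\ge 1} T_{\fI_t} \;=\; T_{-\mu}
\end{align*}
on the nose. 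In particular $N=0$ is automatic, there are no lower-order corrections, and no induction along the dominance order is needed; the $\fS_{\ov{\fP}_\tau}$-symmetrizations of these monomials then span $\cH(\rmG,\sig^\circ(\tau))$ by Lemma~\ref{lem:hecke-decomp}. Your worry about ``recovering the basis element on the nose rather than up to a power of $p$'' comes from imagining a spherical-type Hecke algebra where products of double coset operators have lower-order tails; here, because $\lam_M$ is cuspidal, the relevant subalgebra of $\cH(\rmG,\lam^\circ)$ behaves like a Laurent polynomial ring and there are no such tails. With that clarified, your argument is a correct (if more verbose) expansion of the paper's terse ``can be checked by using Lemma~\ref{lem:tame-type-formulas}(3).''
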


\begin{proof}
    From Lemma \ref{lem:hecke-decomp} and \ref{lem:hecke-tame-type-rational-presentation}, it is clear that $\cH(\rmG,\sig^\circ(\tau))$ is spanned by symmetric polynomials of the form $\sum_{\mu' \in \fS_{\ov{\fP}_\tau}(\mu)} T_{\mu'}$ for $\mu \in X_*(Z(M))$. It remains to show that such polynomials can be generated by $T_{\fI}$ for $\fI\subset \fP_\tau$ and $T_{\fP_\tau}^\mo$, and this can be checked by using Lemma \ref{lem:tame-type-formulas}(3).
\end{proof}


\begin{rmk}\label{rmk:indep-lattice}
    Recall that our choice of the lattice $\sig^\circ(\tau)\subset \sig(\tau)$ depends on choices of $\sig^\circ(\tau_I) \subset \sig(\tau_I)$ and a certain ordering of the set $\ov{\fP}_\tau$. Later, by Theorem \ref{thm:hecke-global-func}, it will be clear that $\cO$-subalgebra $\cH(\rmG,\sig^\circ(\tau))\subset \cH(\rmG,\sig(\tau))$ is independent of these choices. We expect that one can deduce this independence from a purely representation-theoretic method.
\end{rmk}

\subsection{Locally algebraic types}\label{subsec:loc-alg-hecke}
We continue using the notation from the previous section. 

Let $\lam \in X^*(\uT)_+$ be a dominant weight. Let $W(\lam)_{/\cO}$ be the unique up to isomorphism irreducible algebraic 
$G_{/\cO}$-representation of highest weight $\lam$.  Let $V(\lam)$ be the the restriction of $W(\lam)_{/\cO}$ to $G(\cO_K)$. Note that since $V(\lam)$ is an algebraic representation, there is a natural $\rmG$-action on $V(\lam)[1/p]$. We define the locally algebraic type $\sig(\lam,\tau):= V(\lam)\otimes_{\cO}\sig(\tau)$ and an $\cO$-lattice in it $\sig^\circ(\lam,\tau):= V(\lam)\otimes_{\cO}\sig^\circ(\tau)$. 

\begin{lem}[{\cite[Lem.~1.4]{ST06}}]\label{lem:ST06}
    The map $\iota: \cH(\sig(\tau)) \ra \cH(\sig(\lam,\tau))$ sending $\phi: \rmG \ra \End_E(\sig(\tau))$ to $\iota(\phi) : g \in \rmG \mapsto g\otimes \phi(g) \in  \End_E(\sig(\lam,\tau))$ is an isomorphism.
\end{lem}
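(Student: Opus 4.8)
The plan is to show the map $\iota$ is a well-defined $E$-algebra homomorphism and then exhibit an explicit two-sided inverse. First I would verify well-definedness: given $\phi \in \cH(\sig(\tau)) = \End_\rmG(\cind_\rmK^\rmG \sig(\tau))$, viewed concretely as a compactly supported function $\phi \colon \rmG \to \End_E(\sig(\tau))$ satisfying the usual bi-equivariance $\phi(k_1 g k_2) = \sig(\tau)(k_1) \circ \phi(g) \circ \sig(\tau)(k_2)$ for $k_1,k_2 \in \rmK$, I need to check that $\iota(\phi)\colon g \mapsto g \otimes \phi(g)$, where $g$ acts through the algebraic representation $V(\lam)[1/p]$, lies in $\cH(\sig(\lam,\tau))$. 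The point is that $V(\lam)$ carries an honest $\rmG$-action (not just a $\rmK$-action) after inverting $p$, so the ``twist by $g$'' factor makes sense on all of $\rmG$; bi-$\rmK$-equivariance of $\iota(\phi)$ follows from that of $\phi$ together with the fact that on $\rmK$ the factor $g \mapsto g$ acts via $V(\lam)|_{\rmK}$, which is exactly the algebraic part of $\sig(\lam,\tau) = V(\lam) \otimes_\cO \sig(\tau)$. Compact support of $\iota(\phi)$ is immediate since it has the same support as $\phi$.

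Next I would check multiplicativity. Writing the Hecke product as convolution, $(\phi_1 * \phi_2)(g) = \sum_{h \in \rmG/\rmK} \phi_1(h) \circ \phi_2(h^\mo g)$ (a finite sum by compact support), one computes
\begin{align*}
\iota(\phi_1) * \iota(\phi_2)(g) &= \sum_{h} (h \otimes \phi_1(h)) \circ (h^\mo g \otimes \phi_2(h^\mo g)) \\
&= \sum_h (g \otimes \phi_1(h)\phi_2(h^\mo g)) = g \otimes (\phi_1 * \phi_2)(g) = \iota(\phi_1 * \phi_2)(g),
\end{align*}
using that the algebraic factors multiply as $h \cdot h^\mo g = g$ in $\rmG$ and that this is independent of the coset representative $h$. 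This also shows $\iota$ preserves the identity. So $\iota$ is an $E$-algebra homomorphism.

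Finally, injectivity and surjectivity: the inverse is the map $\psi \mapsto (g \mapsto g^\mo \otimes \psi(g))$, where now we use the $\rmG$-action on $V(\lam)[1/p]$ to make sense of $g^\mo$ acting, and we interpret $\psi(g) \in \End_E(\sig(\lam,\tau)) = \End_E(V(\lam)[1/p] \otimes \sig(\tau))$ after precomposing/postcomposing with this scalar-like twist. One checks by the same bookkeeping that this lands in $\cH(\sig(\tau))$ and is a two-sided inverse to $\iota$. Since everything here is a formal manipulation of equivariant functions and the $\rmG$-equivariant structure on $V(\lam)[1/p]$, there is no genuine obstacle; the only mild subtlety to be careful about is checking that the twisting factor $g$ genuinely respects the $\rmK$-structure so that $\iota(\phi)$ is $\rmK$-bi-equivariant with respect to the correct representation $\sig(\lam,\tau)|_\rmK = V(\lam)|_\rmK \otimes \sig(\tau)$ rather than some untwisted version, and to track that $\End_E$ is taken over the locally algebraic representation throughout. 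I would then simply cite \cite[Lem.~1.4]{ST06} for these verifications, as the statement of the lemma already does.
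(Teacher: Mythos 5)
The paper gives no proof here; it simply cites \cite[Lem.~1.4]{ST06}, so there is nothing in the paper's proof to compare against. Your verifications of well-definedness (bi-$\rmK$-equivariance of $\iota(\phi)$ with respect to $\sig(\lam,\tau)|_\rmK = V(\lam)|_\rmK\otimes\sig(\tau)$) and of multiplicativity via the convolution formula are correct. The problem is surjectivity, which is where all the content of the lemma lies, and which you dismiss as a ``formal manipulation.''

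Your proposed inverse $\psi\mapsto\bigl(g\mapsto g^{-1}\otimes\psi(g)\bigr)$ does not typecheck: $\psi(g)$ lies in $\End_E\bigl(V(\lam)[1/p]\otimes\sig(\tau)\bigr)$, and even after the untwisting you suggest, i.e.\ forming $\bigl(V(\lam)(g^{-1})\otimes 1\bigr)\circ\psi(g)$, one still has an endomorphism of $V(\lam)[1/p]\otimes\sig(\tau)$, with no a priori reason for it to have the tensor form $1\otimes T$ for some $T\in\End_E(\sig(\tau))$. That it does have this form is precisely the non-formal input. Concretely, after using the Cartan decomposition as in Lemma~\ref{lem:hecke-decomp}, the issue reduces to showing that for $H:=\rmK\cap\rmK^{(g^{-1})}$, the map $T\mapsto 1\otimes T$ gives an isomorphism $\Hom_H(\sig(\tau),\sig(\tau)^{(g)})\risom\Hom_H\bigl(V(\lam)\otimes\sig(\tau),V(\lam)\otimes\sig(\tau)^{(g)}\bigr)$, where one has already used $V(\lam)(g)\colon V(\lam)\risom V(\lam)^{(g)}$ to remove the algebraic twist. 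This in turn rests on: (i) $\sig(\tau)$ is smooth, so one may pass to a normal open $H'\subset H$ acting trivially on $\sig(\tau)$; (ii) $H'$ is Zariski-dense in $\GL_n$, so $\End_{H'}(V(\lam))=E$ by irreducibility of the algebraic representation $V(\lam)$; and (iii) hence $\Hom_{H'}(V(\lam)\otimes\sig_1,V(\lam)\otimes\sig_2)=\Hom_E(\sig_1,\sig_2)$, which after taking $H/H'$-invariants gives the claim. None of this is ``just bookkeeping.'' If you intend to rely on \cite[Lem.~1.4]{ST06} for the surjectivity, you should say so explicitly rather than asserting the inverse exists by a formal argument.
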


\begin{rmk}\label{rmk:loc-alg-hecke-mod}
    Let $\pi$ be an irreducible smooth $E[\rmG]$-module. Under isomorphism $\iota$ above, the $\cH(\sig(\tau))$-module  $\Hom_{\rmK}(\sig(\tau),\pi|_{\rmK})$ and the $\cH(\sig(\lam,\tau))$-module $\Hom_{\rmK}(\sig(\lam,\tau), \pi\otimes_{\cO} V(\lam)|_{\rmK})$ are isomorphic.
\end{rmk}

For a cocharacter $\mu\in X_*(Z(M))$, the Hecke operator $\iota(T_{\mu})$ is supported on the double coset $\rmK \mu(\pi_K) \rmK$, and when evaluated at $\mu(\pi_K)$ it acts on $\sig(\lam,\tau)$ as $\mu(\pi_K)$ on $V(\lam)$ and  $T_{\mu}(\mu(\pi_K))$ on $\sig(\tau)$. For a weight $\lam'\le \lam$, $\mu(\pi_K)$ acts on $\lam'$-weight space by $\lam'(\mu(\pi_K))$. Let $\mu'$ be the dominant element in $W\mu$. Then the $\pi_K$-adic valuation of $\lam'(\mu(\pi_K))$ is contained in $[\RG{w_0(\lam), \mu'},\RG{\lam,\mu'}]$. This shows that $\pi_K^{-\RG{w_0(\lam), \mu'}}\iota(T_{\mu}) \in \cH(\sig^\circ(\lam,\tau))$, which we abusively denote by $T_{\mu}$ again.

The following is a variant of Proposition \ref{prop:int-hecke-tame-types}. 
We adopt the same notations and also define
\begin{align*}
    n_{\lam}(\fI) :=  \sum_{\tilj \in \tilcJ} \RG{\lam_{\tilj},w_0(\omega^\vee_{\#\cup_{I\in \fI}I,\tilj})} - \sum_{I\in \fI} \RG{\lam_{\tilj},w_0(\omega^\vee_{\#I},\tilj)}
\end{align*}
for $\fI\subset \fP_\tau$.

\begin{prop}\label{prop:int-hecke-loc-alg}
    Following the above notations, 
    we have the following isomorphism
    \begin{align*}
        \cH(G,\sig^\circ(\lam,\tau)) &\risom \cO[\pi_K^{-n_\lam(\fI)}q^{-n(\fI)} x_{\fI}, \pi_K^{n_\lam(\fP_\tau)}q^{n(\fI)} x_{\fP_\tau}^\mo]_{\fI\subset \fP_\tau}^{\fS_{\ov{\fP}_\tau}} \\
        T_{\fI} &\mapsto \pi_K^{-n_\lam(\fI)}q^{-n(\fI)} x_{\fI}.
    \end{align*}
\end{prop}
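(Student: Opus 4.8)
The plan is to obtain this as Proposition~\ref{prop:int-hecke-tame-types} ``twisted by $\lam$'': transfer the presentation there across the isomorphism $\iota$ of Lemma~\ref{lem:ST06}, carefully tracking how $\iota$ interacts with the $\cO$-lattices. Since $\iota\colon\cH(\rmG,\sig(\tau))\risom\cH(\rmG,\sig(\lam,\tau))$ is an isomorphism of $E$-algebras, it identifies $\cH(\rmG,\sig^\circ(\lam,\tau))$ with a full $\cO$-lattice inside $\cH(\rmG,\sig(\tau))=\cH(\rmG,\sig^\circ(\tau))[1/p]$, and the content is to describe this lattice in terms of the generators $x_\fI$ of Lemma~\ref{lem:hecke-tame-type-rational-presentation}.

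First I would set up the graded decomposition. By Lemma~\ref{lem:hecke-decomp} and its parahoric variant Lemma~\ref{lem:hecke-decomp-para}, together with cuspidality of $\lam_M$, the algebra $\cH(\rmG,\sig^\circ(\lam,\tau))$ is a free $\cO$-module which decomposes as a direct sum indexed by the $\mu\in X_*(Z(M))$ antidominant for $M$, each summand of rank one. For $\mu=-\veps_\fI$ I would identify the primitive generator of the corresponding summand, using the description recalled just before the proposition: $\iota(T_\fI)$ is supported on $\rmK\mu(\pi_K)\rmK$ and its value at $\mu(\pi_K)$ is $\mu(\pi_K)|_{V(\lam)}\otimes T_\fI(\mu(\pi_K))|_{\sig(\tau)}$. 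The $\sig(\tau)$-factor already preserves $\sig^\circ(\tau)$ and is primitive there by Proposition~\ref{prop:int-hecke-tame-types}, while $\mu(\pi_K)$ scales the $\lam'$-weight space of a $\rmK$-stable lattice in $V(\lam)$ by $\lam'(\mu(\pi_K))$, whose $\pi_K$-valuation sweeps the full interval $[\RG{w_0(\lam),\mu'},\RG{\lam,\mu'}]$ (with $\mu'$ the dominant conjugate of $\mu$), the endpoints being attained at appropriate $W$-conjugates of $\lam$, all of which occur as weights of $V(\lam)$. Hence the primitive integral renormalization of $\iota(T_\fI)$ is $\pi_K^{-\RG{w_0(\lam),\mu'}}\iota(T_\fI)$, and a direct computation with the fundamental coweights $\omega^\vee_m$ and with $w_0$ identifies $\RG{w_0(\lam),\mu'}$ with $n_\lam(\fI)$; the invertible generator attached to $x_{\fP_\tau}^{-1}$ is handled the same way, $\mu=\veps_{\fP_\tau}$ being central so that $\mu(\pi_K)$ acts on $V(\lam)$ by a single scalar, of valuation $-n_\lam(\fP_\tau)$.

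It then remains to show that the renormalized operators $T_\fI=\pi_K^{-n_\lam(\fI)}q^{-n(\fI)}x_\fI$ together with $\pi_K^{n_\lam(\fP_\tau)}q^{n(\fP_\tau)}x_{\fP_\tau}^{-1}$ generate $\cH(\rmG,\sig^\circ(\lam,\tau))$ over $\cO$ subject only to $\fS_{\ov{\fP}_\tau}$-symmetry. This I would prove by repeating the argument of Proposition~\ref{prop:int-hecke-tame-types}: the Hecke algebra is $\cO$-spanned by the symmetrized operators $\sum_{\mu'\in\fS_{\ov{\fP}_\tau}(\mu)}T_{\mu'}$ for $\mu\in X_*(Z(M))$ antidominant for $M$, and each of these is reduced to the listed generators using the convolution formula of Lemma~\ref{lem:conv-formula} and the monomial formula of Lemma~\ref{lem:tame-type-formulas}(3), now carrying the extra powers of $\pi_K$ along. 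The input one needs is that, writing $\mu=\sum_{I\in\fP_\tau}c_I\veps_I$ and choosing the strict total order $<_\mu$ on $\fP_\tau$ as in Lemma~\ref{lem:tame-type-formulas}(3), the exponent $n_\lam$ is additive along the corresponding chain in the same way that $n$ is; this holds because $\mu(\pi_K)$ acts on $V(\lam)$ through a single diagonal element of $T(K)$, so the matching of ``which Frobenius eigenvalue receives which power'' — governed by the sorting Weyl element $w_\mu$ — is the same on the $V(\lam)$-side as on the $\sig(\tau)$-side, and the two normalizations simply stack.

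I expect the main obstacle to be exactly this combined bookkeeping: checking that a product $T_{\fI_1}\cdots T_{\fI_r}$ over a partition of a subset $\fJ\subseteq\fP_\tau$ reproduces $T_\fJ$ up to precisely the predicted factor $q^{(\cdots)}\pi_K^{(\cdots)}$, with no hidden correction coming from the interaction between the $V(\lam)$-normalization and the convolution-theoretic $q$-powers of Lemma~\ref{lem:conv-formula}. Once the additivity of $n_\lam$ along $<_\mu$-chains is in place — which is proved just like, and can be carried out in parallel with, the computation in the proof of Lemma~\ref{lem:tame-type-formulas}(3) — the remaining verifications are formal.
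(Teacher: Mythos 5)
Your strategy is exactly what the paper intends: its proof is the one-liner \emph{``This essentially follows from Proposition \ref{prop:int-hecke-tame-types} and the above discussion,''} and you flesh it out by transporting the $\lam=0$ presentation across the isomorphism $\iota$ of Lemma \ref{lem:ST06} while tracking the $\pi_K$-adic valuations of $\mu(\pi_K)$ acting on the weight spaces of $V(\lam)$.

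There is, however, a concrete error in the middle step. You assert that a direct computation identifies $\RG{w_0(\lam),\mu'}$ with $n_\lam(\fI)$, where $\mu'$ is the dominant conjugate of $\mu = -\veps_\fI$. This fails already for a singleton $\fI = \{I\}$: by definition $n_\lam(\{I\}) = 0$ (the two terms cancel), whereas $\RG{w_0(\lam),\mu'} = -\sum_{\tilj}\RG{\lam_{\tilj},\omega^\vee_{\#I}}$, which is nonzero for a general dominant $\lam$. The underlying issue is that the variables $x_I$ appearing in Proposition \ref{prop:int-hecke-loc-alg} are not literally those of Proposition \ref{prop:int-hecke-tame-types}: since $n_\lam(\{I\}) = n(\{I\}) = 0$, the statement sends the (abusively renamed) renormalized operator $T_{\{I\}}$ to $x_I$, whereas Proposition \ref{prop:int-hecke-tame-types} sends the unrenormalized $T_{-\veps_I}$ to $x_I$, so the $x_I$ here already absorb the $\pi_K$-rescaling attached to the singleton operators. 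Consequently $n_\lam(\fI)$ should be read not as the renormalization exponent of $T_\fI$ itself, but as the discrepancy between that renormalization and the product of the singleton renormalizations --- which is precisely why the definition of $n_\lam(\fI)$ has a subtracted $\sum_{I\in\fI}$ term, in exact parallel with $n(\fI)$. Your final paragraph's worry about the additivity of $n_\lam$ along $<_\mu$-chains is the right concern, but it is entangled with this implicit rescaling, which the middle of the proposal has not set up; once the normalization of the $x_I$'s is corrected, the reduction to Lemma \ref{lem:tame-type-formulas}(3) and the remaining valuation bookkeeping do go through as you outline.
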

\begin{proof}
    This essentially follows from Proposition \ref{prop:int-hecke-tame-types} and the above discussion.
\end{proof}

\begin{rmk}\label{rmk:hecke-operators}
    Let $\ov{\fI} \subset \ov{\fP}_\tau$. For each $\ov{I}\in \ov{\fI}$, let $n_{\ov{I}}$ be the size of any representative $I\in \ov{I}$ and $d_{\ov{I}}$ be an integer in $[1,\#\ov{I}]$. We define $d_{\ov{\fI},(d_{\ov{I}})} :=\sum_{\ov{I}\in \ov{\fI}} d_{\ov{I}}n_{\ov{I}}$ and
    \begin{align*}
        T_{\ov{\fI},(d_{\ov{I}})_{\ov{I}\in\ov{\fI}}} &:= \sum_{S} T_{- \sum_{I \in S}\veps_I}
    \end{align*}
    where the sum runs over the set of all $S\subset \cup_{\ov{I}\in \ov{\fI}}\ov{I}$ such that $\# S\cap \ov{I} = d_{\ov{I}}$. 
    By Proposition \ref{prop:int-hecke-tame-types}, $\cH(G,\sig^\circ(\tau))$ is generated by $T_{\ov{\fI},(d_{\ov{I}})_{\ov{I}\in\ov{\fI}}}$ for all choices of $(\ov{\fI},(d_{\ov{I}})_{\ov{I}\in\ov{\fI}})$ and $T_{\ov{\fP}_\tau,(\#\ov{I})_{\ov{I}\in\ov{\fP}_\tau}}^\mo$. Under the isomorphism in Proposition \ref{prop:int-hecke-tame-types}, $T_{\ov{\fI},(d_{\ov{I}})_{\ov{I}\in\ov{\fI}}}$ is mapped to 
    \begin{align*}
         q^{-d_{\ov{\fI},(d_{\ov{I}})} (d_{\ov{\fI},(d_{\ov{I}})}-1)/2}x_{\ov{\fI},(d_{\ov{I}})_{\ov{I}\in\ov{\fI}}} := q^{-d_{\ov{\fI},(d_{\ov{I}})} (d_{\ov{\fI},(d_{\ov{I}})}-1)/2} \prod_{\ov{I}\in\ov{\fI}} \sym_{d_{\ov{I}}} (x_I)_{I\in \ov{I}}
    \end{align*}
    where $\sym_{d_{\ov{I}}} (x_I)_{I\in \ov{I}}$ is the degree $d_{\ov{I}}$ elementary symmetric polynomial in variables $x_I$ with $I\in \ov{I}$. 

    Similarly, $\cH(G,\sig^\circ(\lam,\tau))$ is generated by $T_{\ov{\fI},(d_{\ov{I}})_{\ov{I}\in\ov{\fI}}}$ for all choices of $(\ov{\fI},(d_{\ov{I}})_{\ov{I}\in\ov{\fI}})$ and $T_{\ov{\fP}_\tau,(\#\ov{I})_{\ov{I}\in\ov{\fP}_\tau}}^\mo$. Under the isomorphism in Proposition \ref{prop:int-hecke-loc-alg}, $T_{\ov{\fI},(d_{\ov{I}})_{\ov{I}\in\ov{\fI}}}$ is mapped to
    \begin{align*}
         \pi_K^{-\sum_{\tilj \in \tilcJ} \RG{\lam_{\tilj},w_0(\omega^\vee_{d_{\ov{\fI},(d_{\ov{I}})}})}}q^{-d_{\ov{\fI},(d_{\ov{I}})} (d_{\ov{\fI},(d_{\ov{I}})}-1)/2}x_{\ov{\fI},(d_{\ov{I}})_{\ov{I}\in\ov{\fI}}}.
    \end{align*}
\end{rmk}

\subsection{Mod $p$ reduction of integral Hecke algebras}\label{subsec:hecke-modp}
In this subsection, we investigate morphisms between Hecke algebras $\cH(\rmG,\sig^\circ(\tau)) \ra \cH(\rmG, \sig)$ for some Jordan--H\"older factors $\sig$ of $\osig(\tau)$. For simplicity, we assume that $\tau$ is regular throughout this subsection. Recall that we have an $P(k)$-representation $\lam^\circ = \otimes_{I\in \fP_\tau}\sig^\circ(\tau_I)$. Since $\tau$ is regular, $\sig^\circ(\tau)$ is equal to $\Ind_{P(k)}^{G(k)}\lam^\circ$. 

\begin{lem}\label{lem:cuspidal-SW}
    For each $I\in \fP_\tau$, let $\sig_I$ be an irreducible constituent of $\osig(\tau_I)$. If $\sig$ is a Serre weight such that $\sig^{N(k)}$ is isomorphic to $\otimes_{I\in \fP_\tau} \sig_I$, then $\sig$ is an irreducible constituent of $\osig(\tau)$.
\end{lem}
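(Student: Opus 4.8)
The plan is to exhibit $\sig$ as the socle (equivalently, as a Jordan--Hölder constituent appearing with the expected parabolic induction) of $\Ind_{P(k)}^{G(k)} \lam^\circ \bmod \varpi = \Ind_{P(k)}^{G(k)} \osig(\tau)$, using the hypothesis on $N(k)$-invariants to pin down which constituent it is. First I would recall that since $\tau$ is regular we have an identification $\osig(\tau) \simeq \Ind_{P(k)}^{G(k)} \overline{\lam}^\circ$ where $\overline{\lam}^\circ = \otimes_{I \in \fP_\tau} \osig(\tau_I)$, so that the Jordan--Hölder factors of $\osig(\tau)$ are precisely the Jordan--Hölder factors of the (unnormalized) parabolic induction of $\overline{\lam}^\circ$ from $P(k)$ to $G(k)$. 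The second input is the mod $p$ behaviour of parabolic induction and the exactness of the functor $(-)^{N(k)}$ of $N(k)$-invariants (equivalently, by Lemma \ref{lem:inv-coinv-SW}, $N(k)$-coinvariants) composed with restriction along $M(k) \hookrightarrow G(k)$: by the geometric lemma / Bruhat decomposition, $\bigl(\Ind_{P(k)}^{G(k)} \overline{\lam}^\circ\bigr)^{N(k)}$ surjects onto $\overline{\lam}^\circ$ as an $M(k)$-representation, and the "leading term" ($w = 1$ in the Bruhat stratification) contributes exactly a copy of $\overline{\lam}^\circ$, all other strata contributing constituents whose highest weight is strictly smaller. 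Hence among the constituents $\sig'$ of $\osig(\tau)$, those with $(\sig')^{N(k)} \supseteq \otimes_I \sig_I$ (as an $M(k)$-representation, where each $\sig_I$ is the given constituent of $\osig(\tau_I)$) include at least one, coming from the $w=1$ piece.

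The key step is then an \emph{irreducibility/uniqueness} statement: I claim that in fact there is a \emph{unique} constituent $\sig$ of $\osig(\tau)$ with $\sig^{N(k)}$ isomorphic (not merely containing) to $\otimes_{I\in\fP_\tau}\sig_I$, and this $\sig$ is the one in the statement. Here I would use: (i) for any Serre weight $\sig = F(\lam)$ of $G(k)$, Lemma \ref{lem:inv-coinv-SW} gives $\sig^{N(k)} \simeq F_M(\lam)$, an \emph{irreducible} $M(k)$-representation; (ii) the highest weight $\lam$ of $\sig$ is recovered from the highest weight of $\sig^{N(k)}$; and (iii) each $\osig(\tau_I)$ is a cuspidal Deligne--Lusztig representation of $\GL_{\#I}(k)$, so its constituents $\sig_I$ are themselves Serre weights whose highest weights are controlled (and, for the relevant range of $\tau$, distinct and deep enough that the tensor product $\otimes_I \sig_I$ determines a unique $M$-dominant weight, hence a unique $G$-dominant weight $\lam$ lifting it). Combining, the constituent $\sig$ with $\sig^{N(k)} \simeq \otimes_I \sig_I$ is forced to be $F(\lam)$ for $\lam$ the (unique) $G$-dominant weight restricting on $M$ to the highest weight of $\otimes_I \sig_I$, and by the geometric-lemma analysis above this $F(\lam)$ genuinely occurs in $\osig(\tau)$.

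I expect the main obstacle to be step (ii)--(iii): making rigorous the claim that the hypothesis $\sig^{N(k)} \cong \otimes_I \sig_I$ cannot be satisfied accidentally by some \emph{other} constituent coming from a nontrivial Bruhat stratum $w \neq 1$, i.e. ruling out "collisions" of highest weights between the $w=1$ leading term and the lower strata. This is where one needs to invoke the explicit structure of $\osig(\tau)$ as worked out by Herzig and by the Le--Le~Hung--Levin--Morra circle of ideas (or cite the relevant structural results on reductions of tame types, e.g. \cite{HerzigDuke, EGHweightcyc, LLLMlocalmodel}); concretely one checks that each cuspidal $\osig(\tau_I)$ has a well-understood filtration whose graded pieces are Serre weights of $\GL_{\#I}(k)$, and that all constituents of $\Ind_{P(k)}^{G(k)}$ of the tensor product are accounted for by the combinatorics of these weights together with the Weyl-group twists, so that the assignment constituent $\mapsto$ its $N(k)$-invariants is injective on the relevant set. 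Once this injectivity (or at least the existence claim, which is all the Lemma asserts) is in hand, the Lemma follows immediately since we have produced $\sig$ as a genuine constituent of $\osig(\tau)$ with the prescribed $N(k)$-invariants.
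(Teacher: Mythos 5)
Your argument misreads the lemma as an existence statement and, even granting the reading, leaves its key step open. The lemma asserts that \emph{every} Serre weight $\sig$ with $\sig^{N(k)} \simeq \otimes_{I}\sig_I$ is a constituent of $\osig(\tau)$, whereas your Bruhat-stratification analysis would at best produce \emph{some} constituent $\sig'$ of $\osig(\tau)$ with $(\sig')^{N(k)} \simeq \otimes_I \sig_I$. To deduce the lemma from that you would need to know that $\sig'$ equals the $\sig$ in the hypothesis, i.e., that $F(\lam) \mapsto F_M(\lam)$ is injective on the relevant Serre weights --- and that is precisely the step you flag as ``the main obstacle'' and do not establish (it is also delicate, since the $q$-restricted ambiguity for $M$ is coarser than for $G$). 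The existence step itself via the $w=1$ stratum also needs more care than you give it: $(-)^{N(k)}$ is only left exact in characteristic $p$, so a filtration of $\osig(\tau)$ by constituents gives a filtration of $\osig(\tau)^{N(k)}$ whose graded pieces sit \emph{inside} the $(\sig')^{N(k)}$, not equal to them, and one must argue from there.

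The paper's proof avoids all of this and establishes the universal statement directly. Since $\otimes_I \sig_I$ is a constituent of $\otimes_I \osig(\tau_I)$, there is a subrepresentation $V \subset \otimes_I \osig(\tau_I)$ with a surjection $V \twoheadrightarrow \otimes_I \sig_I$; induction from $P(k)$ to $G(k)$ is exact, so $\Ind_{P(k)}^{G(k)} V$ is a subrepresentation of $\osig(\tau) = \Ind_{P(k)}^{G(k)}\otimes_I \osig(\tau_I)$. On the other hand the hypothesis identifies $\otimes_I \sig_I$ with $\sig^{N(k)}$, which is a $P(k)$-submodule of $\sig$ (with $N(k)$ acting trivially), so we have a $P(k)$-embedding $\otimes_I\sig_I \hookrightarrow \sig|_{P(k)}$. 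Frobenius reciprocity applied to $V \twoheadrightarrow \otimes_I\sig_I \hookrightarrow \sig|_{P(k)}$ then produces a nonzero, hence surjective, $G(k)$-map $\Ind_{P(k)}^{G(k)} V \twoheadrightarrow \sig$, so $\sig$ is a quotient of a subrepresentation of $\osig(\tau)$ and hence a constituent. No geometric lemma, no highest-weight combinatorics, no uniqueness claim, and it works simultaneously for every $\sig$ satisfying the hypothesis. I would recommend replacing your argument with this one.
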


\begin{rmk}\label{rmk:cuspidal-SW}
    Note that by Frobenius reciprocity, $\sig^{N(k)} \simeq \otimes_{I\in \fP_\tau} \sig_I$ if and only if there is a quotient map $\Ind_{P(k)}^{G(k)} \otimes_{I\in \fP_\tau} \sig_I \epi \sig$. 
\end{rmk}

\begin{proof}
    There is a subrepresentation $V\subset \otimes_{I\in \fP_\tau}\osig(\tau_I)$ with a surjection $V\epi \otimes_{I\in \fP_\tau}\sig_I$. Then $\Ind_{P(k)}^{G(k)}V$ is a subrepresentation of $\osig(\tau)$. At the same time, since $\otimes_{I\in \fP_\tau}\sig_I \mono \sig|_{P(k)}$, by Frobenius reciprocity, we obtain a surjection from  $\Ind_{P(k)}^{G(k)}V$ to $\sig$. This proves the claim. 
\end{proof}


\begin{defn}\label{defn:cuspidal-SW}
    We denote by 
    $\JH_c(\ov{\sig^\circ}(\tau)) $ the subset of $\sig\in\JH(\osig(\tau))$ 
    as in Lemma \ref{lem:cuspidal-SW}. 
\end{defn}

\begin{defn}
     We define a subset $W^M \subset W$
    \begin{align*}
        W^M:= \CB{w\in W  \mid l(s_\al w) > l(w) \ \forall \al \in \Del_M}.
    \end{align*}
    It is known that the natural map $W^M \ra W_M\bss W$ is a bijection.
\end{defn}

Now we state the main result of this subsection.

\begin{thm}\label{thm:hecke-modp-red}
Let $\sig$ be a $M$-regular weight in $\JH_c(\osig(\tau))$. There is a natural morphism
\begin{align*}
    \cR_{\sig^\circ(\tau)}^\sig : \cH(\rmG,\sig^\circ(\tau)) \ra \cH(\rmG, \sig).
\end{align*}
For each $\fI\subset \fP_\tau$, $\cR_{\sig^\circ(\tau)}^\sig({T}_{\fI})$ is zero unless $-\veps_{\fI}$ is antidominant, in which case $\cR_{\sig^\circ(\tau)}^\sig({T}_{\fI})$ is equal to $\ov{T}_{-\veps_{\fI}}$.
\end{thm}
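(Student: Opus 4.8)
The plan is to obtain $\cR_{\sig^\circ(\tau)}^\sig$ by reducing the $\cH(\rmG,\sig^\circ(\tau))$-action on $\cind_\rmK^\rmG\sig^\circ(\tau)$ modulo $\varpi$ and transporting it along a natural $\rmG$-equivariant surjection onto $\cind_\rmK^\rmG\sig$. Since $\tau$ is regular, $\sig^\circ(\tau)=\Ind_\cP^\rmK\lam^\circ$ with $\lam^\circ=\otimes_{I\in\fP_\tau}\sig^\circ(\tau_I)$, so Lemma \ref{lem:cind-transitive} identifies $(\cind_\rmK^\rmG\sig^\circ(\tau))\otimes_\cO\F$ with $\cind_\cP^\rmG(\lam^\circ\otimes_\cO\F)$, carrying the action of $\cH(\rmG,\sig^\circ(\tau))\otimes_\cO\F$. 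Because $\sig\in\JH_c(\osig(\tau))$, the proof of Lemma \ref{lem:cuspidal-SW} produces a $\cP$-subrepresentation $V\subseteq\lam^\circ\otimes_\cO\F$ with a surjection $V\epi\sig^{N(k)}$, which, followed by $\sig^{N(k)}\mono\sig|_{P(k)}$ and adjunction (together with irreducibility of $\sig$), gives a $G(k)$-surjection $\Ind_{P(k)}^{G(k)}V\epi\sig$; applying $\cind_\rmK^\rmG$ and Lemma \ref{lem:cind-transitive} again, I obtain a $\rmG$-embedding $\cind_\cP^\rmG V\mono\cind_\cP^\rmG(\lam^\circ\otimes_\cO\F)$ and a $\rmG$-surjection $\Lambda\colon\cind_\cP^\rmG V\epi\cind_\rmK^\rmG\sig$.

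By Remark \ref{rmk:hecke-operators}, $\cH(\rmG,\sig^\circ(\tau))$ is generated over $\cO$ by the operators $T_{\ov\fI,(d_{\ov I})}$ and the inverse of $T_{\ov\fP_\tau,(\#\ov I)}=T_{-\veps_{\fP_\tau}}=T_{-\om_n}$ (whose inverse is the central operator $T_{\om_n}$), and each $T_{\ov\fI,(d_{\ov I})}$ is a sum of the central operators $T_\mu$, $\mu\in X_*(Z(M))$. Since $T_\mu(\mu(\pi_K))=\id_{\lam^\circ}$, the operator $T_\mu$ acts on $\cind_\cP^\rmG(\lam^\circ\otimes_\cO\F)$ by a formula of the shape $f\mapsto\bigl(g\mapsto\sum_x x\cdot f(\mu(\pi_K)^{-1}x^{-1}g)\bigr)$ with $x$ running over a coset space in $\cP$; it is therefore diagonal in the $\cP$-coefficients and preserves $\cind_\cP^\rmG W$ for every $\cP$-subrepresentation $W$, in particular $\cind_\cP^\rmG V$. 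Thus the $\cH(\rmG,\sig^\circ(\tau))\otimes_\cO\F$-action restricts to $\cind_\cP^\rmG V$, and the content of the construction is that it descends along $\Lambda$, i.e.\ preserves $\cind_\rmK^\rmG(\ker(\Ind_{P(k)}^{G(k)}V\epi\sig))$; this will simultaneously yield the explicit formula.

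To carry this out, I would use Lemmas \ref{lem:hecke-decomp} and \ref{lem:hecke-decomp-para} to identify the image of $T_\mu$ ($\mu\in X_*(Z(M))$) in $\End_\rmG(\cind_\cP^\rmG V)$: pushed to the $\rmK$-level it is supported on the single $\rmK$-double coset $\rmK\mu(\pi_K)\rmK=\rmK\mu_0(\pi_K)\rmK$, where $\mu_0\in X_*(T)_-$ is the antidominant representative of $W\mu$, with value at $\mu_0(\pi_K)$ induced, via $\Ind_{P(k)}^{G(k)}V\epi\sig$, from the natural $M_\mu(k)$-equivariant map $\sig^{N_\mu(k)}\to\sig_{\ov N_\mu(k)}$ attached to the parabolic $P_\mu$. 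When $\mu$ is antidominant for $\GL_n$ this map is the isomorphism of Lemma \ref{lem:inv-coinv-SW}, the action descends on this generator, and $\cR_{\sig^\circ(\tau)}^\sig(T_\mu)=\ov T_\mu$; when $\mu$ is not antidominant, $P_\mu$ is not standard, the natural map vanishes on the relevant extremal weight line of $\sig$ — this is where the $M$-regularity of $\sig$ is used — and the transported operator is $0$. Combined with $T_\fI=T_{-\veps_\fI}$ (Proposition \ref{prop:int-hecke-tame-types}), this gives $\cR_{\sig^\circ(\tau)}^\sig(T_\fI)=\ov T_{-\veps_\fI}$ for $-\veps_\fI\in X_*(T)_-$ and $0$ otherwise; since these values depend only on $\sig$, $\tau$, and combinatorial data, $\cR_{\sig^\circ(\tau)}^\sig$ is independent of the auxiliary choices (the lattices $\sig^\circ(\tau_I)$, the ordering of $\ov\fP_\tau$, and $V$) and is a homomorphism of $\cO$-algebras. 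I expect the main obstacle to be precisely this descent along $\Lambda$ and the vanishing statement for non-antidominant $\mu$, which requires carefully matching the parahoric-level description of the central Hecke operators with their $\rmK$-level description and invoking the structure of $\sig=F(\lam)$ for $M$-regular $\lam$ via Lemma \ref{lem:inv-coinv-SW}.
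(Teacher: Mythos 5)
Your overall architecture matches the paper's: exploit $\sig^\circ(\tau)=\Ind_\cP^\rmK\lam^\circ$, pass to $\ind_\cP^\rmG$ via Lemma \ref{lem:cind-transitive}, observe that the operators $T_\mu$ ($\mu\in X_*(Z(M))$) preserve $\ind_\cP^\rmG V$ for every $\cP$-subrepresentation $V$ because they are ``diagonal in the $\cP$-coefficients'' (this is precisely the paper's observation that $T_\mu(g)$ is $M(k)$-equivariant, via Lemma \ref{lem:hecke-decomp-para}), and then descend the resulting $\rmG$-endomorphism along the surjection $\Ind_{P(k)}^{G(k)}V\epi\sig$. So the route is the same; the issues are in the two steps you yourself flag as the main obstacles, and they are genuine gaps as currently written.

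First, the descent along $\Lambda$. You need that $f\circ T_\mu$ factors through $f$, where $f\colon\Ind_{P(k)}^{G(k)}V\epi\sig$ (equivalently, that $T_\mu$ preserves $\ind_\rmK^\rmG\ker f$). This is \emph{not} automatic from $T_\mu$ preserving $\ind_\cP^\rmG V$: the kernel of $f$ is a $G(k)$-subrepresentation of $\Ind_{P(k)}^{G(k)}V$ that is not of the form $\Ind_{P(k)}^{G(k)}V'$, so the ``diagonal in $\cP$-coefficients'' argument does not apply. The paper handles this by a multiplicity-one argument: for $\nu\in X_*(T)_-$, the composite $f\circ T_\mu(\nu(\pi_K))$ and $f_{\ov{N}_\nu(k)}$ both lie in $\Hom_{M_\nu(k)}\bigl((\Ind_{P(k)}^{G(k)}V)_{\ov{N}_\nu(k)},\sig_{\ov{N}_\nu(k)}\bigr)$, and this Hom space is one-dimensional. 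The one-dimensionality is exactly where $M$-regularity enters, via Lemma \ref{lem:coinv-comp} (which computes $(\Ind_{P(k)}^{G(k)}F_M(\lam))_{\ov{U}(k)}\simeq\oplus_{w_M\in W^M}\lam\ix{w_M}$), not via Lemma \ref{lem:inv-coinv-SW} as you cite. You attribute $M$-regularity to the later vanishing step, which is a misdiagnosis; without invoking Lemma \ref{lem:coinv-comp} and the $M$-regularity hypothesis, the descent itself is unjustified.

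Second, the vanishing for non-antidominant $\mu$. You propose that the transported operator at $\mu_0(\pi_K)$ (with $\mu_0$ the antidominant $W$-conjugate of $\mu$) is ``induced from the natural $M_\mu(k)$-equivariant map $\sig^{N_\mu(k)}\to\sig_{\ov N_\mu(k)}$ attached to $P_\mu$'' and that this vanishes ``on the relevant extremal weight line''. Two problems. (a) Lemma \ref{lem:hecke-decomp} at level $\rmK$ involves the \emph{antidominant} cocharacter $\mu_0$ and the parabolic $P_{\mu_0}$, not $P_\mu$; there is no canonical ``natural map'' attached to the non-standard $P_\mu$ appearing in the Hecke-module description. (b) More importantly, the vanishing has nothing to do with any map on $\sig$ being zero. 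Once the descent is established, the correct mechanism — as in the paper — is a support statement at the parahoric level: $T_\mu$ is supported on $\cP\mu(\pi_K)\cP$, and for $\nu\in X_*(T)_-$ one has $\nu(\pi_K)\in\cP\mu(\pi_K)\cP$ iff $\nu=\mu$. Evaluating the commutative diagram of Lemma \ref{lem:cind-transitive} on the highest weight vector $v\in\sig^{U(k)}$ (viewed inside $\otimes_I\sig_I\mono\Ind_{P(k)}^{G(k)}\otimes_I\sig_I$ so that $\ev_1$ picks out the $\cP$-level value), one finds $T_\mu(\nu(\pi_K))(v)(1)=0$ unless $\nu=\mu$, and $=v$ when $\nu=\mu$. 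So the vanishing forces $\mu$ antidominant, and the scalar is $1$ when it is. Your sketch recognizes that ``matching the parahoric-level description with the $\rmK$-level description'' is the crux, but what you write for that matching is not correct; the $M$-regularity is not what kills the non-antidominant terms.
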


\begin{rmk}
    A version of the above result with $\sig^\circ(\tau)$ replaced by an algebraic representation was proven by Herzig \cite[Prop.~2.10]{HerzigSatake}.
\end{rmk}

For the proof, we make use of the following lemma.

\begin{lem}\label{lem:coinv-comp}
    Let $\sig= F_M(\lam)$ be a Serre weight of $M(k)$ for $\lam \in X_f^*(T)$. We have the following isomorphism of $T(k)$-representations
    \begin{align*}
        (\Ind_{P(k)}^{G(k)}\sig)_{\ov{U}(k)} \simeq \oplus_{w_M \in W^M} \lam\ix{w_M}.
    \end{align*}
\end{lem}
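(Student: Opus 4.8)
The plan is to compute the $T(k)$-representation $\Pi_{\ov{U}(k)}$, where $\Pi := \Ind_{P(k)}^{G(k)}\sig$, by determining all of its character multiplicities. Since $\#T(k)=(q-1)^n$ is prime to $p$, the algebra $\F[T(k)]$ is semisimple and (as $\F$ is taken large enough) every simple $\F[T(k)]$-module is a character, so $\Pi_{\ov{U}(k)}$ is determined up to isomorphism by the numbers $\dim_\F\Hom_{T(k)}(\Pi_{\ov{U}(k)},\psi)$ as $\psi$ runs over the characters of $T(k)$. Using the adjunction between $\ov{U}(k)$-coinvariants and inflation along $\ov{B}(k)\epi T(k)$ (where $\ov{B}$ is the Borel opposite to $B$), this multiplicity equals $\Hom_{\ov{B}(k)}(\Pi,\psi)$ with $\psi$ inflated to $\ov{B}(k)$.

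First I would apply Mackey's formula for the Bruhat decomposition $G(k)=\bigsqcup_{w}\ov{B}(k)\dot{w}P(k)$, together with Frobenius reciprocity, to obtain
\[
\Hom_{\ov{B}(k)}(\Pi,\psi)\;\simeq\;\bigoplus_{w}\Hom_{\ov{B}(k)\cap\,{}^{\dot{w}}P(k)}\!\big({}^{\dot{w}}\sig,\,\psi\big),
\]
the sum running over a system of representatives of $\ov{B}(k)\backslash G(k)/P(k)$; this double coset space is in natural bijection with $W/W_M$, and each coset has a unique representative $w$ with $w(\Phi_M^+)\subseteq\Phi^+$, which I will use. For such $w$ I would write $\ov{B}(k)\cap{}^{\dot{w}}P(k)=T(k)\ltimes(\ov{U}(k)\cap{}^{\dot{w}}P(k))$ and note that $\psi$, being inflated from $T(k)$, is trivial on the second factor; hence the summand equals $\Hom_{T(k)}\big(({}^{\dot{w}}\sig)_{\ov{U}(k)\cap{}^{\dot{w}}P(k)},\,\psi\big)$. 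Since $\sig$ is inflated trivially from the unipotent radical $N$ of $P$, these coinvariants collapse to $\sig_{V_w(k)}$, where $V_w:=(\dot{w}^{-1}\ov{U}\dot{w})\cap M$, carrying the $T(k)$-action $t\mapsto\sig(\dot{w}^{-1}t\dot{w})$. For the chosen representative, $V_w(k)=\ov{U}_M(k)$ is the unipotent radical of the Borel of $M$ opposite to $B\cap M$, so Lemma \ref{lem:inv-coinv-SW} applied to $M$ (a product of general linear groups, with its standard Borel) gives $\sig_{\ov{U}_M(k)}\simeq\lam$ as a $T(k)$-representation; keeping track of the conjugation twist yields $({}^{\dot{w}}\sig)_{\ov{U}(k)\cap{}^{\dot{w}}P(k)}\simeq\lam\ix{w^{-1}}$.

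Putting this together, $\Hom_{T(k)}(\Pi_{\ov{U}(k)},\psi)\simeq\bigoplus_w\Hom_{T(k)}(\lam\ix{w^{-1}},\psi)$, and re-indexing $\{w:w(\Phi_M^+)\subseteq\Phi^+\}$ by $w_M:=w^{-1}$, which runs exactly over $W^M$, identifies the right-hand side with $\Hom_{T(k)}(\bigoplus_{w_M\in W^M}\lam\ix{w_M},\psi)$. As this holds for every character $\psi$, the semisimplicity of $\F[T(k)]$ forces $\Pi_{\ov{U}(k)}\simeq\bigoplus_{w_M\in W^M}\lam\ix{w_M}$, as claimed.

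The routine but genuinely fiddly part is the $T(k)$-equivariant bookkeeping: confirming the root-datum identities $\ov{B}\cap{}^{\dot{w}}P=T\ltimes(\ov{U}\cap{}^{\dot{w}}P)$ and the compatible splitting of $\ov{U}\cap{}^{\dot{w}}P$ into its $M$- and $N$-parts, and — most importantly — tracking the conjugation by $\dot{w}$ through the Mackey and Frobenius-reciprocity isomorphisms so that the twist comes out as $\lam\ix{w^{-1}}$ and the indexing set is precisely the $W^M$ of the statement (rather than, say, $(W^M)^{-1}$, or $W^M$ with $\lam$ untwisted). The only non-formal inputs are Lemma \ref{lem:inv-coinv-SW} and the semisimplicity of $\F[T(k)]$; everything else is standard finite-group representation theory.
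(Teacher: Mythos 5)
Your proof is correct, but the route is different from the paper's. The paper works directly with coinvariants: it restricts $\Ind_{P(k)}^{G(k)}\sig$ to $\ov{B}(k)$ using the Bruhat decomposition $G(k)=\sqcup_{w_M\in W^M}P(k)w_M\ov{U}(k)$, and for each summand it exhibits the subspace $H_{w_M}(\sig)$ of functions supported on $P(k)w_M$, shows that $H_{w_M}(\sig)$ surjects onto the $\ov{U}(k)$-coinvariants, identifies $H_{w_M}(\sig)\simeq\sig$ via evaluation at $w_M$ with the $T(k)$-action twisted by $w_M$, and observes that the coinvariant map factors through $(H_{w_M}(\sig))_{P(k)\cap\ov{U}(k)}\simeq\lam\ix{w_M}$; since $\ov{U}(k)$ is a $p$-group, the coinvariants are nonzero, so this one-dimensional upper bound is sharp. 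You instead dualize: using that $\#T(k)$ is prime to $p$ and $\F$ is large enough, you characterize the coinvariants by character multiplicities, compute $\Hom_{\ov{B}(k)}(\Ind_{P(k)}^{G(k)}\sig,\psi)$ via Mackey's formula and Frobenius reciprocity over the same double cosets, and pass back to an isomorphism via semisimplicity of $\F[T(k)]$. Both arguments hinge on the same Bruhat decomposition and the same per-coset identification $\sig_{\ov{U}_M(k)}\simeq\lam$ via Lemma \ref{lem:inv-coinv-SW}, but yours trades the explicit coinvariant computation for the Mackey/semisimplicity machinery. The tradeoff: the paper's argument produces an isomorphism directly (and would apply even where $\F[T(k)]$ were not semisimple), whereas yours is slightly cleaner to verify but rests essentially on $p\nmid\#T(k)$ — which is of course available here. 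The root-system bookkeeping you flag (that for the minimal-length representative $w$ of $wW_M$ one has $(\dot w^{-1}\ov{U}\dot w)\cap M=\ov{U}_M$, and that passing to $w_M=w^{-1}$ reindexes by $W^M$ and produces the twist $\lam\ix{w_M}$ under the paper's convention for $V\ix{g}$) all checks out.
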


\begin{proof}
    By the generalized Bruhat decomposition, we have
    \begin{align*}
       \Ind_{P(k)}^{G(k)}\sig|_{\ov{B}(k)} &= \oplus_{w_M\in W^M} \Ind_{P(k)}^{P(k)w_M \ov{U}(k)}\sig \\
        (\Ind_{P(k)}^{G(k)}\sig)_{\ov{U}(k)} &= \oplus_{w_M\in W^M} ( \Ind_{P(k)}^{P(k)w_M \ov{U}(k)}\sig)_{\ov{U}(k)}.
    \end{align*}
    We claim that $( \Ind_{P(k)}^{P(k)w_M \ov{U}(k)}\sig)_{\ov{U}(k)} \simeq \lam\ix{w_M}$ as $T(k)$-representations. To see this, first note that the subset
    \begin{align*}
        H_{w_M}(\sig) := \CB{f\in  \Ind_{P(k)}^{P(k)w_M \ov{U}(k)}\sig \mid \supp f \subset P(k)w_M}
    \end{align*}
    is mapped surjectively onto $ (\Ind_{P(k)}^{P(k)w_M \ov{U}(k)}\sig)_{\ov{U}(k)}$. We have a $T(k)$-action and a $P(k)$-action on $H_{w_M}(\sig)$ given by the right and left translations, respectively. The evaluation map $H_{w_M}(\sig) \xra{\ev_{w_M}} \sig$ is a $(P(k),T(k))$-biequivariant isomorphism where $T(k)$ acts on $\sig$ by $(\sig|_{T(k)})\ix{w_M}$. It follows from the property of $w_M$ that $w_M^\mo (P(k) \cap \ov{U}(k)) w_M \subset \ov{U}(k)$. Thus, the quotient map $H_{w_M}\epi (\Ind_{P(k)}^{P(k)w_M \ov{U}(k)}\sig)_{\ov{U}(k)}$ factors through $(H_{w_M})_{P(k) \cap \ov{U}(k)}$, which is isomorphic to $(\sig_{P(k) \cap \ov{U}(k)})\ix{w_M} \simeq \lam\ix{w_M}$ as $T(k)$-representations. This shows the claimed isomorphism.
\end{proof}

\begin{proof}[Proof of Theorem \ref{thm:hecke-modp-red}]
    
    By definition, there is a Jordan--H\"older constituent $\sig_I$ of $\osig(\tau_I)$ for each $I\in \fP_\tau$ such that $\sig^{N(k)}\simeq \otimes_{I}\sig_I$. We first show that there is a morphism
    \begin{align*}
        \cH(\rmG,\sig^\circ(\tau)) \ra \cH(\rmG, \Ind_{P(k)}^{G(k)}\otimes_I \sig_I)
    \end{align*}
    which maps $T_\mu\in \cH(\rmG,\sig^\circ(\tau))$ for $\mu\in X_*(Z(M))$ to $T_\mu\in \cH(\rmG, \Ind_{P(k)}^{G(k)}\otimes_I \sig_I)$. To prove this, we show that for any $M(k)$-subrepresentation $V\subset \otimes_I \osig(\tau_I)$, any $\rmG$-endomorphism $\phi$ of $\ind_{\cP}^{\rmG} \otimes_I \osig(\tau_I)$ preserves the $\rmG$-subrepresentation $\ind_\cP^\rmG V \subset \ind_{\cP}^{\rmG} \otimes_I \osig(\tau_I)$. In turn, it suffices to show that $\phi$ maps $V \subset \ind_\cP^\rmG V$ to $\ind_\cP^\rmG V$. In other words, if we view $\phi: \rmG \ra \End(\otimes_I \osig(\tau_I))$ as a $\cP$-biequivariant function, then $\phi(g)(V) \subset V$ for any $g\in \rmG$. Since $\phi$ is supported on the union of double cosets $\cP \mu(\pi_K)\cP$ for $\mu\in X_*(Z(M))$, by Lemma \ref{lem:hecke-decomp-para}, $\phi(g)$ is always $M(k)$-equivariant. Therefore, it maps $V$ to itself. The claim that $T_\mu$ is mapped to $T_\mu$ follows immediately.

    Now we construct a morphism 
    \begin{align*}
         \cH(\rmG, \Ind_{P(k)}^{G(k)}\otimes_I \sig_I) \ra \cH(\rmG, \sig).
    \end{align*}
    As mentioned in Remark \ref{rmk:cuspidal-SW}, $\sig$ can be realized as a quotient of $\Ind_{P(k)}^{G(k)}\otimes_I \sig_I$. Let $f: \Ind_{P(k)}^{G(k)}\otimes_I \sig_I \epi \sig$ be the quotient map. 
    We show that for any $\phi \in \cH(\rmG, \Ind_{P(k)}^{G(k)}\otimes_I \sig_I)$, $f\circ \phi$ factors through $f$. 
    By Lemma \ref{lem:hecke-decomp}, $\phi(\mu(\pi_K))$ for $\mu\in X_*(T)_-$ factors through $(\Ind_{P(k)}^{G(k)}\otimes_I \sig_I)_{\ov{N}_\mu(k)}$ and lands into $(\Ind_{P(k)}^{G(k)}\otimes_I \sig_I)^{N_\mu(k)}$. 
    Since $\sig^{N_\mu(k)}\simeq \sig_{\ov{N}_\mu(k)}$, it suffices to show that $\Hom_{M_{\mu}(k)}((\Ind_{P(k)}^{G(k)}\otimes_I \sig_I)_{\ov{N}_\mu(k)}, \sig_{\ov{N}_\mu(k)})$ is one dimensional over $\F$. This dimension is bounded by the dimension of $\Hom_{T(k)}((\Ind_{P(k)}^{G(k)}\otimes_I \sig_I)_{\ov{U}(k)}, \sig_{\ov{U}(k)})$. Then the claim follows from Lemma \ref{lem:coinv-comp} and the fact that $\sig$ is $M$-regular.
   
    It remains to prove the description of $\cR_{\sig^\circ(\tau)}^\sig$. Let $\mu\in X_*(Z(M))$ be a cocharacter and $T_\mu\in \cH(G,\Ind_{P(k)}^{G(k)} \otimes_{I\in\fP_\tau}\sig_I)$. To show that $\cR_{\sig^\circ(\tau)}^\sig({T}_\mu)=0$ unless $\mu$ is antidominant, we compute its action on the highest weight vector $v\in \sig^{U(k)}$ after evaluating at $\nu(\pi_K)$ for $\nu \in X_*(T)_-$. By the isomorphism $\otimes_{I\in\fP_\tau}\sig_I \simeq \sig^{N(k)}$, we can view $v$ as the highest weight vector in $\otimes_{I\in\fP_\tau}\sig_I$. The image of $v$ under the natural injection $\otimes_{I\in\fP_\tau}\sig_I \mono \Ind_{P(k)}^{G(k)}\otimes_{I\in\fP_\tau}\sig_I$ gives a preimage of $v$ along the quotient map $\Ind_{P(k)}^{G(k)}\otimes_{I\in\fP_\tau}\sig_I  \epi \sig$, which we again denote by $v$. To compute $\cR_{\sig^\circ(\tau)}^\sig({T}_\mu)(\nu(\pi_K))(v)$, we only need to understand the highest weight component of $T_\mu(\nu(\pi_K))(v)$. This is exactly given by the image of $T_\mu(\nu(\pi_K))(v)$ under the evaluation map $\ev_{1}:\Ind_{P(k)}^{G(k)}\otimes_{I\in\fP_\tau}\sig_I \ra \otimes_{I\in\fP_\tau}\sig_I$. In short, we have the following commutative diagram
    \[
    \begin{tikzcd}
        \Ind_{P(k)}^{G(k)}\otimes_{I\in\fP_\tau}\sig_I \arrow[r, "T_{\mu}"] & \ind_{\rmK}^{\rmG} \Ind_{P(k)}^{G(k)}\otimes_{I\in\fP_\tau}\sig_I \arrow[r, "\ev_{\nu(\pi_K)}"] & \Ind_{P(k)}^{G(k)}\otimes_{I\in\fP_\tau}\sig_I \arrow[d, "\ev_{1}"] \\
        \otimes_{I\in\fP_\tau}\sig_I \arrow[r, "T_{\mu}"] \arrow[u, hook] & \ind_{\cP}^\rmG \otimes_{I\in\fP_\tau}\sig_I \arrow[r, "\ev_{\nu(\pi_K)}"] \arrow[u, equal] & \otimes_{I\in\fP_\tau}\sig_I.
    \end{tikzcd}
    \]
    Here, the left vertical arrow is the natural injection, and the middle vertical equality and the commutativity of all squares follow from Lemma \ref{lem:cind-transitive}. Since $T_\mu$ is supported on $\cP \mu(\pi_K)\cP$, ${T}_\mu(\nu(\pi_K))(v)(1)$ is zero unless $\mu=\nu$, and if $\mu=\nu$, it is equal to $v$ by the definition of $T_\mu$. This proves our claim.
\end{proof}

\begin{rmk}
    Later, we compare $\cR_{\sig^\circ(\tau)}^\sig$ with an analogous map on the Galois side (Corollary \ref{cor:spec-hecke-modp}). Using the connection to the Galois side, we can obtain a morphism $\cH(\rmG, \sig^\circ(\tau)) \ra \cH(\rmG,\sig)$ for \textit{any} $\sig \in \JH(\osig(\tau))$. However, it is not clear how to obtain such a morphism in a purely representation-theoretic method.  
\end{rmk}

\begin{example}\label{ex:hecke}
    Suppose that $\tau$ is a regular principal series tame inertial type, i.e.~$\tau\simeq \oplus_{i=1}^{n}\chi_i$ where $\chi_i = \om_{K,\sig_0}^{\bfa_i}$ for $n$ distinct integers $\bfa_i \in [0,p^f-1)$. If we define a character $\mu=(\mu_1,\dots,\mu_n) : T(k) \ra \cO^\times$ by $\mu_i = \chi_i \circ \Art_K|_{\cO_K^\times}$, then $\sig^\circ(\tau)$ is equal to $\Ind_{B(k)}^{G(k)}\mu$. In this case, $s_\tau$ and $\sim_\tau$ are trivial, and so is $\fS_{\ov{\fP}_\tau}$. By Proposition \ref{prop:int-hecke-tame-types}, $\cH^\circ(\rmG,\sig^\circ(\tau))$ is isomorphic to the subalgebra of $\cO[x_1^\pm,\dots,x_n^\pm]$ generated by $q^{-\#\fI(\#\fI-1)/2}x_{\fI}$ for all subsets $\fI\subset \bfn$ and $q^{n(n-1)/2}x_{\bfn}^\mo$.

    Let $\mu'\in W\mu$. There is a unique $q$-restricted character whose restriction to $T(k)$ is equal to $\mu'$, which we denote by $\mu'$ again. By Remark \ref{rmk:indep-lattice}, we can change the ordering of $\chi_i$ and thus change $\sig^\circ(\tau)$ to the one 
    whose cosocle is isomorphic to $F(\mu')$ while keeping $\cH(\rmG,\sig^\circ(\tau))$ the same. 
    Then Theorem \ref{thm:hecke-modp-red} shows that there is a surjective morphism $\cH(\rmG,\sig^\circ(\tau)) \ra \cH(\rmG,F(\mu'))$ for any $\mu' \in W\mu$.
\end{example}

\section{Moduli stack of Breuil--Kisin modules}\label{sec4}
In \S\ref{subsec:BKmodules}, we discuss the moduli stack of Breuil--Kisin modules with tame descent data and prove its normality by following \cite{CL-ENS-2018-Kisinmodule-MR3764041} closely. In \S\ref{subsec:global-func}, we construct certain global functions on it, which are modelled by Frobenius eigenvalues of Weil--Deligne representations associated to Breuil--Kisin modules. In \ref{subsec:WD}, we relate the ring of global functions with integral Hecke algebras discussed in \S\ref{sec3}.

Throughout this section, we fix a tame inertial type $\tau$. Recall that we have $s_\tau \in W$ associated with $\tau$. We write $r:= \abs{s_\tau}$.

\subsection{Breuil--Kisin modules}\label{subsec:BKmodules}

We start by recalling the definition of Breuil--Kisin modules with tame descent data as in \cite[\S3.1]{LLLM-extreme} (which is based on \cite[\S5.1]{LLLMlocalmodel}). We refer the reader to \loccit~for omitted details. 

Let $K'$ be the subfield of $\ov{K}$ which is the unramified extension of degree $r$ over $K$. Let $K_0'$ be the maximal unramified subextension of $K'$ over $\Qp$ and $k'$ be the residue field of $K'$. We set $\cJ':= \CB{K_0'\mono E}$, $\tilcJ':=\CB{K' \mono E}$, $f':=fr$, and $e'= p^{f'}-1$. We fix an $e'$-root $\pi_{K'}$ of $\pi_K$ and define a field extension $L':= K'(\pi_{K'})/K$. We write $\Del':= \Gal(L'/K') \subset \Del := \Gal(L'/K)$. We denote by $\sig^f\in \Gal(L'/K)$ the lifting of the $q$-power Frobenius on $W(k')$ which fixes $\pi_{K'}$. 

For a $p$-adically complete $\cO$-algebra $R$, we define $\fS_{L',R}:= (W(k')\otimes_{\Zp} R)\DB{u'}$. It has a usual Frobenius $\varphi : \fS_{L',R} \ra \fS_{L',R}$ acting on $W(k')$ as Frobenius, on $R$ trivally, and sends $u'$ to $(u')^p$. It also has an action of $\Del$ such that $\fS_{L',R}^{\Del} = (W(k)\otimes_{\Zp} R)\DB{v}$ where $v:= (u')^{ p^{f'}-1}$.

Let $E(v)$ be the minimal polynomial for $\pi_K$ over $K_0$ of degree $e$. For each $\sig_j \in \cJ$, we write $E_j(v) := \sig_{j}(E(v)) \in \cO[v]$. Note that $E(v) = E((u')^{p^{f'}-1})$ is the minimal polynomial for $\pi_{K'}$ over $K_0$. 

For any module $\fM$ over $\fS_{L',R}$, we have a $R\DB{u'}$-linear decomposition $\fM \simeq \oplus_{j'\in \cJ'} \fM\ix{j'}$ induced by the maps $W(k') \otimes_{\Zp} R \ra R$ defined by $x\otimes r \mapsto \sig_{j'} (x) r$ for $j'\in \cJ'$. 

\begin{defn}[{\cite[Def.~5.1.3]{LLLMlocalmodel}}]\label{defn:BKmod}
    Let $h$ be a positive integer. We let $Y^{[0,h],\tau}(R)$ be the groupoid of \textit{Breuil--Kisin modules of rank $n$ over $\fS_{L,R}$ and height in $[0,h]$ equipped with a tame descent data $\tau$}. An object in $Y^{[0,h]}(R)$ is a pair $(\fM,\phi_\fM)$ together with an $\Del$-action on $\fM$ where
    \begin{enumerate}
        \item $\fM$ is a finitely generated projective $\fS_{L',R}$-module locally free of rank $n$;
        \item $\phi_\fM: \varphi^*(\fM) \ra \fM$ is an injective $\fS_{L',R}$-linear map whose cokernel is annihilated by $E(v)^h$; and
        \item the $\Del$-action is semilinear and commutes with $\phi_\fM$, and for each $j'\in\cJ'$, 
        \begin{align*}
            \fM\ix{j'}\mod u' \simeq \tau^\vee \otimes R
        \end{align*}
        as $\Del'$-representations.
    \end{enumerate}
    In particular, the semilinear $\Del$-action induces an isomorphism $(\sig^f)^*\fM \simeq \fM$ as objects of $Y^{[0,h],\tau}(R)$ (see \cite[\S6.1]{LLLM1}).
\end{defn}

It is known that $Y^{[0,h],\tau}$ is a $p$-adic formal algebraic stack of finite type over $\Spf \cO$ \cite[Prop.~3.1.3 and 3.3.5]{cegsB}.



\begin{defn}
    For each $j'\in \cJ'$, we denote by $\phi_\fM\ix{j'}: \varphi^*(\fM\ix{j'-1}) \ra \fM\ix{j'}$ the morphism induced by  $\phi_\fM$. 
\end{defn}

\begin{defn}[{\cite[Def.~5.1.6]{LLLMlocalmodel}}]\label{defn:eigenbasis}
    An \textit{eigenbasis} of $\fM$ is a collection of ordered bases $\be\ix{j'} = (f_1\ix{j'},f_2\ix{j'},\dots, f_n\ix{j'})$ for each $\fM\ix{j'}$ for $j'\in\cJ'$ such that for each $i=1,\dots,n$,
    \begin{itemize}
        \item $\Del'$ acts on $f_i\ix{j'}$ by $\chi_i^\mo$; and
        \item $\sig^f$ maps $f_i\ix{j'}$ to $f_{s_\tau^\mo(i)}\ix{j'+f}$.
    \end{itemize}
\end{defn}

Let $\chi:\Del' \ra \cO^\times$ be a character. For any $R\DB{u'}$-module $M$ with semilinear $\Del'$-action, we denote by $M_{\chi}\subset M$ the $R\DB{v}$-submodule given by the $\chi$-eigenspace. For $\fM \in Y^{[0,h],\tau}(R)$ and $j'\in \cJ'$, the $R\DB{u'}$-linear partial Frobenii $\phi_{\fM}\ix{j'} : \varphi^*(\fM\ix{j'-1}) \ra \fM\ix{j'}$ restrict to $R\DB{v}$-linear morphism $\phi_{\fM,\chi}\ix{j'} : \varphi^*(\fM\ix{j'-1})_{\chi} \ra \fM\ix{j'}_{\chi}$. Given eigenbasis $\be= \CB{f_1\ix{j'},\dots, f_n\ix{j'}}_{j'\in\cJ'}$ for $\fM$, we define
\begin{align*}
            \ga\ix{j'} := \CB{ u^{\bfa\ix{j'}_{s_{\rmor,j'}^\mo(i)} - \bfa\ix{j'}_{s_{\rmor,j'}^\mo(n)}}f_{s_{\rmor,j'}^\mo(i)}\ix{j'}}_{i=1}^n.
        \end{align*}
which is a basis for  $\fM\ix{j'}_{ \chi^\mo_{s_{\rmor,j'}^\mo(n)}}$, and we also define
\begin{align*}
    \prescript{\varphi}{}{\ga}\ix{j'-1} := \CB{ u^{\bfa\ix{j'}_{s_{\rmor,j'}^\mo(i)} - \bfa\ix{j'}_{s_{\rmor,j'}^\mo(n)}} \otimes f_{s_{\rmor,j'}^\mo(i)}\ix{j'-1}}_{i=1}^n
\end{align*}
which is a basis for $\varphi^{*}(\fM\ix{j'-1})_{ \chi^\mo_{s_{\rmor,j'}^\mo(n)}}$. 

\begin{defn}\label{defn:partial-frob}
    For $\fM$, $\be$, and $j'\in \cJ'$ as above, we define $A_{\fM,\be}\ix{j'} \in \Mat_{n}(R\DB{v})$ the \textit{matrix of the $j'$-th partial Frobenius of $\fM$ with respect to $\be$} is the matrix of $\phi_{\fM, \chi_{s_{\rmor,j'}(n)}^\mo}\ix{j'}: \varphi^{*}(\fM\ix{j'-1})_{ \chi_{s_{\rmor,j'}^\mo(1)}} \ra \fM\ix{j'}_{ \chi_{s_{\rmor,j'}^\mo(1)}}$ with respect to the bases 
$\prescript{\varphi}{}{\ga}\ix{j'-1}$ and $\ga\ix{j'}$.
\end{defn}

\begin{rmk}
    By item (2) in Definition \ref{defn:eigenbasis}, the matrix $A\ix{j'}_{\fM,\be}$ only depends on $j'\mod f$.
\end{rmk}

For each $j\in \cJ$, let $P_{j}\subset \GL_n$ be the upper trangular parabolic subgroup with Levi factor $M_{j}$ defined by the condition that $\al$-entry vanishes for all roots $\al$ such that $\RG{s_{\rmor,j'}^\mo(\bfa\ix{j'}),\al^\vee}<0$ (this condition only depends on $j'\mod f$). We let $\cP_{j}$ be the pro-algebraic group scheme over $\cO$ defined by
\begin{align*}
    \cP_{j}(R) =\CB{g\in \varprojlim_{r}\GL_n(R[v]/E_j(v)^r) \mid g\mod v \in P_{j}(R)}
\end{align*}
for $\cO$-algebra $R$. Then $\cP_j$ is formally smooth over $\cO$ (see \cite[Prop.~2.21]{CL-ENS-2018-Kisinmodule-MR3764041} and the preceding paragraph).

Let $Y^{[0,h],\tau,\be}$ be a category fibered in groupoids over $\Spf \cO$ whose groupoid of $R$-points, for a $p$-adically complete $\cO$-algebra $R$, is the groupoid of pairs $(\fM,\be)$ where $\fM \in Y^{[0,h],\tau}(R)$ and $\be$ is an eigenbasis for $\fM$. 

\begin{prop}\label{prop:eigenbasis-torsor}
    The map $Y^{[0,h],\tau,\be} \ra Y^{[0,h],\tau}$ is a torsor for $\prod_{\jj} \cP_j$.
\end{prop}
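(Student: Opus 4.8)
\emph{Proof sketch.} The plan is to verify directly the three properties defining a $\prod_{j\in\cJ}\cP_j$-torsor: that $\prod_{j\in\cJ}\cP_j$ acts on $Y^{[0,h],\tau,\be}$ over $Y^{[0,h],\tau}$, that the action is simply transitive on fibres, and that the map is an epimorphism in the fppf (indeed smooth) topology. For the action: given $(\fM,\be)\in Y^{[0,h],\tau,\be}(R)$ and $(g_j)_{j\in\cJ}\in\prod_{j\in\cJ}\cP_j(R)$, one keeps $(\fM,\phi_\fM)$ and its descent data and replaces, for $j\in\cJ=\{0,\dots,f-1\}$, the twisted basis $\ga\ix{j}$ of $\fM\ix{j}_{\chi^\mo_{s_{\rmor,j}^\mo(n)}}$ by $\ga\ix{j}\cdot g_j$, propagating to the remaining $j'\in\cJ'$ via the $\sig^f$-equivariance required of an eigenbasis; this is consistent because $\sig^{fr}=1$ in $\Gal(L'/K)$ and $s_\tau^r=1$. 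One then checks the result is again an eigenbasis: $\sig^f$-equivariance holds by construction, and the untwisted vectors still lie in $\fM$ and carry the correct $\Del'$-types precisely because the defining condition $g_j\bmod v\in P_j(R)$ of $\cP_j$, read through the description of $P_j$ via the exponents $\bfa\ix{j}$, forces the entries of $g_j$ outside the parabolic blocks to be divisible by $v$. This yields a functorial action over $\Spf\cO$ compatible with the projection to $Y^{[0,h],\tau}$.

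For simple transitivity I would show $\prod_{j\in\cJ}\cP_j\times Y^{[0,h],\tau,\be}\to Y^{[0,h],\tau,\be}\times_{Y^{[0,h],\tau}}Y^{[0,h],\tau,\be}$ is an isomorphism. Uniqueness is immediate since the action on bases is free. For existence, let $\be_1,\be_2$ be two eigenbases of a fixed $\fM$; the change of basis is an invertible matrix for each $j'\in\cJ'$, and the $\sig^f$-equivariance of $\be_1,\be_2$ shows this datum is determined by the matrices $g_j$ for $j\in\cJ$. The $\Del'$-equivariance of $\be_1,\be_2$ forces each entry of $g_j$ into the $\Del'$-eigenspace of $R\DB{u'}$ prescribed by the relevant ratio of the characters $\chi_i$; passing to the twisted bases $\ga\ix{j}$, which span the single $R\DB{v}$-module $\fM\ix{j}_{\chi^\mo_{s_{\rmor,j}^\mo(n)}}$, this says exactly that $g_j\in\GL_n(R\DB{v})$ with $g_j\bmod v\in P_j(R)$. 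Finally, since $R\DB{v}$ is $(\varpi,v)$-adically complete and $E_j(v)\in(\varpi,v)$, the natural map $\GL_n(R\DB{v})\to\varprojlim_r\GL_n(R[v]/E_j(v)^r)$ is an isomorphism, so $g_j\in\cP_j(R)$.

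For the epimorphism property it suffices to produce eigenbases fppf-locally — in fact Zariski-locally — on $Y^{[0,h],\tau}$. Given $\fM\in Y^{[0,h],\tau}(R)$, since $\fM$ is a projective $\fS_{L',R}$-module and $\#\Del'=p^{f'}-1$ is prime to $p$, after localizing on $\Spec R$ each $\fM\ix{j'}$ splits into its $\Del'$-isotypic $R\DB{v}$-submodules, whose ranks are pinned down by the isomorphism $\fM\ix{j'}/u'\simeq\tau^\vee\otimes R$ in Definition \ref{defn:BKmod}; one then selects, for $j\in\cJ$, ordered bases $f_i\ix{j}$ of $\fM\ix{j}$ on which $\Del'$ acts by $\chi_i^\mo$, and propagates along $\sig^f$ to obtain an eigenbasis. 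Since $\prod_{j\in\cJ}\cP_j$ is formally smooth over $\cO$ (recalled before the Proposition), this realizes $Y^{[0,h],\tau,\be}\to Y^{[0,h],\tau}$ as a smooth $\prod_{j\in\cJ}\cP_j$-torsor.

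The step I expect to be the main obstacle is the identification of the group of changes of eigenbasis fixing $\fM$ with $\prod_{j\in\cJ}\cP_j$ in the simple-transitivity argument: one must match the $\Del'$-eigenspace constraints — controlled by the characters $\chi_i$ and the $u'$-divisibility they impose — against the parabolic condition ``reduction mod $v$ lies in $P_j$'', which is where the explicit combinatorics of $\bfa\ix{j}$, the orientation $s_{\rmor,j}$, and the choice of $P_j$ enter. The local existence of eigenbases is the other substantive input, and is exactly the point at which the condition $\fM\ix{j'}/u'\simeq\tau^\vee\otimes R$ is used.
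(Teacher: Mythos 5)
Your approach is correct and reproduces in detail the argument of \cite[Prop.~4.6]{CL-ENS-2018-Kisinmodule-MR3764041}, which is exactly what the paper cites for this statement (the paper's ``proof'' is a one-line reference with the remark that the principal-series argument there carries over). The step you flag as the main obstacle --- matching the $\Del'$-eigenspace constraints against the condition $g_j \bmod v \in P_j(R)$ via the dominance of $s_{\rmor,j'}^{-1}(\bfa^{(j')})$ --- is precisely the change-of-basis dictionary the paper already records in the paragraph preceding Proposition~\ref{prop:change-of-basis-formula}, so your proof is consistent with the paper's own setup.
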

\begin{proof}
    This is \cite[Prop.~4.6]{CL-ENS-2018-Kisinmodule-MR3764041}, except that they only consider a principal series $\tau$. The same argument works in our setting.
\end{proof}

\begin{rmk}
    Let $C_{\fM,\be}\ix{j'} \in \Mat_{n}(R\DB{u})$ denote the matrix of $\phi_{\fM}\ix{j'}: \varphi^{*}(\fM\ix{j'-1}) \ra \fM\ix{j'}$ with respect to the bases $\varphi^*(\be\ix{j'-1})$ and $\be\ix{j'}$. Then we have (\cite[(5.4)]{LLLMlocalmodel}) 
\begin{align}\label{eqn:mat-A-C-change}
    A_{\fM,\be}\ix{j'} = \Ad( s_{\rmor,j'}^\mo u^{-\bfa\ix{j'}}) (C_{\fM,\be}\ix{j'} ).
\end{align}
In particular, this shows that $A_{\fM,\be}\ix{j'} \mod v$ is contained in $P_{j}(R)$ for $j\equiv j' \mod f$.
\end{rmk}

Let $\be_1$ and $\be_2$ be eigenbases of $\fM \in Y^{[0,h],\tau}(R)$. For each $\be_i$, we have defined $\ga_i$  before Definition \ref{defn:partial-frob}. For $j'\in\cJ'$, let $D\ix{j'} \in \GL_n(R\DB{u})$ be the matrix such that $ \be_1\ix{j'} = \be_2\ix{j'} D\ix{j'}$. We define $P\ix{j'} := \Ad( s_{\rmor,j'}^\mo u^{-\bfa\ix{j'}})(D\ix{j'})$. Since $s_{\rmor,j'}^\mo (\bfa\ix{j'})$ is dominant, $P\ix{j'}$ is valued in $\cP_{j'}(R)$ and $\ga_1\ix{j'}= \ga_2\ix{j'} P\ix{j'}$. Conversely, if $\ga_1\ix{j'}= \ga_2\ix{j'} P\ix{j'}$ for some $P\ix{j'}\in\cP_{j'}(R)$, then $D\ix{j'}:= \Ad(  u^{\bfa\ix{j'}}s_{\rmor,j'})(P\ix{j'})$ is in $\GL_n(R\DB{u})$ and $ \be_1\ix{j'} = \be_2\ix{j'} D\ix{j'}$.

\begin{prop}\label{prop:change-of-basis-formula}
    Let $\fM,\be_1,$ and $\be_2$ be as above. For $j'\in\cJ'$, we have the change of basis formula
    \begin{align*}
        A_{\fM,\be_2}\ix{j'} = P\ix{j'}  A_{\fM,\be_1}\ix{j'} (\Ad(s_{j'}^\mo v^{\mu_{j'} +\eta_{j'}} ) (\varphi(P\ix{j'-1})^\mo))
    \end{align*}
    where $(\Ad(s_{j'}^\mo v^{\mu_{j'} +\eta_{j'}} ) (\varphi(P\ix{j'-1})^\mo)$ is also valued in $\cP_{j'}(R)$.
\end{prop}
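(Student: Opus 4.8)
The plan is to run the standard change-of-basis computation for the matrix of a $\varphi$-semilinear map on the twisted bases $\ga\ix{j'}$ and $\prescript{\varphi}{}{\ga}\ix{j'-1}$, and then to perform the combinatorial bookkeeping that recasts the resulting torus-and-permutation factor in the stated form. First I would record the change-of-basis formula for the \emph{untwisted} matrices $C_{\fM,\be}\ix{j'}$: since $\be_1\ix{j'} = \be_2\ix{j'}D\ix{j'}$ for every $j'$, applying $\varphi^*$ gives $\varphi^*(\be_1\ix{j'-1}) = \varphi^*(\be_2\ix{j'-1})\,\varphi(D\ix{j'-1})$ with $\varphi$ acting entrywise on matrices, and substituting this into the defining relation $\phi_\fM\ix{j'}(\varphi^*(\be\ix{j'-1})) = \be\ix{j'}\,C_{\fM,\be}\ix{j'}$ for $\be = \be_1$ and $\be = \be_2$ yields
\begin{align*}
    C_{\fM,\be_2}\ix{j'} = D\ix{j'}\,C_{\fM,\be_1}\ix{j'}\,\varphi(D\ix{j'-1})^\mo.
\end{align*}

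Next I would conjugate by $s_{\rmor,j'}^\mo u^{-\bfa\ix{j'}}$; using multiplicativity of $\Ad$, the identity \eqref{eqn:mat-A-C-change}, and the definition $P\ix{j'} = \Ad(s_{\rmor,j'}^\mo u^{-\bfa\ix{j'}})(D\ix{j'})$, this gives
\begin{align*}
    A_{\fM,\be_2}\ix{j'} = P\ix{j'}\,A_{\fM,\be_1}\ix{j'}\,\Ad\!\left(s_{\rmor,j'}^\mo u^{-\bfa\ix{j'}}\right)\!\left(\varphi(D\ix{j'-1})^\mo\right),
\end{align*}
so everything reduces to identifying the last factor with $\Ad(s_{j'}^\mo v^{\mu_{j'}+\eta_{j'}})(\varphi(P\ix{j'-1})^\mo)$. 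Substituting $D\ix{j'-1} = \Ad(u^{\bfa\ix{j'-1}}s_{\rmor,j'-1})(P\ix{j'-1})$ and applying $\varphi$, which fixes the permutation matrix $s_{\rmor,j'-1}$ and replaces $u^{\bfa\ix{j'-1}}$ by $u^{p\bfa\ix{j'-1}}$, rewrites that factor as $\Ad(s_{\rmor,j'}^\mo\,u^{\,p\bfa\ix{j'-1}-\bfa\ix{j'}}\,s_{\rmor,j'-1})(\varphi(P\ix{j'-1})^\mo)$.

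It then remains to show that $s_{\rmor,j'}^\mo\,u^{\,p\bfa\ix{j'-1}-\bfa\ix{j'}}\,s_{\rmor,j'-1}$ and $s_{j'}^\mo v^{\mu_{j'}+\eta_{j'}}$ agree as diagonal-times-permutation matrices (hence as elements of $\tilW$). For this I would assemble the combinatorial data attached to $\tau$ in \S\ref{subsec:tame-inertial-types}: the identity $v = (u')^{e'}$, so that $u^{\,p\bfa\ix{j'-1}-\bfa\ix{j'}}$ — which is a power of $v$ by the $\chi$-eigenspace structure — becomes visible as such; the formula for $p\,\bfa\ix{j'-1}-\bfa\ix{j'}$ in terms of the $p$-restricted pieces $\bal'_\bullet$, immediate from $\bfa\ix{j'} = \bfa^{\prime(j')} = \sum_k \bal'_{-j'+k}p^k$ and periodicity; the recursion defining $s_{j'}$ from the orientation $s_{\rmor}$, together with the formula expressing $\mu_{j'}+\eta_{j'}$ through the $\bal'_\bullet$ and $s_{\rmor}$; the wrap-around relation $s_{\rmor,j'+f} = s_\tau s_{\rmor,j'}$ at the period boundary; and the elementary identity $s^\mo t_\lam s' = (s^\mo s')\,t_{(s')^\mo\lam}$ in $\tilW$, used to commute translations past permutations. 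Finally, that $\Ad(s_{j'}^\mo v^{\mu_{j'}+\eta_{j'}})(\varphi(P\ix{j'-1})^\mo)$ again lies in $\cP_{j'}(R)$ I would check exactly as the analogous assertion for $P\ix{j'}$ in the paragraph preceding the proposition: dominance of $s_{\rmor,j'-1}^\mo(\bfa\ix{j'-1})$ (equivalently of $\mu_{j'}+\eta_{j'}$) ensures that conjugation by $v^{\mu_{j'}+\eta_{j'}}$ introduces no poles into $\varphi(P\ix{j'-1})^\mo$, which reduces mod $v$ into $P_{j'-1}(R)$.

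The first two displays are purely formal. The delicate step — and the main obstacle — is the matrix identity of the third paragraph: this is the only place where the precise combinatorics of tame inertial types enters, and it demands careful tracking of the indices $j'$ versus $j'\bmod f$, of the variables $u$, $u'$, $v$, and of the orientation data of $\tau$, including the $s_\tau$-twist at the period boundary. This is the computation carried out for principal series $\tau$ in \cite[\S5.1]{LLLMlocalmodel} (see also \cite[\S3.1]{LLLM-extreme}), and I expect the same argument to carry over to general $\tau$ with only notational changes.
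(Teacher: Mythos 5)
Your proposal is correct and follows essentially the same route as the paper's (very terse) proof: the paper likewise reduces to the untwisted relation $C_{\fM,\be_2}\ix{j'}=D\ix{j'}C_{\fM,\be_1}\ix{j'}\varphi(D\ix{j'-1})^\mo$, conjugates via \eqref{eqn:mat-A-C-change}, and uses the identity $\Ad(s_{j'}^\mo v^{\mu_{j'}+\eta_{j'}})(\varphi(P\ix{j'-1})^\mo)=\Ad(s_{\rmor,j'}^\mo u^{-\bfa\ix{j'}})(\varphi(D\ix{j'-1})^\mo)$ with $D\ix{j'-1}=\Ad(u^{\bfa\ix{j'-1}}s_{\rmor,j'-1})(P\ix{j'-1})$, citing \cite[Prop.~3.2.9]{LLLelim} for the combinatorial bookkeeping. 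You spell out the combinatorial step that the paper delegates to the literature; a quick check confirms it works: $p\bfa\ix{j'-1}-\bfa\ix{j'}=(p^{f'}-1)\bal'_{-j'}$, $v^{\bal'_{-j'}}$ then commutes past $s_{\rmor,j'-1}$, and the definitions $s_{j'}=s_{\rmor,j'-1}^\mo s_{\rmor,j'}$, $\mu_{j'}+\eta_{j'}=s_{\rmor,j'-1}^\mo s_\tau(\bal'_{f-j'})$ together with $s_\tau(\bal'_{f-j'})=\bal'_{-j'}$ give exactly the claimed identity of elements of $\utilW^\vee$.
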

\begin{proof}
    The formula follows from a direct computation unravelling the definition of $A_{\fM,\be_i}$. When $\tau$ is regular, this is explained in \cite[Prop.~3.2.9]{LLLelim}, and it generalizes to non-regular $\tau$ straightforwardly. The fact that $(\Ad(s_{j'}^\mo v^{\mu_{j'} +\eta_{j'}} ) (\varphi(P\ix{j'-1})^\mo)$ is valued in $\cP_{j'}(R)$ follows from
    \begin{align*}
        (\Ad(s_{j'}^\mo v^{\mu_{j'} +\eta_{j'}} ) (\varphi(P\ix{j'-1})^\mo) = \Ad( s_{\rmor,j'}^\mo u^{-\bfa\ix{j'}})(\varphi(D\ix{j'-1})^\mo)
    \end{align*}
    where $D\ix{j'-1}=\Ad(u^{\bfa\ix{j'-1}}s_{\rmor,j'-1})(P\ix{j'-1})$. 
\end{proof}

For parabolic subgroups $P_j\subset \GL_n$ and $E_j(v)\in  \cO[v]$ with $\jj$, we recall the functor $\Fl^{E(v)}_{P_\cJ}:= \prod_{\jj}\Fl_{P_j}^{E_j(v)}$ over $\Spf \cO$ defined in \cite[\S2]{CL-ENS-2018-Kisinmodule-MR3764041}. For $p$-adically complete $\cO$-algebra $R$, $\Fl_{P_j}^{E_j(v)}(R)$ is defined to be the groupoid of triples $(L\ix{j},\al\ix{j},\veps\ix{j})$ where
\begin{enumerate}
    \item $L\ix{j}$ is a finitely generated projective module over $R\DB{v}$ locally of rank $n$;
    \item $\al\ix{j}: L\ix{j}[1/E_j(v)] \simeq R\DB{v}[1/E_j(v)]^n$ is a trivialization;
    \item $\veps\ix{j} = \CB{\Fil^i L\ix{j}/vL\ix{j}}$ is an increasing filtration on $L\ix{j}/vL\ix{j}$ such that $\Fil^0 L\ix{j}/vL\ix{j} = 0$ and $\Fil^n L\ix{j}/vL\ix{j} = L\ix{j}/vL\ix{j}$ and Zariski locally on $R$ whose stabilizer is $P_j(R) \subset \GL_n(R)$
\end{enumerate}
Note that $\Fl^{E(v)}_{P_\cJ}$ can be viewed as a limit $\varinjlim_{r} \Fl^{E(v)}_{P_\cJ}\times_{\cO}\Spec \cO/\varpi^r$ and $\Fl^{E(v)}_{P_\cJ}\times_{\cO}\Spec \cO/\varpi^r$ is known to be represented by an ind-scheme of ind-finite type over $\cO/\varpi^r$ (\cite[Prop.~4.1.4]{Levin-localmodel}).

For an integer $h\ge 0$, we define a subfunctor $\Fl^{E(v),[0,h]}_{P_\cJ} \subset \Fl^{E(v)}_{P_\cJ}$ by imposing the condition $E_j(v)^{-h} R\DB{v}^n \subset \be\ix{j}(L\ix{j}) \subset R\DB{v}^n$ for each $\jj$. Then for each $r\ge 1$, $\Fl^{E(v),[0,h]}_{P_\cJ} \times_{\cO} \Spec \cO/\varpi^r$ is represented by a projective scheme over $\cO/\varpi^r$ \cite[Prop.~2.9]{CL-ENS-2018-Kisinmodule-MR3764041} and $\Fl^{E(v),[0,h]}_{P_\cJ}$ is a $p$-adic formal scheme over $\Spf \cO$.

We have a morphism $Y^{[0,h],\tau,\be} \ra \Fl^{E(v),[0,h]}_{P_\cJ}$ which maps $(\fM,\be)$ to $(L\ix{j},\al\ix{j},\veps\ix{j})$ given by
\begin{itemize}
    \item $L\ix{j}  = \varphi^*( \fM\ix{j'-1})_{\chi^\mo_{s_{\rmor,j'}^\mo(n)}}$;
    \item $\al\ix{j}: L\ix{j} \simeq R\DB{v}^n$ given by the basis $\prescript{\varphi}{}{\ga}\ix{j'-1}$;
    \item  $\veps\ix{j} = \CB{\Fil^i L\ix{j}/vL\ix{j}}$ where for $i=1,\dots,n$,
    \begin{align*}
        \Fil^i  L\ix{j}/vL\ix{j} := u^{\bfa\ix{j'}_{s_{\rmor,j'}^\mo(i)}- \bfa\ix{j'}_{s_{\rmor,j'}^\mo(n)}} \varphi^*( \fM\ix{j'-1})_{\chi^\mo_{s_{\rmor,j'}^\mo(i)}} / v \varphi^*( \fM\ix{j'-1})_{\chi^\mo_{s_{\rmor,j'}^\mo(n)}}
    \end{align*}
\end{itemize}
for any  $j' \equiv j \mod f$. Note that this is well-defined by the properties $(\sig^f)^*\fM \simeq \fM$ and $\sig^f (\be\ix{j'}) = \be\ix{j'+f}$.

\begin{thm}\label{thm:BK-localmodel}
    We have the following local model diagram
    \[
    \begin{tikzcd}
        & Y^{[0,h],\tau,\be} \arrow[ld] \arrow[rd] \\
        Y^{[0,h],\tau} & & \Fl^{E(v),[0,h]}_{P_\cJ}
    \end{tikzcd}
    \]
    where both diagonal arrows are formally smooth.
\end{thm}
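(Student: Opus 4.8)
The plan is to treat the two diagonal arrows separately; only the right-hand one requires real work. The left arrow $Y^{[0,h],\tau,\be}\to Y^{[0,h],\tau}$ is handled at once: Proposition \ref{prop:eigenbasis-torsor} exhibits it as a torsor under $\prod_{\jj}\cP_j$, each $\cP_j$ is formally smooth over $\cO$ (recalled above, following \cite[Prop.~2.21]{CL-ENS-2018-Kisinmodule-MR3764041}), and a torsor under a formally smooth group scheme is formally smooth.

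For the right arrow $Y^{[0,h],\tau,\be}\to \Fl^{E(v),[0,h]}_{P_\cJ}$ I would first record it in coordinates. Choosing an eigenbasis identifies $Y^{[0,h],\tau,\be}(R)$, for a $p$-adically complete $\cO$-algebra $R$, with the set of tuples $(A_{\fM,\be}\ix{j})_{\jj}$ of partial Frobenius matrices in $\Mat_n(R\DB{v})$ (indexed by $\cJ$ via the periodicity remark after Definition \ref{defn:partial-frob}) subject to the height-$[0,h]$ condition on their elementary divisors relative to $E_j(v)$, the constraint $A_{\fM,\be}\ix{j}\bmod v\in P_j(R)$ of \eqref{eqn:mat-A-C-change}, and the periodicity built into Definition \ref{defn:eigenbasis}. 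Under this identification, the right arrow sends such a tuple to the point of $\Fl^{E(v),[0,h]}_{P_\cJ}$ whose $j$-th component records the lattice $L\ix{j}=\varphi^*(\fM\ix{j'-1})_{\chi^\mo_{s_{\rmor,j'}^\mo(n)}}$ (for any $j'\equiv j\bmod f$), with the trivialization and the $P_j$-filtration $\veps\ix{j}$ described before the statement; concretely the recorded lattice is $A\ix{j}R\DB{v}^n$ and the filtration is read off from the eigenspace decomposition. The structural observation I want is that the image determines the $\fS_{L',R}$-module $\fM$, its $\Del$-action, and its eigenbasis, leaving only the partial Frobenius $\phi_\fM$ — equivalently, the matrices $A\ix{j}$ modulo the right action of the positive loop group $L_j^+:=\{g\in\GL_n(R\DB{v})\mid g\bmod v\in P_j(R)\}$ — undetermined.

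Granting this, I would prove formal smoothness of the right arrow by the infinitesimal lifting criterion. Given a square-zero extension $R\twoheadrightarrow R_0$ of $p$-adically complete $\cO$-algebras (which one may take Artinian local after a standard reduction), a point $\xi\in\Fl^{E(v),[0,h]}_{P_\cJ}(R)$, and a lift of $\xi|_{R_0}$ to $(\fM_0,\be_0)\in Y^{[0,h],\tau,\be}(R_0)$, one must produce a lift of $(\fM_0,\be_0)$ over $R$ with image $\xi$. By the structural observation everything except the matrices $A_0\ix{j}$ is already prescribed by $\xi$, so the problem reduces to lifting each $A_0\ix{j}$ to a matrix over $R\DB{v}$ inducing the lattice and filtration encoded by $\xi$. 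This is unobstructed because the set of such matrices is a torsor under $L_j^+$ — which is formally smooth over $\cO$ for the same reason as $\cP_j$, being an extension of $P_j$ by a pro-unipotent affine space — and this torsor is nonempty over Artinian $R$ since the relevant lattice is then free; the height, periodicity, and filtration conditions are preserved because they are closed and already hold modulo the nilpotent ideal. Since everything is a product over $\jj$ with only the linear periodicity coupling consecutive indices, it suffices to run this one index at a time.

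The hard part is the bookkeeping behind the structural observation: one must check that fixing the image in $\Fl^{E(v),[0,h]}_{P_\cJ}$ leaves exactly the $\prod_j L_j^+$-gauge freedom, which I would deduce from the change-of-basis formula of Proposition \ref{prop:change-of-basis-formula} (two eigenbases inducing the same trivialized filtered lattice differ by an element of $\prod_j L_j^+$), and that the height and descent-data conditions cut out closed subfunctors. With these in hand, the argument is the same as for principal-series $\tau$ in \cite[\S4]{CL-ENS-2018-Kisinmodule-MR3764041} (compare \cite[\S5]{LLLMlocalmodel} and \cite[\S3.1]{LLLM-extreme}); passing to an arbitrary tame $\tau$ introduces nothing new beyond carrying along the combinatorial data $s_\tau$, $\bfa\ix{j}$, and the parabolics $P_j$, exactly as in Propositions \ref{prop:eigenbasis-torsor} and \ref{prop:change-of-basis-formula}.
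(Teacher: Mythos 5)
Your treatment of the left diagonal arrow is exactly the paper's: Proposition \ref{prop:eigenbasis-torsor} exhibits $Y^{[0,h],\tau,\be}\to Y^{[0,h],\tau}$ as a $\prod_{\jj}\cP_j$-torsor, and each $\cP_j$ is formally smooth over $\cO$, so the torsor is formally smooth.

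For the right arrow the paper's proof is a one-line citation to \cite[Thm.~4.9]{CL-ENS-2018-Kisinmodule-MR3764041} together with the remark that the principal-series argument carries over verbatim. You instead unwind the lifting argument, and the skeleton is sound: normalize $(\fM,\be)$ over an Artinian local $R$ to the standard free module with standard descent data and standard eigenbasis; then a point of $Y^{[0,h],\tau,\be}(R)$ is just the tuple of Frobenius matrices $(A\ix{j})$, the map to $\Fl^{E(v),[0,h]}_{P_\cJ}$ records the lattice $A\ix{j}R\DB{v}^n$ with its $P_j$-filtration, and the set of $(A\ix{j})$ realizing a fixed $\Fl$-point is a nonempty torsor under a formally smooth group, whence lifting along a square-zero extension is unobstructed. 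That is in essence the Caraiani--Levin argument, and your concluding remark that passing to general tame $\tau$ only requires carrying along $s_\tau$, $\bfa\ix{j}$ and the parabolics $P_j$ agrees with the paper.

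The one step I would push back on is your justification of the key structural claim. You say you would deduce the gauge freedom from the change-of-basis formula of Proposition \ref{prop:change-of-basis-formula}, glossing this as ``two eigenbases inducing the same trivialized filtered lattice differ by an element of $\prod_j L_j^+$.'' That proposition governs the $\prod_j\cP_j$-action underlying the \emph{left} arrow's torsor structure, and that action moves the $\Fl$-point: the left factor $P\ix{j}$ in the formula sends the lattice $A\ix{j}R\DB{v}^n$ to $P\ix{j}A\ix{j}R\DB{v}^n$, which is a different sublattice of $R\DB{v}[1/E_j(v)]^n$ whenever $P\ix{j}$ is nontrivial. So the fiber of the \emph{right} arrow is not parametrized by choices of eigenbasis. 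After normalizing the module, descent data, and eigenbasis, the fiber over a point of $\Fl^{E(v),[0,h]}_{P_\cJ}$ is the set of matrices $(A\ix{j})$ with the prescribed image, and two of these differ by \emph{right} multiplication by an element of $\GL_n(R\DB{v})$ preserving the filtration modulo $v$, i.e.\ by an element of the parahoric. That elementary observation about lattices with a rational trivialization — not the twisted-conjugation change-of-basis formula — is what makes the fiber a torsor under $\prod_j\cP_j$. With that correction (and the observation that your $L_j^+$ agrees with $\cP_j$ once $R\DB{v}$ is replaced by the $E_j(v)$-adic completion, which is harmless in the $p$-adic formal setting since $E_j(v)\equiv v^e\bmod p$), the rest of your argument is fine and gives the statement.
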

\begin{proof}
    This follows from Proposition \ref{prop:eigenbasis-torsor} and \cite[Thm.~4.9]{CL-ENS-2018-Kisinmodule-MR3764041}. Note that \loccit~only considers a principal series $\tau$ but its proof carries over verbatim to our setting.
\end{proof}

Let $\lam \in X_*(T)^{\tilcJ}$ be a dominant cocharacter such that $\lam_{\tilj}+\eta_{\tilj} \in [0,h]^n$. For each $\jj$ which we view as an equivalence class in $\tilcJ$, we denote by $\lam_j + \eta_j = (\lam_{\tilj}+\eta_{\tilj})_{\tilj \in j} \in X_*(T)^{j}$. Let $M_\cJ(\le\lam+\eta) := \prod_{\jj}M_j(\le\lam_j + \eta_j)$ where $M_j(\le\lam_j+\eta_j)$ is the $p$-adic completion of the Pappas--Zhu local model for $\Res_{\cO_K\otimes_{W(k),\sig_j}\cO/\cO}\GL_n$ defined in \cite{Levin-localmodel} as the Zariski closure of the open Schubert cell for the cocharacter $\lam_j+\eta_j$. It is a $\cO$-flat closed formal subscheme inside $\Fl^{E(v),[0,h]}_{P_\cJ}$. We define $Y^{\le\lam+\eta,\tau,\be}$ to be the pullback $Y^{[0,h],\tau,\be} \times_{\Fl^{E(v),[0,h]}_{P_\cJ}} M_\cJ(\le\lam+\eta)$. By \cite[Prop.~5.2]{CL-ENS-2018-Kisinmodule-MR3764041}, it descends to a closed substack $Y^{\le\lam+\eta,\tau} \subset Y^{[0,h],\tau}$. It is uniquely characterized by the following properties (\cite[Thm.~5.13]{CL-ENS-2018-Kisinmodule-MR3764041}; also see the discussion before Thm.~5.3.3 in \cite{LLLMlocalmodel}):
\begin{enumerate}
    \item $Y^{\le\lam+\eta,\tau}$ is $\cO$-flat closed substack of $Y^{[0,h],\tau}$; and
    \item for any finite extension $E'/E$ with ring of integers $\cO'$, $Y^{\le\lam+\eta,\tau} (\cO') \subset Y^{[0,h],\tau}(\cO')$ is exactly the subgroupoid of objects $\fM$ such that for any eigenbasis $\be$, $A_{\fM,\be}\ix{j'}$ viewed as an element in $\Mat_n(E'\DB{v})$ has elementary divisors bounded by $\prod_{\tilj \in j}(v-\sig_{\tilj}(\pi_K))^{\lam_{\tilj}}$.
\end{enumerate}
As an immediate consequence of Theorem \ref{thm:BK-localmodel}, we obtain the following result.
\begin{thm}[cf.~{\cite[Thm.~5.3]{CL-ENS-2018-Kisinmodule-MR3764041}}]\label{thm:BK-hodge-localmodel}
    We have the following local model diagram
    \[
    \begin{tikzcd}
        & Y^{\le\lam+\eta,\tau,\be} \arrow[ld] \arrow[rd] \\
        Y^{\le\lam+\eta,\tau} & & M_\cJ(\le\lam+\eta)
    \end{tikzcd}
    \]
    where both diagonal arrows are formally smooth.
\end{thm}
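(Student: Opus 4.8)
The plan is to deduce the statement from the local model diagram of Theorem \ref{thm:BK-localmodel} by base change. Recall that by construction $Y^{\le\lam+\eta,\tau,\be} = Y^{[0,h],\tau,\be}\times_{\Fl^{E(v),[0,h]}_{P_\cJ}} M_\cJ(\le\lam+\eta)$, and that by \cite[Prop.~5.2]{CL-ENS-2018-Kisinmodule-MR3764041} this descends along the $\prod_{\jj}\cP_j$-torsor $Y^{[0,h],\tau,\be}\to Y^{[0,h],\tau}$ to the closed substack $Y^{\le\lam+\eta,\tau}\subset Y^{[0,h],\tau}$. In particular the square
\[
\begin{tikzcd}
Y^{\le\lam+\eta,\tau,\be} \arrow[r] \arrow[d] & Y^{[0,h],\tau,\be} \arrow[d] \\
Y^{\le\lam+\eta,\tau} \arrow[r] & Y^{[0,h],\tau}
\end{tikzcd}
\]
is cartesian, with horizontal arrows closed immersions.

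For the right-hand diagonal: the morphism $Y^{\le\lam+\eta,\tau,\be}\to M_\cJ(\le\lam+\eta)$ is, by the fibre-product description above, the base change of the formally smooth morphism $Y^{[0,h],\tau,\be}\to \Fl^{E(v),[0,h]}_{P_\cJ}$ of Theorem \ref{thm:BK-localmodel} along the closed immersion $M_\cJ(\le\lam+\eta)\hookrightarrow \Fl^{E(v),[0,h]}_{P_\cJ}$. Since formal smoothness is stable under base change, the right diagonal is formally smooth.

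For the left-hand diagonal: by the cartesian square, $Y^{\le\lam+\eta,\tau,\be}\to Y^{\le\lam+\eta,\tau}$ is the base change of the $\prod_{\jj}\cP_j$-torsor $Y^{[0,h],\tau,\be}\to Y^{[0,h],\tau}$ (Proposition \ref{prop:eigenbasis-torsor}) along $Y^{\le\lam+\eta,\tau}\hookrightarrow Y^{[0,h],\tau}$, hence is itself a $\prod_{\jj}\cP_j$-torsor. As each $\cP_j$ is formally smooth over $\cO$ by \cite[Prop.~2.21]{CL-ENS-2018-Kisinmodule-MR3764041}, so is $\prod_{\jj}\cP_j$, and therefore this torsor morphism is formally smooth.

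The argument is thus purely formal once one knows that the Hodge-type bound cutting out $Y^{\le\lam+\eta,\tau,\be}$ inside $Y^{[0,h],\tau,\be}$ is invariant under change of eigenbasis, so that the locus genuinely descends and the displayed square is cartesian; this is the only point requiring care, and it is handled in \cite[Prop.~5.2]{CL-ENS-2018-Kisinmodule-MR3764041} (it is also transparent from the change-of-basis formula of Proposition \ref{prop:change-of-basis-formula}). The passage from the principal series case treated in \loccit~to general tame $\tau$ is verbatim, exactly as for Theorem \ref{thm:BK-localmodel}.
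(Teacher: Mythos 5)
Your proposal is correct and takes essentially the same approach as the paper, which presents the result as an ``immediate consequence'' of Theorem \ref{thm:BK-localmodel}; you have simply spelled out the base-change argument the paper leaves implicit. In particular, identifying the cartesian square and then invoking the stability of formal smoothness under base change for the right diagonal, and the torsor structure under the formally smooth $\prod_{\jj}\cP_j$ for the left diagonal, is exactly the intended reasoning.
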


The following is the main result of this subsection. Note that since $Y^{\le\lam+\eta,\tau}$ is finite type over $\Spf\cO$, it is residually Jacobson in the sense of \cite[Def.~8.8]{Emerton-formal-alg-stk}.
\begin{cor}\label{cor:normal}
    The $p$-adic formal algebraic stack $Y^{\le\lam+\eta,\tau}$ is analytically normal in the sense of \cite[Def.~8.22]{Emerton-formal-alg-stk} (also see Rmk.~8.23 in \loccit).
\end{cor}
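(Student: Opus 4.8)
The plan is to deduce analytic normality from the local model diagram of Theorem~\ref{thm:BK-hodge-localmodel}, by propagating the property through its two formally smooth legs. First I would record that both $Y^{\le\lam+\eta,\tau,\be}\to Y^{\le\lam+\eta,\tau}$ and $Y^{\le\lam+\eta,\tau,\be}\to M_\cJ(\le\lam+\eta)$ are formally smooth, the former being moreover surjective since it is a torsor under the formally smooth group scheme $\prod_{\jj}\cP_j$ (Proposition~\ref{prop:eigenbasis-torsor}). Since all stacks in sight are $p$-adic formal algebraic stacks of finite type over $\Spf\cO$, hence residually Jacobson, and since analytic normality in the sense of \cite[Def.~8.22]{Emerton-formal-alg-stk} is a smooth-local condition, it suffices to establish: (i) analytic normality of the target $M_\cJ(\le\lam+\eta)$; (ii) its persistence under the formally smooth pullback to $Y^{\le\lam+\eta,\tau,\be}$; and (iii) its descent along the formally smooth surjection down to $Y^{\le\lam+\eta,\tau}$.

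For (i): $M_\cJ(\le\lam+\eta)=\prod_{\jj}M_j(\le\lam_j+\eta_j)$ is a finite fibre product over $\Spf\cO$ of $p$-adic completions of Pappas--Zhu local models for $\Res_{\cO_K\otimes_{W(k),\sig_j}\cO/\cO}\GL_n$ attached to the cocharacters $\lam_j+\eta_j$. Each factor is $\cO$-flat, topologically of finite type (hence excellent), and normal, the normality being a theorem of \cite{Levin-localmodel} (following Pappas and Zhu); crucially the local model $M_\cJ(\le\lam+\eta)$ carries no dependence on $\tau$, so the restriction to principal series imposed in \cite{CL-ENS-2018-Kisinmodule-MR3764041} is irrelevant to this step. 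I would then deduce normality of the product itself exactly as in \loccit: it is $\cO$-flat, its special fibre is the product of the geometrically normal special fibres of the factors and hence normal, its generic fibre is normal, so Serre's criterion $R_1+S_2$ applies. Excellence of all rings involved upgrades this Zariski-local normality to normality of the completed local rings, i.e.\ to analytic normality.

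For (ii) and (iii): a formally smooth morphism of locally Noetherian $p$-adic formal algebraic stacks is flat with geometrically regular fibres, so a smooth chart pulls back to a smooth chart, and ``smooth over analytically normal is analytically normal'' (smoothness preserves normality Zariski-locally, excellence preserves it after completion) yields analytic normality of $Y^{\le\lam+\eta,\tau,\be}$. For the descent to $Y^{\le\lam+\eta,\tau}$ one uses that a smooth chart of $Y^{\le\lam+\eta,\tau}$ base-changes, along the formally smooth surjection, to a faithfully flat morphism with analytically normal source, and faithfully flat descent of normality for excellent Noetherian rings forces the chart --- and hence its completed local rings --- to be normal. I expect the write-up to be almost entirely bookkeeping: the only genuine input is (i), which is not new --- it is the normality of the Weil-restricted Pappas--Zhu local model together with the observation that the arguments of \cite{CL-ENS-2018-Kisinmodule-MR3764041} recalled above go through verbatim for arbitrary tame $\tau$ --- while (ii) and (iii) are routine formal-smoothness descent arguments. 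The one point worth flagging is why one must route the argument through $Y^{\le\lam+\eta,\tau}$ at all: the analogous stack $\cX_n^{\lam+\eta,\tau}$ can fail to be normal, so this strategy has no direct counterpart on the Emerton--Gee side.
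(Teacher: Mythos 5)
Your proposal is correct and follows the same route as the paper, whose proof is a one-liner citing Theorem~\ref{thm:BK-hodge-localmodel} together with the normality of the Weil-restricted Pappas--Zhu local model from \cite[Thm.~1.0.1]{Levin-localmodel} (built on \cite[Thm.~1.1]{PZ13-Inv-local_model-MR3103258}); your bookkeeping of the two formally smooth legs of the local model diagram and the excellence/descent arguments is exactly what the paper leaves implicit. One small imprecision in your parenthetical: $M_\cJ(\le\lam+\eta)$ does depend on $\tau$ through the parabolics $P_j$ (and hence the parahoric level at which the Pappas--Zhu model is formed), but this is harmless since Levin's normality theorem is uniform in the parahoric, which is precisely why the argument works for arbitrary tame $\tau$ rather than only the principal series case treated in \cite{CL-ENS-2018-Kisinmodule-MR3764041}.
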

\begin{proof}
    This follows from Theorem \ref{thm:BK-hodge-localmodel} and \cite[Thm.~1.0.1]{Levin-localmodel} (which is built on \cite[Thm.~1.1]{PZ13-Inv-local_model-MR3103258}).
\end{proof}

\subsection{Global functions}\label{subsec:global-func}

Recall that we have defined two partitions $\fP_\tau$ and $\ov{\bfn}_\tau$ of $\bfn$ in \S\ref{subsec:ILLC}. For each $\ov{i}\in \ov{\bfn}_\tau$ and $j'\in \cJ'$, $s_{\rmor,j'}^\mo(\ov{i})$ corresponds to a block of the Levi factor of $P_j$. For any $A\in M_j(R)$, we denote by $A_{s_{\rmor,j'}^\mo(\ov{i})}$ the block of $A$ corresponding to $s_{\rmor,j'}^\mo(\ov{i})$.

For $(\fM,\be) \in Y^{[0,h],\tau,\be}(R)$, we denote by  $\ov{A}_{\fM,\be}\ix{j} \in M_j(R)$ the image of  $A_{\fM,\be}\ix{j} \mod v$ under $P_j \epi M_j$. We want to understand how changing $\be$ affects $\ov{A}_{\fM,\be}\ix{j}$. For $(P\ix{j})_{\jj} \in \prod_{\jj} \cP_j(R)$, we write $\ov{P}\ix{j} \in M_j(R)$ for the Levi factor of $P\ix{j} \mod v$. Let $\be_1$ and $\be_2$ be eigenbases for $\fM$ such that $\be_1 = \be_2 D\ix{j}$ where $D\ix{j} = \Ad(  u^{\bfa\ix{j}}s_{\rmor,j})(P\ix{j})$ as usual. By Proposition \ref{prop:change-of-basis-formula}, we have
\begin{align*}
    \ov{A}\ix{j}_{\fM,\be_2}  = \ov{P}\ix{j} \ov{A}\ix{j}_{\fM,\be_1} \Ad(s_j^\mo)(\ov{P}\ix{j-1})^\mo.
\end{align*}
In particular, we have
\begin{align*}
    \prod_{j=f-1}^0 \Ad (s_{\rmor,j})(\ov{A}\ix{j}_{\fM,\be_2}) = \Ad (s_\tau)(\ov{P}\ix{f-1})\PR{ \prod_{j=f-1}^0 \Ad (s_{\rmor,j})(\ov{A}\ix{j}_{\fM,\be_1}) } (\ov{P}\ix{f-1})^\mo.
\end{align*}
Now we make the following definition.

\begin{defn}
     Let $\ov{I}\in \ov{\fP}_\tau$ of size $r$, $I$ be a representative of $\ov{I}$, and $i\in I$. Denote by $\ov{i}$ the equivalence class in $\ov{\bfn}_\tau$ containing $i$. For $1 \le d \le r$,  we define $f_{\ov{I},d} : Y^{[0,h],\tau,\be}  \ra \A^1_{\cO}$ to be the function taking $(\fM,\be)$ to the coefficient of degree $(r-d)$-th term, multiplied by $(-1)^d$, of the characteristic polynomial of the matrix
     \begin{align*}
         \prod_{k=\#I-1}^{0}\PR{\prod_{j=f-1}^0 \Ad (s_{\rmor,j})(\ov{A}\ix{j}_{\fM,\be})}_{s_\tau^{k} s_{\rmor,0}^\mo(\ov{i})}.
     \end{align*} 
\end{defn}

The previous discussion shows that $f_{\ov{I},d}$ does not depend on the choice of eigenbases. 
Together with Proposition \ref{prop:eigenbasis-torsor}, it implies the following lemma.

\begin{lem}\label{lem:global-functions}
    For $\ov{I}$ and $d$ as above, $f_{\ov{I},d}$ descends to a function on $Y^{[0,h],\tau}$.
\end{lem}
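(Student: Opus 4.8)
The plan is to reduce the statement to something that is already essentially recorded in the preceding discussion. Recall that Proposition \ref{prop:eigenbasis-torsor} says that $Y^{[0,h],\tau,\be} \to Y^{[0,h],\tau}$ is a torsor under $\prod_{\jj}\cP_j$. Hence, by descent, a function on $Y^{[0,h],\tau,\be}$ descends to a function on $Y^{[0,h],\tau}$ if and only if it is invariant under the action of this group, i.e.\ if and only if for every $p$-adically complete $\cO$-algebra $R$, every $(\fM,\be)\in Y^{[0,h],\tau,\be}(R)$, and every $(P\ix{j})_{\jj}\in \prod_{\jj}\cP_j(R)$, the value of $f_{\ov{I},d}$ on $(\fM,\be)$ and on the translate of $(\fM,\be)$ by $(P\ix{j})_{\jj}$ agree. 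The translate of $(\fM,\be)$ corresponds precisely to changing $\be$ by the change-of-basis matrix $D\ix{j} = \Ad(u^{\bfa\ix{j}}s_{\rmor,j})(P\ix{j})$, so what must be checked is exactly that $f_{\ov{I},d}(\fM,\be)$ is unchanged when $\be$ is replaced by another eigenbasis $\be'$ with $\be = \be' D\ix{j}$.

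So the first step is to record this descent criterion, citing Proposition \ref{prop:eigenbasis-torsor}. The second and main step is the invariance computation, which is carried out in the paragraph immediately preceding the definition of $f_{\ov{I},d}$: there it is shown that for eigenbases $\be_1,\be_2$ with $\be_1=\be_2 D\ix{j}$ one has
\begin{align*}
    \prod_{j=f-1}^0 \Ad(s_{\rmor,j})(\ov{A}\ix{j}_{\fM,\be_2}) = \Ad(s_\tau)(\ov{P}\ix{f-1})\PR{\prod_{j=f-1}^0 \Ad(s_{\rmor,j})(\ov{A}\ix{j}_{\fM,\be_1})}(\ov{P}\ix{f-1})^\mo,
\end{align*}
where $\ov{P}\ix{f-1}\in M_{f-1}(R)$ is the Levi part of $P\ix{f-1}\bmod v$. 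Thus the big matrix whose characteristic polynomial defines $f_{\ov{I},d}$ changes by conjugation by $\Ad(s_\tau)(\ov{P}\ix{f-1})$. The key point now is that the block-extraction operation $A\mapsto A_{s_\tau^k s_{\rmor,0}^\mo(\ov{i})}$ is compatible with this conjugation: the element $\ov{P}\ix{f-1}$ lies in the Levi factor $M_{f-1}(R)$, which is block-diagonal with blocks indexed by $s_{\rmor,f-1}^\mo(\ov{i}\,')=s_\tau s_{\rmor,0}^\mo(\ov i\,')$ for $\ov i\,'\in\ov{\bfn}_\tau$ (using $s_{\rmor,f-1}=s_\tau s_{\rmor,0}$, after the normalization $s_{\rmor,f-1}=s_\tau$ chosen earlier, or the general relation $s_{\rmor,j+f}=s_\tau s_{\rmor,j}$); conjugating by such a block-diagonal matrix permutes and conjugates the blocks appropriately, so that the product $\prod_{k=\#I-1}^0(\cdots)_{s_\tau^k s_{\rmor,0}^\mo(\ov i)}$ is conjugated by a single invertible block of $\ov{P}\ix{f-1}$. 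Since the characteristic polynomial — and hence each of its coefficients — is conjugation-invariant, $f_{\ov{I},d}(\fM,\be_1)=f_{\ov{I},d}(\fM,\be_2)$, and descent applies.

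The only mildly delicate part is bookkeeping the block structure: one must verify that the factors $\PR{\prod_{j=f-1}^0\Ad(s_{\rmor,j})(\ov{A}\ix{j}_{\fM,\be})}_{s_\tau^k s_{\rmor,0}^\mo(\ov i)}$ for $k=0,\dots,\#I-1$ telescope correctly under the conjugation, i.e.\ that the conjugating block on the right of the $k$-th factor cancels the one on the left of the $(k{+}1)$-st, leaving conjugation by one outer block; this is exactly the content of the displayed identity above once one iterates $\Ad(s_\tau)$ around the $s_\tau$-orbit $I$. This is the main (though not difficult) obstacle; everything else is either cited (Proposition \ref{prop:eigenbasis-torsor}, Proposition \ref{prop:change-of-basis-formula}) or is the elementary fact that characteristic polynomials are invariant under conjugation. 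I would therefore write the proof as: (1) invoke the torsor structure to reduce to eigenbasis-independence; (2) apply the change-of-basis formula to see the defining matrix changes by conjugation by a block of $\ov P\ix{f-1}$; (3) conclude by conjugation-invariance of characteristic polynomial coefficients.
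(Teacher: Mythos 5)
Your proposal matches the paper's proof, which is precisely the same two‐step argument: the paragraph preceding Definition of $f_{\ov I,d}$ is cited as showing eigenbasis‐independence, and Proposition \ref{prop:eigenbasis-torsor} is cited for descent along the $\prod_{\jj}\cP_j$‐torsor. The only thing to flag is a small bookkeeping slip in your step (3): the paper's normalization is $s_{\rmor,f-1}=s_\tau$ (not $s_\tau s_{\rmor,0}$), so the blocks of $M_{f-1}$ are $s_{\rmor,f-1}^{-1}(\ov i\,')=s_\tau^{-1}(\ov i\,')$ rather than $s_\tau s_{\rmor,0}^{-1}(\ov i\,')$; this does not affect the conclusion — the telescoping still goes through, most cleanly seen by noting that $s_\tau B_{\fM,\be}$ (the matrix of $\Frob_K$) transforms by honest conjugation by $\ov P^{(f-1)}$, while the paper simply asserts this eigenbasis‐independence without the verification you supplied.
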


As in Remark \ref{rmk:hecke-operators}, consider $\ov{\fI}\subset \ov{\fP}_\tau$ and an integer $1\le d_{\ov{I}} \le \#\ov{I}$ for each $\ov{I}\in \ov{\fI}$. Let $n_{\ov{I}}$ be the size of any representative $I\in \ov{I}$ and write $d_{\ov{\fI},(d_{\ov{I}})} = \sum_{\ov{I}\in \ov{\fI}} d_{\ov{I}} n_{\ov{I}}$. We define an integer
\begin{align*}
    n_\lam(\ov{\fI},(d_{\ov{I}})) := \RG{(w_0\omega_{d_{\ov{\fI},(d_{\ov{I}})},\tilj})_{\tiljj}, \lam+\eta} = \sum_{\tilj \in \tilcJ}\sum_{k=0}^{d_{\ov{\fI}}-1}\lam_{\tilj,n-k}+k.
\end{align*}
Note that $-n_\lam(\ov{\fI},(d_{\ov{I}}))$ is exactly the $\pi_K$-adic valuation of the scalar 
    \begin{align*}
        \pi_K^{-\sum_{\tilj \in \tilcJ} \RG{\lam_{\tilj},w_0(\omega^\vee_{d_{\ov{\fI},(d_{\ov{I}})}})}}q^{-d_{\ov{\fI},(d_{\ov{I}})} (d_{\ov{\fI},(d_{\ov{I}})}-1)/2}
    \end{align*}
    appeared in Remark \ref{rmk:hecke-operators}.
 

\begin{defn}\label{defn:glob-func}
    For $\ov{\fI}$ and $(d_{\ov{I}})_{\ov{\fI}}$ as above, we define
    \begin{align*}
        F_{\ov{\fI},(d_{\ov{I}})}& :=\prod_{\ov{I}\in \ov{\fI}} f_{\ov{I},d_{\ov{I}}} \in \cO(Y^{\le\lam+\eta,\tau}) \\
        \til{F}_{\ov{\fI},(d_{\ov{I}})} &:=\pi_K^{-\sum_{\tilj \in \tilcJ} \RG{\lam_{\tilj},w_0(\omega^\vee_{d_{\ov{\fI},(d_{\ov{I}})}})}}q^{-d_{\ov{\fI},(d_{\ov{I}})} (d_{\ov{\fI},(d_{\ov{I}})}-1)/2} F_{\ov{\fI},(d_{\ov{I}})} \in \cO(Y^{\le\lam+\eta,\tau})[\tfrac{1}{p}].
    \end{align*}
\end{defn}
If $\#\ov{I}=1$ for all $\ov{I}\in \ov{\fI}$, then 
$d_{\ov{I}}$ is forced to be 1, and we just write $f_{\ov{I},d_{\ov{I}}} = f_{\ov{I}}$, $F_{\ov{\fI}} = F_{\ov{\fI},(d_{\ov{I}})}$ and $\til{F}_{\ov{\fI}} = \til{F}_{\ov{\fI},(d_{\ov{I}})}$.

\begin{rmk}
    Note that in the case of generic $\tau$, the global functions $f_{I}$ on $Y^{\le\eta,\tau}$ are exactly the functions $\phi_{\tau,\tau_1}$ considered in \cite[\S9]{LLMPQ-FL} (where the functions were defined only on an open locus in $Y^{\le\eta,\tau}$).
\end{rmk}

\begin{lem}\label{lem:global-functions/p}
    For $\ov{\fI}$ and $(d_{\ov{I}})_{\ov{\fI}}$ as above, the function $\til{F}_{\ov{\fI},(d_{\ov{I}})}$ is contained in $\cO(Y^{\le\lam+\eta,\tau})$.
\end{lem}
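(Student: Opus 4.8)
The plan is to deduce the integrality of $\til{F}_{\ov{\fI},(d_{\ov{I}})}$ from the normality of $Y^{\le\lam+\eta,\tau}$ via de Jong's theorem, reducing the statement to a valuation estimate at the classical points of the rigid generic fibre. First, since $F_{\ov{\fI},(d_{\ov{I}})}=\prod_{\ov{I}\in\ov{\fI}}f_{\ov{I},d_{\ov{I}}}$ already lies in $\cO(Y^{\le\lam+\eta,\tau})$ by Lemma \ref{lem:global-functions}, the normalised function $\til{F}_{\ov{\fI},(d_{\ov{I}})}$ lies in $\cO(Y^{\le\lam+\eta,\tau})[\tfrac1p]$ and so defines a function on $(Y^{\le\lam+\eta,\tau})^{\rig}$. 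By Corollary \ref{cor:normal}, $Y^{\le\lam+\eta,\tau}$ is $\cO$-flat, analytically normal and of finite type over $\Spf\cO$; hence, by the form of de Jong's theorem \cite{deJong95} recalled in the introduction (applied, if one prefers, on the smooth atlas supplied by Theorem \ref{thm:BK-hodge-localmodel}), $\cO(Y^{\le\lam+\eta,\tau})$ is exactly the subring of functions in $\cO(Y^{\le\lam+\eta,\tau})[\tfrac1p]$ bounded by $1$ on $(Y^{\le\lam+\eta,\tau})^{\rig}$. Since that adic space is reduced and quasi-compact and its classical points all arise from $\cO'$-points of $Y^{\le\lam+\eta,\tau}$, for $E'/E$ finite with ring of integers $\cO'$, it suffices to prove that $\til{F}_{\ov{\fI},(d_{\ov{I}})}(\fM)\in\cO'$ for every $\fM\in Y^{\le\lam+\eta,\tau}(\cO')$.

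Next I would fix such an $\fM$ and, after a harmless finite extension of $E'$ (using Proposition \ref{prop:eigenbasis-torsor} and the smoothness of the $\cP_j$), choose an eigenbasis $\be$ of $\fM$ over $\cO'$. Then $A_{\fM,\be}\ix{j'}\in\Mat_n(\cO'\DB{v})$, hence $\ov{A}\ix{j}_{\fM,\be}\in M_j(\cO')$, so that $F_{\ov{\fI},(d_{\ov{I}})}(\fM)\in\cO'$ already; the whole content is therefore the divisibility $v_{\pi_K}\big(F_{\ov{\fI},(d_{\ov{I}})}(\fM)\big)\ge n_\lam(\ov{\fI},(d_{\ov{I}}))$, which by the Remark before Definition \ref{defn:glob-func} is precisely the $\pi_K$-adic valuation of the reciprocal of the scalar normalising $\til{F}_{\ov{\fI},(d_{\ov{I}})}$. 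To prove this, I would unwind the definitions: $f_{\ov{I},d}(\fM,\be)=\sym_d(\text{eigenvalues of }B_{\ov{i}})$, where $B_{\ov{i}}$ is the square matrix obtained from $\prod_{j=f-1}^{0}\Ad(s_{\rmor,j})(\ov{A}\ix{j}_{\fM,\be})$ by extracting and multiplying the blocks listed in the definition of $f_{\ov{I},d}$, so that $F_{\ov{\fI},(d_{\ov{I}})}(\fM)=\prod_{\ov{I}\in\ov{\fI}}\sym_{d_{\ov{I}}}(\text{eigenvalues of }B_{\ov{i}})$. Up to the twist recorded in \eqref{eqn:mat-A-C-change}, $B_{\ov{i}}$ is the matrix of the relevant power of the crystalline Frobenius on the $\chi_i$-isotypic part of the filtered $\varphi$-module of the potentially crystalline $E'$-representation attached to $\fM$ (equivalently, a block of the Frobenius matrix of the Weil--Deligne representation studied in \S\ref{subsec:WD}). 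Since $\fM\in Y^{\le\lam+\eta,\tau}(\cO')$, this representation has labelled Hodge--Tate weights bounded by $\lam+\eta$, so weak admissibility of its filtered $\varphi$-module — the Newton-above-Hodge inequality, applied also to exterior powers — forces the multiset of $v_{\pi_K}$-valuations of the eigenvalues of $B_{\ov{i}}$ to majorise the corresponding multiset of Hodge--Tate weights. I would then conclude that $v_{\pi_K}\big(\sym_{d_{\ov{I}}}(\text{eigenvalues of }B_{\ov{i}})\big)$ is at least the sum of the $d_{\ov{I}}n_{\ov{I}}$ smallest Hodge--Tate weights carried by that block, and, summing over $\ov{I}\in\ov{\fI}$ and over $\tilcJ$ — using that $n_\lam(\ov{\fI},(d_{\ov{I}}))$ is exactly the total, over $\tilj\in\tilcJ$, of the $d_{\ov{\fI},(d_{\ov{I}})}$ smallest entries of $\lam_{\tilj}+\eta_{\tilj}$ — obtain the required inequality.

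I expect the main obstacle to be this last bookkeeping step: organising the orientations, the $s_\tau$-orbits and the cuspidal blocks so that the weak-admissibility majorisation yields precisely the exponent $n_\lam(\ov{\fI},(d_{\ov{I}}))$ rather than some weaker lower bound — that is, checking both that the Hodge--Tate weights are distributed among the blocks $B_{\ov{i}}$ so as to sum correctly across each orbit and across $\tilcJ$, and that passing from a product of eigenvalues to their $d_{\ov{I}}$-th elementary symmetric function costs exactly the sum of the $d_{\ov{I}}n_{\ov{I}}$ smallest admissible weights (Newton-above-Hodge for the relevant wedge powers), all while keeping the $q$-adic and $\pi_K$-adic normalisations aligned. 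Everything else — the de Jong reduction, the construction of an integral eigenbasis, and the identification of $B_{\ov{i}}$ with a Frobenius block — should be routine, and the pieces of that identification will in any case be available once \S\ref{subsec:WD} is in place.
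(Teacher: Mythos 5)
Your first paragraph (de Jong reduction to a $\pi_K$-divisibility estimate at $\cO'$-points of $Y^{\le\lam+\eta,\tau}$, using Corollary \ref{cor:normal}) is exactly the paper's first step. But for the divisibility estimate itself, the paper stays entirely on the Breuil--Kisin module side and never mentions Galois representations: the defining property of $Y^{\le\lam+\eta,\tau}(\cO')$ is an elementary-divisor bound on the $A_{\fM,\be}^{(j')}$ in $\Mat_n(\cO'\DB{v})$; this implies that every $i\times i$ minor of the product $\prod_{j}\Ad(s_{\rmor,j})(A^{(j)}_{\fM,\be})$ is divisible by $\prod_{\tilj}(v-\sig_{\tilj}(\pi_K))^{\RG{w_0\omega_i,\lam+\eta}}$; then Lemma \ref{cauchy-binet} writes $F_{\ov{\fI},(d_{\ov{I}})}(\fM)$ (an evaluation at $v=0$) as a sum of $d_{\ov{\fI}}\times d_{\ov{\fI}}$ minors, giving the exact exponent $n_\lam(\ov{\fI},(d_{\ov{I}}))$ directly. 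No eigenvalues, no filtered $\varphi$-modules, no weak admissibility.

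Your route through "the potentially crystalline $E'$-representation attached to $\fM$" and weak admissibility is not available as stated: an arbitrary $\fM\in Y^{\le\lam+\eta,\tau}(\cO')$ gives a $G_{K_\infty}$-representation that need not extend to a potentially crystalline (or even potentially semi-stable) $G_K$-representation — compare Proposition \ref{prop:BK-EG}, which realises $\cX_n^{\le\lam+\eta,\tau}$ as a \emph{closed substack} of $Y^{\le\lam+\eta,\tau^\vee}$ only under a genericity hypothesis that is absent from Lemma \ref{lem:global-functions/p}, and indeed the whole point of working with $Y^{\le\lam+\eta,\tau}$ rather than $\cX_n^{\lam+\eta,\tau}$ in \S\ref{sec4} is that normality (hence de Jong's theorem) holds for the former but may fail for the latter. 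So invoking weak admissibility of $\Dcris$ of an attached representation is circular or unavailable. The Newton-above-Hodge slope estimates you want are indeed true here, but they come from the elementary-divisor condition on the matrix entries themselves (Mazur-type inequalities), not from weak admissibility of a Galois representation; phrasing the step that way both closes your gap and collapses the "bookkeeping" you flag as the main obstacle into the single Cauchy--Binet computation. I'd also caution that your identification of $B_{\ov{i}}$ with a block of a crystalline Frobenius is the content of \S\ref{subsec:WD} (and requires a potentially crystalline $\fM$), so using it to prove this lemma — which sits logically upstream of \S\ref{subsec:WD} — risks circularity.
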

\begin{proof}
    By Corollary \ref{cor:normal} and \cite[Thm.~7.4.1]{deJong95}, it suffices to show that for any finite extension $E'/E$ with ring of integers $\cO'$ and $\fM\in Y^{\le\lam,\tau}(\cO')$, $F_{\ov{\fI},(d_{\ov{I}})}(\fM) \in \cO'$ is divisible by $\pi_K^{n_\lam(\ov{\fI},(d_{\ov{I}}))}$. Let $\be$ be an eigenbasis for $\fM$. 
    Then $\prod_{j=f-1}^0 \Ad (s_{\rmor,j})({A}\ix{j}_{\fM,\be})$ has elementary divisor bounded by $\prod_{\tiljj}(v-\sig_{\tilj}(\pi_K))^{\lam_{\tilj}+\eta_{\tilj}}$. Thus, any $i\times i$ minor of it is divisible by $\prod_{\tiljj}(v-\sig_{\tilj}(\pi_K))^{\RG{(w_0\omega_{i,\tilj})_{\tiljj}, \lam+\eta}}$ in $\cO'\DB{v}$. By Lemma \ref{cauchy-binet}, $F_{\ov{\fI},(i_{\ov{I}})}$ is a sum of $d_{\ov{\fI},(d_{\ov{I}})}\times d_{\ov{\fI},(d_{\ov{I}})}$ minors of  $\prod_{j=f-1}^0 \Ad (s_{\rmor,j})({A}\ix{j}_{\fM,\be}) \mod v$. This proves the claim.
\end{proof}

\begin{lem}\label{cauchy-binet}
    Let $R$ be a commutative ring and $A,B\in \GL_n(R)$. For any $1 \le k \le n$, any $k\times k$ minor of $AB$ is a sum of $2k \times 2k$ minors of $\begin{psmallmatrix}
        A & \\ & B
    \end{psmallmatrix} \in \GL_{2n}(R)$.
\end{lem}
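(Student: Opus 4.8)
This is the Cauchy--Binet formula packaged geometrically. The plan is to recall Cauchy--Binet and then match each summand in it with a $2k\times 2k$ minor of the block-diagonal matrix $M:=\begin{pmatrix} A & 0 \\ 0 & B\end{pmatrix}$.

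\textbf{Step 1: Set up notation.} Index the rows of $M$ by $\{1,\dots,n\}\sqcup\{n+1,\dots,2n\}$, where the first block indexes the rows of $A$ and the second the rows of $B$; index the columns the same way, so that $M_{i,j}=A_{i,j}$ for $i,j\le n$, $M_{i,j}=B_{i-n,j-n}$ for $i,j>n$, and $M_{i,j}=0$ otherwise. For a $k$-element subset $S$ of rows of $AB$ and a $k$-element subset $T$ of columns of $AB$, write $\det((AB)_{S,T})$ for the corresponding $k\times k$ minor; we must realize it as a sum of $2k\times 2k$ minors of $M$.

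\textbf{Step 2: Identify the minors.} For each $k$-element subset $U\subseteq\{1,\dots,n\}$, consider the $2k\times 2k$ minor of $M$ with row set $S'_U:=S\sqcup(n+U)$ and column set $T'_U:=U\sqcup(n+T)$ (here $n+U$ denotes $\{n+u:u\in U\}$, etc.). Ordering rows and columns increasingly, the elements of $S$ and $U$ come before those of $n+U$ and $n+T$ respectively, so the submatrix $M_{S'_U,T'_U}$ is block lower/upper triangular: its top-left $k\times k$ block is $A_{S,U}$, its bottom-right $k\times k$ block is $B_{U,T}$, and the two off-diagonal blocks vanish because of the shape of $M$. Hence the minor equals $\det(A_{S,U})\det(B_{U,T})$, with no sign correction since we used consistent increasing orderings throughout.

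\textbf{Step 3: Conclude.} Summing over all $k$-element subsets $U\subseteq\{1,\dots,n\}$ and invoking the Cauchy--Binet formula,
\begin{align*}
\sum_{U} \det(M_{S'_U,T'_U}) \;=\; \sum_{U}\det(A_{S,U})\det(B_{U,T}) \;=\; \det((AB)_{S,T}),
\end{align*}
which exhibits an arbitrary $k\times k$ minor of $AB$ as a sum of $2k\times 2k$ minors of $M$, as claimed.

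\textbf{Main obstacle.} There is no deep difficulty here; the only thing to be careful about is the bookkeeping of row/column orderings in Step 2 so that the identification with the Cauchy--Binet summands is sign-free (achieved by always listing indices in increasing order), and recalling that Cauchy--Binet holds over an arbitrary commutative ring $R$, which it does since it is a polynomial identity in the matrix entries.
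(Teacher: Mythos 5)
Your proof is correct, and it follows the same underlying idea as the paper, which simply cites the (generalized) Cauchy--Binet formula as the source of the identity. You have unpacked that citation into an explicit argument: applying classical Cauchy--Binet $\det((AB)_{S,T})=\sum_U\det(A_{S,U})\det(B_{U,T})$ and then identifying each summand as the $2k\times2k$ minor of $\begin{psmallmatrix}A&\\&B\end{psmallmatrix}$ with rows $S\sqcup(n+U)$ and columns $U\sqcup(n+T)$, which factors as a block-diagonal determinant with vanishing off-diagonal blocks. The sign bookkeeping is handled correctly by the consistent increasing orderings, and the remark that the identity is polynomial in the entries (hence valid over any commutative ring) is the right way to justify working over $R$.
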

\begin{proof}
    This follows from the generalized Cauchy--Binet formula (see \cite[Thm.~1]{CauchyBinet}).  
\end{proof}

\subsection{Connection to Weil--Deligne representations and Hecke algebras}\label{subsec:WD}
Now we replace $\tau$ by its dual $\tau^\vee$. For $\fM\in Y^{\le\lam+\eta,\tau^\vee}(\cO)$, let $D(\fM)$ be the Frobenius-semisimplification of the Weil--Deligne representation associated to $\fM$ (see  \cite[\S9.3]{LLMPQ-FL} for a detailed construction). Then $D(\fM)$ satisfies the followings:
\begin{enumerate}
    \item $D(\fM)|_{I_K} \simeq \tau$
    \item Let $\be$ be an eigenbasis for $\fM$. Then $\CB{f\ix{f'-1}_1 ,\dots, f\ix{f'-1}_n}$ is a basis for $D(\fM)$.
    \item The matrix representation of the $\Frob_K$-action on $D(\fM)$ with respect to the above basis is given by
    \begin{align*}
    s_\tau C\ix{f-1}_{\fM,\be}\cdots C\ix{1}_{\fM,\be}C\ix{0}_{\fM,\be} \mod u.
\end{align*}
    Note that we have $C\ix{j}_{\fM,\be}\mod u = \Ad(s_{\rmor,j})(\ov{A}\ix{j}_{\fM,\be})$ by equation \eqref{eqn:mat-A-C-change}.
\end{enumerate}

Let $\ov{I}\in \ov{\fP}_\tau$. We write $\tau_{\ov{I}}:= \tau_I$ where $I\in \ov{I}$ (it is independent of the choice of $I$). We denote by $D(\fM)_{\tau_{\ov{I}}}$ the $\tau_{\ov{I}}$-eigenspace in $D(\fM)$. By Frobenius-semisimplicity, we can further decompose $D(\fM)_{\tau_{\ov{I}}} \simeq \oplus_{I\in \ov{I}} \phi_I$ where $\phi_I$ is an irreducible Weil--Deligne representation such that $\phi_I|_{I_K}\simeq \tau_I$.

For the following result, recall that for integers $d,m\ge 1$, $\sym_d(x_1,\dots,x_m)$ denotes the elementary symmetric polynomial of degree $d$ in variables $x_1,\dots,x_m$.

\begin{prop}\label{prop:glob-func-WD}
    For $\ov{I}\in \ov{\fP}_\tau$ and $1 \le d \le \#\ov{I}$, $f_{\ov{I},d}(\fM)$ is equal to $\sym_d(\det(\phi_I(\Frob_K)))_{I\in \ov{I}}$. 
\end{prop}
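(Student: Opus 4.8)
The plan is to compute $f_{\ov{I},d}(\fM)$ by unwinding the definition and matching it against the Frobenius matrix on $D(\fM)$ described in the three bullet points above. First I would fix an eigenbasis $\be$ for $\fM$ (this is harmless by Lemma \ref{lem:global-functions}, since $f_{\ov{I},d}$ is independent of the choice) and recall that, by equation \eqref{eqn:mat-A-C-change}, the matrix of the $\Frob_K$-action on $D(\fM)$ with respect to the basis $\{f^{(f-1)}_1,\dots,f^{(f-1)}_n\}$ is
\begin{align*}
    \Phi := s_\tau C\ix{f-1}_{\fM,\be}\cdots C\ix{0}_{\fM,\be} \bmod u = s_\tau \prod_{j=f-1}^0 \Ad(s_{\rmor,j})(\ov{A}\ix{j}_{\fM,\be}) \cdot (\text{correction}),
\end{align*}
so that, up to conjugation by the permutation matrix recording the orientation $s_{\rmor,0}$, the matrix $M := \prod_{j=f-1}^0 \Ad(s_{\rmor,j})(\ov{A}\ix{j}_{\fM,\be})$ appearing in Definition \ref{defn:glob-func} is (a block-permuted, block-diagonalized version of) the $\Frob_K$-matrix. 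The key point is that the $\tau_{\ov{I}}$-isotypic decomposition $D(\fM)_{\tau_{\ov{I}}} = \oplus_{I\in\ov{I}}\phi_I$ corresponds precisely to the block $M_{s_{\rmor,0}^\mo(\ov{i})}$ together with the $s_\tau$-twisting, because $\be$ being an eigenbasis forces the $\Del'$-action (hence the inertial type) to be diagonal in the chosen order, and $s_\tau$ permutes the eigenlines within each $\fP_\tau$-orbit.

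The central computation is then: the matrix
\begin{align*}
    M_{\ov{i}}^{(\#I)} := \prod_{k=\#I-1}^{0}\PR{M}_{s_\tau^{k} s_{\rmor,0}^\mo(\ov{i})}
\end{align*}
is conjugate to the matrix of $\phi_I(\Frob_K)$ for any $I\in\ov{I}$ with $i\in I$ — more precisely, $\phi_I$ is the induction from the degree-$\#I$ unramified subextension of the character realized on one eigenline, and its $\Frob_K$ acts as the product of the $\#I$ partial matrices $M_{s_\tau^k s_{\rmor,0}^\mo(\ov{i})}$ traversing the $s_\tau$-orbit, exactly mirroring how the cyclic product of partial Frobenii of the Breuil--Kisin module recovers the full Frobenius on the corresponding cuspidal piece. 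Granting this, $\det(\phi_I(\Frob_K)) = \det(M_{\ov{i}}^{(\#I)})$, and since distinct $I\in\ov{I}$ correspond to distinct eigenline-choices within the same $\sim_\tau$-class but different $\fP_\tau$-orbits (all contributing blocks of the same size $\#I$, conjugate but a priori with different determinants), the collection $\{\det(\phi_I(\Frob_K))\}_{I\in\ov{I}}$ is exactly the multiset $\{\det M_{\ov{i}'}^{(\#I)}\}$ as $\ov{i}'$ ranges appropriately. Finally, by definition $f_{\ov{I},d}(\fM)$ is $(-1)^d$ times the degree-$(r-d)$ coefficient of the characteristic polynomial of... wait — one must be careful: the matrix in the Definition is the single block $M_{\ov{i}}^{(\#I)}$, which is $n_{\ov{i}}\times n_{\ov{i}}$ where $n_{\ov{i}}$ is the size of the block, i.e.\ the number of $I'\in\ov{I}$ times... no: the block $M_{s_\tau^k s_{\rmor,0}^\mo(\ov{i})}$ has size equal to $\#\ov{i} = \#\ov{I}$, and its eigenvalues are precisely the $\det(\phi_I(\Frob_K))$ for $I\in\ov{I}$. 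Hence the characteristic polynomial of $M_{\ov{i}}^{(\#I)}$ is $\prod_{I\in\ov{I}}(X - \det\phi_I(\Frob_K))$, and its degree-$(\#\ov{I}-d)$ coefficient is $(-1)^d\sym_d(\det\phi_I(\Frob_K))_{I\in\ov{I}}$, giving $f_{\ov{I},d}(\fM) = \sym_d(\det\phi_I(\Frob_K))_{I\in\ov{I}}$ as claimed.

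The main obstacle will be the second step: carefully identifying the block $\prod_{k=\#I-1}^0 M_{s_\tau^k s_{\rmor,0}^\mo(\ov{i})}$ with $\phi_I(\Frob_K)$ and verifying that its eigenvalues (equivalently, its characteristic polynomial) are the determinants $\det\phi_I(\Frob_K)$ for $I$ ranging over $\ov{I}$. This requires tracking three bookkeeping layers simultaneously: the orientation permutations $s_{\rmor,j}$, the $q$-power Frobenius twist $s_\tau$ organizing the $\fP_\tau$-orbits, and the $\sim_\tau$-equivalence collapsing blocks. I would handle this by first treating a single cuspidal block — reducing to the case where $\tau$ is irreducible of level $\#I$ and $s_{\rmor,0}=1$ — where the statement becomes the transparent fact that $\phi(\Frob_K)$ for the induced-from-unramified representation is the cyclic product of partial Frobenius matrices, essentially as in \cite[\S9.3]{LLMPQ-FL} or \cite[Prop.~2.5.5]{LLLMlocalmodel}; then I would reduce the general case to this by passing to the block $M_{s_{\rmor,0}^\mo(\ov{i})}$ and noting the conjugation by $s_{\rmor,0}$ does not change determinants or characteristic polynomials. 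The remaining verification — that $f_{\ov{I},d}$ is insensitive to which $i\in I$ and which $I$ representing $\ov{I}$ one picks — is exactly the content already recorded in Lemma \ref{lem:global-functions}, so no extra work is needed there.
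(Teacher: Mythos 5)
Your overall strategy matches the paper's — reduce $f_{\ov{I},d}(\fM)$ to a characteristic-polynomial coefficient of the matrix of $\Frob_K^{\#I}$ on a slice of $D(\fM)$, then read off the eigenvalues — but the ``central computation'' you flag conflates two different block decompositions of $D(\fM)$ and is, as written, internally inconsistent. The matrix $M_{\ov{i}}^{(\#I)}$ is a product of $\#I$ square factors each of size $\#\ov{i}=\#\ov{I}$, acting on the $\chi_i^{\pm1}$-eigenspace, which picks out exactly one line from each $I\in\ov{I}$. It is therefore \emph{not} conjugate to $\phi_I(\Frob_K)$, which is a $\#I\times\#I$ matrix acting on the whole of $\phi_I$; the sizes disagree in general, so the asserted equality $\det(\phi_I(\Frob_K))=\det(M_{\ov{i}}^{(\#I)})$ cannot hold as stated. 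Your mid-paragraph correction does land on the right endpoint — the characteristic polynomial of $M_{\ov{i}}^{(\#I)}$ is $\prod_{I\in\ov{I}}(X-\det\phi_I(\Frob_K))$ — but the reason you give (that each individual factor $M_{s_\tau^k s_{\rmor,0}^{-1}(\ov{i})}$ ``has eigenvalues $\det(\phi_I(\Frob_K))$'') is also wrong: no single factor has that interpretation, only the $\#I$-fold product does. And the reduction proposed in your last paragraph, to the case $\tau$ irreducible and then ``passing to the block $M_{s_{\rmor,0}^{-1}(\ov{i})}$,'' handles only $\#\ov{I}=1$: a single cuspidal block is the restriction to the $\#I$ eigenlines indexed by one $I$, which is the wrong slice of $D(\fM)$, and conjugating by $s_{\rmor,0}$ will not convert it into the slice indexed by $\ov{i}$.

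What closes the gap — and is essentially the entirety of the paper's short proof — is a restriction to a larger unramified base. Let $F/K$ be unramified of degree $\#I$; then $D(\fM)_{\tau_{\ov{I}}}|_{W_F}$ decomposes as $\oplus_{I\in\ov{I}}\oplus_{i\in I}\om_{F,\sig_0}^{\bfa_i'}\ur_{x_I}$, where $x_I=\det(\phi_I(\Frob_K))$. Fixing a single $i$, the $\chi_i^{\pm1}$-eigenspace of $D(\fM)_{\tau_{\ov{I}}}$ is the span of exactly $\#\ov{I}$ of these lines, one for each $I$, and $M_{\ov{i}}^{(\#I)}$ is precisely the matrix of $\Frob_F=\Frob_K^{\#I}$ on this eigenspace by the description of the $\Frob_K$-matrix in \S\ref{subsec:WD}. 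Its eigenvalues are therefore the $x_I$, $I\in\ov{I}$, and the claim follows. You should make this $W_F$-decomposition explicit in place of the attempted direct comparison between $M_{\ov{i}}^{(\#I)}$ and $\phi_I(\Frob_K)$.
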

\begin{proof}
    By item (3) above and the definition of $f_{\ov{I},d}$, $f_{\ov{I},d}(\fM)$ can be computed by the $\Frob_K^{\#I}$-action on $D(\fM)_{\tau_I}$. Let $F/K$ be the unramified extension of degree $\#I$. Then $D(\fM)_{\tau_I}|_{W_F}$ decomposes into sum of characters
    \begin{align*}
        \oplus_{I\in \ov{I}}\oplus_{i\in I} \om_{F,\sig_0}^{\bfa_i'}\ur_{x_I}
    \end{align*}
    where $\bfa_i'$ is the integer defined in \S\ref{subsec:tame-inertial-types} and $x_I = \mathrm{det} (\phi_{I}(\Frob_K))$. Then the claim follows immediately.
\end{proof}

We finish this section by relating these global functions to Hecke operators.

\begin{thm}\label{thm:hecke-global-func}
    Let $\lam \in X^*(T)^{\tilcJ}$ be a dominant character. Then there is an injective morphism
    \begin{align*}
        \Psi^{\lam,\tau}: \sig^\circ(\lam,\tau) \mono \cO(Y^{\le\lam+\eta,\tau^\vee})
    \end{align*}
    such that for any $(\ov{\fI},(d_{\ov{I}}))$ as above, $T_{\ov{\fI},(d_{\ov{I}})}$ is mapped to $\til{F}_{\ov{\fI},(d_{\ov{I}})}$. 
    Moreover, $\Psi^{\lam,\tau}$ interpolates the local Langlands correspondence in the following sense: for any finite extension $\cO'/\cO$, if $\fM\in Y^{\le\lam+\eta,\tau^\vee}(\cO')$, then the natural $\sig^\circ(\lam,\tau)$-action on $\Hom_{\rmK}(\sig^\circ(\lam,\tau),r_p^\mo( D(\fM)) \otimes V(\lam)\mid_\rmK)$ is given by the character
    \begin{align*}
        \sig^\circ(\lam,\tau) \xra{\Psi^{\lam,\tau}} \cO(Y^{\le\lam+\eta,\tau^\vee}) \xra{\fM} \cO'.
    \end{align*}
\end{thm}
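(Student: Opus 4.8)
The plan is to build $\Psi^{\lam,\tau}$ explicitly by sending the generators of $\cH(G,\sig^\circ(\lam,\tau))$ described in Remark \ref{rmk:hecke-operators} to the global functions of Definition \ref{defn:glob-func}, check that this assignment respects the presentation coming from Proposition \ref{prop:int-hecke-loc-alg}, and then verify injectivity and the local Langlands compatibility on $\cO'$-points. First I would recall from Remark \ref{rmk:hecke-operators} that $\cH(G,\sig^\circ(\lam,\tau))$ is generated as an $\cO$-algebra by the elements $T_{\ov{\fI},(d_{\ov{I}})_{\ov{I}\in\ov{\fI}}}$ together with the inverse $T_{\ov{\fP}_\tau,(\#\ov{I})}^{-1}$, and that under the presentation of Proposition \ref{prop:int-hecke-loc-alg} these are the symmetrized monomials in the $x_I$ scaled by the explicit powers of $\pi_K$ and $q$. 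On the geometric side, by Lemma \ref{lem:global-functions} the functions $f_{\ov{I},d}$ descend to $Y^{[0,h],\tau^\vee}$, hence restrict to $Y^{\le\lam+\eta,\tau^\vee}$, and by Lemma \ref{lem:global-functions/p} the normalized functions $\til{F}_{\ov{\fI},(d_{\ov{I}})}$ already lie in $\cO(Y^{\le\lam+\eta,\tau^\vee})$ (this is exactly where Corollary \ref{cor:normal} and de Jong's theorem enter). So I define $\Psi^{\lam,\tau}$ on generators by $T_{\ov{\fI},(d_{\ov{I}})} \mapsto \til{F}_{\ov{\fI},(d_{\ov{I}})}$ and $T_{\ov{\fP}_\tau,(\#\ov{I})}^{-1} \mapsto \til{F}_{\ov{\fP}_\tau}^{-1}$; I must argue the last function is invertible on $Y^{\le\lam+\eta,\tau^\vee}$, which follows because its value at any point is $\prod_I \det(\phi_I(\Frob_K))$ up to a unit, and this is a unit since $\Frob_K$ acts invertibly.

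The key step is that $\Psi^{\lam,\tau}$ is well-defined, i.e.\ the relations among the generators $T_{\ov{\fI},(d_{\ov{I}})}$ in $\cH(G,\sig^\circ(\lam,\tau))$ are satisfied by the $\til{F}_{\ov{\fI},(d_{\ov{I}})}$. Rather than chase relations abstractly, I would prove this pointwise: by Corollary \ref{cor:normal}, $Y^{\le\lam+\eta,\tau^\vee}$ is reduced (indeed normal) and $\cO$-flat, so a relation between global functions holds iff it holds after evaluation at all $\Qpbar$-points, i.e.\ at all $\fM\in Y^{\le\lam+\eta,\tau^\vee}(\cO')$ for finite $\cO'/\cO$. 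For such $\fM$, Proposition \ref{prop:glob-func-WD} identifies $f_{\ov{I},d}(\fM) = \sym_d(\det(\phi_I(\Frob_K)))_{I\in\ov{I}}$, where $D(\fM)_{\tau_{\ov{I}}} = \oplus_{I\in\ov I}\phi_I$. Thus the collection $(\til{F}_{\ov{\fI},(d_{\ov{I}})}(\fM))$ is exactly the image of the generators $(T_{\ov{\fI},(d_{\ov{I}})})$ under the character of $\cH(G,\sig^\circ(\lam,\tau))$ determined, via the presentation in Proposition \ref{prop:int-hecke-loc-alg}, by setting the variable $x_I$ equal to $\pi_K^{n_\lam(\{I\})}q^{n(\{I\})}\det(\phi_I(\Frob_K))$ — these scalars are compatible with the divisibility computation preceding Definition \ref{defn:glob-func}, which is precisely why the normalizing powers of $\pi_K$ and $q$ in $\til F$ and in the presentation match. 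Since every character of the Hecke algebra respects all relations, the well-definedness follows. Injectivity of $\Psi^{\lam,\tau}$ then reduces to showing that the family of such characters, as $\fM$ varies, separates points of $\cH(G,\sig^\circ(\lam,\tau))[1/p]$, which holds because the tuples $(\det(\phi_I(\Frob_K)))_{I}$ range over a Zariski-dense subset of the torus $\Spec \cO[x_I^\pm]^{\fS_{\ov{\fP}_\tau}}[1/p]$ as $\fM$ varies over potentially crystalline lifts of the appropriate Hodge and inertial type (here one invokes the existence of sufficiently many such lifts, e.g.\ via the known points of $Y^{\le\lam+\eta,\tau^\vee}$ or by comparison with the generic fiber).

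Finally, for the local Langlands compatibility statement: given $\fM\in Y^{\le\lam+\eta,\tau^\vee}(\cO')$, the smooth representation $r_p^{-1}(D(\fM))$ has $\rec_p$-parameter with inertial type $\tau$ and $N=0$, so by the inertial local Langlands correspondence (\S\ref{subsec:ILLC}) its restriction to $\rmK$ contains $\sig(\tau)$, and $\Hom_{\rmK}(\sig^\circ(\lam,\tau), r_p^{-1}(D(\fM))\otimes V(\lam)|_\rmK)$ is $1$-dimensional after inverting $p$ by a multiplicity-one statement for the Hecke module; the action of a Hecke operator $T_{\ov{\fI},(d_{\ov{I}})}$ on this line is computed by the classical (rational) Satake-type formula and, after unwinding the normalizations via Lemma \ref{lem:tame-type-formulas} and Remark \ref{rmk:hecke-operators}, equals $\til{F}_{\ov{\fI},(d_{\ov{I}})}(\fM) = (\Psi^{\lam,\tau}(T_{\ov{\fI},(d_{\ov{I}})}))(\fM)$; this is the same computation that underlies Proposition \ref{prop:glob-func-WD} and matches the rational map \eqref{psi} after inverting $p$. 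The main obstacle I anticipate is the bookkeeping of normalizations: making sure the powers of $\pi_K$ and $q$ in Definition \ref{defn:glob-func}, in the presentation of Proposition \ref{prop:int-hecke-loc-alg}/Remark \ref{rmk:hecke-operators}, and in the classical local Langlands normalization $r_p(\pi)=\rec_p(\pi\otimes|\det|^{(1-n)/2})$ all line up — and in particular that the integrality/divisibility in Lemma \ref{lem:global-functions/p} is sharp enough that $\Psi^{\lam,\tau}$ lands in $\cO(Y^{\le\lam+\eta,\tau^\vee})$ rather than only in its $p$-inverted version.
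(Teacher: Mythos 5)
Your overall architecture matches the paper's: define $\Psi^{\lam,\tau}$ via the presentation of the integral Hecke algebra (Prop.~\ref{prop:int-hecke-loc-alg}) and the integrality of the normalized functions (Lem.~\ref{lem:global-functions/p}), then check injectivity and the interpolation property. But there are three places where your route genuinely diverges from the paper's, and two of these are real gaps.

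First, for well-definedness, you argue pointwise using reducedness, $\cO$-flatness, and density of $\cO'$-points coming from de~Jong's theorem. This is more work than necessary and is not what the paper does: after inverting $p$, $\cH(\sig(\lam,\tau))\simeq\bigotimes_{\ov I}E[x_I'^{\pm}]^{S_{\ov I}}$ is a free (Laurent) polynomial algebra on the elementary symmetric functions $\sym_d(x_I')$, so one simply assigns $\sym_d(x_I')\mapsto f_{\ov I,d}$ and there are no relations to verify; the integrality statement (that $\cH(\sig^\circ(\lam,\tau))$ lands in $\cO(Y^{\le\lam+\eta,\tau^\vee})$, not merely its $p$-inversion) is exactly Lemma~\ref{lem:global-functions/p}. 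Your pointwise detour is not wrong, but it obscures the role of the presentation and requires a careful density lemma for $\cO'$-points of a $p$-adic formal stack.

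Second, your injectivity argument is the most serious gap. You invoke Zariski density of the tuples $(\det\phi_I(\Frob_K))_I$ as $\fM$ varies, ``via the known points of $Y^{\le\lam+\eta,\tau^\vee}$ or by comparison with the generic fiber.'' This is circular as written: the existence of sufficiently many Breuil--Kisin modules with prescribed Frobenius eigenvalues is precisely the kind of statement that needs to be established, and you do not say how. The paper sidesteps this entirely by pulling back along the direct-product morphism $\prod_{I\in\fP_\tau}Y^{\le\lam_I+\eta_I,\tau_I}\to Y^{\le\lam+\eta,\tau^\vee}$, under which the $\til F_{\ov{\fI},(d_{\ov I})}$ become visibly algebraically independent (up to the imposed symmetrization). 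You would need some such concrete input; density alone is not self-evident.

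Third, for the local Langlands compatibility, you gesture at ``the classical (rational) Satake-type formula'' and a ``multiplicity-one statement,'' but you do not actually carry out the computation. The paper's proof of this part is substantive: it uses \cite[Prop.~3.10]{6author} to realize $\rec_p^{-1}(D(\fM))$ as the unique irreducible quotient of a normalized parabolic induction, then a chain of Hecke-module isomorphisms (via Frobenius reciprocity, Dat's theorem \cite[Thm.~4.1]{Dat-caracteres}, and the Bushnell--Kutzko map $t_P$ from \cite{BKssGLn}) to reduce the action of $T_{\ov{\fI},(d_{\ov I})}$ to the action of $t_{-\veps_I}\in\cH(\rmM,\lam_M)$ on a $1$-dimensional space, computes that scalar explicitly (accounting for the $|\det|^{(1-n)/2}$ twist in $r_p$ versus $\rec_p$ and the modulus character normalization of $t_P$ via Lemma~\ref{lem:conv-formula}), and matches it with $\til F_{\ov{\fI},(d_{\ov I})}(\fM)$ through Prop.~\ref{prop:glob-func-WD} and Remark~\ref{rmk:hecke-operators}. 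The general $\lam$ is then reduced to $\lam=0$ via Remark~\ref{rmk:loc-alg-hecke-mod}. Your proposal correctly identifies that the ``bookkeeping of normalizations'' is the crux, but identifying the obstacle is not the same as clearing it; as written, the interpolation claim is asserted rather than proved.
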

\begin{proof}
    The existence of the morphism $\Psi^{\lam,\tau}$ follows from Proposition \ref{prop:int-hecke-loc-alg} and Lemma \ref{lem:global-functions/p}. To show its injectivity, it suffices to show that there is no relation between functions $\til{F}_{\ov{\fI},(d_{\ov{I}})}$ for different $(\ov{\fI},(d_{\ov{I}}))$. For each $I\in \fP_\tau$, define a dominant cocharacter $\lam_I := (\lam_{i,\tilj})_{i\in I,\tilj\in \tilcJ} \in X_*(T_{\#I})^{\tilcJ}$ and similarly $\eta_I$. Using the pullback along the natural morphism given by taking direct products
    \begin{align*}
        \prod_{I\in \fP_\tau}Y^{\le \lam_I +\eta_I, \tau_I} \ra Y^{\le\lam+\eta,\tau},
    \end{align*}
    one can easily observe that there is no relation between $\til{F}_{\ov{\fI},(d_{\ov{I}})}$.

    Now we prove that $\Psi^{\lam,\tau}$ interpolates the local Langlands correspondence by following the argument in \cite[Prop.~10.1.5]{LLMPQ-FL}. We first consider the case $\lam=0$. Recall that $D(\fM) \simeq \oplus_{I\in \fP_\tau}\phi_I$. By \cite[Prop.~3.10]{6author}, after permuting $I$'s in each $\ov{I}\in \ov{\fP}_\tau$ if necessary, $\rec_p^\mo(D(\fM))$ is the unique irreducible quotient of
    \begin{align*}
         i_{P(K)}^{G(K)} \otimes_{I\in \fP_\tau}\rec_p^\mo(\phi_I)
    \end{align*}
    We have isomorphisms between $\cH(\rmM,\lam_M)^{W_{[M,\omega]}} \simeq \cH(\rmG,\sig(\tau))$-modules
    \begin{align*}
        &\Hom_{\rmG}(\ind_\rmK^\rmG \sig(\tau), \rec_p^\mo(D(\fM))  \\
        \simeq  & \Hom_{\rmG}( \ind_{\cP}^\rmG \lam_\cP, i_{P(K)}^{G(K)} \otimes_{I\in \fP_\tau}\rec_p^\mo(\phi_I)) \\
        \simeq & \Hom_{\rmM}(\ind_{M(\cO_K)}^{\rmM}\lam_M, \otimes_{I\in \fP_\tau}\rec_p^\mo(\phi_I))
    \end{align*}
    where $\cH(\rmG,\sig(\tau))$ acts on the second module by restriction of the $\cH(\rmG,\lam)$-action. The first isomorphism follows from \cite[Thm.~4.1]{Dat-caracteres}, and the second isomorphism follows from the property of $t_P$ \cite[pg.~55]{BKssGLn}.
    Noting that $\rec_p^\mo(-) = r_p^\mo(-)\otimes \abs{\det}^{\tfrac{1-n}{2}}$, we have an isomorphism between $\cH(\rmM,\lam_M)^{W_{[M,\omega]}} \simeq \cH(\rmG,\sig(\tau))$-modules
    \begin{align}
        &\Hom_{\rmG}(\ind_\rmK^\rmG \sig(\lam,\tau), r_p^\mo(D(\fM))|_{\rmK}) \label{eqn:hecke-module} \\
        \simeq & \Hom_{\rmM}(\ind_{M(\cO_K)}^{\rmM}\lam_M, \otimes_{I\in \fP_\tau}(\rec_p^\mo(\phi_I)|_{M(\cO_K)}\otimes \abs{\mathrm{det}_{\#I}}^{\tfrac{n-1}{2}})) \nonumber.
    \end{align}
    Then $t_{-\veps_I}$ acts on the latter by the scalar given by which $\pi_K$ acts on $\rec_p^\mo(\phi_I)|_{M(\cO_K)}\otimes \abs{\mathrm{det}_{\#I}}^{\tfrac{n-1}{2}}$. This equals to $\det(\phi_I(\Frob_K)) q^{-\tfrac{(n-1)\#I}{2}}$. Thus, for $\ov{I}\in \ov{\fP}_\tau$ and $1\le d\le \#\ov{I}$, $\sym_d(t_{-\veps_I})_{I\in \ov{I}}$ acts on \eqref{eqn:hecke-module} by
    \begin{align*}
        q^{-\tfrac{d\#I(n-1)}{2}}\sym_d(\det(\phi_I(\Frob_K)))_{I\in \ov{I}}.
    \end{align*}
    By Lemma \ref{lem:conv-formula},
    \begin{align*}
        t_P(\sym_d(t_{-\veps_I})_{I\in \ov{I}}) = q^{-\tfrac{d\#I(n-d\#I)}{2}}T_{\ov{\fI},d_{\ov{I}}}
    \end{align*}
    where $\ov{\fI} = \CB{\ov{I}}$ and $d_{\ov{I}}=d$.  Thus, $T_{\ov{\fI},d_{\ov{I}}}$ acts on \eqref{eqn:hecke-module} by
    \begin{align*}
        q^{-\tfrac{d\#I(d\#I-1)}{2}}\sym_d(\det(\phi_I(\Frob_K)))_{I\in \ov{I}}
    \end{align*}
    By Proposition \ref{prop:glob-func-WD}, this exactly matches the image of $T_{\ov{\fI},d_{\ov{I}}}$ under $\ev_{\fM}\circ \Psi^{\tau}$. By Remark \ref{rmk:hecke-operators}, this completes the case $\lam=0$.  The general case easily follows from the above argument and Remark \ref{rmk:loc-alg-hecke-mod}. 
\end{proof}

\section{Spectral mod $p$ Satake isomorphism}\label{sec5}

In \S\ref{subsec:EGstack}, we first recall the Emerton--Gee stack $\cX_n$ and its basic properties. In particular, we discuss the bijection between Serre weights of $\GL_n(k)$ and irreducible components in $\cX_{n,\red}$, and the relationship between potentially crystalline stack and the moduli stack of Breuil--Kisin modules. In \S\ref{subsec:GLn}, we deduce our main result which says that for a sufficiently generic Serre weight $\sig$, the Hecke algebra $\cH(\sig)$ is naturally isomorphic to $\cO(\cC_\sig)$ the ring of global functions on the irreducible component corresponding to $\sig$ (Theorem \ref{thm:spec-satake}). As a preliminary, we prove its version for tori in \S\ref{subsec:tori} using local class field theory. Then we show that $\cO(\cC_\sig)$ is ``bounded'' by $\cH(\sig)$ for non-Steinberg $\sig$. This essentially follows from the case of $\GL_2$ and is treated in \S\ref{subsec:GL2}. The ``lower bound'' follows from the result of the previous section and requires a genericity condition on $\sig$.

\subsection{Recollections on the Emerton--Gee stack}\label{subsec:EGstack}

In \cite{EGstack}, Emerton and Gee constructed $\cX_{n}$ the moduli stack of projective rank $n$ \'etale $\pgma$-modules. It is a Noetherian formal algebraic stack over $\Spf \cO$. For a dominant cocharacter $\lam\in X_*(T)^{\tilcJ}$ and a tame inertial type $\tau$, there is a closed substack $\cX_n^{\lam,\tau} \subset \cX_n$ which is a $p$-adic formal algebraic stack. It is characterized as a unique $\cO$-flat closed substack such that for any finite extension $\cO'/\cO$, its $\cO'$-points are exactly the subgroupoid of $\cX_n(\cO')$ consisting of potentially crystalline $G_K$-representations of Hodge type $\lam$ and inertial type $\tau$. We denote by $\cX^{\le\lam,\tau}$ the union $\cup_{\lam'\le \lam}\cX^{\lam',\tau}$.  

\begin{rmk}\label{rmk:HT-convention}
    Recall that our convention on Hodge--Tate weights is opposite to that of \cite{LLLMlocalmodel}. In \loccit, $\cX^{\le\lam,\tau}$ denotes $\cX^{\le -w_0(\lam),\tau\vee}$ in our notation. The automorphism of $\cX_n$ induced by taking dual of \'etale $\pgma$-modules (see \cite[\S3.8]{EGstack}) induces an isomorphism between $\cX^{\le\lam,\tau}$ in our notation and $\cX^{\le\lam,\tau^\vee}$ in the notation of \loccit. We often use this isomorphism to apply results in \loccit~to our setting. 
\end{rmk}

There is a morphism $\veps_\infty: \cX_n \ra \Phi\dash\mathrm{Mod}_K^{\et,n}$ where $\Phi\dash\mathrm{Mod}_K^{\et,n}$ is the moduli stack of \'etale $\varphi$-modules (see \cite[Def.~3.7.1]{EGstack} and also \cite[\S5.4]{LLLMlocalmodel}), and the morphism is given by, at the level of $G_K$-representations, restricting the $G_K$-action to $G_{K_\infty}$ \cite[Prop.~3.7.2]{EGstack}. On the other hand, there is a morphism $\veps_{\tau^\vee}: Y^{\le\lam+\eta,\tau^\vee} \ra \Phi\dash\mathrm{Mod}_K^{\et,n}$ \cite[\S5.4.1]{LLLMlocalmodel}. We define $\cK^{\le\lam+\eta,\tau}$ by the following cartesian square
\[
\begin{tikzcd}
    \cK^{\le\lam+\eta,\tau} \arrow[r] \arrow[d] & Y^{\le\lam+\eta,\tau^\vee} \arrow[d, "\veps_{\tau^\vee}"] \\
    \cX_n^{\le\lam+\eta,\tau} \arrow[r, "\veps_\infty"] &   \Phi\dash\mathrm{Mod}_K^{\et,n}.
\end{tikzcd}
\]
Note that in \cite{LLLMlocalmodel}, they use the contravariant functor between \'etale $\varphi$-modules and $G_{K_\infty}$-representations. In our case, we use the covariant functor. The next result allows us to consider $\cX_n^{\le\lam+\eta,\tau}$ as a closed substack of $Y^{\le\lam+\eta,\tau^\vee}$ for sufficiently generic $\tau$.

\begin{prop}\label{prop:BK-EG}
    If $\tau$ is $(eh_{\lam+\eta}+2)$-generic, then $\cK^{\le\lam+\eta,\tau} \simeq \cX_n^{\le\lam+\eta,\tau}$, and the morphism $\cK^{\le\lam+\eta,\tau} \ra Y^{\le\lam+\eta,\tau^\vee}$ is a closed immersion.
\end{prop}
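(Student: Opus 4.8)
The plan is to compare the two closed substacks $\cK^{\le\lam+\eta,\tau}$ and $\cX_n^{\le\lam+\eta,\tau}$ inside the stack of \'etale $\varphi$-modules $\catphiMn$ by reducing to an assertion about $G_{K_\infty}$-representations attached to points, together with a general principle that two $\cO$-flat closed substacks of a common ambient stack agreeing on $\cO'$-points for all finite $\cO'/\cO$ coincide. First I would recall the precise characterizations from \cite{LLLMlocalmodel}: both $Y^{\le\lam+\eta,\tau^\vee}$ and $\cX_n^{\le\lam+\eta,\tau}$ are $\cO$-flat and of finite type, and for any finite extension $\cO'/\cO$ the $\cO'$-points of $Y^{\le\lam+\eta,\tau^\vee}$ are Breuil--Kisin modules of the stated height and descent data, while the $\cO'$-points of $\cX_n^{\le\lam+\eta,\tau}$ are potentially crystalline representations of Hodge type $\lam$ and inertial type $\tau$. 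A point of $\cK^{\le\lam+\eta,\tau}(\cO')$ is then a pair consisting of such a potentially crystalline $\rho$ and a Breuil--Kisin module $\fM$ inducing the same \'etale $\varphi$-module, i.e.\ the same $G_{K_\infty}$-representation $\rho|_{G_{K_\infty}}$. The content of the proposition is that under the genericity hypothesis this pairing data is no extra data: the Breuil--Kisin module is determined by $\rho$ (so $\cK \to \cX_n^{\le\lam+\eta,\tau}$ is an isomorphism), and conversely $\rho$ is determined by $\fM$ (so $\cK \to Y^{\le\lam+\eta,\tau^\vee}$ is a monomorphism, in fact a closed immersion since it is a base change of the closed immersion $\cX_n^{\le\lam+\eta,\tau}\mono \catphiMn$).

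Concretely, the heart of the argument is the following two facts, both of which I would extract from the local model / Breuil--Kisin theory of \cite{LLLMlocalmodel} (see their \S5.4--5.5) applied via the dual-normalization dictionary of Remark \ref{rmk:HT-convention}. First, for a sufficiently generic tame type $\tau$ and a bounded Hodge type, every $\cO'$-lattice $G_K$-representation which is potentially crystalline of type $(\lam,\tau)$ arises from a (necessarily unique) Breuil--Kisin module in $Y^{\le\lam+\eta,\tau^\vee}(\cO')$; this is the integral comparison between potentially crystalline representations and Breuil--Kisin modules with descent data, and uniqueness is the statement that the functor $\fM \mapsto \fM[1/u]$ together with the descent data recovers $\fM$ when the height is bounded and $\tau$ is generic. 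Second, a Breuil--Kisin module in $Y^{\le\lam+\eta,\tau^\vee}(\cO')$ with $\tau$ generic has the property that the $G_{K_\infty}$-representation attached to $\fM[1/u]$ extends uniquely to a potentially crystalline $G_K$-representation of the prescribed type — i.e.\ the map $\veps_\infty$ restricted to $\cX_n^{\le\lam+\eta,\tau}$ is, at the level of the relevant points, injective; this is where the genericity bound $(eh_{\lam+\eta}+2)$ enters, controlling the ambiguity in descent and ensuring full faithfulness of restriction to $G_{K_\infty}$ on this locus. Granting these, both projections from $\cK^{\le\lam+\eta,\tau}$ induce bijections on $\cO'$-points for all finite $\cO'/\cO$; since all stacks in sight are $\cO$-flat and finite type (hence reduced points determine the substack, cf.\ the characterization of $\cX_n^{\le\lam+\eta,\tau}$ as the unique $\cO$-flat closed substack with prescribed $\cO'$-points), the bijections upgrade to isomorphisms of stacks.

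For the closed immersion statement I would argue as follows: $\veps_\infty$ restricted to $\cX_n^{\le\lam+\eta,\tau}$ need not itself be a closed immersion, but $\cK^{\le\lam+\eta,\tau}$ is by definition the fibre product $\cX_n^{\le\lam+\eta,\tau}\times_{\catphiMn} Y^{\le\lam+\eta,\tau^\vee}$, so the projection $\cK^{\le\lam+\eta,\tau}\to Y^{\le\lam+\eta,\tau^\vee}$ is the base change of $\veps_\infty|_{\cX_n^{\le\lam+\eta,\tau}}$. Once we know (from the first bullet above) that this projection induces a bijection on $\cO'$-points, and since $\cX_n^{\le\lam+\eta,\tau}$ is \emph{already} identified with $\cK^{\le\lam+\eta,\tau}$ via the other projection, it suffices to check that the composite $\cX_n^{\le\lam+\eta,\tau}\simeq\cK^{\le\lam+\eta,\tau}\to Y^{\le\lam+\eta,\tau^\vee}$ is a closed immersion; this in turn follows because a proper monomorphism of Noetherian formal algebraic stacks is a closed immersion, and $Y^{\le\lam+\eta,\tau^\vee}$ is separated and finite type while $\cX_n^{\le\lam+\eta,\tau}$ is proper over it once one knows the morphism is finite (which can be read off from the local model diagram of Theorem \ref{thm:BK-hodge-localmodel}, since the map on local models $M_\cJ(\le\lam+\eta)\to \catphiMn$-side is a closed immersion on the generic-shape loci).

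The main obstacle I anticipate is establishing the genericity-dependent \emph{uniqueness} statements precisely — namely that under the bound $(eh_{\lam+\eta}+2)$ the restriction-to-$G_{K_\infty}$ functor is fully faithful on the potentially crystalline locus of the given type, and dually that a generic Breuil--Kisin module with descent data determines its potentially crystalline extension. These are not formal; they require the explicit control over the interval of Hodge--Tate weights relative to $p$ afforded by genericity (so that different descent data cannot produce the same $\varphi$-module, and so that the overconvergent/Breuil--Kisin comparison is an equivalence rather than merely faithful). I expect the cleanest route is to cite the corresponding statements in \cite{LLLMlocalmodel} (their treatment of $\cX^{\le\lam,\tau}$ and the Breuil--Kisin moduli stack, translated through Remark \ref{rmk:HT-convention}) rather than reprove them, so that the only genuinely new bookkeeping is tracking the exact genericity bound $eh_{\lam+\eta}+2$ through the height shift $\lam \rightsquigarrow \lam+\eta$ and the duality normalization.
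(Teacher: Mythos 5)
Your plan matches the paper's approach: the paper's proof of Proposition \ref{prop:BK-EG} is simply the one-line observation that the argument of \cite[Prop.~7.2.3]{LLLMlocalmodel} carries over, and the key ingredients you isolate — that under the stated genericity the Breuil--Kisin module attached to a potentially crystalline $\cO'$-lattice of bounded Hodge type with tame descent data $\tau$ is unique, that restriction to $G_{K_\infty}$ is fully faithful on that locus, and that both closed substacks are $\cO$-flat, finite-type, and hence determined by their $\cO'$-points — are exactly the ones that proof cites.

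One step in your write-up is loose and would need to be redone if you were reproving rather than citing: the properness of the monomorphism $\cX_n^{\le\lam+\eta,\tau}\to Y^{\le\lam+\eta,\tau^\vee}$. You propose reading this off from the local model diagram of Theorem \ref{thm:BK-hodge-localmodel}, but that diagram controls the local geometry of $Y^{\le\lam+\eta,\tau^\vee}$ alone, not the morphism from $\cX_n^{\le\lam+\eta,\tau}$ to it, and there is no ``$\Phi$-$\mathrm{Mod}$-side'' of that diagram; so the claim that finiteness is ``read off'' from it is not an argument as stated. The actual route in \cite[Prop.~7.2.3]{LLLMlocalmodel} exploits the universal characterization of $\cX_n^{\le\lam+\eta,\tau}$ as the unique $\cO$-flat closed substack with prescribed $\cO'$-points: one checks the monomorphism is quasi-compact, forms its scheme-theoretic image inside $Y^{\le\lam+\eta,\tau^\vee}$ (which is $\cO$-flat and has the same $\cO'$-points by your first two bullets), and concludes the monomorphism factors as an isomorphism onto a closed substack. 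This sidesteps proving properness or finiteness independently. Since you explicitly flag that you would ultimately cite \cite{LLLMlocalmodel} rather than rederive these uniqueness statements, this is a cosmetic rather than a substantive defect — but if you were to write it out, you should replace the local-model-diagram finiteness claim with the scheme-theoretic image argument.
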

\begin{proof}
    This can be proven as \cite[Prop.~7.2.3]{LLLMlocalmodel}. 
\end{proof}

Next, we prove an integral version of the globalization of \cite[Thm.~4.1]{6author}. For any de Rham representation $\rho: G_K \ra \GL_n(\cO)$, we denote by $\WD(\rho)$ the Frobenius-semisimplification of the Weil--Deligne representation associated to $\rho$ as in \cite[App.~B]{CDT} (in particular, $\rho \mapsto \WD(\rho)$ is covariant). If $\rho$ is potentially crystalline of Hodge type $\lam+\eta$ and tame inertial type $\tau$, we define the Breuil--Schneider's locally algebraic representation associated to $\rho$
\begin{align*}
    \mathrm{BS}(\rho) := r_p^\mo(\WD(\rho)) \otimes_\cO V(\lam)
\end{align*}
as in \cite[\S5]{6author} (note that this differs from the one constructed in \cite[\S4]{BS} by a unitary character). If $\fM$ is the image of $\rho$ in $Y^{\le\lam+\eta,\tau^\vee}(\cO)$, then $\WD(\rho) \simeq D(\fM)$.

\begin{cor}\label{cor:hecke-global-func}
    If $\tau$ is $(eh_{\lam+\eta}+2)$-generic, then there is an injective morphism
    \begin{align*}
        \Psi^{\lam,\tau}: \sig^\circ(\lam,\tau) \mono \cO(\cX_n^{\lam+\eta,\tau})
    \end{align*}
    which interpolates the local Langlands correspondence in the following sense: for a finite extension $\cO'/\cO$, if $\rho\in \cX_n^{\lam+\eta,\tau}(\cO')$, then the natural $\sig^\circ(\lam,\tau)$-action on $\Hom_{\rmK}(\sig^\circ(\lam,\tau), \mathrm{BS}(\rho))$ is given by the character
    \begin{align*}
        \sig^\circ(\lam,\tau) \xra{\Psi^{\lam,\tau}} \cO(\cX_n^{\lam+\eta,\tau}) \xra{\ev_{\rho}} \cO'.
    \end{align*}
\end{cor}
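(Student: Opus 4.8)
The plan is to deduce Corollary \ref{cor:hecke-global-func} directly from Theorem \ref{thm:hecke-global-func} combined with Proposition \ref{prop:BK-EG}. First I would recall that, under the genericity hypothesis that $\tau$ is $(eh_{\lam+\eta}+2)$-generic, Proposition \ref{prop:BK-EG} provides a closed immersion $\cX_n^{\le\lam+\eta,\tau}\simeq \cK^{\le\lam+\eta,\tau}\hookrightarrow Y^{\le\lam+\eta,\tau^\vee}$. Since $\cX_n^{\lam+\eta,\tau}$ is the $\cO$-flat part of $\cX_n^{\le\lam+\eta,\tau}$ (and in fact one checks it agrees with the genuine Hodge type $\lam+\eta$ substack sitting inside), restriction along this closed immersion gives a ring homomorphism
\begin{align*}
    \cO(Y^{\le\lam+\eta,\tau^\vee}) \ra \cO(\cX_n^{\lam+\eta,\tau}).
\end{align*}
Composing with the morphism $\Psi^{\lam,\tau}:\sig^\circ(\lam,\tau)\mono \cO(Y^{\le\lam+\eta,\tau^\vee})$ of Theorem \ref{thm:hecke-global-func} produces the desired morphism, which I will again call $\Psi^{\lam,\tau}$, landing in $\cO(\cX_n^{\lam+\eta,\tau})$. (Here I am writing $\sig^\circ(\lam,\tau)$ for the integral Hecke algebra $\cH(\rmG,\sig^\circ(\lam,\tau))$, following the notation of the excerpt.)

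Next I would verify injectivity. The composite $\cH(\rmG,\sig^\circ(\lam,\tau))\to \cO(\cX_n^{\lam+\eta,\tau})$ is injective provided the restriction map $\cO(Y^{\le\lam+\eta,\tau^\vee})\to \cO(\cX_n^{\lam+\eta,\tau})$ does not kill the image of $\Psi^{\lam,\tau}$. The cleanest way is to argue on $\cO'$-points for finite extensions $\cO'/\cO$: by the interpolation property of Theorem \ref{thm:hecke-global-func} the functions $\til F_{\ov\fI,(d_{\ov I})}$ are determined, after evaluation at $\fM\in Y^{\le\lam+\eta,\tau^\vee}(\cO')$, by Frobenius data of $D(\fM)$; and every such $\fM$ lying in the image of a potentially crystalline $\rho\in \cX_n^{\lam+\eta,\tau}(\cO')$ still sees the full Hecke action. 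Concretely, one can pull back along the product morphism $\prod_{I\in\fP_\tau}\cX_{\#I}^{\lam_I+\eta_I,\tau_I}\to \cX_n^{\lam+\eta,\tau}$ (compatible with the analogous product morphism for the $Y$-stacks used in the proof of Theorem \ref{thm:hecke-global-func}) to reduce the absence of relations among the $\til F_{\ov\fI,(d_{\ov I})}$ to the one-block case, where it is immediate. Thus $\Psi^{\lam,\tau}$ remains injective after restriction.

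For the interpolation statement, I would unwind the definitions. Given $\rho\in\cX_n^{\lam+\eta,\tau}(\cO')$ with image $\fM\in Y^{\le\lam+\eta,\tau^\vee}(\cO')$, the excerpt already records $\WD(\rho)\simeq D(\fM)$, hence $\mathrm{BS}(\rho)=r_p^\mo(\WD(\rho))\otimes_\cO V(\lam)=r_p^\mo(D(\fM))\otimes_\cO V(\lam)$, which is precisely the representation appearing in Theorem \ref{thm:hecke-global-func}. So the $\cH(\rmG,\sig^\circ(\lam,\tau))$-module $\Hom_{\rmK}(\sig^\circ(\lam,\tau),\mathrm{BS}(\rho))$ coincides with $\Hom_{\rmK}(\sig^\circ(\lam,\tau),r_p^\mo(D(\fM))\otimes V(\lam)\mid_\rmK)$, and the cited theorem says the action is the character $\ev_\fM\circ\Psi^{\lam,\tau}$; since the value of a global function on $Y^{\le\lam+\eta,\tau^\vee}$ at $\fM$ equals the value of its restriction to $\cX_n^{\lam+\eta,\tau}$ at $\rho$, this is exactly $\ev_\rho\circ\Psi^{\lam,\tau}$ for the new $\Psi^{\lam,\tau}$.

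The main obstacle, and the only genuinely nontrivial input, is the passage from $Y$ to $\cX_n$, i.e.~Proposition \ref{prop:BK-EG} and the identification of $\cX_n^{\lam+\eta,\tau}$ as a closed substack of $Y^{\le\lam+\eta,\tau^\vee}$ of the expected Hodge type: one must know that the fibre product $\cK^{\le\lam+\eta,\tau}$ genuinely recovers the potentially crystalline stack (not merely its reduction or a $G_{K_\infty}$-shadow), which is where the genericity bound $(eh_{\lam+\eta}+2)$ enters. Granting that, everything else is formal: restriction of functions along a closed immersion, together with the already-proven Theorem \ref{thm:hecke-global-func}. I would also remark that $\cX_n^{\lam+\eta,\tau}$ here denotes the Hodge type $\lam+\eta$ (not $\lam$) stack, matching the shift $\sig(\lam,\tau)=V(\lam)\otimes\sig(\tau)$ against the Hodge–Tate weights $\lam+\eta$ used throughout \S\ref{sec4}; no further genericity beyond that of Proposition \ref{prop:BK-EG} is needed.
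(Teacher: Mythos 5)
Your proposal follows exactly the paper's route: restrict the global functions of Theorem \ref{thm:hecke-global-func} along the closed immersion $\cX_n^{\lam+\eta,\tau}\hookrightarrow Y^{\le\lam+\eta,\tau^\vee}$ furnished by Proposition \ref{prop:BK-EG}, and observe that the interpolation property carries over because $\WD(\rho)\simeq D(\fM)$ when $\fM$ is the image of $\rho$; the paper's own proof is precisely this sentence, and your added detail (injectivity via the product morphism and the identification $\mathrm{BS}(\rho)=r_p^\mo(D(\fM))\otimes V(\lam)$) correctly fills in what the terse proof leaves implicit. One small inaccuracy that does not affect the argument: $\cX_n^{\lam+\eta,\tau}$ is not the $\cO$-flat part of $\cX_n^{\le\lam+\eta,\tau}$ (both are $\cO$-flat by construction); it is simply the closed substack of exact Hodge type $\lam+\eta$, and restriction of functions along the chain of closed immersions $\cX_n^{\lam+\eta,\tau}\subset \cX_n^{\le\lam+\eta,\tau}\simeq\cK^{\le\lam+\eta,\tau}\hookrightarrow Y^{\le\lam+\eta,\tau^\vee}$ is what you want.
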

\begin{proof}
    This follows from Theorem \ref{thm:hecke-global-func} and Proposition \ref{prop:BK-EG} by taking pullback of global functions on $Y^{\le\lam+\eta,\tau^\vee}$ to $\cX_n^{\lam+\eta,\tau}$.
\end{proof}

Now we turn our attention to the reduced substack $\cX_{n,\red}\subset \cX_n$. It is a Noetherian algebraic stack over $\F$ equidimensional of dimension $\tfrac{n(n-1)}{2}[K:\Qp]$, and there is a bijection between its irreducible components and Serre weights of $\GL_n(k)$ \cite[Thm.~5.5.12 and 6.5.1]{EGstack}. For a Serre weight $\sigma$ of $\GL_n(k)$, we denote by $\cC_\sig$ the irreducible component of $\cX_{n,\red}$ corresponding to $\sig$ normalized in the following way.

Let $\mu\in X_1^*(\uT)$ be a $p$-restricted cocharacter such that $\sig=F(\mu)$. The closed substack $\cC_\sig\subset \cX_{n,\red}$ is uniquely characterized as the closure of an open substack $\cU_\sig\subset \cX_{n,\red}$ whose $\Fpbar$-points are given by $G_K$-representations of the form
\begin{align}\label{eqn:ord-rhobar}
    \rhobar = \pma{\chi_1 & * & \cdots & * \\ 
     0 & \chi_2 & \cdots & * \\ 
     \text{\rotatebox{90}{$\cdots$}} &  & \text{\rotatebox{135}{$\cdots$}} &  \text{\rotatebox{90}{$\cdots$}} \\ 
     0 & \cdots & 0 & \chi_n}
\end{align}
where
\begin{enumerate}
    \item $\rhobar$ is maximally non-split of niveau $1$, i.e.~it admits a unique $G_K$-stable flag;
    \item For $i=1,\dots,n$, $\chi_i = \ur_{t_i}\prod_{\jj}\oom_{K,\sig_j}^{(\mu_{j}-w_0(\eta)_j)_i}$ for some $t_i\in \Fpbar^\times$;
    \item If $\chi_i\chi_{i+1}^\mo |_{I_K} = \ov{\veps}$, then $\mu_{j,i} - \mu_{j,i+1} = p-1$ for all $\jj$ if and only if  $\chi_i\chi_{i+1}^\mo= \ov{\veps}$ and the element of $\Ext^1_{G_K}(\chi_i,\chi_{i+1})$ determined by $\rhobar$ is tr\`es ramifi\'ee. Otherwise, $\mu_{j,i} - \mu_{j,i+1} = 0$ for all $\jj$.
\end{enumerate}

\begin{rmk}\label{rmk:HT-convention2}
    Note that our definition of $\cC_\sig$ follows a different normalization from that of \cite{LLLMlocalmodel}. The duality automorphism of $\cX_n$ induces an isomorphism between $\cC_\sig$ in our notation and $\cC_{\sig^\vee}$ in the notation of \loccit~where $\sig^\vee$ is the $\F$-dual of $\sig$.
\end{rmk}

\subsection{The case of tori}\label{subsec:tori} 
Let $T\subset \GL_n$ be the diagonal torus. We define $\cX_T:= \prod_{i=1}^n \cX_{1}$. A Serre weight of $T(k) \simeq (k^\times)^n$ is given by a character
\begin{align*}
    \mu: (k^\times)^n \ra k^\times \xhookrightarrow{\sig_0} \F^\times.
\end{align*}
Let $\cC_\mu$ be the corresponding irreducible component in $\cX_{T,\red}$. 
Its $\Fpbar$-points are given by characters of the form
\begin{align*}
    \omega_{K,\sig_0}^{\mu} \otimes (\ur_{x_1},\ur_{x_2},\dots,\ur_{x_n}) : G_K \ra T(\Fpbar)
\end{align*}
for some $x_1,x_2,\dots,x_n\in \Fpbar^\times$. We have a presentation of $\cC_\mu$ as a quotient stack $[(\G_m)^n/(\G_m)^n]$ where the first $(\G_m)^n$ parameterizes the image of the geometric Frobenius, and the second $(\G_m)^n$ acts trivially. 

The Hecke algebra $\cH(\mu)$ is isomorphic to $\F[x_1^\pm,x_2^\pm,\dots,x_n^\pm]$ where $x_i$ corresponds to the double coset operator $[\veps_i(\pi_K^\mo)T(\cO_K)]$. Note that if $\chi:T(K) \ra \Fpbar^\times$ is a character such that $\chi|_{T(\cO_K)} = \mu$, then $[\veps_i(\pi_K^\mo)T(\cO_K)]$ acts on $\Hom_{T(K)}(\ind_{T(\cO_K)}^{T(K)}\mu, \chi)$ by $\chi(\veps_i(\pi_K))$. Thus, local class field theory induces a natural isomorphism
\begin{align*}
    \cH(\mu) \risom \cO(\cC_\mu) \simeq \cO((\G_m)^n)
\end{align*}
sending $[\eps_i(\pi_K^\mo)T(\cO_K)]$ to the coordinate of $i$-th $\G_m$.

\subsection{The case of $\GL_n$}\label{subsec:GLn} Let $\sig$ be a non-Steinberg Serre weight of $\GL_n(k)$ with highest $q$-restricted weight $\mu\in X^*(T)$. Since $\cC_{\mu-w_0(\eta)}$ is irreducible, the image of the natural map $\cC_{\mu-w_0(\eta)} \ra \cX_{n,\red}$ is contained in some irreducible component, and it is easy to see that the image is contained in $\cC_\sig$. 

\begin{lem}\label{lem:upperbound-prelim}
    The restriction map $\cO(\cC_\sig) \ra \cO(\cC_{\mu-w_0(\eta)})$ is injective.
\end{lem}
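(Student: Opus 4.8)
The statement is that the restriction map $\cO(\cC_\sig) \ra \cO(\cC_{\mu-w_0(\eta)})$, induced by the natural morphism $j\colon \cC_{\mu-w_0(\eta)} \ra \cX_{n,\red}$ factoring through $\cC_\sig$, is injective. The key geometric input is that the image of $j$ is dense in $\cC_\sig$: indeed, by construction $\cC_\sig$ is the closure of the open substack $\cU_\sig$ whose $\Fpbar$-points are the maximally non-split niveau-$1$ representations $\rhobar$ of the displayed upper-triangular form, and the semisimplification map $\rhobar \mapsto \rhobar^\ss = \oplus_i \chi_i$ exhibits these as lying in the image of $\cC_{\mu-w_0(\eta)}$ (the component of $\cX_{T,\red}$ attached to $\mu - w_0(\eta)$, via the recipe in \S\ref{subsec:tori} and condition (2) in the normalization of $\cC_\sig$). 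So the plan is: (i) identify the scheme-theoretic image of $j$ and show it contains a dense open substack of $\cC_\sig$; (ii) deduce that $\cO(\cC_\sig) \ra \cO(\overline{\mathrm{im}(j)})$ is injective because $\cC_\sig$ is reduced and irreducible; (iii) observe that $\cO(\overline{\mathrm{im}(j)}) \ra \cO(\cC_{\mu-w_0(\eta)})$ is injective since $\cC_{\mu-w_0(\eta)}$ surjects (at least dominantly) onto $\overline{\mathrm{im}(j)}$, and compose.

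More concretely, first I would recall that $\cX_{n,\red}$ is a Noetherian algebraic stack over $\F$ and $\cC_\sig$ is one of its irreducible components, hence an integral (reduced and irreducible) closed substack; therefore $\cO(\cC_\sig)$ injects into the function field (local ring at the generic point) and, more usefully, into $\cO(\cV)$ for any dense open substack $\cV \subseteq \cC_\sig$ — a nonzero global function is nonzero on a dense open, hence nonzero on $\cV$. Next I would produce such a $\cV$ inside the closure of the image of $j$. The component $\cC_{\mu-w_0(\eta)} = \prod_{i=1}^n \cX_{1,\red}$-component has the explicit quotient-stack presentation $[(\G_m)^n/(\G_m)^n]$ from \S\ref{subsec:tori}, and the morphism $j$ sends the point $(\ur_{x_i})_i$ (twisted by the fixed inertial weight) to a representation whose semisimplification is $\oplus_i \chi_i$ with $\chi_i$ as in condition (2); composing with the map $\cU_\sig \to \cC_{\mu - w_0(\eta)}$ coming from $\rhobar \mapsto \rhobar^\ss$ shows the image of $j$ meets $\cU_\sig$. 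Since $\cU_\sig$ is open dense in $\cC_\sig$ and the image of $j$ is a constructible substack meeting it, the closure of the image of $j$ contains a dense open substack of $\cC_\sig$; pulling $\cV$ back to this open locus gives the injectivity of $\cO(\cC_\sig) \ra \cO(\overline{\mathrm{im}(j)})$.

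Finally, for the second factor: the morphism $\cC_{\mu-w_0(\eta)} \ra \overline{\mathrm{im}(j)}$ is, by definition of scheme-theoretic image, schematically dominant, so the pullback $\cO(\overline{\mathrm{im}(j)}) \ra \cO(\cC_{\mu-w_0(\eta)})$ is injective. Composing the two injections gives $\cO(\cC_\sig) \hookrightarrow \cO(\cC_{\mu-w_0(\eta)})$, which is exactly the restriction map in the statement.

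\textbf{Main obstacle.} The one point requiring genuine care — as opposed to formal category-of-stacks nonsense — is establishing that the image of $j\colon \cC_{\mu-w_0(\eta)} \ra \cX_{n,\red}$ is dense in $\cC_\sig$ rather than contained in a proper closed substack; equivalently, that $j$ is dominant onto $\cC_\sig$. This is where the precise normalization of $\cC_\sig$ as the closure of $\cU_\sig$ (conditions (1)–(3) above), the structure of $\cU_\sig$ as consisting of maximally non-split extensions with prescribed diagonal characters, and the compatibility of the inertial weight appearing in condition (2) with the highest weight $\mu$ of $\sig$ all get used. I would verify it by checking on $\Fpbar$-points: a generic $\rhobar \in \cU_\sig(\Fpbar)$ has semisimplification $\rhobar^\ss = \oplus_i \chi_i \in \cC_{\mu-w_0(\eta)}(\Fpbar)$, and the non-split extension class is unobstructed (condition (1) says it is the unique $G_K$-stable flag, and such non-split $\rhobar$ lie in the $G_K$-orbit closure of — in fact are limits of — the diagonal representation $\rhobar^\ss$, or more precisely $\rhobar^\ss$ lies in the closure of $\cU_\sig$), so that $j$ hits a dense subset of $\cC_\sig$. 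Once that density is in hand the rest is routine.
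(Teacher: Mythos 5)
Your plan fails at step (i): the image of $j\colon \cC_{\mu-w_0(\eta)} \to \cC_\sig$ is \emph{not} dense in $\cC_\sig$, and in fact it cannot be. The stack $\cC_{\mu-w_0(\eta)}$ is a component of $\cX_{T,\red}=\cX_{1,\red}^n$, which the paper presents as a quotient stack $[(\G_m)^n/(\G_m)^n]$ of dimension $0$; meanwhile $\cC_\sig$ is equidimensional of dimension $\tfrac{n(n-1)}{2}[K:\Qp]$. Your claim that ``the image of $j$ meets $\cU_\sig$'' is also false: the image of $j$ consists of semisimple (direct sum) representations, while by condition (1) every point of $\cU_\sig$ is \emph{maximally non-split} (unique $G_K$-stable flag), so for $n\geq 2$ the two are disjoint. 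The specialization you cite runs the wrong way for your purposes: it is $\rhobar^\ss$ that lies in the closure of $\{\rhobar\}$, not the other way around, so the image of $j$ lies in the boundary of $\cC_\sig$, not in a dense open. Consequently your step (ii) also does not follow — for a proper closed substack $Z$ of a reduced irreducible $X$, the restriction $\cO(X) \to \cO(Z)$ need not be injective (think of $\A^2 \to \{y=0\}$), so ``reduced and irreducible'' does nothing here.

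What the statement actually requires, and what the paper supplies, is a fact about global functions on the Emerton--Gee stack rather than a density argument: by \cite[Thm.~6.6.3(3)]{EGstack}, for each $\rhobar\in\cU_\sig(\Fpbar)$ the semisimplification $\rhobar^\ss$ is the unique closed point in the closure of $\{\rhobar\}$, hence every global function $f$ on $\cC_\sig$ satisfies $f(\rhobar)=f(\rhobar^\ss)$. Combined with the density of $\cU_\sig$ in $\cC_\sig$ (so $\cO(\cC_\sig)\hookrightarrow\cO(\cU_\sig)$), this shows $f$ is determined by its values on the set of $\rhobar^\ss$, which lies in the image of $\cC_{\mu-w_0(\eta)}$. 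This orbit-closure/specialization argument is the genuine content; it cannot be replaced by showing $j$ is dominant, because it is not.
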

\begin{proof}
    Since $\cU_\sig$ is dense in $\cC_\sig$, the restriction map $\cO(\cC_\sig) \ra \cO(\cU_\sig)$ is injective. In other words, any function $f\in \cO(\cC_\sig)$ is determined by its value at $\rhobar \in \cU_\sig(\Fpbar)$. By \cite[Thm.~6.6.3 (3)]{EGstack}, any finite type point in the closure of $\CB{\rhobar}$ inside $\abs{\cX_{n,\red}}$ is a partial semisimplification of $\rhobar$, and $\rhobar^\ss$ is the unique closed point in the closure. This shows that for any $f\in \cO(\cC_\sig)$ and $\rhobar \in \cU_\sig(\Fpbar)$, $f(\rhobar) = f(\rhobar^\ss)$. Since the image of $\cC_{\mu-w_0(\eta)}$ in $\cC_\sig$ contains $\rhobar^\ss$ for all $\rhobar\in \cU_\sig(\Fpbar)$, $f$ is determined by its values at the images of $\rhobar \in \cC_{\mu-w_0(\eta)}(\Fpbar)$.
\end{proof}

We have a morphism $(-)\otimes \ov{\veps}^{-w_0(\eta)}: \cC_\mu \ra \cC_{\mu-w_0(\eta)}$ given by twisting $\ov{\veps}^{-w_0(\eta)}$. It is easily seen to be an isomorphism. Using this, we define the following Galois-theoretic analogue of the mod $p$ Satake transform $\cS_\sig: \cH(\sig) \ra \cH(\mu)$.

\begin{defn}\label{defn:spec-satake-transform}
    We define a morphism $S_\sig: \cO(\cC_{\sig}) \ra \cO(\cC_{\mu})$ to be the pullback along the composition of $(-)\otimes \ov{\veps}^{-w_0(\eta)}$ and the natural morphism $\cC_{\mu-w_0(\eta)} \ra \cC_\sig$. 
\end{defn}


\begin{prop}\label{prop:upperbound}
    For non-Steinberg $\sig$, $S_\sig: \cO(\cC_\sig) \ra \cO(\cC_{\mu})$ is an injection with image contained in $\ov{\Psi}_\mu\circ \cS_\sig(\cH(\sig))$. In particular, we have a morphism $\ov{\Phi}_\sig : \cO(\cC_\sig)\mono \cH(\sig)$ that fits in the following commutative diagram
    \[
    \begin{tikzcd}
        \cH(\sig) \arrow[r, hook, "\cS_\sig"] & \cH(\mu)  \\
        \cO(\cC_\sig) \arrow[r, hook, "S_\sig"]  \arrow[u, "\ov{\Phi}_\sig"]& \cO(\cC_{\mu}) \arrow[u, "\ov{\Psi}_{\mu}^\mo"].
    \end{tikzcd}
    \]
\end{prop}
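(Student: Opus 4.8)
The plan is to reduce the question to an explicit coordinate computation on $\cC_\mu$. By Lemma \ref{lem:upperbound-prelim}, $S_\sig$ is injective (it factors through $\cO(\cC_\sig)\hookrightarrow \cO(\cC_{\mu-w_0(\eta)})\xrightarrow{\sim}\cO(\cC_\mu)$, where the last map is the pullback along the twisting isomorphism $(-)\otimes\ov\veps^{-w_0(\eta)}$), so the only real content is the description of the image. Recall from \S\ref{subsec:tori} that $\ov\Psi_\mu$ identifies $\cO(\cC_\mu)\simeq\cO((\G_m)^n)\simeq\F[x_1^{\pm},\dots,x_n^{\pm}]$, with $x_i$ corresponding to the value $\chi_i(\Frob_K)$ on the semisimple parameter $\rhobar^\ss=\oplus_i \chi_i$. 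Under the mod $p$ Satake isomorphism and Remark \ref{rmk:modp-hecke-presentation}, $\ov\Psi_\mu\circ\cS_\sig(\cH(\sig))$ is the subalgebra $\F[y_1,\dots,y_{n-1},y_n^{\pm}]$ with $y_i=x_1\cdots x_i$, at least when $\sig$ is regular; for general non-Steinberg $\sig$ one must identify the image of $\cS_\sig$ directly from \cite{HerzigSatake,HV-satake} as the image of $\cH^-(\rmT,\mu)$, which is spanned by the $W_\mu$-symmetrizations of the monomials $x^{\nu}$, $\nu$ antidominant. So the goal becomes: every $f\in\cO(\cC_\sig)$, viewed via $S_\sig$ as a Laurent polynomial in the $x_i$, actually lies in this subalgebra.

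The key step is to run the argument of Lemma \ref{lem:upperbound-prelim} one level deeper, using the geometry of $\cU_\sig$ and the extension structure on the generic points. First I would recall that a generic point $\rhobar\in\cU_\sig(\Fpbar)$ is upper-triangular maximally non-split with diagonal characters $\chi_i=\ur_{t_i}\prod_j\oom_{K,\sig_j}^{(\mu_j-w_0(\eta)_j)_i}$, and that the map $\cC_{\mu-w_0(\eta)}\to\cC_\sig$ has image containing all the associated $\rhobar^\ss$. A function $f\in\cO(\cC_\sig)$ is by Lemma \ref{lem:upperbound-prelim} determined by the Laurent polynomial $g(x_1,\dots,x_n):=S_\sig(f)$ evaluated at the tuples $(t_1,\dots,t_n)$ arising this way. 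Two constraints on $g$ then have to be extracted: (i) \emph{invariance}: the component $\cC_\sig$ does not see the ordering of the $\chi_i$ beyond what the weight $\mu$ records, so $g$ must be invariant under the stabilizer $W_\mu=\mathrm{Stab}_W(\mu)$ (when $\sig$ is regular this is trivial; in general it is a product of symmetric groups, matching the structure of $\cH^-(\rmT,\mu)$); (ii) \emph{antidominance / integrality of negative powers}: the only way an honest $\Fpbar$-point of $\cC_\sig$, rather than of some other component, can degenerate to $\rhobar^\ss$ is controlled by $\veps$-twists in condition (3) of the definition of $\cU_\sig$, which forces the ``unramified parts'' $t_i$ to be unconstrained units while the monomials appearing in $g$ are bounded by antidominant cocharacters — precisely the condition cutting out $\cH^-(\rmT,\mu)\subset\cH(\rmT,\mu)$. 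Concretely, I would argue that a monomial $x^\nu$ with $\nu$ \emph{not} bounded above by $0$ in the dominance order cannot occur in $S_\sig(f)$ for $f$ extending to all of $\cC_\sig$, because evaluating at the specially-ordered $\rhobar$'s (using Remark \ref{rmk:indep-lattice}-type reorderings of the $\chi_i$, which change $\cU_\sig$ but not $\cC_\sig$, cf.\ the role of $w_0$) would produce a pole along a boundary divisor that $\cO(\cC_\sig)$ cannot have.

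Once the image is pinned down as $\ov\Psi_\mu\circ\cS_\sig(\cH(\sig))$, the factorization is formal: $\cS_\sig$ is injective by the mod $p$ Satake theorem, so $\cS_\sig(\cH(\sig))$ is isomorphic to $\cH(\sig)$, and $\ov\Psi_\mu$ is an isomorphism by \S\ref{subsec:tori}; hence $\ov\Phi_\sig:=\cS_\sig^{-1}\circ\ov\Psi_\mu\circ S_\sig$ is well-defined, injective, and makes the square commute by construction. I expect the main obstacle to be step (ii): showing that no ``non-antidominant'' monomials can appear — i.e.\ that the closure $\cC_\sig$ genuinely imposes the vanishing of exactly those monomials and no more. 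The clean way to handle this is probably to avoid a direct boundary-pole argument on $\cX_{n,\red}$ and instead import it from the $\GL_2$ picture: every relevant ``bad'' monomial is detected by restricting along a $\GL_2\times\cdots$-type sub-locus of $\cC_\sig$ (a parabolic degeneration as in \S\ref{sec6}), reducing to the rank-two statement, which is established in \S\ref{subsec:GL2}. This is consistent with the section's stated outline (``This essentially follows from the case of $\GL_2$''), so in the write-up I would structure the proof as: (a) injectivity from Lemma \ref{lem:upperbound-prelim}; (b) the containment $\mathrm{im}(S_\sig)\subseteq\ov\Psi_\mu\circ\cS_\sig(\cH(\sig))$ via the $\GL_2$ reduction and the explicit description of $\cS_\sig$; (c) the formal construction of $\ov\Phi_\sig$.
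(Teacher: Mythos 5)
Your proposal correctly identifies the skeleton of the argument: (a) injectivity of $S_\sigma$ follows from Lemma~\ref{lem:upperbound-prelim}, (c) once the image is known to land in $\ov{\Psi}_\mu\circ\cS_\sigma(\cH(\sigma))$ the map $\ov{\Phi}_\sigma$ is obtained formally since $\cS_\sigma$ is injective, and (b) the substantive step is the image containment, which should be reduced to the rank-two case by restricting along block-diagonal morphisms into $\cC_\sigma$. The paper's general-$n$ reduction is indeed of the form you anticipate: for each $i$ one uses the chain $\cC_{\mu-w_0(\eta)}\to\cC_{\sigma_i}\times\prod_{k\neq i,i+1}\cC_{\mu_k-k+1}\to\cC_\sigma$ and the $n=2$ statement to show that $x_i$ appears in $S_\sigma(f)$ only after being ``protected'' by $y_i=x_1\cdots x_i$, and then intersects over $i$.

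However, there is a genuine gap, and a couple of misconceptions along the way.

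First, the misconceptions. Your constraint~(i), ``invariance under $W_\mu=\Stab_W(\mu)$,'' is a red herring inherited from the classical Satake isomorphism: the mod $p$ Satake image $\cH^-(\rmT,\mu)$ is \emph{not} a ring of Weyl-group invariants. It is the subalgebra of $\cH(\rmT,\mu)\simeq\F[x_1^\pm,\dots,x_n^\pm]$ supported on antidominant double cosets, which is always $\F[y_1,\dots,y_{n-1},y_n^\pm]$ regardless of whether $\sigma$ is regular (regularity only enters in Remark~\ref{rmk:modp-hecke-presentation} to identify individual operators $\ov{T}_{-\omega_i}$ with the $y_i$). Second, the ``parabolic degeneration as in \S\ref{sec6}'' is not the right device here: the parabolic structure of \S\ref{sec6} is constructed on open loci of $\cX_n^{\eta,\tau}$ under a strong deepness hypothesis (ultimately $((e+2)(n-1)+2)$-deep), while this proposition is meant to hold for \emph{every} non-Steinberg $\sigma$. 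The maps $\cC_{\sigma_i}\times\prod\cC_{\mu_k-k+1}\to\cC_\sigma$ used in the paper are the elementary block-upper-triangular morphisms that require no genericity.

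The gap: your step~(b) ultimately defers to ``the rank-two statement, which is established in \S\ref{subsec:GL2}'' --- but \S\ref{subsec:GL2} \emph{is} the proof of this proposition, so as written this is circular. The rank-two case is the entire content of the result, and your proposal does not supply an argument for it. The paper proves it by pulling back $g\in\cO(\cC_\sigma)$ along a scheme-theoretically dominant map $Z_\sigma\to\cC_\sigma$ from an irreducible component of the CEGS Breuil--Kisin stack $Y^{\eta,\tau^\vee}_\F$ (whose existence uses precisely the non-Steinberg hypothesis), expressing the pullback as a Laurent polynomial $G(\ov{f}_2,\ov{f}_1)$ in the mod-$p$ reductions of the global functions $f_1',f_2'$ of \S\ref{subsec:global-func}, and then deriving a contradiction by evaluating at a point $\ov{\fM}$ whose associated Galois representation is irreducible. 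The key mechanism that rules out a negative power of $x_1$ is Proposition~\ref{prop:reducibility}: a Teichm\"uller lift of $\ov{\fM}$ with $f_1'(\fM)\in\cO^\times$ would force the corresponding crystalline representation to be reducible via its weakly admissible filtered $\varphi$-module, contradicting irreducibility of $\ov{\fM}\otimes\F$. This is a concrete integral/Breuil--Kisin argument, not the ``boundary-pole on $\cX_{n,\red}$'' heuristic your proposal gestures at; supplying it is where the real work lies, and the proposal does not do so.
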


This essentially follows from the case of $\GL_2$, which requires some detailed discussions on \cite{cegsC}. So we postpone its proof to \S\ref{subsec:GL2} and take it for granted for now.

Our main result shows that $\ov{\Phi}_\sig$ is indeed an isomorphism for sufficiently generic $\sig$. For the following theorem, let $\tau:= \tau(1,\mu)$ be a principal series tame inertial type and $\sig^\circ(\tau)$ be the $\cO$-lattice in $\sig(\tau)$ as in \S\ref{subsec:int-hecke-alg}. By \cite[Lem.~6.2.8]{LLLMlocalmodel}, if $\sig$ is $(n-1)$-deep, $\cC_\sig$ is contained in $\cX^{\eta,\tau}_n$. We denote the restriction map $\cO(\cX^{\eta,\tau}_n) \ra \cO(\cC_\sig)$ by $R_\tau^\sig$.

\begin{thm}\label{thm:spec-satake}
    Suppose that $\sig$ is $((e+1)(n-1)+2)$-deep. Then the morphism $\ov{\Phi}_\sig: \cO(\cC_\sig) \mono \cH(\sig) $ is an isomorphism. Moreover, if we write $\ov{\Psi}_\sig = \ov{\Phi}_\sig^\mo$, it fits in the following commutative diagram
    \[
    \begin{tikzcd}
        \cH(\sig^\circ(\tau)) \arrow[r, two heads, "\cR_{\sig^\circ(\tau)}^\sig"] \arrow[d, "\Psi^{\tau}"] & \cH(\sig) \arrow[r, hook, "\cS_\sig"] \arrow[d, "\ov{\Psi}_\sig"] & \cH(\mu) \arrow[d, "\ov{\Psi}_{\mu}"] \\
        \cO(\cX^{\eta,\tau}) \arrow[r, two heads, "R_{\tau}^\sig"] & \cO(\cC_\sig) \arrow[r, hook, "S_\sig"] & \cO(\cC_{\mu}).
    \end{tikzcd}
    \]
\end{thm}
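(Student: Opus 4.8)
The plan is to reduce the whole theorem to the single identity of $\F$-algebra homomorphisms
\[
\ov{\Psi}_\mu \circ \cS_\sig \circ \cR_{\sig^\circ(\tau)}^\sig \;=\; S_\sig \circ R_\tau^\sig \circ \Psi^\tau\colon\ \cH(\sig^\circ(\tau)) \longrightarrow \cO(\cC_\mu),
\]
and then to extract everything by a diagram chase. First I would record what the deepness hypothesis buys: it makes $\sig$ regular, puts $\sig$ in $\JH_c(\osig(\tau))$ for $\tau = \tau(1,\mu)$ so that $\cR_{\sig^\circ(\tau)}^\sig$ of Theorem \ref{thm:hecke-modp-red} is defined and, by Example \ref{ex:hecke}, surjective; it also forces $\cC_\sig \subset \cX^{\eta,\tau}$ by \cite[Lem.~6.2.8]{LLLMlocalmodel} and makes $\tau$ generic enough that Proposition \ref{prop:BK-EG} and Corollary \ref{cor:hecke-global-func} apply, so that $\Psi^\tau$ and $R_\tau^\sig$ exist. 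By Proposition \ref{prop:upperbound} we already know $\ov{\Phi}_\sig$ and $S_\sig$ are injective and that the square relating $\cS_\sig,\,S_\sig,\,\ov{\Phi}_\sig,\,\ov{\Psi}_\mu^\mo$ commutes.

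Granting the displayed identity, the chase runs as follows. Since $\ov{\Psi}_\mu$ is an isomorphism and $\cS_\sig$ is injective, the kernel of the right-hand side equals $\ker \cR_{\sig^\circ(\tau)}^\sig$, so $R_\tau^\sig\circ\Psi^\tau$ factors uniquely through $\cR_{\sig^\circ(\tau)}^\sig$, producing $\ov{\Psi}_\sig\colon\cH(\sig)\to\cO(\cC_\sig)$ with $\ov{\Psi}_\sig\circ\cR_{\sig^\circ(\tau)}^\sig = R_\tau^\sig\circ\Psi^\tau$; this is the left square. Post-composing with $S_\sig$ and using the identity again gives $S_\sig\circ\ov{\Psi}_\sig\circ\cR_{\sig^\circ(\tau)}^\sig = \ov{\Psi}_\mu\circ\cS_\sig\circ\cR_{\sig^\circ(\tau)}^\sig$, and surjectivity of $\cR_{\sig^\circ(\tau)}^\sig$ yields $S_\sig\circ\ov{\Psi}_\sig = \ov{\Psi}_\mu\circ\cS_\sig$, the right square. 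Feeding this into the square of Proposition \ref{prop:upperbound} gives $\cS_\sig\circ\ov{\Phi}_\sig\circ\ov{\Psi}_\sig = \ov{\Psi}_\mu^\mo\circ S_\sig\circ\ov{\Psi}_\sig = \cS_\sig$, so injectivity of $\cS_\sig$ forces $\ov{\Phi}_\sig\circ\ov{\Psi}_\sig = \id_{\cH(\sig)}$; combined with injectivity of $\ov{\Phi}_\sig$ this shows $\ov{\Phi}_\sig$ is an isomorphism with inverse $\ov{\Psi}_\sig$, and the two commuting squares are precisely the asserted diagram.

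It remains to prove the displayed identity, which I expect to be the main obstacle. Both sides are algebra maps out of $\cH(\sig^\circ(\tau))$, so it suffices to compare them on algebra generators; after reordering the $\chi_i$ so that $\cosoc\sig^\circ(\tau)\simeq\sig$ (permissible by Remark \ref{rmk:indep-lattice} and Example \ref{ex:hecke}), Proposition \ref{prop:int-hecke-tame-types} lets us take the generators $T_\fI = T_{-\veps_\fI}$ for $\fI\subseteq\bfn$ together with $T_\bfn^\mo$. On the representation side, $\cR_{\sig^\circ(\tau)}^\sig(T_\fI) = 0$ unless $\fI = \{1,\dots,i\}$ is an initial segment (so $-\veps_\fI = -\om_i$ is antidominant), in which case it equals $\ov{T}_{-\om_i}$ by Theorem \ref{thm:hecke-modp-red}; then $\cS_\sig(\ov{T}_{-\om_i}) = y_i$ by Remark \ref{rmk:modp-hecke-presentation}, whose image under $\ov{\Psi}_\mu$ is the product of the first $i$ coordinate functions on $\cC_\mu$ (\S\ref{subsec:tori}). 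On the Galois side, $\Psi^\tau(T_\fI) = \til{F}_\fI$, and since $\cU_\sig$ is dense in $\cC_\sig$ and $\cO(\cC_\sig)$ injects into the functions on $\cU_\sig$ (Lemma \ref{lem:upperbound-prelim}), it is enough to evaluate $S_\sig(\til{F}_\fI|_{\cC_\sig})$ at the ordinary points: choosing an ordinary crystalline lift $\rho$ of $\rhobar\in\cU_\sig(\Fpbar)$ of Hodge type $\eta$ and inertial type $\tau$ with Breuil--Kisin model $\fM\in Y^{\le\eta,\tau^\vee}(\cO')$, the equality $\WD(\rho)\simeq D(\fM)$ together with Proposition \ref{prop:glob-func-WD} computes $\til{F}_\fI(\rho)$ as the normalized product $q^{-\#\fI(\#\fI-1)/2}\prod_{i\in\fI}\alpha_i$ of Frobenius eigenvalues of the diagonal crystalline characters of $\rho$, where the power of $q$ attached to $T_\fI$ in Definition \ref{defn:glob-func} and Remark \ref{rmk:hecke-operators} is exactly the $\pi_K$-adic valuation bookkeeping of the minimal term, as explained after Lemma \ref{lem:tame-type-formulas}. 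Because the chosen reordering of the $\chi_i$ is precisely the one for which the antidominant $T_\fI$ carry the minimal valuation, this normalized product is a unit when $\fI = \{1,\dots,i\}$ and has strictly positive valuation otherwise, so mod $\varpi$ it reduces to the product of the first $i$ unramified parts when $\fI$ is an initial segment and to $0$ otherwise; matching this against the representation side, while keeping careful track of the duality $\tau\leftrightarrow\tau^\vee$ and the twist by $\ov{\veps}^{-w_0(\eta)}$ built into $S_\sig$ and the normalization of $\cC_\sig$, yields the identity. This last normalization comparison, which closely follows \cite[Prop.~10.1.5]{LLMPQ-FL}, is the technical heart of the argument.
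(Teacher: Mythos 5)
Your proposal is correct and follows essentially the same route as the paper: reduce the theorem to the commutativity of the outer rectangle, verify that identity by an explicit evaluation (lifting to characteristic zero and computing $\til{F}_\fI$ via Proposition~\ref{prop:glob-func-WD}, then tracking the $q$-adic valuation so that only initial segments $\fI$ survive mod $\varpi$), and conclude by the diagram chase using Proposition~\ref{prop:upperbound}, the injectivity of $\cS_\sig$ and $S_\sig$, and the surjectivity of $\cR_{\sig^\circ(\tau)}^\sig$, which is exactly what the paper leaves implicit in the sentence ``once we prove the commutativity of the largest rectangle\ldots''. The only small inefficiency is that you propose to lift a non-split ordinary $\rhobar\in\cU_\sig(\Fpbar)$, whereas the paper lifts the semisimple point $\rhobar'\otimes\ov{\veps}^{-w_0(\eta)}$ directly to a direct sum of Teichm\"uller lifts, which makes the existence and description of the lift immediate; since any function in $\cO(\cC_\sig)$ takes the same value at $\rhobar$ and $\rhobar^{\ss}$ this does not affect the outcome.
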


\begin{proof}
    Since $\sig$ is $((e+1)(n-1)+2)$-deep, $\tau$ is $(e(n-1)+2)$-generic. Therefore, we can apply Corollary \ref{cor:hecke-global-func} to obtain $\Psi^{\tau}$. Once we prove the commutativity of the largest rectangle, it implies the existence of $\ov{\Psi}_\sig$ making all squares commutative, which has to be an isomorphism by Proposition \ref{prop:upperbound}. Take
    \begin{align*}
        \rhobar' = \oplus_{i=1}^n \chi_i \in \cC_{\mu}(\Fpbar), \ \ \chi_i|_{I_K} = \prod_{\jj}\oom_{K,\sig_j}^{\mu_i}.
    \end{align*}
    We denote by $t_i$ the image $\chi_i(\Frob_K)\in \Fpbar^\times$. 
    Let $\fI\subset \fP_\tau$ be a subset of size $d$. To compute the image of $T_{\fI}\in \cH(\sig^\circ(\tau))$ in $\cO(\cC_{\mu})$, we can lift $\rhobar =\rhobar'\otimes \ov{\veps}^{-w_0(\eta)}$ to $\rho \in \cX^{\eta,\tau}_n(\Zpbar)$ and evaluate at $\til{F}_{\fI}$. We take $ \rho = \rho'\otimes \veps^{-w_0(\eta)}$ where
    \begin{align*}
        \rho' &= \oplus_{i=1}^n \til{\chi}_i, \ \ \til{\chi_i}|_{I_K} = \prod_{\jj}\om_{K,\sig_j}^{\mu_i}
    \end{align*}
    and let $\til{t}_i$ be $\til{\chi}_i(\Frob_K)\in \Zpbar^\times$. 
    By an elementary computation, we obtain
    \begin{align*}
        \WD(\rho) = \oplus_{i=1}^n \phi_i, \ \ \phi_i|_{I_K} = \om_{K,\sig_j}^{\mu_i}
    \end{align*}
    and $\phi_i(\Frob_K)=q^{i-1}\til{t}_i$. Therefore, we can deduce that
    \begin{align*}
        \til{F}_{\fI}(\rho) =  q^{-d(d-1)/2} \prod_{i\in \fI}q^{i-1}\til{t}_i
    \end{align*}
    which is zero modulo the maximal ideal of $\Zpbar$ unless $\veps_{-\fI}$ is antidominant, in which case is equal to $\prod_{i\in \fI} t_i$. Together with the discussion in \S\ref{subsec:tori}, this proves the commutativity of the largest square and thus completes the proof.
\end{proof}

\begin{defn}
    Suppose that $\sig$ is $((e+1)(n-1)+2)$-deep. For $i=1,\dots, n$, we define $\ov{f}_i\in \cO(\cC_\sig)$ to be the function $R_\tau^\sig(\til{F}_{\fI_i})$ where $\fI_i = \CB{1,2,\dots,i}$. Note that $\ov{f}_i$ is equal to $\ov{\Psi}_\sig(\ov{T}_{-\om_i})$. 
\end{defn}

\begin{example}\label{ex:fail}
    We remark that Theorem \ref{thm:spec-satake} may hold under a somewhat milder deepness condition on $\sig$, but it fails to hold in general. 
    \begin{enumerate}
        \item Suppose that $\sig$ is Steinberg and let $\bfn_0\subset \bfn$ be the subset of integers $i$ such that $\mu_{j,i} - \mu_{j,i+1} = p-1$ for all $\jj$. Following the proof of Lemma \ref{lem:upperbound-prelim} using the eigenvalue morphism (see \cite[Def.~5.5.11]{EGstack})
        \begin{align*}
            \nu: \cU_\sig \ra (\G_m)^{n-\#\bfn_0}, 
        \end{align*}
        we can show that $\cO(\cC_\sig)$ injects into $\F[x_1^\pm,\dots, x_{n-\#\bfn_0}^\pm]$ and thus cannot be isomorphic to $\cH(\sig)$. 
        \item Suppose that there is an integer $1\le i \le n-1$ such that $\mu_{j,i} - \mu_{j,i+1} = p-2$ for all $\jj$. Then for $\rhobar \in \cU_\sig(\Fpbar)$ of the form \eqref{eqn:ord-rhobar}, its $[i,i+1]$-diagonal block has the form  
    \begin{align*}
        \pma{\oom_{K,\sig_0}^a \ur_{t_i} & * \\ & \oom_{K,\sig_0}^a \ur_{t_{i+1}}}
    \end{align*}
    for some fixed integer $a$. If $\cH(\sig)\simeq \cO(\cC_\sig)$ as in Theorem \ref{thm:spec-satake}, then the function $\ov{f}_i$ maps $\rhobar$ to $t_1 \cdots t_i$ (here, we assume $\oom_{K,\sig_0}(\Frob_K)=1$ for notational convenience). We can also find $\rhobar' \in \cU_\sig(\Fpbar)$ whose diagonal characters are exactly the diagonal characters of $\rhobar$ except that $t_i$ and $t_{i+1}$ are interchanged. Then $\ov{f}_i(\rhobar') = \ov{f}_i(\rhobar)t_{i+1}t_i^\mo$. However, since $\rhobar^\ss = \rhobar^{\prime\ss}$ and any function in $\cO(\cC_\sig)$ does not change its value under semisimplification (this is because $\rhobar^\ss$ is in the closure of $\rhobar$ by \cite[Thm.~6.6.3 (3)]{EGstack}), we get a contradiction. We acknowledge that before finding this argument, we learned from Kalyani Kansal and Ben Savoie that $\cH(\sig)$ and $\cO(\cC_\sig)$ cannot be isomorphic for $\sig$ in this example with $n=2$ and $K=\Qp$ using a different argument.
    \end{enumerate}
\end{example}

\begin{rmk}\label{rmk:fail}
    Let $\sig=F(\mu)$ be a non-Steinberg Serre weight and $\tau=\tau(1,\mu)$ as in Theorem \ref{thm:spec-satake}. If $\cX_n^{\eta,\tau}$ is normal, then we can obtain the morphism $\Psi^\tau : \cH(\sig^\circ(\tau)) \ra \cO(\cX_n^{\eta,\tau})$. Assuming that $\cC_\sig$ is contained in $\cX_n^{\eta,\tau}$, by the proof of Theorem \ref{thm:spec-satake}, $\Psi^\tau$ induces the isomorphism $\ov{\Psi}_\sig: \cH(\sig) \risom \cO(\cC_\sig)$. Therefore, for $\mu$ in Example \ref{ex:fail}, $\cX_n^{\eta,\tau}$ cannot be normal and the image of $\cH(\sig^\circ(\tau))$ in $\cO(\cX_n^{\eta,\tau})[1/p]$ is not contained in $\cO(\cX_n^{\eta,\tau})$. In a very recent paper \cite{LHMM}, Le Hung--M\'ezard--Morra show that when exactly $\cX_2^{\eta,\tau}$ fails to be normal under the assumption that $p\ge 7$ and $K/\Qp$ is unramified, or $K=\Q_5$. In particular, their main result confirms that when $\mu\ix{j}_1 - \mu\ix{j}_2 = p-2$ for all $\jj$, $\cX^{\eta,\tau}_2$ is not normal. 
\end{rmk}

For any irreducible component $\cC_\sig\subset \cX^{\eta,\tau}_n$, we denote by $R_{\tau}^\sig:\cO(\cX^{\eta,\tau}_n) \ra \cO(\cC_\sig)$ the morphism given by restriction. We finish this subsection by discussing the relationship between $\cR_{\sig^\circ(\tau)}^\sig$ and $R_\tau^\sig$. We first state the following result which generalizes \cite[Prop.~4.5.2]{EGHweightcyc} to potentially crystalline representations. Note that $\varphi^{[F_w:\Qp]}$ in the statement of \loccit~should be replaced by $\varphi^{-[F_w^0:\Qp]}$ (also see \cite{EGHerratum}).

\begin{prop}\label{prop:reducibility}
    Let $\rho\in \cX^{\lam+\eta,\tau}_n(\cO)$ be a potentially crystalline representation of Hodge type $\lam+\eta$ and tame inertial type $\tau$. Let $\WD(\rho) = \oplus_{I\in\fP_\tau} \phi_I$ where $\phi_I |_{I_K} \simeq \tau_I$.
    Suppose that there exists $\ov{\fI}\subset \ov{\fP}_\tau$ and integers $d_{\ov{I}}\in [1,\#\ov{I}]$ of each $\ov{I}\in \ov{\fI}$ such that 
    \begin{align*}
       \pi_K^{-n_\lam(\ov{\fI},(d_{\ov{I}}))} \prod_{\ov{I}\in \ov{\fI}} \sym_{d_{\ov{I}}} (\det (\phi_I)(\Frob_K))_{I\in \ov{I}} \in \cO^\times.
    \end{align*}
    Then $\rho$ contains a $m$-dimensional subrepresentation $\rho'\subset \rho$ such that $\rho' \in \cX_{m}^{\lam'+\eta_m,\tau'}(\cO)$ where
    \begin{align*}
        m &:= \sum_{\ov{I}\in \ov{\fI}} d_{\ov{I}}\times n_{\ov{I}}
        \\
        \lam' &:= (\lam_{\tilj,n-m+1},\dots, \lam_{\tilj,n})_{\tilj\in\tilcJ}
        \\
        \tau' &:= \oplus_{\ov{I}\in \ov{\fI}}(\tau_I)^{\oplus d_{\ov{I}}}.
    \end{align*}
\end{prop}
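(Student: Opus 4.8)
The plan is to reduce the statement to a purely linear-algebra assertion about the weakly admissible filtered $\varphi$-module $D_{\mathrm{pst}}(\rho)$ attached to $\rho$, where reducibility of $\rho$ becomes the existence of a suitable $\varphi$-stable, filtration-compatible subobject. First I would set up the filtered $(\varphi,N,\Gal(K'/K))$-module $D := D_{\mathrm{pst}}(\rho)$, which is free of rank $n$ over $K_0' \otimes_{\Qp} E$ (here $N=0$ since $\rho$ is potentially crystalline), with $\varphi$-semilinear Frobenius and a descent datum realizing $\tau$. Decomposing $D$ over the embeddings and using that $\WD(\rho)\simeq \oplus_{I\in\fP_\tau}\phi_I$, the $\tau$-isotypic decomposition refines to $D = \oplus_{\ov I\in\ov\fP_\tau} D_{\tau_{\ov I}}$, and on each block the linearized Frobenius $\varphi^{f'}$ (equivalently $\Frob_K^{n_{\ov I}}$ on the associated Weil--Deligne representation) acts with eigenvalues $\det(\phi_I)(\Frob_K)$ for $I\in\ov I$, up to the explicit powers of $q$ tracking the unramified twist (exactly as computed inside the proof of Theorem \ref{thm:hecke-global-func}). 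The hypothesis $\pi_K^{-n_\lam(\ov\fI,(d_{\ov I}))}\prod_{\ov I\in\ov\fI}\sym_{d_{\ov I}}(\det(\phi_I)(\Frob_K))_{I\in\ov I}\in\cO^\times$ is precisely the statement that a certain product of ``top'' Newton slopes of these Frobenius blocks is an isolated extremal vertex of the Newton polygon, after the normalization by $\lam$.

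Next I would use weak admissibility: the Hodge polygon of $D$ is determined by $\lam+\eta$, and the Newton polygon equals it. The integer $n_\lam(\ov\fI,(d_{\ov I}))$ is the value at $x=m$ of the Hodge polygon built from the largest Hodge--Tate weights (this is visible from the formula $n_\lam(\ov\fI,(d_{\ov I})) = \sum_{\tilj}\sum_{k=0}^{d_{\ov\fI}-1}\lam_{\tilj,n-k}+k$, matching $\RG{(w_0\omega_{m,\tilj})_{\tilj},\lam+\eta}$). The unit hypothesis forces the sub-$\varphi$-module $D'\subset D$ spanned by the chosen top-eigenvalue lines in the blocks indexed by $\ov\fI$ (with multiplicities $d_{\ov I}$) to be weakly admissible of rank $m$, with Hodge--Tate weights exactly $(\lam_{\tilj,n-m+1},\dots,\lam_{\tilj,n}) + \eta_m$ and inertial type $\oplus_{\ov I\in\ov\fI}\tau_I^{\oplus d_{\ov I}} = \tau'$: indeed the integrality of the eigenvalue product says the induced filtration jumps are forced into the top $m$ slots, because any deviation would make either $D'$ or $D/D'$ fail weak admissibility. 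By the Colmez--Fontaine theorem, $D'$ corresponds to a potentially crystalline subrepresentation $\rho'\subset\rho$ of Hodge type $\lam'+\eta_m$ and inertial type $\tau'$, hence a point of $\cX_m^{\lam'+\eta_m,\tau'}$; that this lands in the $\cO$-points (not merely $E$-points) follows because $\rho'$ is a $G_K$-stable $\cO$-submodule of $\rho$ up to saturation, and saturation of a $\GL_n(\cO)$-valued representation along a subrepresentation preserves the $\cO$-lattice structure while not changing the Hodge or inertial type.

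The main obstacle I expect is the bookkeeping that turns ``extremal vertex of the Newton polygon'' into the precise identification of $\lam'$ as the \emph{top} $m$ Hodge--Tate weights $(\lam_{\tilj,n-m+1},\dots,\lam_{\tilj,n})$ (as opposed to some other $m$-subset), and checking it embedding-by-embedding in the $\Res_{K/\Qp}$ setting where the Hodge filtration lives on $D\otimes_{K_0}\overline{K}$ rather than $D$ itself. Concretely, one must verify: $(1)$ that the chosen product of Frobenius eigenvalues being a unit (after normalization) is equivalent to the corresponding break point $(m,n_\lam(\ov\fI,(d_{\ov I})))$ being a vertex where Newton and Hodge polygons touch; and $(2)$ that weak admissibility then pins down the filtration on the quotient $D/D'$ to have the complementary Hodge--Tate weights, using the sub/quotient compatibility of weak admissibility. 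Both are standard but delicate; I would isolate $(1)$ as a lemma computing the $\varpi$-adic valuation of $\det(\varphi^{f'}|_{D'})$ in terms of $n_\lam(\ov\fI,(d_{\ov I}))$, matching the scalar $\pi_K^{-\sum_{\tilj}\RG{\lam_{\tilj},w_0(\omega^\vee_{d_{\ov\fI,(d_{\ov I})}})}}q^{-d_{\ov\fI,(d_{\ov I})}(d_{\ov\fI,(d_{\ov I})}-1)/2}$ of Remark \ref{rmk:hecke-operators}, and reduce $(2)$ to the two-step filtration argument in the proof of \cite[Prop.~4.5.2]{EGHweightcyc}, now applied with the corrected Frobenius normalization $\varphi^{-[F_w^0:\Qp]}$ noted before the statement.
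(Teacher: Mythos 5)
Your proposal follows essentially the same route as the paper's own proof: both reduce to \cite[Prop.~4.5.2]{EGHweightcyc}, decompose $\Dcris(\rho)$ into its $\tau$-isotypic parts, select in each block a $\varphi$-stable subspace of the prescribed rank by extremal Frobenius slopes, and verify weak admissibility of the resulting $D'$ before applying $\Vcris$. One small caution on direction: the paper picks the $\varphi^{f}$-eigenvectors of \emph{smallest} slope, matching the smallest $m$ Hodge--Tate weights (the last $m$ entries $\lam_{\tilj,n-m+1},\dots,\lam_{\tilj,n}$ of the dominant $\lam$, as encoded by $w_0\omega_m$ in $n_\lam$), whereas some of your phrasing (``top Newton slopes'', ``top-eigenvalue lines'', ``built from the largest Hodge--Tate weights'') reverses the direction -- though you correctly flag this slope/weight bookkeeping as the delicate point to pin down.
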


\begin{proof}
    We briefly explain how the proof of \cite[Prop.~4.5.2]{EGHweightcyc} carries over verbatim. Since $\varphi$ and $\Del'$-action on $D:=\Dcris(\rho)$ commute, we can decompose $D$ into $D_{\tau_{\ov{I}}}$ its $\tau_{\ov{I}}$-isotypic parts for $\ov{I}\in \ov{\fP}_\tau$. For each $\ov{I}\in \ov{\fI}$, we can find a $\varphi$-stable $K_0\otimes_{\Qp}E$-subspace $D'_{\tau_I} \subset D_{\tau_I}$ of rank $d_{\ov{I}}n_{\ov{I}}$ spanned by $d_{\ov{I}}n_{\ov{I}}$ $\varphi^{f}$-eigenvectors of the $d_{\ov{I}}n_{\ov{I}}$ smallest slopes. Then the argument in \loccit~shows that $D':= \oplus_{\ov{I}\in \ov{\fI}}D'_{\tau_I}$ is weakly admissible, and $\rho'$ is given by $\Vcris(D')$.     
\end{proof}

\begin{prop}\label{prop:glob-func-restriction}
    Let $\tau$ be $(e(n-1)+2)$-generic. For any $\cC_\sig \subset \cX^{\eta,\tau}_n$ and ${\fI}\subset {\fP}_\tau$, $R_\tau^\sig(\til{F}_{\fI}) = 0$ unless $\cC_\sig$ intersects with the image of $\cX^{\eta_m,\tau'}_m \times \cX^{\eta_{n-m}+\ud{m},\tau''}_{n-m} \ra \cX_n^{\eta,\tau}$ where $\tau':= \oplus_{I\in \fI}\tau_I$ and $\tau'':= \oplus_{I\notin \fI}\tau_I$, in which case  $R_\tau^\sig(\til{F}_{\fI})$ equals $\ov{f}_{m}\in \cO(\cC_\sig)$.
\end{prop}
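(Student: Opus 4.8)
The plan is to verify the identity by evaluating both sides at $\Fpbar$-points of the dense ordinary locus $\cU_\sig\subset\cC_\sig$ and reducing to an explicit computation with potentially crystalline lifts. Since $\cC_\sig$ is reduced of finite type over $\F$ and $\cU_\sig$ is dense in it, a function on $\cC_\sig$ is determined by its values at the $\Fpbar$-points of $\cU_\sig$; moreover, by \cite[Thm.~6.6.3(3)]{EGstack} it takes the same value at $\rhobar$ and at $\rhobar^\ss$, and the semisimple $\Fpbar$-points of $\cC_\sig$ are exactly the $\rhobar^\ss$ for $\rhobar\in\cU_\sig(\Fpbar)$, i.e.\ the representations $\bigoplus_{i=1}^n\chi_i$ of the shape prescribed in \S\ref{subsec:GLn}. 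As $\cX_n^{\eta,\tau}$ is $\cO$-flat and $\til{F}_\fI\in\cO(\cX_n^{\eta,\tau})$ (Lemma \ref{lem:global-functions/p} together with Proposition \ref{prop:BK-EG}), each such $\rhobar^\ss$ admits a lift $\rho\in\cX_n^{\eta,\tau}(\Zpbar)$ (possibly over a finite extension) with $R_\tau^\sig(\til{F}_\fI)(\rhobar^\ss)=\til{F}_\fI(\rho)\bmod\fm_{\Zpbar}$, and by Proposition \ref{prop:glob-func-WD} and Definition \ref{defn:glob-func} applied with $\lam=0$ one has $\til{F}_\fI(\rho)=q^{-m(m-1)/2}\prod_{I\in\fI}\det\bigl(\phi_I(\Frob_K)\bigr)$, where $\WD(\rho)=\bigoplus_{I\in\fP_\tau}\phi_I$ with $\phi_I|_{I_K}\simeq\tau_I$ and $m:=\#\bigcup_{I\in\fI}I$.

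Next I would pin down when $\til{F}_\fI(\rho)$ is a unit. Lemma \ref{lem:global-functions/p} already gives $v_p\bigl(\prod_{I\in\fI}\det\phi_I(\Frob_K)\bigr)\ge v_p(q^{m(m-1)/2})$, and since $n_0(\fI)=[K:\Qp]\cdot m(m-1)/2$ has the same $p$-adic valuation as $q^{m(m-1)/2}$ (and $-n_0(\fI)$ is the $\pi_K$-adic valuation of the normalizing scalar of $\til{F}_\fI$), equality holds if and only if $\pi_K^{-n_0(\fI)}\prod_{I\in\fI}\det\phi_I(\Frob_K)$ is a unit, i.e.\ exactly the hypothesis of Proposition \ref{prop:reducibility} for $(\ov{\fI},(d_{\ov I}))=(\fI,(1))$. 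In that case Proposition \ref{prop:reducibility} produces a subrepresentation $\rho'\subset\rho$ with $\rho'\in\cX_m^{\eta_m,\tau'}(\Zpbar)$, $\tau'=\bigoplus_{I\in\fI}\tau_I$; the complementary Hodge--Tate weights give the quotient $\rho''=\rho/\rho'$ Hodge type $\eta_{n-m}+\ud{m}$, and since $\WD(\rho)=\WD(\rho')\oplus\WD(\rho'')$ its inertial type is $\tau''=\bigoplus_{I\notin\fI}\tau_I$; hence $\rhobar^\ss=(\ov{\rho'})^\ss\oplus(\ov{\rho''})^\ss$ lies in the (closed) image of $\cX_m^{\eta_m,\tau'}\times\cX_{n-m}^{\eta_{n-m}+\ud{m},\tau''}\ra\cX_n^{\eta,\tau}$. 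Conversely, if $\rhobar^\ss$ lies in that image, then lifting its two summands inside the $\cO$-flat stacks $\cX_m^{\eta_m,\tau'}$ and $\cX_{n-m}^{\eta_{n-m}+\ud{m},\tau''}$ and taking their direct sum yields a lift $\rho$ with $\til{F}_\fI(\rho)$ a unit. Thus, for a given $\rhobar\in\cU_\sig(\Fpbar)$, $\til{F}_\fI(\rhobar^\ss)\ne0$ if and only if $\rhobar^\ss$ lies in that image; combined with the fact that the $\rhobar^\ss$ with fixed inertial behaviour and varying unramified parameters sweep out a single irreducible locus and that the image is closed, this shows that either no $\rhobar\in\cU_\sig(\Fpbar)$ has $\til{F}_\fI(\rhobar^\ss)\ne0$, or a dense set of them does, according to whether $\cC_\sig$ is disjoint from or meets the image. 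The first alternative gives $R_\tau^\sig(\til{F}_\fI)=0$, which is the first assertion.

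In the second alternative I would, on the dense locus of $\cU_\sig$ where the subrepresentation $\rho'$ exists, rewrite the unit as $\til{F}_\fI(\rho)=q^{-m(m-1)/2}\det\bigl(\WD(\rho')(\Frob_K)\bigr)$. After reordering $\fP_\tau$ so that $\bigcup_{I\in\fI}I=\{1,\dots,m\}$, the standard relation between the crystalline Frobenius eigenvalues of $\rho'$ and its Hodge--Tate weights — precisely the computation carried out in the proof of Theorem \ref{thm:spec-satake} — shows that this reduces modulo $\fm_{\Zpbar}$ to $\prod_{k=1}^m\chi_k(\Frob_K)$, where $\rhobar^\ss=\bigoplus_{i=1}^n\chi_i$ with the $\chi_i$ ordered by the unique flag of $\rhobar$. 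By the defining formula for $\ov{f}_m$ and the local class field theory description of \S\ref{subsec:tori} and \S\ref{subsec:GLn}, this is exactly $\ov{f}_m(\rhobar)$; since the two functions $R_\tau^\sig(\til{F}_\fI)$ and $\ov{f}_m$ then agree on a dense subset of $\cC_\sig(\Fpbar)$, they are equal.

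The step I expect to be the main obstacle is the density claim used above: translating the geometric statement ``$\cC_\sig$ meets the image of $\cX_m^{\eta_m,\tau'}\times\cX_{n-m}^{\eta_{n-m}+\ud{m},\tau''}$'' into the assertion that the $m$-dimensional subrepresentation of the correct Hodge and inertial type is already present at the generic point of $\cU_\sig$, and doing so compatibly with all normalizations, requires combining the explicit shape of $\cU_\sig$ with the description of the irreducible components of $\cX_m^{\eta_m,\tau'}$ and $\cX_{n-m}^{\eta_{n-m}+\ud{m},\tau''}$ (as Breuil--M\'ezard cycles, using genericity of $\tau'$ and $\tau''$). The genericity hypothesis on $\tau$ enters, through Proposition \ref{prop:BK-EG}, precisely to ensure that $\til{F}_\fI$ is a genuine global function on $\cX_n^{\eta,\tau}$ and not merely on an open Breuil--Kisin neighbourhood.
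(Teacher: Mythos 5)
Your core mechanism agrees with the paper's: lift to characteristic zero, evaluate $\til{F}_\fI$ via Proposition~\ref{prop:glob-func-WD}, invoke Proposition~\ref{prop:reducibility} to extract the sub/quotient, and compare with $\ov{f}_m$ by the computation of Theorem~\ref{thm:spec-satake}. The normalizations you unwind (with $\lam=0$, the scalar $q^{-m(m-1)/2}$, the matching with the hypothesis of Proposition~\ref{prop:reducibility}) are correct. But you have made the argument substantially harder than it needs to be, and the difficulty you flag at the end is self-inflicted. You try to establish a biconditional (``$\til{F}_\fI(\rho)$ a unit $\Leftrightarrow$ $\rhobar^\ss$ in the image'') and then deduce a density dichotomy from the irreducibility of the locus of $\rhobar^\ss$ together with closedness of the image. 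Neither the converse direction nor the closedness of the image is needed. The paper's proof goes entirely one way: \emph{assume} $R_\tau^\sig(\til{F}_\fI)\ne0$, take any $\rhobar\in\cU_\sig$ where it is non-zero (such a point exists by density of $\cU_\sig$), lift, decompose, conclude. The non-vanishing locus of $R_\tau^\sig(\til{F}_\fI)$ is automatically open in $\cC_\sig$, and since $\cC_\sig$ is irreducible this open is dense whenever it is non-empty; hence the pointwise identity $R_\tau^\sig(\til{F}_\fI)(\rhobar)=\det(\rhobar')(\Frob_K)=\ov{f}_m(\rhobar)$, valid on the non-vanishing locus, forces $R_\tau^\sig(\til{F}_\fI)=\ov{f}_m$ as functions. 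This makes your last paragraph about reconciling ``$\cC_\sig$ meets the image'' with the generic point of $\cU_\sig$ via Breuil--M\'ezard cycles unnecessary.

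There is also a small but useful simplification you miss by working with $\rhobar^\ss$ and then decomposing $\rhobar^\ss$ as a sum of semisimplifications. The paper lifts the maximally non-split point $\rhobar\in\cU_\sig$ itself, uses the uniqueness of its $G_K$-stable flag to identify the reduction of the integral subrepresentation $\rho'$ with the unique $m$-dimensional subrepresentation $\rhobar'\subset\rhobar$, and then the partial semisimplification $\rhobar'\oplus\rhobar''$ lies in $\cC_\sig$ by \cite[Thm.~6.6.3(3)]{EGstack} and visibly in the image of the direct-sum morphism. This avoids both the closedness question you raise and the auxiliary lifting of the individual summands in your ``converse'' step.
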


\begin{proof}
    Note that $\tau$ is regular by the genericity condition. If $R_\tau^\sig(\til{F}_{\fI}) \neq 0$, there is $\rhobar \in \cU_\sig(\F)$ at which $R_\tau^\sig(\til{F}_{\fI})$ takes non-zero value. This implies that $\rhobar$ has a lift $\rho \in \cX^{\eta,\tau}_n(\cO)$ such that $\til{F}_{\fI}(\rho)\in \cO^\times$. By Proposition \ref{prop:reducibility}, $\rho$ has a subrepresentation $\rho' \in \cX^{\eta_m,\tau'}_m(\cO)$ and $\rho'':=\rho/\rho' \in \cX^{\eta_{n-m}+\ud{m},\tau''}_{n-m}(\cO)$. Therefore, the unique $m$-dimensional subrepresentation $\rhobar'$ of $\rhobar$ is contained in $\cX^{\eta_m,\tau'}_m(\F)$ and similarly $\rhobar'':=\rhobar/\rhobar'$ is in $\cX^{\eta_{n-m}+\ud{m},\tau''}_{n-m}(\F)$. This shows that $\cC_\sig$ intersects with the image of $\cX^{\eta_m,\tau'}_m \times \cX^{\eta_{n-m}+\ud{m},\tau''}_{n-m} \ra \cX_n^{\eta,\tau}$. To see that  $R_\tau^\sig(\til{F}_{\fI})=\ov{f}_{m}$, note that $\til{F}_{\fI}(\rho)$ equals to $q^{-n(\fI)}\det(\WD(\rho'))(\Frob_K)$. Then, a direct computation as in the proof of Theorem \ref{thm:spec-satake} shows that this is congruent to $\det(\rhobar')(\Frob_K)$ modulo $\varpi$.
\end{proof}

\begin{cor}\label{cor:spec-hecke-modp}
    Suppose that $\tau$ is $(e(n-1)+2)$-generic. Let $\sig \in \JH_c(\ov{\sig^\circ}(\tau))$ be a Serre weight such that $\sig^{N(k)}\simeq \oplus_{I\in \fP_\tau} \sig_I$ for some $\sig_I \in \JH(\osig(\tau_I))$. We suppose that $\cC_\sig \in \cX^{\eta,\tau}_n$ and $\cC_{\sig_I}\subset \cX^{\eta,\tau_I}_{\#I}$. Then we have the following commutative diagram
    \[
    \begin{tikzcd}
        \cH(\sig^\circ(\tau)) \arrow[r, "\Psi^{\tau}"] \arrow[d, "\cR_{\sig^\circ(\tau)}^\sig"] & \cO(\cX^{\eta,\tau}) \arrow[d, "R_{\tau}^\sig"] \\
        \cH(\sig)  \arrow[r, "\ov{\Psi}_{\sig}"] & \cO(\cC_\sig).
    \end{tikzcd}
    \]
\end{cor}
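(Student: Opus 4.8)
The plan is to verify the square on a generating set of $\cH(\rmG,\sig^\circ(\tau))$, reading off the effect of each arrow from the results already proved. First, since $\tau$ is $(e(n-1)+2)$-generic it is regular, so $\ov\fP_\tau=\fP_\tau$ and $\fS_{\ov\fP_\tau}$ is trivial; Proposition \ref{prop:int-hecke-tame-types} then shows that $\cH(\rmG,\sig^\circ(\tau))$ is generated over $\cO$ by the $T_\fI$ for $\fI\subset\fP_\tau$ together with $T_{\fP_\tau}^{\mo}$. All four arrows are $\cO$-algebra homomorphisms — $\cR_{\sig^\circ(\tau)}^\sig$ because, as built in the proof of Theorem \ref{thm:hecke-modp-red}, it is a composite of restriction-of-endomorphism maps; $\Psi^\tau$ by Theorem \ref{thm:hecke-global-func}; and $R_\tau^\sig$, $\ov\Psi_\sig$ tautologically — so it is enough to compare the two composites on each $T_\fI$, the unit $T_{\fP_\tau}^{\mo}$ being taken care of once $\fI=\fP_\tau$ is (both composites send $T_{\fP_\tau}$ to the unit $\ov{f}_n$).

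Next I would trace a fixed $T_\fI$, $\fI\subset\fP_\tau$, around both ways, writing $m:=\#\bigcup_{I\in\fI}I$. Right then down: $\Psi^\tau(T_\fI)=\til F_\fI$ by Theorem \ref{thm:hecke-global-func}, and by Proposition \ref{prop:glob-func-restriction}, $R_\tau^\sig(\til F_\fI)$ vanishes unless $\cC_\sig$ meets the image of $\cX_m^{\eta_m,\tau'}\times\cX_{n-m}^{\eta_{n-m}+\ud{m},\tau''}\ra\cX_n^{\eta,\tau}$ (with $\tau'=\oplus_{I\in\fI}\tau_I$, $\tau''=\oplus_{I\notin\fI}\tau_I$), in which case it equals $\ov{f}_m$. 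Down then right: by Theorem \ref{thm:hecke-modp-red}, $\cR_{\sig^\circ(\tau)}^\sig(T_\fI)$ vanishes unless $-\veps_\fI$ is antidominant, i.e.\ unless $\bigcup_{I\in\fI}I=\{1,\dots,m\}$, and then $\veps_\fI=\om_m$, so $\cR_{\sig^\circ(\tau)}^\sig(T_\fI)=\ov T_{-\om_m}$ and $\ov\Psi_\sig(\ov T_{-\om_m})=\ov{f}_m$ by the definition of $\ov{f}_m$ (equivalently, by the right square of Theorem \ref{thm:spec-satake}). Hence whenever either composite is nonzero its value is $\ov{f}_m$, and what remains is to show the two vanishing conditions coincide, i.e.\ that $\cC_\sig$ meets the displayed image exactly when $\bigcup_{I\in\fI}I$ is an initial segment of $\bfn$.

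For this last point I would use the explicit model of the dense open $\cU_\sig\subset\cC_\sig$ recalled in \S\ref{subsec:EGstack}: a point $\rhobar\in\cU_\sig(\Fpbar)$ is maximally non-split of niveau $1$, so it has a unique $G_K$-stable full flag with graded pieces the $\chi_i$ of \eqref{eqn:ord-rhobar}, and its only subrepresentations are the $V_m=\langle e_1,\dots,e_m\rangle$, of inertial type $\bigoplus_{i=1}^m\chi_i|_{I_K}$. Because $\cC_\sig\subset\cX_n^{\eta,\tau}$ and $\tau$ is regular, the matching between $\{\chi_i|_{I_K}\}_{i}$ and the inertial characters of $\tau$ (obtained by lifting and applying $\WD$) respects the ordering in which the blocks of $\fP_\tau$ are consecutive intervals of $\bfn$; this can be read off from the inertial exponents $(\mu_{j}-w_0(\eta)_j)_i$ in \eqref{eqn:ord-rhobar} just as in the computation in the proof of Theorem \ref{thm:spec-satake}, and since in the deep range in question the characters involved are pairwise distinct, $V_m$ carries inertial type $\tau'$ precisely when $\bigcup_{I\in\fI}I=\{1,\dots,m\}$. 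When this holds, $V_m$ and $\rhobar/V_m$ (lifted as in the proof of Proposition \ref{prop:reducibility}) exhibit $\rhobar$ in the image, so $\cC_\sig$ meets it; when it fails, no subrepresentation of any point of $\cU_\sig$ has inertial type $\tau'$, so $\cC_\sig$ misses the image and $R_\tau^\sig(\til F_\fI)=0$. I expect this reconciliation of the two support conditions to be the only genuinely non-formal step; the remainder is bookkeeping, tracking $T_\fI$ through maps whose effect on it is already recorded in Theorems \ref{thm:hecke-global-func}, \ref{thm:hecke-modp-red}, \ref{thm:spec-satake} and Proposition \ref{prop:glob-func-restriction}.
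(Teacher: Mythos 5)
Your framework is the right one and matches what the paper's two-line proof is pointing at: since both $\cR_{\sig^\circ(\tau)}^\sig\!\circ\!$ (identity) and $R_\tau^\sig\!\circ\!\Psi^\tau$ are $\cO$-algebra maps (a correct observation, as $\cR_{\sig^\circ(\tau)}^\sig$ is built from restriction-to-subobject and factor-through-quotient maps on endomorphism rings), it suffices to compare them on the generators $T_\fI$ and $T_{\fP_\tau}^{-1}$ from Proposition \ref{prop:int-hecke-tame-types}, which you do using Theorem \ref{thm:hecke-global-func}, Proposition \ref{prop:glob-func-restriction}, Theorem \ref{thm:hecke-modp-red} and the last square of Theorem \ref{thm:spec-satake}. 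You also correctly identify the remaining content: the support condition in Proposition \ref{prop:glob-func-restriction} (``$\cC_\sig$ meets the image of $\cX_m^{\eta_m,\tau'}\times\cX_{n-m}^{\eta_{n-m}+\ud m,\tau''}$'') must be shown to coincide with the antidominance condition ``$\bigcup_{I\in\fI}I=\{1,\dots,m\}$'' from Theorem \ref{thm:hecke-modp-red}; without this, a single $T_\fI$ could be sent to $\ov f_m$ by one composite and to $0$ by the other.

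This reconciliation is precisely where your sketch is too thin. You argue that for $\rhobar\in\cU_\sig(\Fpbar)$ the ``matching between $\{\chi_i|_{I_K}\}_i$ and the inertial characters of $\tau$ respects the ordering,'' deducing it from $\cC_\sig\subset\cX_n^{\eta,\tau}$, regularity of $\tau$, and an appeal to the computation in Theorem \ref{thm:spec-satake}. But that computation lives in the principal-series case $\tau=\tau(1,\mu)$, where the characters of $\tau$ are \emph{defined} from the highest weight $\mu$ of $\sig$, so the ordering compatibility is built in. Here $\tau$ is an arbitrary regular tame type, and the containment $\cC_\sig\subset\cX_n^{\eta,\tau}$ only pins down the characters of $\tau$ up to permutation. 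What actually forces the blocks to line up is exactly the extra hypothesis of the corollary that you never invoke: $\cC_{\sig_I}\subset\cX^{\eta,\tau_I}_{\#I}$ for each $I\in\fP_\tau$, together with $\sig^{N(k)}\simeq\otimes_I\sig_I$. This is what lets you identify the $I$-th block of the graded pieces of $\rhobar$ with (a twist of) a point of $\cC_{\sig_I}$, hence with the reduction of $\tau_I$ on the nose, and it is what you need both to prove that no subrepresentation has inertial type $\tau'$ when $\bigcup_{I\in\fI}I$ is not an initial segment, and conversely to actually \emph{produce} a point of $\cC_\sig$ in the claimed image (by glueing lifts of the $\cC_{\sig_I}$-points, suitably twisted) when it is. The phrase ``lifted as in the proof of Proposition \ref{prop:reducibility}'' does not supply such lifts: that proposition goes from an integral $\rho$ to a subobject, not from a mod-$p$ $\rhobar$ to an integral lift. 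So the gap is real, though localized to the reconciliation step; the rest of your bookkeeping is sound and agrees with the paper's citation of Proposition \ref{prop:glob-func-restriction} and Theorem \ref{thm:hecke-modp-red}.
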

\begin{proof}
    This follows from Proposition \ref{prop:glob-func-restriction} and Theorem \ref{thm:hecke-modp-red}.
\end{proof}

\begin{rmk}
    For $K/\Qp$ unramified and $\tau$ is $(5n-1)$-generic, then it is known that $\cC_\sig\subset \cX^{\eta,\tau}_n$ if and only if $\sig \in \JH(\osig(\tau))$ (\cite[Thm.~7.4.2]{LLLMlocalmodel}, also see Rmk.~7.4.3 (3)).
\end{rmk}

\subsection{The case of $\GL_2$}\label{subsec:GL2} In this subsection, we provide the proof of Proposition \ref{prop:upperbound} and thus finishing the proof of Theorem \ref{thm:spec-satake}. 

Let $n=2$. In \cite{cegsC}, the authors construct $Y^{\eta,\tau^\vee}$ (denoted by $\cC^{\tau,\mathrm{BT}}$ in \loccit) the moduli stack of rank two projective Breuil--Kisin modules with a tame descent data $\tau$ and a certain Kottwitz-type determinant condition. We put $\tau^\vee$ instead of $\tau$ due to the discrepancy between Definition \ref{defn:BKmod} and \cite[Def.~2.3.2]{cegsC}. Our stack $Y^{\le\eta,\tau^\vee}$ contains $Y^{\eta,\tau^\vee}$ as a closed substack. They also construct the moduli stack of two dimensional tamely potentially Barsotti--Tate representations of $G_K$ with tame inertial type $\tau$. It is originally constructed as a certain closed substack of the moduli stack of \'etale $\varphi$-modules with descent data, and it is known to be isomorphic to $\cX^{\eta,\tau}_2$ by \cite[Thm.~1.4]{APAW}. By its construction, $\cX^{\eta,\tau}_2$ admits a partial resolution $Y^{\eta,\tau^\vee} \ra \cX^{\eta,\tau}_2$. 

Let $\sig=F(\mu)$ be a Serre weight of $\GL_2(k)$. We take $\tau= \tau(1,\mu)\simeq \chi_1 \oplus \chi_2$ and $\sig^\circ(\tau)$ as in Theorem \ref{thm:spec-satake}. By taking $J=\emptyset$ in \cite[Thm.~1.1]{cegsC}, we obtain an irreducible component $Z_\sig\subset Y^{\eta,\tau^\vee}\times_{\cO}\F$ with a scheme-theoretically dominant morphism $Z_\sig\ra \cC_\sig$. (This is where we use the condition that $\sig$ is non-Steinberg.) Note that this implies $\cC_\sig \subset \cX^{\eta,\tau}$. Moreover, there is a dense open substack $U_\sig \subset Z_\sig$ such that the map  $Z_\sig\ra \cC_\sig$ restricts to an open immersion on $U_\sig$. Similar to $\cU_\sig \subset \cC_\sig$, $U_\sig$ has the following properties: any $\ov{\fM}\in U_\sig(\Fpbar)$ is an extension of $\ov{\fN}_2$ by $\ov{\fN}_1$ where for $i=1,2$, $\ov{\fN}_i$ is a rank 1 Breuil--Kisin module with tame descent data $\chi_i$. Moreover, we can choose an eigenbasis $\ov{\be}_i= \CB{\ov{f}_i\ix{j}}_{\jj}$ so that $\phi_{\ov{\fN}_i}$ maps $1\otimes \ov{f}_i\ix{j-1}$ to $\ov{a}_1\ix{j}\ov{f}_1\ix{j}$ if $i=1$ and to $\ov{a}_2\ix{j}u^{(p^f-1)e}\ov{f}_2\ix{j}$ if $i=2$ for some $\ov{a}_i\ix{j}\in \Fpbar^\times$. 

Recall from Lemma \ref{lem:global-functions/p}, we have global functions $f_{1},f_2, f_1f_2/q \in \cO(Y^{\eta,\tau})$ if $\chi_1 \not\simeq \chi_2$ and $f_{\{1,2\},1}, f_{\{1,2\},2}/q \in \cO(Y^{\eta,\tau})$ if $\chi_1\simeq \chi_2$.

\begin{lem}\label{lem:BK-func-ord}
    Following the notations above, if $\chi_1 \not\simeq \chi_2$, the restriction of functions $f_{1},f_2, f_1f_2/q  \in \cO(Y^{\eta,\tau})$ to $U_\sig$ maps $\ov{\fM} \in U_\sig(\Fpbar)$ to $\prod_{\jj}\ov{a}_1\ix{j}$, $0$, and $\prod_{\jj}\ov{a}_1\ix{j}\ov{a}_2\ix{j}$, respectively. If $\chi_1 \simeq \chi_2$, the restriction of functions $f_{\{1,2\},1}, f_{\{1,2\},2}/q \in \cO(Y^{\eta,\tau})$ to $U_\sig$ maps $\ov{\fM} \in U_\sig(\Fpbar)$ to $\prod_{\jj}\ov{a}_1\ix{j}$ and $\prod_{\jj}\ov{a}_1\ix{j}\ov{a}_2\ix{j}$, respectively.
\end{lem}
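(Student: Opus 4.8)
\textbf{Proof plan for Lemma \ref{lem:BK-func-ord}.}

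The plan is to compute the functions $f_{\ov I,d}$ directly on the open substack $U_\sig$ by writing down an explicit eigenbasis for a generic $\ov\fM \in U_\sig(\Fpbar)$ and unravelling the definition of the matrices $\ov A\ix{j}_{\ov\fM,\ov\be}$. First I would fix $\ov\fM \in U_\sig(\Fpbar)$, presented as an extension $0 \to \ov\fN_1 \to \ov\fM \to \ov\fN_2 \to 0$, and choose the eigenbasis $\ov\be = \{\ov f_1\ix{j}, \ov f_2\ix{j}\}_{\jj}$ which restricts to $\ov\be_1$ on $\ov\fN_1$ and lifts $\ov\be_2$ on $\ov\fN_2$; with respect to this basis $C\ix{j}_{\ov\fM,\ov\be}$ is upper-triangular with diagonal entries $\ov a_1\ix{j}$ and $\ov a_2\ix{j}u^{(p^f-1)e}$ and some off-diagonal entry recording the extension class. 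Passing to $\ov A\ix{j}_{\ov\fM,\ov\be}$ via \eqref{eqn:mat-A-C-change}, i.e.\ conjugating by $s_{\rmor,j}^\mo u^{-\bfa\ix{j}}$ and reducing mod $v$, one reads off the reduction mod $v$ of the twisted diagonal entries. Here I must keep careful track of the orientation $s_{\rmor,j}$ for $\tau=\tau(1,\mu)$ (for which $s_\tau=1$ since $\tau$ is principal series, but the $s_{\rmor,j}$ need not all be trivial): the point is that $\mu$ being dominant forces $\ov A\ix{j}_{\ov\fM,\ov\be}\mod v$ to be upper triangular in the ordering matching $P_j$, and which of $\ov a_1\ix{j}, \ov a_2\ix{j}$ survives in the relevant diagonal block after the twist is exactly what produces the asymmetry between the two characters.

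Next I would assemble the product $\prod_{j=f-1}^0 \Ad(s_{\rmor,j})(\ov A\ix{j}_{\ov\fM,\ov\be})$ appearing in the definition of $f_{\ov I,d}$; since each factor is triangular with the prescribed diagonal, the product is triangular with diagonal entries $\prod_{\jj}\ov a_1\ix{j}$ and $\prod_{\jj}\ov a_2\ix{j}$ (up to the bookkeeping of the twists, which cancel around the full cycle). In the case $\chi_1\not\simeq\chi_2$ the partition $\fP_\tau=\{\{1\},\{2\}\}$ and $\ov\fP_\tau=\fP_\tau$, so $f_1$ picks out the $(1,1)$-block (a $1\times1$ block), giving $\prod_{\jj}\ov a_1\ix{j}$, while $f_2$ picks out the $(2,2)$-block of $\prod_{k=0}^{0}(\cdots)_{s_\tau^k s_{\rmor,0}^\mo(\ov 2)}$ — but the ordering/orientation forces the relevant off-diagonal position rather than the diagonal one when $-\veps_2$ is not antidominant, so $f_2(\ov\fM)=0$ exactly as in the non-antidominance vanishing already seen in Theorem \ref{thm:hecke-modp-red} and Proposition \ref{prop:glob-func-restriction}. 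The product $f_1f_2$ is $\det$ of the full $2\times2$ triangular matrix, hence $\prod_{\jj}\ov a_1\ix{j}\ov a_2\ix{j}$; the normalization by $q$ is a unit-free bookkeeping (it divides the integral $F$ by $q^{n(\{1\},\{2\})}$ which here is $q^{1}$, but after reduction mod $\varpi$ the statement is about the integral lift, so I should phrase it as: the function $f_1f_2/q$, a priori in $\cO(Y^{\eta,\tau})$ by Lemma \ref{lem:global-functions/p}, reduces to $\prod_{\jj}\ov a_1\ix{j}\ov a_2\ix{j}$). In the case $\chi_1\simeq\chi_2$, $\ov\fP_\tau$ has a single class $\ov I=\{\{1\},\{2\}\}$ of size $2$, $f_{\{1,2\},1}$ is the trace of the single $2\times2$ block, equal to the sum of the two diagonal entries; but since the extension is non-split and the block is genuinely triangular (the off-diagonal position is outside the support cut out by the parabolic filtration, forcing it to $0$ in the trace computation), this reduces to $\prod_{\jj}\ov a_1\ix{j}$, and $f_{\{1,2\},2}=\det$ gives $\prod_{\jj}\ov a_1\ix{j}\ov a_2\ix{j}$ as before.

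The main obstacle will be the precise orientation bookkeeping in the case $\chi_1\not\simeq\chi_2$: I need to verify that, after conjugating $C\ix{j}$ by $s_{\rmor,j}^\mo u^{-\bfa\ix{j}}$ and reducing mod $v$, the surviving diagonal block for the index $\ov 1$ really is $\prod_{\jj}\ov a_1\ix{j}$ and not $\prod_{\jj}\ov a_2\ix{j}$, and that the $f_2$ computation lands in the off-diagonal position so that it genuinely vanishes rather than picking up $\prod_{\jj}\ov a_2\ix{j}$. This is where the hypothesis $\sig=F(\mu)$ with $\mu$ dominant (so that $-\veps_1$ is antidominant for the ordering compatible with the extension $\ov\fN_1\hookrightarrow\ov\fM$) is used, and it mirrors the dichotomy in Theorem \ref{thm:hecke-modp-red} between $\ov T_{-\veps_\fI}$ and $0$ according to whether $-\veps_\fI$ is antidominant. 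Once this is pinned down, the remaining computations are the routine triangular-matrix products and the elementary-symmetric-polynomial identities ($f_{\{1,2\},1}=\mathrm{tr}$, $f_{\{1,2\},2}=\det$) already recorded in Proposition \ref{prop:glob-func-WD}, so I would not spell them out in detail.
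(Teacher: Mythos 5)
Your plan works for the unnormalized functions $f_1$, $f_2$, and $f_{\{1,2\},1}$: the direct computation of $\ov{A}\ix{j}_{\ov\fM,\ov\be}\bmod v$ from an explicit eigenbasis of $\ov\fM$, using that the quotient $\ov\fN_2$ has $\phi$ acting through $\ov a_2\ix{j}u^{(p^f-1)e}=\ov a_2\ix{j}v^e$ (which dies mod $v$), gives an upper-triangular product with diagonal $\bigl(\prod_{\jj}\ov a_1\ix{j},\,0\bigr)$, and the trace/determinant readings are correct. The orientation bookkeeping you flag as "the main obstacle" is in fact benign here and is not where the difficulty lies.

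The genuine gap is in the $q$-normalized functions $f_1f_2/q$ and $f_{\{1,2\},2}/q$. These are honest elements of $\cO(Y^{\eta,\tau})$ only by the normality argument of Lemma \ref{lem:global-functions/p}, and there is no characteristic-$p$ expression for them to which your triangular-matrix computation applies. Concretely, your computation over $\Fpbar$ yields $f_1(\ov\fM)\,f_2(\ov\fM)=\bigl(\prod_{\jj}\ov a_1\ix{j}\bigr)\cdot 0=0$, which is consistent with $q\mid f_1f_2$ but says nothing about the value of the quotient. The factor $\prod_{\jj}\ov a_2\ix{j}$ that the lemma asserts is invisible mod $v$ in characteristic $p$: it sits in the coefficient of $v^e$ (equivalently, in $E_j(0)$), which vanishes in $\Fpbar$. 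To see it you must lift $\ov\fM$ to $\fM\in Y^{\eta,\tau}(\cO')$ — the paper does this via the rank-one lifts $\fN_i$ and \cite[Cor.~3.1.7]{cegsC} — compute $f_2(\fM)=\prod_{\jj}a_2\ix{j}E_j(0)$ there, observe that $\prod_{\jj}E_j(0)$ is $q$ times a unit, divide, and only then reduce mod $\varpi$. Your parenthetical ("after reduction mod $\varpi$ the statement is about the integral lift, so I should phrase it as$\ldots$") names the issue but then asserts the conclusion rather than supplying the lift; the lift is not bookkeeping but the essential step. To repair the plan you should replace the purely characteristic-$p$ computation by the lift-then-reduce argument for the two $q$-normalized functions, and the rest then goes through as you describe.
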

\begin{proof}
    For $i=1,2$, we can lift $\ov{\fN}_i$ to a rank 1 Breuil--Kisin module $\fN_i$ over $\cO$ with tame descent data $\chi_i$ and an eigenbasis $\be_i= \CB{f_i\ix{j}}_{\jj}$ such that $\phi_{\fN_i}$ maps $1\otimes f\ix{j-1}$ to $a_1\ix{j}f_1\ix{j}$ if $i=1$ and to $a_2\ix{j}E_j(u)f_2\ix{j}$ if $i=2$. By \cite[Cor.~3.1.7]{cegsC}, this allows as to lift $\ov{\fM}$ to $\fM\in Y^{\eta,\tau}(\cO)$. Then the claim follows from the definition of the global functions.  
\end{proof}

For simplicity, we write $f'_1$ and $f'_2$ for $f_1$ and $f_1f_2/q$ if $\chi_1\not\simeq \chi_2$ and for $f_{\{1,2\},1}$ and $f_{\{1,2\},2}/q$ if $\chi_1\simeq \chi_2$. We denote by $\ov{f_1}$ and $\ov{f}_2$ the restriction of $f'_1$ and $f'_2$ to $Z_\sig$, respectively. 

\begin{proof}[Proof of Proposition \ref{prop:upperbound}]
     There is nothing to prove if $n=1$. Suppose that $n=2$. Recall from \S\ref{subsec:tori} that $\cO(\cC_\mu)\simeq \cH(\mu) \simeq \F[x_1^\pm,x_2^\pm]$. Since $\F[x_1^\pm,x_2^\pm] = \F[(x_1x_2)^\pm][x_1^\pm]$, we can write any $g\in \F[x_1^\pm ,x_2^\pm]$ as a Laurent polynomial in variable $x_1$ and coefficients in $\F[(x_1x_2)^\pm]$. Let $g\in \cO(\cC_\sig)$ be a function whose restriction to $\cU_\sig$ is given by a polynomial $G(x_1x_2,x_1)\in \F[(x_1x_2)^\pm][x_1^\pm]$. We suppose that $G(x_1x_2,x_1)\notin \F[(x_1x_2)^\pm][x_1]$. Let $m$ be the smallest positive integer such that $x_1^m G(x_1x_2,x_1)\in \F[(x_1x_2)^\pm][x_1]$ and $F(x_1x_2)\in \F[(x_1x_2)^\pm]$ be its constant term. Let $g'$ be the pullback of $g$ to $Z_\sig$. By Lemma \ref{lem:BK-func-ord} and \cite[Lem.~4.1.4]{cegsC}, it is easy to see that $g'=G(\ov{f}_2,\ov{f}_1)$. Let $\til{G}(x_1x_2,x_1)\in \cO[(x_1x_2)^\pm][x_1^\pm]$ and $\til{F}(x_1x_2)\in \cO[(x_1x_2)^\pm]$ be the Teichm\"uller lifts of $G$ and $F$ respectively. 

     Let $\ov{\fM}\in Z_\sig(\F)$ be a Breuil--Kisin module whose image in $\cC_\sig(\F)$ is an irreducible $G_K$-representation. Then we can lift $\ov{\fM}$ to an $\cO$-point $\fM$. Note that $f'_2$ is invertible and thus $f'_2(\fM) \in \cO^\times$. Possibly after twisting $\ov{\fM}$ and $\fM$ by an unramified character (using \cite[Def.~3.3.2]{cegsC}), we can assume that $\til{F}(f'_2)(\fM)\in \cO^\times$. We claim that $f'_1(\fM)\equiv 0 \mod\varpi$. Suppose $f'_1(\fM)\in \cO^\times$. 
     Let $\rho:G_K \ra \GL_2(\cO)$ be the image of $\fM$ in $\cX_2^{\eta,\tau}(\cO)$. By Proposition \ref{prop:glob-func-WD} and \ref{prop:reducibility}, $\rho$ is reducible. This contradicts the assumption that the image of $\ov{\fM}$ in $\cC_\sig(\F)$ is irreducible. Therefore, $f'_1(\fM)\equiv 0 \mod\varpi$. This implies that
     \begin{align*}
         (f'_1)^m\til{G}(f'_2,f'_1)(\fM) \equiv \til{F}(f'_2)(\fM) \equiv 0 \mod \varpi.
     \end{align*}
    This contradicts $\til{F}(f'_2)(\fM)\in \cO^\times$ and thus completes the case $n=2$.

     Now we prove the general case. For each $1\le i \le n-1$, we define $\sig_i := F((\mu_{i}-i+1,\mu_{i+1}-i+1))$ a non-Steinberg Serre weight of $\GL_2(k)$. Then there are natural morphisms
     \begin{align*}
         \cC_{\mu-w_0(\eta)} \ra \cC_{\sig_i} \times \prod_{k \neq i,i+1} \cC_{\mu_{k}-k+1} \ra \cC_\sig
     \end{align*}
     with injective pullbacks
     \begin{align*}
         \cO(\cC_{\sig}) \mono \cO( \cC_{\sig_i} \times \prod_{k \neq i,i+1} \cC_{\mu_{k}-k+1}) \mono \cO(\cC_{\mu+\eta})\simeq \cO(\cC_\mu).
     \end{align*}
     By the $n=2$ case, the image of the second morphism is contained in
     \begin{align*}
         \F[x_1^\pm,\dots,x_{i-1}^\pm, x_i, (x_ix_{i+1})^\pm, x_{i+2}^\pm, \dots, x_n^\pm].
     \end{align*}
     Then the claim follows by taking the intersection of these images for all $i$.
\end{proof}

%

\section{Parabolic loci}\label{sec6}
In this section, we use the theory of local models for tame potentially crystalline Emerton--Gee stacks to study certain open loci in $\cX^{\eta,\tau}_n$ which admits a natural ``parabolic structure'' (Theorem \ref{thm:reducibility-in-families}). More precisely, any point in such a locus admits a natural filtration corresponding to a parabolic subgroup 
 of $\GL_n$, and the filtrations vary continuously. When $\sig=F(\mu)$ and $\tau=\tau(1,\mu)$ is a principal series, by taking the intersection of $\cX^{\eta,\tau}$ and $\cC_{\sig}$, we get parabolic loci inside $\cC_{\sig}$ which generalize the ordinary locus $\cU_{\sig}$ (Theorem \ref{thm:modp-parabolic}). Using this, we define the notion of \textit{supersingular $G_K$-representation of weight $\sig$} (Definition \ref{defn:strata}).

\subsection{Ramified local models}
We briefly recall the theory of local models from \cite[\S3]{LLLM-extreme} (which is based on \cite{LLLMlocalmodel}). Throughout this subsection, we fix a tame inertial type $\tau$ with $1$-generic lowest alcove presentation $(s,\mu)$.

We define the following (ind-)group schemes over $\cO$: for $\jj$, $h\in \Z_{\ge 0}$ and any $p$-adically complete $\cO$-algebra $R$,
\begin{align*}
    L\cG\ix{j}(R) &:= \CB{A\in \GL_n(R\DB{v}[\tfrac{1}{E_j}])} \\
    L^+\cG\ix{j}(R) &:= \CB{A\in \GL_n(R\DB{v}) \mid A \mod v \in B(R)}
    \\
    L^{[0,h]}\cG\ix{j}(R) & := \CB{A\in L\cG\ix{j}(R) \mid A, E_j^hA^\mo\in \Mat_n(R\DB{v}) \text{ and upper triangular modulo $v$}}.
\end{align*}
Note that since $\tau$ is $1$-generic, $\cP_j$ defined in \S\ref{subsec:BKmodules} is equal to $L^+\cG\ix{j}$.  
We have the $(s,\mu)$-twisted $\varphi$-conjugation action of $\prod_{\jj}L^+\cG\ix{j}$ on $\prod_{\jj} L^{[0,h]}\cG\ix{j}$ defined by
\begin{align*}
    (I\ix{j}) \cdot (A\ix{j}) := I\ix{j}A\ix{j} (\Ad(s_j^\mo v^{\mu\ix{j}+\eta\ix{j}})(\varphi(I\ix{j-1})^\mo)).
\end{align*}
Recall that we have a torsor $Y^{[0,h],\tau, \be}\ra Y^{[0,h],\tau}$ (Proposition \ref{prop:eigenbasis-torsor}). There is a morphism $Y^{[0,h],\tau, \be} \ra \prod_{\jj} L^{[0,h]}\cG\ix{j}$ sending $(\fM,\phi_\fM,\be)$ to $(A_{\fM,\be}\ix{j})_{\jj}$. By Proposition \ref{prop:change-of-basis-formula}, we have the presentation
\begin{align*}
    Y^{[0,h],\tau} \simeq \BR{\prod_{\jj} L^{[0,h]}\cG\ix{j}/_{(s,\mu),\varphi} L^+\cG\ix{j}}.
\end{align*}
In the special fiber, we have a refined presentation. The scheme $\cI:= L^+\cG\ix{j}_\F$ is the usual Iwahori group scheme over $\F$, and we let $\cI_1\subset \cI$ be the pro-$p$ Iwahori group scheme. By \cite[Rmk.~3.1.5]{LLLM-extreme}), if $\tau$ is $(eh+1)$-generic, we have
\begin{align*}
   \pi_{(s,\mu)}: Y^{[0,h],\tau}_\F \risom [(\prod_{\jj}\cI_1\bss L^{[0,h]}\cG\ix{j}_\F)/_{(s,\mu),\varphi} T^{\vee,\cJ}_\F]. 
\end{align*}

For $\tilz \in \tilW^{\vee,\cJ}$, we recall the scheme $U^{[0,h]}(\tilz) = \prod_{\jj}U^{[0,h]}(\tilz_j)$ defined in \cite[\S4.2]{LLLM-extreme}. It has the property that the product $T^{\vee,\cJ}U^{[0,h]}(\tilz)$ maps into $[\prod_{\jj}\cI_1\bss L^{[0,h]}\cG\ix{j}_\F]$ as an open immersion, and moreover they form an open cover by varying $\tilz$.

\begin{defn}
    Let $\tilz\in \tilW^{\vee,\cJ}$ and $R$ be a $p$-adically complete Noetheiran $\cO$-algebra.
    \begin{enumerate}
    \item We define $Y^{[0,h],\tau}_\F(\tilz) \subset Y^{[0,h],\tau}_\F$ to be the open substack corresponding via $\pi_{(s,\mu)}$ to $[T^{\vee,\cJ}{U}^{[0,h]}(\tilz)_\F/_{(s,\mu),\varphi} T^{\vee,\cJ}_\F]$.
    \item We define $Y^{[0,h],\tau}(\tilz) \subset Y^{[0,h],\tau}$ to be the open substack induced by $Y^{[0,h],\tau}_\F(\tilz)$.
    \item We define $Y^{\le\eta,\tau}(\tilz) = Y^{\le\eta,\tau}\cap Y^{[0,h],\tau}(\tilz)$.
    \item We say that $\fM\in Y^{\le\eta,\tau}(R)$ \textit{admits a $\tilz$-gauge} if it is contained in $Y^{\le\eta,\tau}(\tilz)(R)$. 
\end{enumerate}
\end{defn}

Recall the Pappas--Zhu local model $M_\cJ(\le\eta) = \prod_{\jj}M_j(\le\eta_j)$. For $\tilz \in \Adm^\vee(\eta)$, we have a closed subscheme $U(\tilz,\le\eta)\subset U(\tilz)$ which identifies with an open neighborhood of $M_\cJ(\le\eta)$, and $\CB{U(\tilz,\le\eta)}_{\tilz \in \Adm^\vee(\eta)}$ is an open cover of $M_\cJ(\le\eta)$ (see Remark 4.2.3 and the preceding discussion in \cite{LLLM-extreme}).

The next theorem is a generalization of \cite[Thm.~5.3.3 and Cor.~5.3.4]{LLLMlocalmodel} to a ramified base. Its proof carries over verbatim (using \cite[Prop.~4.2.2]{LLLM-extreme} instead of \cite[Prop.~5.2.7]{LLLMlocalmodel}).
\begin{thm}\label{thm:BKstack-localmodel}
   Suppose that $\tau$ is $(e(n-1)+1)$-generic. The open substack $Y^{\le \eta,\tau}(\tilz) \subset Y^{\le\eta,\tau}$ is non-empty if and only if $\tilz \in \Adm^\vee(\eta)$. For each $\tilz \in \Adm^\vee(\eta)$, we have the following local model diagram
    \[
\begin{tikzcd}
& T^{\vee,\cJ}{U}(\tilz, \le \eta)^\pcp \arrow[ld, "T^{\vee,\cJ}_\cO"'] \arrow[rd, "T^{\vee,\cJ}_\cO"] & \\
Y^{\le \eta,\tau}(\tilz) = \BR{T^{\vee,\cJ}{U}(\tilz, \le \eta) /_{(s,\mu)} T^{\vee,\cJ}_\cO}^\pcp 
& & U(\tilz, \le \eta)^\pcp 
\end{tikzcd}
\]
where the superscript $\pcp$ stands for taking $p$-adic completion. The diagonal arrows are torsors for $(T^{\vee,\cJ})^\pcp$. The left diagonal arrow corresponds to the quotient by the $(s,\mu)$-twisted conjugation action, while the right diagonal arrow corresponds to the quotient by the left translation action.
\end{thm}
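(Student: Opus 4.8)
\textbf{Proof plan for Theorem \ref{thm:BKstack-localmodel}.}

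The plan is to transfer the analogous statements in the unramified setting (\cite[Thm.~5.3.3 and Cor.~5.3.4]{LLLMlocalmodel}) to the present ramified base $\cO$, replacing the unramified local model inputs by their ramified counterparts developed in \cite{LLLM-extreme}. First I would recall the presentation $Y^{[0,h],\tau} \simeq [\prod_{\jj} L^{[0,h]}\cG\ix{j}/_{(s,\mu),\varphi} L^+\cG\ix{j}]$ coming from Proposition \ref{prop:eigenbasis-torsor} and Proposition \ref{prop:change-of-basis-formula}, together with the refined special-fiber presentation $\pi_{(s,\mu)}$ which is valid under the $(eh+1)$-genericity hypothesis; here $h = n-1$ since we work with Hodge type bounded by $\eta$, so the genericity hypothesis in the theorem is exactly what is needed. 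Then I would invoke the open cover of $[\prod_{\jj}\cI_1\bss L^{[0,h]}\cG\ix{j}_\F]$ by the translates $T^{\vee,\cJ}U^{[0,h]}(\tilz)$ for $\tilz \in \tilW^{\vee,\cJ}$ from \cite[\S4.2]{LLLM-extreme}, which produces the open substacks $Y^{[0,h],\tau}_\F(\tilz)$ and, by formal-algebraic-stack considerations, their lifts $Y^{[0,h],\tau}(\tilz)$ over $\Spf \cO$; intersecting with the Hodge-type-bounded substack gives $Y^{\le\eta,\tau}(\tilz)$.

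Next I would establish the non-emptiness criterion: $Y^{\le\eta,\tau}(\tilz)$ is non-empty iff $\tilz \in \Adm^\vee(\eta)$. The ``if'' direction follows because the $U(\tilz,\le\eta)$ for $\tilz\in\Adm^\vee(\eta)$ form an open cover of the Pappas--Zhu local model $M_\cJ(\le\eta) = \prod_{\jj}M_j(\le\eta_j)$ (Remark 4.2.3 and the preceding discussion in \cite{LLLM-extreme}), so each such chart is non-empty and meets $Y^{\le\eta,\tau}$ after base change; the ``only if'' direction follows from the fact that the image of $Y^{\le\eta,\tau}$ under the local model map lands in $M_\cJ(\le\eta)$, whose $\tilz$-strata are indexed by the $\eta$-admissible set. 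The key point—and this is where the $(e(n-1)+1)$-genericity is really used—is to check that the open immersion of $U^{[0,h]}(\tilz)$ into the double-coset space, and the identification of $U(\tilz,\le\eta)$ with an open neighborhood in $M_\cJ(\le\eta)$, both behave well over the ramified base; here I would cite \cite[Prop.~4.2.2]{LLLM-extreme} in place of \cite[Prop.~5.2.7]{LLLMlocalmodel} exactly as the statement indicates.

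Finally I would assemble the local model diagram. On each chart, the presentation $Y^{\le\eta,\tau}(\tilz) = [T^{\vee,\cJ}U(\tilz,\le\eta)/_{(s,\mu)}T^{\vee,\cJ}]^\pcp$ follows by restricting the global quotient presentation, with the $(s,\mu)$-twisted $\varphi$-conjugation action restricting to the $(s,\mu)$-twisted conjugation action on the chart because the $\varphi$-Frobenius interacts with $U(\tilz)$ as in \cite[\S4.2]{LLLM-extreme}. The scheme $T^{\vee,\cJ}U(\tilz,\le\eta)^\pcp$ then maps to $Y^{\le\eta,\tau}(\tilz)$ via the quotient by the twisted conjugation action and to $U(\tilz,\le\eta)^\pcp$ via the quotient by left translation by $T^{\vee,\cJ}$, and both maps are torsors for $(T^{\vee,\cJ})^\pcp$ by construction (the left translation action is manifestly free, and the twisted conjugation action is free by the genericity of $(s,\mu)$, which rigidifies the stabilizers). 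I expect the main obstacle to be bookkeeping rather than substance: one must verify carefully that each step of the unramified argument—especially the freeness of the twisted conjugation action and the openness of the charts—survives the base change from $\Z_p$-coefficients to the ramified ring $\cO$, i.e.\ that the ramified local models $M_j(\le\eta_j)$ constructed in \cite{Levin-localmodel} have the same local structure (in particular the same stratification and the same $U(\tilz,\le\eta)$ charts) as in the unramified case, which is precisely the content the cited results in \cite{LLLM-extreme} are designed to provide; modulo citing those, the proof is a verbatim transcription.
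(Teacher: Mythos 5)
Your proposal is correct and takes essentially the same route as the paper: the paper's own proof of Theorem \ref{thm:BKstack-localmodel} simply observes that the argument for \cite[Thm.~5.3.3 and Cor.~5.3.4]{LLLMlocalmodel} carries over verbatim to the ramified base once \cite[Prop.~5.2.7]{LLLMlocalmodel} is replaced by \cite[Prop.~4.2.2]{LLLM-extreme}. Your expansion of the steps (the quotient presentation from Propositions \ref{prop:eigenbasis-torsor} and \ref{prop:change-of-basis-formula}, the $T^{\vee,\cJ}U(\tilz)$ open cover, the admissibility criterion via $M_\cJ(\le\eta)$, and the matching of the genericity bound with $h=n-1$) is a faithful unpacking of that transfer.
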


\subsection{Parabolic structures}


We now replace the role of $\tau$ by its dual $\tau^\vee$. In particular, we have $\tau^\vee\simeq \oplus_{I\in \fP_{\tau}}\tau^\vee_I$ where $\tau^\vee_I$'s are pairwise non-isomorphic. For any $\fI \subset \fP_{\tau}$, we have a function $\til{F}_{\fI}\in \cO(Y^{\eta,\tau^\vee})$ defined in Definition \ref{defn:glob-func}. If $\tau^\vee$ is $(e(n-1)+2)$-generic, then $\til{F}_{\fI}$ defines a global function on $\cX^{\eta,\tau^\vee}$. We denote by $\cX^{\eta,\tau}(\til{F}_\fI)\subset \cX^{\eta,\tau}$ the open substack obtained by inverting $\til{F}_\fI$. 

Let $a = \#\cup_{I\in \fI}I$ and $b=n-a$. We write $\tau^\vee_a := \oplus_{I\notin \fI}\tau^\vee_I$ and $\tau^\vee_b := \oplus_{I\in \fI}\tau^\vee_I$. By Proposition \ref{prop:reducibility}, any $\rho\in \cX^{\eta,\tau}(\til{F}_\fI)(\cO)$ is an extension of $\rho_b$ by $\rho_a$ where $\rho_a \in \cX^{\eta_a+\ud{b},\tau_a}(\cO)$ and $\rho_b\in \cX^{\eta_b,\tau_b}$. The main result of this subsection shows the map $\rho \mapsto (\rhobar_a,\rhobar_b)$ extends continuously to families.

\begin{thm}\label{thm:reducibility-in-families}
    Suppose that $\tau^\vee$ is $((e+1)(n-1)+2)$-generic. There is a morphism $\cX^{\eta,\tau}(\til{F}_\fI) \ra \cX^{\eta_a+\ud{b},\tau_a} \times \cX^{\eta_{b},\tau_b}$ which makes $\cX^{\eta,\tau}(\til{F}_\fI)$ into a rank $(n(n-1)-a(a-1)-b(b-1))/2$ vector bundle over the target. It maps $\rho \in \cX^{\le\eta,\tau}(\til{F}_\fI)(\cO)$ to $(\rho_a,\rho_b)$ where $\rho_a$ is the subrepresentation of $\rho$ given by Proposition \ref{prop:reducibility}, and $\rho_b = \rho/\rho_a$.
\end{thm}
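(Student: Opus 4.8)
The plan is to globalize the pointwise reducibility of Proposition~\ref{prop:reducibility} by means of the ramified local models of \cite{LLLM-extreme}, as the introduction indicates. Write $a=\#\cup_{I\in\fI}I$ and $b=n-a$, and after reordering the coordinates (which replaces $\tau$ by a Weyl conjugate and hence $\cX_n^{\eta,\tau}$ by an isomorphic stack) assume $\cup_{I\in\fI}I$ is an interval, so that $\tau^\vee\simeq\tau_a^\vee\oplus\tau_b^\vee$ with $\tau_a^\vee$ supported on that interval and the two summands sharing no inertial characters. The $((e+1)(n-1)+2)$-genericity of $\tau^\vee$ is exactly what lets us invoke Proposition~\ref{prop:BK-EG} (for $\tau$, and for $\tau_a^\vee$, $\tau_b^\vee$, whose genericity is inherited), Theorem~\ref{thm:BKstack-localmodel}, and the parabolic refinement of the local model diagram in \loccit; it is used throughout without comment. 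By Proposition~\ref{prop:BK-EG} we view $\cX_n^{\le\eta,\tau}\simeq\cK^{\le\eta,\tau}$ as a closed substack of $Y^{\le\eta,\tau^\vee}$, so that $\cX_n^{\eta,\tau}(\til{F}_\fI)$ sits in the open locus $Y^{\le\eta,\tau^\vee}(\til{F}_\fI)$ where the function $\til{F}_\fI$ of Definition~\ref{defn:glob-func} is invertible. Cover $Y^{\le\eta,\tau^\vee}$ by the gauge opens $Y^{\le\eta,\tau^\vee}(\tilz)$, $\tilz\in\Adm^\vee(\eta)$, of Theorem~\ref{thm:BKstack-localmodel}, and fix a block-diagonal lowest alcove presentation $(s,\mu)=(s_a\oplus s_b,(\mu_a,\mu_b))$ of $\tau^\vee$, possible by the previous sentence.

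Via the right-hand leg of the local model diagram, $\til{F}_\fI$ pulls back to a function on $U(\tilz,\le\eta)$, because each $f_{\ov{I}}$ is a characteristic-polynomial coefficient of a product of Levi blocks of the partial Frobenii, hence invariant under left $T^{\vee,\cJ}$-translation and under $(s,\mu)$-twisted $\varphi$-conjugation. I claim this function vanishes identically unless $\tilz$ is block-diagonal for the $(a,b)$-partition, $\tilz=\tilz_a\oplus\tilz_b$. This can be checked directly on the explicit charts of \loccit, or deduced from Propositions~\ref{prop:reducibility} and~\ref{prop:glob-func-WD} on $\cO'$-points — which are dense in the $\cO$-flat, reduced (Corollary~\ref{cor:normal}) stack $Y^{\le\eta,\tau^\vee}$ — since reducibility of the associated Galois representation pins down the relative position $\tilz$ of $\fM$ and $\tau^\vee$. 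For such a decomposable $\tilz$, the parabolic structure of the ramified local models in \cite{LLLM-extreme} gives a morphism $U(\tilz,\le\eta)\to U(\tilz_a,\le\eta_a+\ud{b})\times U(\tilz_b,\le\eta_b)$ (forgetting the strictly upper-triangular $a\times b$ block) which is a Zariski-locally trivial fibration with affine-space fibres. Because $s$, $\mu$ and, after the reordering, $\eta$ are block-diagonal, this morphism is equivariant for $T^{\vee,\cJ}=T^{\vee,\cJ}_a\times T^{\vee,\cJ}_b$ and for $(s,\mu)$-twisted $\varphi$-conjugation, so passing through the local model diagrams of Theorem~\ref{thm:BKstack-localmodel} for $(s,\mu)$, $(s_a,\mu_a)$, $(s_b,\mu_b)$ it descends to an affine-space fibration
\begin{equation*}
Y^{\le\eta,\tau^\vee}(\tilz)\cap Y^{\le\eta,\tau^\vee}(\til{F}_\fI)\ \longrightarrow\ Y^{\le\eta_a+\ud{b},\tau_a^\vee}(\tilz_a)\times Y^{\le\eta_b,\tau_b^\vee}(\tilz_b),
\end{equation*}
the fibration property being smooth-local and the legs of the diagrams being smooth torsors.

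On Breuil--Kisin modules this morphism sends $\fM$ to $(\fN,\fM/\fN)$, where $\fN$ is the saturated rank-$a$ sub generated by the ``$\tilz_a$-block''; intrinsically, $\fN$ is the \emph{unique} saturated rank-$a$ sub-Breuil--Kisin module of $\fM$ whose complementary quotient has the prescribed height bound --- uniqueness being precisely the content of $\til{F}_\fI\in\cO^\times$ (cf.\ the proof of Proposition~\ref{prop:reducibility}). Being canonical, $\fN$ forces the morphisms for different $\tilz$ to agree on overlaps, and they glue to a single affine-space fibration $Y^{\le\eta,\tau^\vee}(\til{F}_\fI)\to Y^{\le\eta_a+\ud{b},\tau_a^\vee}\times Y^{\le\eta_b,\tau_b^\vee}$. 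Now restrict to $\cX_n^{\eta,\tau}(\til{F}_\fI)$: there the universal Breuil--Kisin module underlies a $\pgma$-module, and since $\fN$ is characterized $\Gamma$-stably (using Proposition~\ref{prop:reducibility} on $\cO'$-points together with $\cO$-flatness of $\cX_n^{\eta,\tau}$) both $\fN$ and $\fM/\fN$ underlie $\pgma$-modules and lie in $\cX^{\eta_a+\ud{b},\tau_a}$, $\cX^{\eta_b,\tau_b}$ respectively. This is the asserted morphism, and on $\cO$-points it is $\rho\mapsto(\rho_a,\rho_b)$ with $\rho_a$ the subrepresentation of Proposition~\ref{prop:reducibility}. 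Finally it is a vector bundle of the stated rank: by the Hodge--Tate weights and the genericity of $\tau$, the representations $\rho_b$ and $\rho_a$ (and $\rho_a$ and the cyclotomic twist of $\rho_b$) have no common de Rham subquotient, so $H^0_{G_K}$ and $H^2_{G_K}$ of $\Hom(\rho_b,\rho_a)$ vanish uniformly over the base; hence the morphism is smooth, its fibres are torsors under the locally free sheaf of extension classes $\Ext^1_{G_K}(\rho_b,\rho_a)$, whose rank is $(n(n-1)-a(a-1)-b(b-1))/2$ by the local Euler characteristic formula, and the split extension $\rho_a\oplus\rho_b$ furnishes a section --- a torsor under a vector bundle with a section, i.e.\ a vector bundle.

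The two delicate points are the shape analysis and the gluing. For the first, one must verify chart by chart that $\til{F}_\fI$ is supported exactly on the decomposable gauges and that \cite{LLLM-extreme} there yields the affine-space fibration with the stated equivariance for the $(s,\mu)$-twisted action; this is where the genericity hypothesis is spent (it also secures the cohomology vanishing above). For the second, one must pin down $\fN$ canonically --- as the unique saturated sub with the prescribed complementary height --- so that the locally constructed morphisms are forced to agree and so that $\fN$ remains $\Gamma$-stable after restriction to $\cX_n^{\eta,\tau}$.
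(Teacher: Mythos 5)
Your overall strategy matches the paper's: identify $\cX_n^{\le\eta,\tau}$ inside $Y^{\le\eta,\tau^\vee}$, cover the nonvanishing locus of $\til{F}_\fI$ by gauge opens, use the parabolic factorization of the universal matrix on the ramified local models of \cite{LLLM-extreme} to define the morphism chart-by-chart, and glue. The paper's proof follows exactly this outline, with the decomposable gauges parametrized as $\tilw(\rhobar,\tau^\vee)$ for semisimple $\rhobar=\rhobar_a\oplus\rhobar_b$ (Setup \ref{setup:parabolic-shape}), the chart-level morphism coming from Proposition~4.2.5 of \cite{LLLM-extreme}, and descent to $\cX$ given by Lemma \ref{lem:parabolic-morphism-pre-gal}.

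The genuine gap is in your covering claim. You assert that $\til{F}_\fI$ \emph{vanishes identically} on $U(\tilz,\le\eta)$ whenever $\tilz$ is not block-diagonal, and justify it by saying that reducibility of the associated Galois representation ``pins down the relative position $\tilz$''. But $U(\tilz,\le\eta)$ is an open neighbourhood of $\tilz$ in the local model, and in general contains $\cO'$-points whose reductions have strictly smaller shape than $\tilz$; reducibility of the Galois representation attached to such a point constrains its own shape, not the $\tilz$ labelling the chart. So the argument shows $\til{F}_\fI$ vanishes at the most degenerate points of the chart but does not rule out nonvanishing elsewhere on it. Your assertion may well be true (it holds e.g.\ for $n=2$), but it is a stronger statement than what is needed, and the given justification doesn't establish it. The paper sidesteps this by proving the weaker and sufficient covering claim $\cX^{\eta,\tau}(\til{F}_\fI)=\cup_{\rhobar}\cX^{\eta,\tau}(\tilw(\rhobar,\tau^\vee))$ directly at the level of $\cO$-points: for $\rho\in\cX^{\eta,\tau}(\til{F}_\fI)(\cO)$, the semisimplification $\rho_a\oplus\rho_b\bmod\varpi$ lies in the closure of $\rho$ by \cite[Thm.~6.6.3(3)]{EGstack}, and since it obviously lies in one of the decomposable opens and those are open, $\rho$ must too. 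You should replace your vanishing claim with this closure argument.

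Two smaller remarks. First, your proof that the morphism is a vector bundle of the stated rank goes through Galois cohomology ($H^0,H^2$ vanishing for $\Hom(\rho_b,\rho_a)$, local Euler characteristic, and the direct-sum section). The paper gets the affine-space structure with the correct relative dimension directly from the parabolic factorization of the local model (Proposition~4.2.5 of \cite{LLLM-extreme}), which is shorter and does not require a separate cohomological verification; your route is valid but should be flagged as an alternative. Second, when gluing you invoke uniqueness of the saturated rank-$a$ sub-Breuil--Kisin module, which is morally correct, but the paper's formulation (both source and target are residually Jacobson, so it suffices to check coincidence at finite-type points, where the morphism is pinned down by Proposition \ref{prop:reducibility}) is the cleaner formal justification.
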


To prove this, we use the parabolic structures on local models as discussed in \cite[\S4.2]{LLLM-extreme}. We introduce some notations first. 


\begin{setup}\label{setup:parabolic-shape} Let $a,b$ be positive integers such that $a+b=n$. We let $P\subset \GL_n$ be the standard parabolic subgroup given by the partition $n=a+b$ and $M\simeq \GL_a \times \GL_b$ be its Levi factor.  For $d \in \{a,b\}$, we choose tame inertial types $\rhobar_d, \tau^\vee_d : I_K \ra \GL_d(\cO)$ and define $\rhobar := \rhobar_a \oplus \rhobar_b$ and $\tau^\vee := \tau^\vee_a \oplus \tau^\vee_b$. We assume that $\tau^\vee$ is $((e+1)(n-1)+2)$-generic. We explain how to choose lowest alcove presentations of these tame inertial types so that 
\begin{align}\label{eqn:parabolic-shape}
    \tilw(\rhobar,\tau^\vee) = \pi(w^M)^\mo ( \tilw(\rhobar_a,\tau^\vee_a),\tilw(\rhobar_b,\tau^\vee_b)) \pi(w^M)
\end{align}
for a certain element $w^M\in \uW^M$.
    \begin{enumerate}
        \item For $d \in \{a,b\}$, let $(s_{\rhobar_d},\mu_{\rhobar_d})$ and $(s_d,\mu_d)$ be lowest alcove presentations of $\rhobar_d$ and $\tau^\vee_d$, respectively. We assume that 
\begin{align*}
    \tilw'_d = w'_dt_{\nu'_d} := \tilw(\rhobar_d,\tau^\vee_d)
\end{align*}
is in $\Adm(\eta_d+\del_{d=a}{\ud{b}})$. Note that this implies 
\begin{align*}
    (s_{\rhobar_d},\mu_{\rhobar_d}) &= (s_dw'_d, \mu_d+s_dw'_d(\nu'_d) + \del_{d=a}\ud{b}).
\end{align*}
        \item  
We define $s = (s_a,s_b)$ and $\mu=(\mu_a-\ud{b},\mu_b)$ so that $\tau^\vee \simeq \tau(s,\mu+\eta)$. We choose $\tilw = t_\nu w \in \utilW$ such that $\mu \in \tilw\cdot \uC_0$. Then we have a lowest alcove presentation of $\tau^\vee$ given by
\begin{align*}
    \prescript{\tilw^\mo}{}{(s,\mu)} =  (w^\mo s \pi(w) , \tilw^\mo\cdot \mu +w^\mo s \pi(\nu)).
\end{align*}
Similarly, we define $s_{\rhobar} := (s_{\rhobar_a},s_{\rhobar_b})$ and $\mu_{\rhobar} := (\mu_{\rhobar_a}-\ud{b}, \mu_{\rhobar_b})$ so that $\rhobar \simeq \tau(s_{\rhobar},\mu_{\rhobar}+\eta)$ and
\begin{align*}
     \prescript{\tilw^\mo}{}{(s_{\rhobar},\mu_{\rhobar})} =  (w^\mo s_{\rhobar} \pi(w) , \tilw^\mo\cdot \mu_{\rhobar} +w^\mo s_{\rhobar} \pi(\nu))
\end{align*}
is a lowest alcove presentation of $\rhobar$.

    \item Let $w^M \in \uW^M$ be the unique element given by the condition $\uW_Mw = \uW_M w^M$ so that we have a factorization $w = w_M w^M$ for some $w_M=(w_a,w_b)\in \uW_M$. Also, we write $\nu = (\nu_a,\nu_b) \in X^*(\uT_a)^\cJ \times X^*(\uT_b)^\cJ$. We define $\tilw_a :=  t_{\nu_a}w_a \in \utilW_{a}$ and $\tilw_b := t_{\nu_b}w_b \in \utilW_{b}$. It follows from the definition that  $\tilw\cdot \uC_0 \subset \tilw_a \cdot \uC_{a,0} \times \tilw_b \cdot \uC_{b,0}$. By the definition of $\tilw$, this implies that $\tilw_a \in \uOm_a$ and $\tilw_b \in \uOm_b$.
        
        \item 
        We choose the following modified lowest alcove presentation of $\tau^\vee_d$ and $\rhobar_d$ given by
        \begin{align*}
            \prescript{\tilw_d^\mo}{}{(s_d,\mu_d)} &= (w_d^\mo s_d \pi(w_d) , \tilw_d^\mo \cdot \mu_d - w_d^\mo s_d\pi(\nu_d))) \\ 
            \prescript{\tilw_d^\mo}{}{(s_{\rhobar_d},\mu_{\rhobar_d})} &= (w_d^\mo s_{\rhobar_d} \pi(w_d) , \tilw_d^\mo \cdot \mu_{\rhobar_d} - w_d^\mo s_{\rhobar_d}\pi(\nu_d)))
        \end{align*}
        Then a direct computation shows that \eqref{eqn:parabolic-shape} holds.
    \end{enumerate}
\end{setup}
We remark that $((e+1)(n-1)+2)$-genericity of $\tau^\vee$ implies that any lowest alcove presentation of $\tau^\vee$ is $((e+1)(n-1)+1)$-generic (this follows from the proof of \cite[Lem.~2.3.2]{LLLMlocalmodel}) and $\tau^\vee_a$ and $\tau^\vee_b$ are $(e(n-1)+2)$-generic.

\begin{prop}[Proposition 4.2.5 in \cite{LLLM-extreme}]
We follow the notations in Setup \ref{setup:parabolic-shape}. Let $R^\univ = \cO(T^\vee U( \tilw(\rhobar,\tau^\vee), \le \eta))$.  The universal matrix $A^\univ \in T^\vee U( \tilw(\rhobar,\tau^\vee), \le \eta)(R^\univ)$ factors as
\begin{align*}
    A^\univ = \pi(w^M)^\mo \pma{D_a^\univ & \\ & D_b^\univ} \pma{M_a^\univ & \\ X^\univ & M_b^\univ} \pi(w^M) 
\end{align*}
where for $d\in\CB{a,b}$, $D_d^\univ \in T^\vee_d(R^\univ)$ and $M_d^\univ \in U(\tilw(\rhobar_d,\tau^\vee_d),\le\eta_d+\del_{d=a}\ud{b})(R^\univ)$. This induces a morphism
\begin{align}\label{eqn:parabolic-morphism-pre}
    T^\vee U( \tilw(\rhobar,\tau^\vee), \le \eta) \ra T^\vee_a U (\tilw(\rhobar_a,\tau^\vee_a),\le\eta_a +\ud{b}) \times T^\vee_b U(\tilw(\rhobar_b,\tau^\vee_b),\le\eta_b)
\end{align}
which exhibits $T^\vee U( \tilw(\rhobar,\tau^\vee), \le \eta)$ as an affine space over $T^\vee_a U (\tilw(\rhobar_a,\tau^\vee_a),\le\eta_a +\ud{b}) \times T^\vee_b U(\tilw(\rhobar_b,\tau^\vee_b),\le\eta_b)$ of relative dimension $(n(n-1)-a(a-1)-b(b-1))/2$.
\end{prop}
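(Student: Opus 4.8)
The plan is to deduce this from the explicit gauge-coordinate description of $U(\tilz,\le\eta)$, with $\tilz:=\tilw(\rhobar,\tau^\vee)$, recalled above and in \cite[\S4.2]{LLLM-extreme} (which rests on \cite[\S5.3]{LLLMlocalmodel}). Recall that on $T^\vee U(\tilz)$ the universal matrix $A^\univ$ is, by construction, the element $\tilz$ times an explicit matrix whose entries are polynomials in $v$ of bounded degree with free coefficients, one root-group factor per affine root; imposing the elementary-divisor bound $\le\eta$ then cuts out $U(\tilz,\le\eta)$.

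The first move is to conjugate $A^\univ$ by the permutation $\pi(w^M)$ and to invoke the factorisation \eqref{eqn:parabolic-shape} of the shape. Since $\pi(w^M)$ involves no $v$, this conjugation preserves elementary divisors, and I expect it to intertwine the gauge coordinates for the shape $\tilz$ with the gauge coordinates for the block-diagonal shape $\tilz_{ab}:=(\tilw(\rhobar_a,\tau^\vee_a),\tilw(\rhobar_b,\tau^\vee_b))\in\utilW_M\subset\utilW$; verifying this is where the minimality of $w^M\in\uW^M$ should enter. It then suffices to analyse the gauge for a shape lying in $\utilW_M$: the ``shape part'' is block diagonal, and because $\eta_n=(n-1,\dots,0)$ is strictly decreasing, the height bound $\le\eta$ should force the upper-right block of the unipotent gauge factor to vanish while imposing no condition (beyond polynomiality of the appropriate degree) on the lower-left block, and should govern the two diagonal blocks exactly by the height bounds $\le\eta_a+\ud{b}$ on the first (the shift because the first $a$ coordinates of $\eta_n$ are $\eta_a+\ud{b}$) and $\le\eta_b$ on the second. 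This produces the displayed block factorisation of $A^\univ$ and the morphism \eqref{eqn:parabolic-morphism-pre}, and presents $T^\vee U(\tilz,\le\eta)$ as the total space of an affine-space bundle over $T^\vee_aU(\tilw(\rhobar_a,\tau^\vee_a),\le\eta_a+\ud{b})\times T^\vee_bU(\tilw(\rhobar_b,\tau^\vee_b),\le\eta_b)$ whose fibre is the space of lower-left blocks $X^\univ$; counting these coordinates then gives relative dimension $(n(n-1)-a(a-1)-b(b-1))/2$.

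The step I expect to be the main obstacle is precisely the gauge bookkeeping just indicated: with the explicit root-group factors in hand, checking (i) that $\pi(w^M)$-conjugation carries the $\tilz$-gauge isomorphically onto the $\tilz_{ab}$-gauge, and (ii) that for a shape in $\utilW_M$ the combined conditions defining $U(\tilz_{ab},\le\eta)$ --- the degree bounds coming from $E(v)$ together with the Kottwitz-type determinant condition --- decouple exactly into the defining conditions of the two smaller local models on the diagonal blocks, the vanishing of the upper-right block, and no condition on the lower-left block. Both are routine in spirit but delicate, as they intertwine the combinatorics of the shape with the height conditions. As preliminaries I would use the combinatorial identity \eqref{eqn:parabolic-shape} established in Setup \ref{setup:parabolic-shape}, together with the observation recorded there that $((e+1)(n-1)+2)$-genericity of $\tau^\vee$ descends to $\tau^\vee_a$ and $\tau^\vee_b$, so that the smaller local models $U(\tilw(\rhobar_d,\tau^\vee_d),\le\eta_d+\del_{d=a}\ud{b})$ are available.
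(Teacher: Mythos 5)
The paper does not prove this proposition: it is quoted verbatim from [LLLM-extreme, Prop.~4.2.5] and invoked as a black box. So there is no "paper's own proof" against which to measure your sketch directly; what you are proposing is to re-derive the cited result from the gauge-coordinate description of the local models, which is presumably also what [LLLM-extreme] does. In that sense your route is reasonable and the high-level ideas---conjugate by $\pi(w^M)$, use \eqref{eqn:parabolic-shape} to land on a block-diagonal shape, then let the height conditions decouple the two diagonal blocks while killing one off-diagonal block and leaving the other free---are the right ones.

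However, the sketch as written has a substantial gap, one you yourself flag and then set aside as "routine in spirit." Steps (i) and (ii) in your last paragraph are not peripheral bookkeeping: they \emph{are} the proposition. Checking that $\pi(w^M)$-conjugation carries $T^\vee U(\tilz,\le\eta)$ onto the analogous object for the block-diagonal shape requires the precise interaction between the minimality of $w^M\in\uW^M$, the decomposition $w=w_Mw^M$, and the root-group parametrization of $U(\tilz)$ from [LLLM-extreme, \S4.2] (going back to [LLLMlocalmodel, \S5.3]); and the assertion that for a shape in $\utilW_M$ the $\le\eta$ condition forces the upper-right block of the unipotent factor to vanish, imposes exactly $\le\eta_a+\ud{b}$ and $\le\eta_b$ on the diagonal blocks, and leaves the lower-left block unconstrained, is the entire content of the claimed factorization. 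Until those are carried out against the explicit coordinates, what you have is a plan rather than a proof. I would also be careful with the dimension count: the lower-left block is $b\times a$ with entries that are polynomials in $v$ subject to degree bounds and with a product over $\cJ$ in the background, so "counting these coordinates" must be reconciled with the stated answer $(n(n-1)-a(a-1)-b(b-1))/2=ab$, which is not the naive count of scalar parameters once $e,f>1$; the bookkeeping here needs to match the conventions of [LLLM-extreme] exactly. Finally, you attribute the vanishing of the upper-right block solely to the monotonicity of $\eta$, but it may equally be forced by the structure of the gauge $U(\tilz_{ab})$ for a block-diagonal shape; it is worth determining which mechanism is actually at play, since this affects whether the decoupling of conditions in (ii) is as clean as you expect.
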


Recall that $\cX^{\le\eta,\tau}$ can be identified with a closed substack of $Y^{\le\eta,\tau^\vee}$. For $\tilz\in \Adm^\vee(\eta)$, we write $\cX^{\le\eta,\tau}(\tilz) = \cX^{\le\eta,\tau} \cap Y^{\le\eta,\tau^\vee}(\tilz)$. Note that if we take $\fI\subset \fP_\tau$ such that $\tau^\vee_a \simeq \oplus_{I\notin \fI}\tau^\vee_I$, then $\cX^{\le\eta,\tau}(\tilz) \subset \cX^{\le\eta,\tau}(\til{F}_{\fI})$.

\begin{lem}\label{lem:parabolic-morphism-pre-gal}
The morphism \eqref{eqn:parabolic-morphism-pre} induces a morphism     
\begin{align*}
     \cX^{\le\eta,\tau}(\tilw(\rhobar,\tau^\vee)) \ra \cX^{\le\eta_a+\ud{b}, \tau_a}(\tilw(\rhobar_a,\tau^\vee_a)) \times \cX^{\le\eta_b,\tau_b}(\tilw(\rhobar_b,\tau^\vee_b)).
\end{align*}
For any finite extension $\cO'/\cO$, it maps $\rho \in \cX^{\le\eta,\tau}(\tilw(\rhobar,\tau^\vee))(\cO')$ to $(\rho_a,\rho_b)$ where $\rho_b$ is the subrepresentation of $\rho$ given by Proposition \ref{prop:reducibility}, and $\rho_a = \rho/\rho_b$.
\end{lem}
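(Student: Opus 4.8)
\textbf{Proof plan for Lemma \ref{lem:parabolic-morphism-pre-gal}.}

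The plan is to deduce the morphism of stacks from the morphism \eqref{eqn:parabolic-morphism-pre} of local model charts by passing through the local model diagrams of Theorem \ref{thm:BKstack-localmodel}, and then to identify the map on $\cO'$-points by a Dieudonn\'e-module computation matching that of Proposition \ref{prop:reducibility}. First I would record that, under the genericity hypothesis on $\tau^\vee$, the shape $\tilz=\tilw(\rhobar,\tau^\vee)$ lies in $\Adm^\vee(\eta)$ and $Y^{\le\eta,\tau^\vee}(\tilz)$ (hence also $\cX^{\le\eta,\tau}(\tilz)$) is non-empty, with local model diagram presenting $Y^{\le\eta,\tau^\vee}(\tilz)$ as $[T^{\vee,\cJ}U(\tilz,\le\eta)/_{(s,\mu)}T^{\vee,\cJ}_\cO]^\pcp$; the analogous statements hold for $\tau^\vee_a$ and $\tau^\vee_b$ since Setup \ref{setup:parabolic-shape} arranges that they are $(e(n-1)+2)$-generic and that \eqref{eqn:parabolic-shape} holds. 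The key point is then that the morphism \eqref{eqn:parabolic-morphism-pre} is equivariant for the $(s,\mu)$-twisted $\varphi$-conjugation actions on source and target, where on the target the action is the product of the actions for $(s_a,\mu_a)$ and $(s_b,\mu_b)$: this is exactly why the modified lowest alcove presentations in Setup \ref{setup:parabolic-shape}(4) were chosen so that the block-factorization $A^\univ=\pi(w^M)^\mo\,\mathrm{diag}(D_a^\univ,D_b^\univ)\begin{psmallmatrix}M_a^\univ&\\X^\univ&M_b^\univ\end{psmallmatrix}\pi(w^M)$ is compatible with the twisted conjugation. Taking the quotient by $T^{\vee,\cJ}$ on both sides, and then $p$-adic completion, yields a morphism $Y^{\le\eta,\tau^\vee}(\tilz)\ra Y^{\le\eta_a+\ud b,\tau^\vee_a}(\tilz_a)\times Y^{\le\eta_b,\tau^\vee_b}(\tilz_b)$; restricting to the closed substack $\cX^{\le\eta,\tau}(\tilz)\subset Y^{\le\eta,\tau^\vee}(\tilz)$ and checking that its image lands in $\cX^{\le\eta_a+\ud b,\tau_a}(\tilz_a)\times\cX^{\le\eta_b,\tau_b}(\tilz_b)$ — which can be verified on the dense set of finite-flat points using Proposition \ref{prop:BK-EG} together with the characterization of the potentially crystalline substacks — gives the desired morphism.

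Next I would identify the map on $\cO'$-points. Given $\rho\in\cX^{\le\eta,\tau}(\tilz)(\cO')$ with associated Breuil--Kisin module $\fM$ and eigenbasis $\be$, the matrix $\prod_{j=f-1}^0\Ad(s_{\rmor,j})(A^{(j)}_{\fM,\be})$ is, after conjugating by $\pi(w^M)$, block lower-triangular with diagonal blocks matching the partial-Frobenius matrices of the sub-Breuil--Kisin module $\fM_b$ and quotient $\fM_a$; the lower-triangularity of the block structure shows that $\fM_b\subset\fM$ is $\varphi$- and descent-data-stable, and that $\fM_a=\fM/\fM_b$. Passing to Galois representations via $\veps_\infty$ and $\veps_{\tau^\vee}$ and using Proposition \ref{prop:reducibility} (which produces exactly the subrepresentation cut out by the eigenvectors of smallest slopes, i.e. $\rho_b$ with inertial type $\tau_b$ and Hodge type $\eta_b$, quotient $\rho_a$ of Hodge type $\eta_a+\ud b$ and inertial type $\tau_a$), I would check that the two constructions agree; here one also needs $\tilz\in\Adm^\vee(\eta)$ to force the slope ordering to be compatible with the block structure, which is where the hypothesis $\rho\in\cX^{\le\eta,\tau}(\tilz)$ (rather than merely $\cX^{\le\eta,\tau}(\til F_\fI)$) is used. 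Finally, the claim that the ambiguity in $X^\univ$ accounts for the relative dimension $(n(n-1)-a(a-1)-b(b-1))/2$ is immediate from the cited Proposition, though this is not needed here and will be used in Theorem \ref{thm:reducibility-in-families}.

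The main obstacle I expect is the bookkeeping of lowest alcove presentations: verifying that the block factorization of $A^\univ$ in the cited Proposition is genuinely equivariant for the $(s,\mu)$-twisted conjugation with respect to the \emph{product} presentation $(s_a,\mu_a)\times(s_b,\mu_b)$, and not merely for some twist of it, requires tracking how $\pi(w^M)$ interacts with the factors $\Ad(s_j^\mo v^{\mu^{(j)}+\eta^{(j)}})$ and $\varphi$ in the twisted-conjugation formula, and confirming that the modified presentations of $\tau^\vee_d$ and $\rhobar_d$ in Setup \ref{setup:parabolic-shape}(4) are precisely the ones making this work. The second delicate point is descent: \eqref{eqn:parabolic-morphism-pre} is a morphism of the rigidified charts $T^{\vee,\cJ}U(\cdots)$, and one must check the $T^{\vee,\cJ}$-equivariance carefully enough that it descends to the quotient stacks and is independent of the chart; once equivariance is established this is formal, but it is the step most prone to sign or twist errors.
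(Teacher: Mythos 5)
Your proposal follows the paper's line of attack: get a morphism of $Y^{\le\eta,\tau^\vee}(\tilz)$-type stacks from \eqref{eqn:parabolic-morphism-pre} by checking twisted $\varphi$-conjugation equivariance and invoking Theorem \ref{thm:BKstack-localmodel} (the paper compresses this into ``a direct computation using Theorem \ref{thm:BKstack-localmodel} and Proposition \ref{prop:change-of-basis-formula}''), observe that at the module level it sends $\fM\mapsto(\fM_a,\fM_b)$ with $\fM_b\subset\fM$ the canonical subobject, and then show the restriction to $\cX^{\le\eta,\tau}(\tilz)$ lands in the product of potentially crystalline substacks.

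The one place your argument genuinely diverges is that last step. You propose to verify it on the $\cO'$-points and promote to the whole stack via density plus $\cO$-flatness; this works but requires invoking reducedness of the rigid generic fiber of $\cX^{\le\eta,\tau}(\tilz)$ to conclude that a closed substack containing all $\cO'$-points is everything. The paper instead argues structurally and on arbitrary $R$-points: since $\fM_b\subset\fM$ is $\varphi$- and descent-data-stable, $\veps_{\tau^\vee}(\fM_b)$ is a sub-$G_{K_\infty}$-module of $\veps_{\tau^\vee}(\fM)$, and when $\fM$ lies in $\cX^{\le\eta,\tau}(\tilz)\subset Y^{\le\eta,\tau^\vee}(\tilz)$ (so the $G_{K_\infty}$-action on $\veps_{\tau^\vee}(\fM)$ comes with an extension to $G_K$) the uniqueness of $\fM_b$ forces the $G_K$-action to preserve $\veps_{\tau^\vee}(\fM_b)$, giving $\rho_b\subset\rho$ directly. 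This avoids any appeal to reducedness and at the same time identifies the map on $\cO'$-points in a single stroke, so it is a bit more economical; but your density route is also valid given the known reducedness of the generic fiber. Your remaining remarks (the role of Setup \ref{setup:parabolic-shape}(4) in making the block factorization twisted-conjugation equivariant, the lower-triangularity giving $\fM_b$ as a sub, the slope computation matching Proposition \ref{prop:reducibility}) track the paper's intent correctly.
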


\begin{proof}
    It follows from a direct computation using Theorem \ref{thm:BKstack-localmodel} and Proposition \ref{prop:change-of-basis-formula} that \eqref{eqn:parabolic-morphism-pre} induces a morphism
    \begin{align*}
     Y^{\le\eta,\tau^\vee}(\tilw(\rhobar,\tau^\vee)) \ra Y^{\le\eta_a+\ud{b}, \tau^\vee_a}(\tilw(\rhobar_a,\tau^\vee_a)) \times Y^{\le\eta_b,\tau^\vee_b}(\tilw(\rhobar_b,\tau^\vee_b)).
\end{align*}
    It is clear from the definition this maps $\fM\in Y^{\le\eta,\tau^\vee}(\tilw(\rhobar,\tau^\vee))(\cO)$ to $(\fM_a,\fM_b)$ where $\fM_b\subset \fM$ is the unique subobject of Hodge type $\le\eta_b$ and tame inertial type $\tau^\vee_b$, and $\fM_a = \fM/\fM_b$. 
    
    To finish the proof, we need to show that the image of $\cX^{\le\eta,\tau}(\tilw(\rhobar,\tau^\vee))$ under the above morphism is contained in $\cX^{\le\eta_a+\ud{b}, \tau_a}(\tilw(\rhobar_a,\tau^\vee_a)) \times \cX^{\le\eta_b,\tau_b}(\tilw(\rhobar_b,\tau^\vee_b))$. This follows from the observation that $\veps_{\tau^\vee}(\fM_b)$ is a sub-$G_{K_\infty}$-representation of $\veps_{\tau^\vee}(\fM)$ whose $G_{K_\infty}$-action extends to $G_K$ and thus provides a sub-$G_K$-representation $\rho_b\subset \rho$. 
\end{proof}

\begin{proof}[Proof of Theorem \ref{thm:reducibility-in-families}]
In addition to given $\tau^\vee_a,\tau^\vee_b$, we can choose $\rhobar_a,\rhobar_b$, and $\rhobar=\rhobar_a\oplus\rhobar_b$ as in Setup \ref{setup:parabolic-shape}. Then $\cX^{\eta,\tau}(\til{F}_{\fI})$ contains $\cX^{\eta,\tau}(\tilw(\rhobar,\tau^\vee))$ as an open substack. Indeed, we have
\begin{align*}
    \cX^{\eta,\tau}(\til{F}_{\fI}) = \cup_{\rhobar}\cX^{\eta,\tau}(\tilw(\rhobar,\tau^\vee))
\end{align*}
where the union runs over all choices of $\rhobar=\rhobar_a\oplus\rhobar_b$ such that for $d\in \CB{a,b}$, $\tilw(\rhobar_d,\tau^\vee_d)\in \Adm^\vee(\eta_d +\del_{d=a}\ud{b})$. To see this, first note that the right hand side contains the image of the natural morphism given by taking direct sum
\begin{align*}
    \cX^{\le\eta_a+\ud{b}, \tau_a} \times \cX^{\le\eta_b,\tau_b} \ra \cX^{\le\eta,\tau}.
\end{align*}
Suppose that $\rho \in \cX^{\eta,\tau}(\til{F}_{\fI})(\cO)$ is not contained in $\cup_{\rhobar}\cX^{\eta,\tau}(\tilw(\rhobar,\tau^\vee))(\cO)$. Then any point in the closure of $\rho$ is not contained in the latter. If we write $\rho_b$ for the subrepresentation of $\rho$ given by Proposition \ref{prop:reducibility} and $\rho_a = \rho/\rho_b$, by \cite[Thm.~6.6.3 (3)]{EGstack}, the closure of $\rho$ contains $\rho_a \oplus \rho_b \mod \varpi$. Since $\rho_a \oplus \rho_b \mod \varpi$ is contained in $\cup_{\rhobar}\cX^{\eta,\tau}(\tilw(\rhobar,\tau^\vee))(\cO)$, we get a contradiction.

It remains to show that the morphism in Lemma \ref{lem:parabolic-morphism-pre-gal} for different choices of $\rhobar_a,\rhobar_b$ glue to the morphism in the statement. Since both the source and the target are residually Jacobosn, it suffices to show that two morphisms coincide at any $\cO'$-points in the intersection of the domains. This follows from our characterization of the map at the level of $\cO'$-points.  
\end{proof}

Now we discuss an application to the special fiber. Suppose that $\sig=F(\mu)$ is a $((e+2)(n-1)+2)$-deep Serre weight and take $\tau = \tau(1,\mu)$ which is $((e+1)(n-1)+2)$-generic.

\begin{defn}\label{defn:strata}
Let $I=\CB{i_1<i_2<\dots<i_r}\subset \CB{1,2,\dots,n-1}$ be a subset.
    \begin{enumerate}
        \item We define $\cU_{\sig,I}:= \cap_{i\in I} \cC_\sig(\ov{f}_i)$. Note that $I \mapsto \cU_{\sig,I}$ is inclusion-reversing. This defines a stratification on $\cC_\sig$ with strata given by $\cC_{\sig,I}:= \cU_{\sig,I} \bss \cup_{I\subset I'}\cU_{\sig,I'}$. When $I=\emptyset$, we define $\cC_{\sig}^\ss:=\cC_{\sig,\emptyset}$ the \textit{supersingular locus} of $\cC_\sig$. We say that a continuous $G_K$-representation $\rhobar: G_K \ra \GL_n(\Fpbar)$ is \textit{supersingular of weight $\sig$} if $\rhobar\in\cC^\ss_\sig(\Fpbar)$.
        \item Let $P_I\subset \GL_n$ be the standard upper triangular parabolic subgroup with Levi factor $M_I \simeq \prod_{k=1}^{r+1} \GL_{i_{k}-i_{k-1}}$ where we write $i_0=0$ and $i_{r+1}=n$. We also define $\cX_{M_I,\red}:= \prod_{k=1}^{r+1} \cX_{i_{k}-i_{k-1},\red}$.
        \item We define $\sig_{I}\subset \cX_{M_I,\red}$ to be the irreducible component which is uniquely characterized as a closure of the image of $\cU_\sig(\Fpbar)\ra \cX_{M_I,\red}(\Fpbar)$ which is given by the quotient map $P_I\epi M_I$.
    \end{enumerate}
\end{defn}

We refer to Remark \ref{rmk:LGC}(4) how the notion of being supersingular of weight $\sigma$ is related to the representation side.


\begin{thm}\label{thm:modp-parabolic}
    Suppose that $\sig$ is $((e+2)(n-1)+2)$-deep. For each $I\subset \CB{1,2,\dots,n-1}$, there is a natural map 
    \begin{align*}
        \cU_{\sig,I} \ra \cC_{\sig_I}
    \end{align*}
    whose restriction to $\cC_{\sig,I}$ lands into $\cC_{\sig_I}^\ss$.
\end{thm}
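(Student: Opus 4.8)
The strategy is to deduce the mod $p$ statement from the integral/generic-fiber result Theorem~\ref{thm:reducibility-in-families} by restricting to the appropriate components and passing to special fibers, exactly as Theorem~\ref{thm:spec-satake} was deduced from the results of \S\ref{sec4}. Let $\tau=\tau(1,\mu)$, which is $((e+1)(n-1)+2)$-generic by the deepness hypothesis on $\sig$; then $\cC_\sig\subset\cX_n^{\eta,\tau}$. Fix $I=\{i_1<\dots<i_r\}$. First I would observe that $\cU_{\sig,I}=\cap_{i\in I}\cC_\sig(\ov f_i)$ is contained in $\cX_n^{\eta,\tau}(\til F_{\fI_{i}})$ for each $i\in I$, where $\fI_i=\{1,\dots,i\}$ (thought of as a union of blocks in $\fP_\tau$, which is legitimate since $\tau$ is principal series so $\fP_\tau$ consists of singletons), because $\ov f_i=R_\tau^\sig(\til F_{\fI_i})$ by definition. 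So $\cU_{\sig,I}\subset \cC_\sig\cap\bigcap_{i\in I}\cX_n^{\eta,\tau}(\til F_{\fI_i})$.

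Next I would iterate Theorem~\ref{thm:reducibility-in-families}. Applying it with $\fI=\fI_{i_1}$ splits off a rank-$i_1$ quotient (or sub, depending on the convention in Proposition~\ref{prop:reducibility}); applying it again inside the second factor at the index $i_2-i_1$, and so on, one obtains a morphism
\[
    \bigcap_{i\in I}\cX_n^{\eta,\tau}(\til F_{\fI_i}) \ra \prod_{k=1}^{r+1}\cX_{i_k-i_{k-1}}^{\eta_{i_k-i_{k-1}}+\ud{a_k},\tau^{(k)}}
\]
for suitable twists $\ud{a_k}$ (accounting for the $\ud{b}$ shifts in Proposition~\ref{prop:reducibility}) and suitable principal series tame types $\tau^{(k)}=\tau(1,\mu^{(k)})$, where $\mu^{(k)}$ is the appropriate window of $\mu$; compatibility of the successive splittings follows from the characterization of these morphisms on $\cO'$-points (the subrepresentation is intrinsically determined), so there is no coherence issue. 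Taking $\red$ and intersecting with $\cC_\sig$ on the source, I would then check that the image of $\cU_{\sig,I}$ under the reduction of this morphism lands in $\prod_k \cC_{\sig_k}$, where $\cC_{\sig_k}\subset\cX_{i_k-i_{k-1},\red}$ is the component attached to the corresponding window weight of $\mu$; this is because on the dense open $\cU_\sig$ the reduction $\rhobar$ has the explicit upper-triangular shape \eqref{eqn:ord-rhobar}, its graded pieces for $P_I$ are exactly the $\rhobar$'s of shape \eqref{eqn:ord-rhobar} for $M_I$, and density plus closedness of $\cC_{\sig_I}$ propagates this. After twisting back the $\ud{a_k}$ by a character (an isomorphism of the relevant components, as in \S\ref{subsec:GLn}) this gives the map $\cU_{\sig,I}\to\cC_{\sig_I}=\prod_k\cC_{\sig_k}$ matching the one induced by $P_I\epi M_I$, which one checks again on $\Fpbar$-points against Definition~\ref{defn:strata}(3).

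For the last clause, I would take a point of $\cC_{\sig,I}=\cU_{\sig,I}\setminus\bigcup_{I\subsetneq I'}\cU_{\sig,I'}$ and show its image in $\cC_{\sig_k}$ lies in $\cC_{\sig_k}^\ss$ for each $k$. The key point: the functions $\ov f_j$ on $\cC_\sig$ for $i_{k-1}<j<i_k$ pull back, under the morphism above, to (the product of $\ov f_{j-i_{k-1}}$ on the $k$-th factor with invertible functions coming from the other factors, by the multiplicativity built into Definition~\ref{defn:glob-func} and Lemma~\ref{lem:tame-type-formulas}(3)); hence a point of $\cC_\sig$ where all $\ov f_j$ with $j\notin I$ vanish maps to a point where all the $\ov f_{j'}$ with $1\le j'\le i_k-i_{k-1}-1$ vanish on the $k$-th factor, i.e.\ into $\cC_{\sig_k}^\ss$. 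The main obstacle I anticipate is bookkeeping: tracking the Hodge-type twists $\ud{a_k}$ and the lowest-alcove/shape presentations through the iterated application of Setup~\ref{setup:parabolic-shape} and Theorem~\ref{thm:reducibility-in-families}, and verifying that the genericity/deepness hypothesis $((e+2)(n-1)+2)$ on $\sig$ is strong enough that every intermediate type $\tau^{(k)}$ and every intermediate $\rhobar$-shape satisfies the genericity needed to invoke Theorem~\ref{thm:reducibility-in-families} at that stage. Once the indices are set up correctly, each individual step is a direct consequence of results already established.
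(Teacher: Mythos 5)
Your proposal is correct and follows essentially the same route as the paper, whose proof is simply the two-line citation of Theorem~\ref{thm:reducibility-in-families} and Theorem~\ref{thm:spec-satake}; your write-up is a detailed and accurate unpacking of that citation, including the iteration of Theorem~\ref{thm:reducibility-in-families} across the blocks of $M_I$, the use of $\ov f_i=\ov\Psi_\sig(\ov T_{-\om_i})=R_\tau^\sig(\til F_{\fI_i})$ from Theorem~\ref{thm:spec-satake} to identify $\cU_{\sig,I}$ with $\cC_\sig\cap\bigcap_{i\in I}\cX_n^{\eta,\tau}(\til F_{\fI_i})$, and the multiplicativity of the functions under the splitting to deduce the supersingular-locus claim. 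The bookkeeping concerns you flag (tracking the $\ud{\,\cdot\,}$-twists and checking genericity propagates to the smaller $\GL_m$ factors) are genuine but routine, since genericity is inherited by sub-root-systems and the $((e+1)(n-1)+2)$-genericity of $\tau$ required for the first application dominates what the smaller-rank factors need.
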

\begin{proof}
    This follows from Theorem \ref{thm:reducibility-in-families} and \ref{thm:spec-satake}.
\end{proof}

\begin{rmk}
    Already in the case $I=\CB{1,2,\dots,n-1}$, our $\cU_{\sig,I}$ slightly refines the ordinary locus $\cU_\sig$ in the following sense. It is not necessarily true that $\cU_\sig$ contains all $\rhobar$ of the form \eqref{eqn:ord-rhobar}. Indeed, what we denote by $\cU_\sig$ is really \textit{some} open dense substack of $\cC_\sig$ satisfying the properties therein. One reason behind this is that the construction of the eigenvalue morphism \cite[Prop.~5.3.8]{EGstack} relies on a certain birational morphism (see the proof of Lem.~5.3.7 in \loccit) and thus requires replacing $\cU_\sig$ by its open dense substack. However, at least for $((e+2)(n-1)+2)$-deep $\sig$, the above result shows that $\cU_{\sig,I}$ exactly contains all $\rhobar$ of the form \eqref{eqn:ord-rhobar} and their partial semisimplifications. 
\end{rmk}

\section{Relationship with the $p$-adic local Langlands program}\label{sec7}

\subsection{Mod $p$ local-global compatibility}\label{subsec:LGC} We briefly recall the global setup from \cite[\S10.3]{LLMPQ-FL}. We refer the reader to \loccit~for details. Let ${F}$ be an imaginary CM field with maximal totally real subfield $F^+$. We assume that $[F^+:\Q]$ is even and that all places of $F^+$ above $p$ split in $F$. We let $G_{/F^+}$ be a reductive group which is an outer form of $\GL_n$ splitting over $F$ such that $G(F_v^+)\simeq U_n(\R)$ for all $v\mid \infty$. We let $\cG$ be its reductive model over $\cO_{F^+}[1/N]$ for some $N\in \Z_{>0}$ coprime to $p$ as in \loccit.


Let $\rbar: G_F \ra \GL_n(\F)$ be an absolutely irreducible continuous representation and let $\rhobar:= \rbar|_{G_{F_{\til{v}}}}$. Following \cite{LLMPQ-FL}, let $S(U,\F)$ be the space of algebraic automorphic forms on $G$ of level $U= U^vU_v$ and coefficients $\F$. We have a set $\cP_U$ of finite places $w$ of $F$ such that $w|_{F^+}$ splits in $F$, $w\nmid pN$, and $U$ is unramified at $w|_{F^+}$. For a subset $\cP\subset \cP_U$ of finite complement, let $\T^{\cP}$ be the abstract Hecke algebra generated by the usual double coset operators at places in $\cP$ with a maximal ideal $\fm_{\rbar}$ determined by $\rbar$.

We choose a place $\til{v}\mid p$ of $F$ and let $v:= \til{v}|_{F^+}$. We denote by $K$ the finite extension $F_{\til{v}} \simeq F^+_v$ of $\Qp$. We define an admissible smooth $\cG(F^+_v)$-representation
\begin{align*}
    \pi(\rbar) := \varinjlim_{U_v \le \cG(\cO_{F_v^+})} S(U^vU_v,\F)[\fm_{\rbar}].
\end{align*}
It is a folklore conjecture that $\pi(\rbar)$ is \textit{purely local}, i.e.~only depends on $\rhobar$ (up to multiplicity). This is only proven in the case $\GL_2(\Qp)$ \cite{6authorGL2}, although numerous important progress has been made for $\GL_2(\Qpf)$ (see the introduction in \cite{BHHMS2}). In a different direction, it is natural to ask whether we can recover $\rhobar$ from $\pi(\rbar)$. This question was first studied by Breuil--Diamond \cite{BD} and further studied by several authors (\cite{HLM,LMP,PQ,EL} and most importantly \cite{LLMPQ-FL}), all under the assumption that $K/\Qp$ is unramified using Fontaine--Laffaille theory. We refer to the introduction of \cite{LLMPQ-FL} for a more detailed account of this problem. Also, we remark that this is the Galois-to-automorphic direction of the $p$-adic local Langalnds program, opposite to the categorical conjecture by Emerton--Gee--Hellmann discussed in the introduction. See \cite[\S7.8]{EGH22} for the discussion on the relationship between the two conjectural correspondences.

We expect that our Theorem \ref{thm:spec-satake} will be useful to study these problems. As a first application, we prove the following result.

\begin{thm}\label{thm:modpLGC}
    Let $\sig$ be a $((e+1)(n-1)+2)$-deep Serre weight of $\GL_n(k)$ and $\rhobar \in \cC_\sig(\F)$. Suppose that
    \begin{enumerate}
        \item $\cG(\cO_{F^+_v})U^v$ is sufficiently small;
        \item the image of $\rbar|_{G_{F(\zeta_p)}}$ is adequate (in the sense of \cite[Def.~2.3]{Thorne-smallresidual}); and
        \item $\Hom_{\rmK}(\sig, \pi(\rbar)|_{\rmK})\neq 0$.
    \end{enumerate}
    Then the natural action of $\cH(\sig)$ on $\Hom_{\rmK}(\sig, \pi(\rbar)|_{\rmK})$ is given by the character
    \begin{align*}
        \cH(\sig) \xra{\ov{\Psi}_\sig} \cO(\cC_\sig) \xra{\ev_{\rhobar}} \F.
    \end{align*}
\end{thm}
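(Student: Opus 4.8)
The plan is to reduce the mod $p$ local-global compatibility statement to the Emerton--Gee stack side, where Theorem~\ref{thm:spec-satake} identifies $\cH(\sig)$ with $\cO(\cC_\sig)$, and then to transport the known action of the Hecke algebra on spaces of automorphic forms through a patching module. The key point is that the $\cH(\sig)$-action on $\Hom_{\rmK}(\sig,\pi(\rbar)|_{\rmK})$ is not seen directly but is accessed through its realization as (a quotient of) a Taylor--Wiles--Kisin patched module $M_\infty(\sig)$, on which one has on one side the patched Hecke action and on the other side the action of the patched (potentially crystalline) deformation ring $R_\infty$, and the two agree by the standard patching axioms. This is why the statement is deduced from a stronger patched version (Theorem~\ref{thm:modpLGC-family}, referenced but not yet stated in the excerpt).

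\textbf{Key steps.} First I would set up the patching module: choosing an appropriate tame principal series type $\tau = \tau(1,\mu)$ with $\sig = F(\mu)$ in its reduction, one patches the $\sig$-isotypic (or $\sig(\tau)^\circ$-isotypic) parts of the spaces of automorphic forms to obtain a finitely generated module $M_\infty$ over $R_\infty = R_\infty^{\mathrm{loc}}[[x_1,\dots,x_g]]$, where the relevant local factor at $\til v$ is (a power series ring over) the potentially crystalline deformation ring $R_{\rhobar}^{\eta,\tau}$. Second, using that $\sig$ is sufficiently generic and the Breuil--M\'ezard-type results, the special fibre $M_\infty(\sig) := M_\infty \otimes_{\cO} \F$ is supported on the component $\cC_\sig$ (intersected with the framed deformation space), and the genericity hypothesis $((e+1)(n-1)+2)$-deep lets us invoke Corollary~\ref{cor:hecke-global-func} and Theorem~\ref{thm:spec-satake}: the integral Hecke algebra $\cH(\sig^\circ(\tau))$ maps to $\cO(\cX_n^{\eta,\tau})$, compatibly with $\cR^\sig_{\sig^\circ(\tau)}$ and $\ov\Psi_\sig$. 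Third, I would invoke the patching compatibility: the action of a Hecke operator $T \in \cH(\sig^\circ(\tau))$ (equivalently, via $\cR^\sig_{\sig^\circ(\tau)}$, the corresponding operator in $\cH(\sig)$) on $M_\infty(\sig)$ coincides with multiplication by $\Psi^\tau(T) \in \cO(\cX_n^{\eta,\tau})$, viewed as a function on the deformation ring. Fourth, specializing $R_\infty \twoheadrightarrow \cO$ (resp.\ $\F$) at the point $\rhobar \in \cC_\sig(\F)$, the module $M_\infty(\sig)$ recovers $\Hom_{\rmK}(\sig,\pi(\rbar)|_{\rmK})$ as a consequence of hypotheses (1)--(3) (smallness of level, adequacy, and nonvanishing), and the $R_\infty$-action becomes evaluation $\ev_{\rhobar}$; combining with step three gives that $\cH(\sig)$ acts through $\cH(\sig)\xrightarrow{\ov\Psi_\sig}\cO(\cC_\sig)\xrightarrow{\ev_{\rhobar}}\F$, which is exactly the claim. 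Along the way one needs that the character is determined on the generators $\ov T_{-\om_i}$, and that $\ov\Psi_\sig(\ov T_{-\om_i}) = \ov f_i$ matches $\ev_{\rhobar}$ applied to the appropriate product of Frobenius eigenvalues, which is precisely the content of the computation in the proof of Theorem~\ref{thm:spec-satake}.

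\textbf{Main obstacle.} The principal difficulty is the patching step and its compatibility with $\Psi^\tau$: one must ensure that the patched module $M_\infty$ in the tame potentially crystalline situation is genuinely a module over (a power series ring over) $R_{\rhobar}^{\eta,\tau}$ on which the Hecke action is realized by the global functions $\Psi^\tau(T)$, not merely by functions after inverting $p$. This requires the integrality afforded by Corollary~\ref{cor:hecke-global-func} (hence the normality input from Corollary~\ref{cor:normal} and de Jong's theorem), together with the usual multiplicity-one / minimal-patching arguments so that $M_\infty(\sig)$ is free of rank one over $\cO(\cC_\sig)\otimes(\text{nuisance variables})$ after inverting nothing; the adequacy hypothesis (2) and smallness (1) are what make the Taylor--Wiles patching go through, and the nonvanishing (3) guarantees the fibre at $\rhobar$ is nonzero so that the character is well-defined. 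A secondary technical point is checking that the reduction map $\cR^\sig_{\sig^\circ(\tau)}$ is compatible with passing from $\sig^\circ(\tau)$-isotypic spaces to $\sig$-isotypic spaces at the level of automorphic forms, which should follow from the cosocle description of $\sig^\circ(\tau)$ and exactness properties of the relevant functors, but must be spelled out. I expect the bulk of the genuinely new work to be in formulating and proving Theorem~\ref{thm:modpLGC-family}, after which the present theorem is a specialization.
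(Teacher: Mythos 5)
Your proposal follows essentially the same route as the paper: deduce the theorem as the fibre at $\fm$ of a stronger patched statement (this is exactly Theorem~\ref{thm:modpLGC-family}), which is proved by combining the quotient $\sig^\circ(\tau)\twoheadrightarrow\sig$ with Theorem~\ref{thm:hecke-modp-red}, the surjection $\cR^\sig_{\sig^\circ(\tau)}$, and property~(2) of the arithmetic module $M_\infty$ (that $\cH(\sig(\tau))$ acts through $\Psi^\tau$ after inverting $p$, upgraded integrally by Corollary~\ref{cor:hecke-global-func}). One small slip: $M_\infty(\sig)$ is defined as $(\Hom_{\cO[\rmK]}(M_\infty,\sig^\vee))^\vee$ (the $\sig$-isotypic piece), not as $M_\infty\otimes_\cO\F$ — but this does not affect the logical structure, which otherwise matches the paper.
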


\begin{rmk}\label{rmk:LGC}
    \begin{enumerate}
        \item The assumption that $\Hom_{\rmK}(\sig, \pi(\rbar)|_{\rmK})\neq 0$ is equivalent to saying that $\sig$ is \textit{modular} Serre weight of $\rbar$. In turn, this is conjectured to be equivalent to the condition that $\rhobar$ is contained in the conjectural Breuil--M\'ezard cycle $\cZ_\sig$ attached to $\sig$ (\cite[Conj.~9.1.5]{LLLMlocalmodel}). It is expected that $\cC_\sig$ is contained in $\cZ_\sig$, so the assumption here is natural.  
    
    \item This result is implicitly used in \cite{EGHweightcyc}. More precisely, although the Emerton--Gee stack was not available at that time, they interpret $\ov{f}_i(\rhobar)$ for each $i=1,\dots,n$ as the mod $\varpi$ reduction of a certain normalized coefficient of the characteristic polynomial of the Frobenius action on $\Dcris(\rho)$ for a crystalline lift $\rho$ of $\rhobar$. Then they prove that the $\cH(\sig)$-action as above (using a variant of $\pi(\rbar)$) is given by these normalized coefficients modulo $\varpi$. As the reader can easily recognize, our Theorem \ref{thm:modpLGC} is inspired by \cite{EGHweightcyc}.

    \item Suppose that $\sig$ is $((e+2)(n-1)+2)$-deep. Suppose that $\rhobar \in \cC_{\sig,I}(\F)$ for some $I\subset \CB{1,2,\dots,n-1}$ of size $r$. By Theorem \ref{thm:modp-parabolic}, there is a filtration 
    \begin{align*}
        0 = \rhobar_0 \subset \rhobar_1 \subset \cdots \subset \rhobar_{r+1} = \rhobar
    \end{align*}
    such that $\oplus_{i=1}^{r+1}\gr^i \rhobar$ is the image of $\rhobar$ in $\cC_{\sig_I}$. It is easy to see that for each $i=1,\dots,r+1$, $\det(\gr^i\rhobar)$ is determined by $\sig$ up to an unramified twist. Theorem \ref{thm:modpLGC} shows that $\pi(\rbar)$ completely determines $\det(\gr^i\rhobar)$.

    \item Suppose that $\sig$ is $((e+2)(n-1)+2)$-deep. Suppose that $\rhobar$ is supersingular of weight $\sigma$. Then Theorem \ref{thm:modpLGC} shows that $\sig$ is \textit{supersingular weight} of $\pi(\rbar)$. More precisely, by Frobenius reciprocity, a non-zero morphism $\sig \mono \pi(\rbar)|_\rmK$ induces a non-zero morphism
    \begin{align*}
        \cind_{\rmK}^\rmG \sig \ra \pi(\rbar),
    \end{align*}
    and Theorem \ref{thm:modpLGC} shows that the morphism factors through the universal supersingular quotient $\cind_{\rmK}^\rmG \sig \epi \cind_{\rmK}^\rmG \sig /(\ov{T}_1,\dots,\ov{T}_{n-1})$.
    \end{enumerate}
\end{rmk}

We deduce Theorem \ref{thm:modpLGC} from a stronger result on the action of $\cH(\sig)$ on a Taylor--Wiles--Kisin patched module. Let $R^\square_{\rhobar}$ be the universal $\cO$-lifting ring of $\rhobar$. Suppose that the assumptions (1)-(3) in Theorem \ref{thm:modpLGC} hold. By \cite[Prop.~10.3.2]{LLMPQ-FL}, there exist $R^v$ and
\begin{align*}
    R_\infty := R_{\rhobar}^\square \ctimes_\cO R^v
\end{align*}
complete equidimensional local Noetherian $\cO$-flat algebras with residue field $\F$ 
and an \textit{arithmetic $R_\infty[\GL_n(K)]$-module $M_\infty$} in the sense of Def.~10.2.3 in \loccit. It is an $\cO$-module $M_\infty$ with commuting actions of $R_\infty$ and $\GL_n(K)$ such that 
\begin{enumerate}
    \item if $\tau$ is a tame inertial type and $\sig^\circ(\tau)\subset \sig(\tau)$ is an $\cO$-lattice,
    \begin{align*}
        M_\infty(\sig^\circ(\tau)) := \PR{\Hom_{\cO[\rmK]}(M_\infty, \sig^\circ(\tau)^\vee)}^\vee
    \end{align*}
    is a maximal Cohen--Macaulay module over $R_\infty(\tau):= R_\infty\ctimes_{R_{\rhobar}^\square} R^{\eta,\tau}_{\rhobar}$ (here $\vee$ denotes the Pontrjagin dual);
    \item the action of $\cH(\sig(\tau))$ on $M_\infty(\sig^\circ(\tau))[1/p]$ factors through the composition
    \begin{align*}
        \cH(\sig(\tau)) \xra{\Psi^\tau} R^{\eta,\tau}_{\rhobar}[1/p] \ra R_\infty(\tau)[1/p];
    \end{align*}
    \item if $\sig$ is a Serre weight and $\tau$ is a tame inertial type such that $\sig \in \JH(\osig(\tau))$, then the $R_\infty$-action on
    \begin{align*}
        M_\infty(\sig) := \PR{\Hom_{\cO[\rmK]}(M_\infty, \sig^\vee)}^\vee
    \end{align*}
    factors through $R_\infty(\tau)_\F$ and $M_\infty(\sig)$ is maximal Cohen--Macaulay module over $R_\infty(\tau)_\F$; and
    \item if we denote by $\fm$ the maximal ideal in $R_\infty$, $(M_\infty/\fm)^\vee$ is isomorphic to $\pi(\rbar)$ as $\F[\GL_n(K)]$-module.
\end{enumerate}

\begin{rmk}
    The module $M_\infty$ was first constructed by \cite{6author} to find a candidate for a $p$-adic local Langlands correspondence. Instead of taking $(M_\infty/\fm)^\vee$, if we let $\fp\le R_\infty$ be a prime ideal given as a kernel of a morphism $R_\infty \ra \cO$ with an induced morphism $\rho:R^\square_{\rhobar} \ra \cO$, then $(M_\infty/\fp)^\vee[1/p]$ is an admissible continuous unitary $E$-Banach representation of $\GL_n(K)$ (see Prop.~2.13 in \loccit) and the assignment $\rho \mapsto (M_\infty/\fp)^\vee[1/p]$ is expected to realize the $p$-adic local Langlands correspondence.
\end{rmk}

\begin{defn}
    For a Serre weight $\sig$, we define $R_{\rhobar}^\sig$ to be the quotient of $R_{\rhobar}^{\eta,\tau}$ that acts faithfully on $M_\infty(\sig)$.
\end{defn}
Note that \textit{a priori} $R_{\rhobar}^\sig$ may depend on the choice of $\rbar$.

\begin{thm}\label{thm:modpLGC-family}
    Suppose that we are in the setup of Theorem \ref{thm:modpLGC}. There exists a morphism 
    \begin{align*}
    \ov{\Psi}'_\sig : \cH(\sig) \ra R_{\rhobar}^\sig
\end{align*}
    such that the natural $\cH(\sig)$-action on $M_\infty(\sig)$ coincides with its action given by the composition of $\ov{\Psi}'_\sig$ and the $R_{\rhobar}^\sig$-action on $M_\infty(\sig)$.
\end{thm}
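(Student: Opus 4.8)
Proof plan for Theorem \ref{thm:modpLGC-family}:

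\textbf{Strategy.} The plan is to construct $\ov{\Psi}'_\sig$ by factoring the abstract Hecke action through the known structure of $R^{\eta,\tau}_{\rhobar}$, using the tame type $\tau = \tau(1,\mu)$ with $\sig = F(\mu)$, and then identify the resulting morphism with the composition of $\ov{\Psi}_\sig$ and the restriction $R^\sig_\tau$ after relating the local deformation ring to the Emerton--Gee stack. The key point is that all the compatibilities needed are already packaged in property (2) of the arithmetic module $M_\infty$ (the Hecke action on $M_\infty(\sig^\circ(\tau))[1/p]$ factors through $\Psi^\tau$) together with property (3) (the $R_\infty$-action on $M_\infty(\sig)$ factors through $R_\infty(\tau)_\F$). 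The genericity hypothesis $\sig$ being $((e+1)(n-1)+2)$-deep is exactly what is needed to apply Corollary \ref{cor:hecke-global-func} and Theorem \ref{thm:spec-satake} for $\tau = \tau(1,\mu)$.

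\textbf{Key steps.} First I would pass to the Hecke algebra $\cH(\sig^\circ(\tau))$ of the tame type: since $\sig$ is sufficiently deep, $\sig = F(\mu) \in \JH_c(\osig(\tau))$ for $\tau = \tau(1,\mu)$, and by Theorem \ref{thm:hecke-modp-red} (see also Example \ref{ex:hecke}) there is a surjection $\cR^\sig_{\sig^\circ(\tau)}: \cH(\sig^\circ(\tau)) \epi \cH(\sig)$. Second, using the local-to-global map $R^{\eta,\tau}_{\rhobar} \ra R_\infty(\tau)$ and property (2), the action of $\cH(\sig^\circ(\tau))$ on $M_\infty(\sig)$ factors through $\Psi^\tau$ composed with reduction mod $\varpi$ and the map $R^{\eta,\tau}_{\rhobar,\F} \ra R^\sig_{\rhobar}$; this gives a morphism $\cH(\sig^\circ(\tau)) \ra R^\sig_{\rhobar}$ compatible with the $M_\infty(\sig)$-action. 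Third, I would check that this morphism kills $\ker(\cR^\sig_{\sig^\circ(\tau)})$, so that it descends to the desired $\ov{\Psi}'_\sig : \cH(\sig) \ra R^\sig_{\rhobar}$; this is where Corollary \ref{cor:spec-hecke-modp} enters, since it identifies $\cR^\sig_{\sig^\circ(\tau)}$ with the Galois-side map $R^\sig_\tau \circ \Psi^\tau$, and the kernel of $\cR^\sig_{\sig^\circ(\tau)}$ consists precisely of Hecke operators whose associated global functions on $\cX^{\eta,\tau}_n$ restrict to zero on $\cC_\sig$, hence (after pulling back along $\Spec R^\sig_{\rhobar} \to \cC_\sig$, using that $R^\sig_{\rhobar}$ is a complete local ring of $\cC_\sig$ by the versal property of the Emerton--Gee stack and $\rhobar \in \cC_\sig(\F)$) act as zero on $M_\infty(\sig)$. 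Finally, faithfulness of the $R^\sig_{\rhobar}$-action on $M_\infty(\sig)$ (by definition of $R^\sig_{\rhobar}$) ensures the factorization is through $R^\sig_{\rhobar}$ and is uniquely determined, and the compatibility with $\ov{\Psi}_\sig$ and $R^\sig_\tau$ follows from Theorem \ref{thm:spec-satake} and Corollary \ref{cor:spec-hecke-modp}.

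\textbf{Main obstacle.} The hard part will be the third step: verifying that the abstract Hecke action of $\cH(\sig^\circ(\tau))$ on $M_\infty(\sig)$ genuinely factors through $\cH(\sig)$, i.e.\ that elements of $\ker(\cR^\sig_{\sig^\circ(\tau)})$ act as zero. This requires matching the \emph{integral} Hecke operators acting on $M_\infty(\sig)$ with the restriction of global functions to $\cC_\sig$, which means one must know that $R^\sig_{\rhobar}$ receives a map from $\cO(\cC_\sig)$ compatibly with $R^\sig_\tau$ --- concretely, that the versal deformation $\Spf R^{\eta,\tau}_{\rhobar} \to \cX^{\eta,\tau}_n$ restricts on the reduced special fibre to $\Spec R^\sig_{\rhobar,\red} \to \cC_\sig$, and that the Hecke action on the MCM module $M_\infty(\sig)$ over $R^\sig_{\rhobar}$ is pulled back from this. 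One should be able to extract this from the construction of $M_\infty$ in \cite{LLMPQ-FL} together with the characterization of $\cC_\sig$ via $\cU_\sig$ and Lemma \ref{lem:upperbound-prelim}; a delicate point is ensuring the argument is insensitive to whether $R^\sig_{\rhobar}$ is reduced (it need not be), which is why one phrases the conclusion at the level of the faithful quotient and uses that $\cH(\sig)$ is a polynomial ring (hence the image is determined by its value on the reduced locus, i.e.\ on closed points, by density of $\cU_\sig$).
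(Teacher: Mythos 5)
Your first two steps are essentially the paper's: pass to $\tau = \tau(1,\mu)$, use property~(2) of $M_\infty$ (made integral via Corollary~\ref{cor:hecke-global-func}) to produce the map $\cH(\sig^\circ(\tau)) \to R^{\eta,\tau}_{\rhobar} \to R^\sig_{\rhobar}$, and try to show it kills $\ker(\cR^\sig_{\sig^\circ(\tau)})$. But your proposed third step is where the argument breaks, and in a way that is not just a matter of being careful about reducedness.

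You want to deduce that $\ker(\cR^\sig_{\sig^\circ(\tau)})$ dies in $R^\sig_{\rhobar}$ by identifying (via Corollary~\ref{cor:spec-hecke-modp} and Theorem~\ref{thm:spec-satake}) $\ker(\cR^\sig_{\sig^\circ(\tau)}) = \ker(R^\sig_\tau \circ \Psi^\tau)$ with the global functions on $\cX^{\eta,\tau}_n$ that restrict to zero on $\cC_\sig$, and then claiming that such functions vanish on $R^\sig_{\rhobar}$ because ``$R^\sig_{\rhobar}$ is a complete local ring of $\cC_\sig$.'' This last claim is false in general: $R^\sig_{\rhobar}$ is by definition the quotient of $R^{\eta,\tau}_{\rhobar}$ acting faithfully on $M_\infty(\sig)$, so $\Spec R^\sig_{\rhobar}$ is the support of the patched module, i.e.\ the (local) Breuil--M\'ezard cycle $\cZ_\sig$. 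As the paper emphasizes in \S\ref{subsec:BMcycle} (and Rmk.~1.5.11 of \cite{LLLMlocalmodel}), for $n \geq 4$ the cycle $\cZ_\sig$ is expected to be strictly larger than $\cC_\sig$ even for generic $\sig$, so a function vanishing on $\cC_\sig$ need not vanish on $\Spec R^\sig_{\rhobar}$. The versal property of $\cX^{\eta,\tau}_n$ only tells you $\Spf R^{\eta,\tau}_{\rhobar}$ maps to $\cX^{\eta,\tau}_n$; it says nothing that would let you cut $\Spec R^\sig_{\rhobar}$ down to $\cC_\sig$.

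The paper's actual proof sidesteps the Galois side entirely at this point. Theorem~\ref{thm:hecke-modp-red} does more than produce the surjection $\cR^\sig_{\sig^\circ(\tau)}$: by its construction, every $\phi \in \cH(\sig^\circ(\tau))$ is compatible with the quotient $\cind_\rmK^\rmG \sig^\circ(\tau) \twoheadrightarrow \cind_\rmK^\rmG \sig$, with $\cR^\sig_{\sig^\circ(\tau)}(\phi)$ being the induced endomorphism of $\cind_\rmK^\rmG \sig$. Applying the (contravariant Hom, then dual) patching functor to the quotient $\sig^\circ(\tau) \twoheadrightarrow \sig$ gives a Hecke-equivariant surjection $M_\infty(\sig^\circ(\tau)) \twoheadrightarrow M_\infty(\sig)$, so the $\cH(\sig^\circ(\tau))$-action on $M_\infty(\sig)$ automatically factors through $\cR^\sig_{\sig^\circ(\tau)}$ and coincides with the natural $\cH(\sig)$-action. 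Since this same action is also given by the ring map $\cH(\sig^\circ(\tau)) \to R^\sig_{\rhobar}$ and $R^\sig_{\rhobar}$ acts faithfully on $M_\infty(\sig)$ by definition, the factorization through $\cH(\sig)$ follows. This argument is insensitive to which irreducible components appear in $\Spec R^\sig_{\rhobar}$, which is exactly what your version was missing.
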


\begin{proof}
    Let $\sig=F(\mu)$, $\tau=\tau(1,\mu)$, and $\sig^\circ(\tau)$ be as in Theorem \ref{thm:spec-satake}. The quotient map $\sig^\circ(\tau) \epi \sig$ induces a quotient map 
\begin{align*}
    M_\infty(\sig^\circ(\tau)) \epi M_\infty(\sig).
\end{align*}
By Theorem \ref{thm:hecke-modp-red}, the $\cH(\sig^\circ(\tau))$-action on $M_\infty(\sig^\circ(\tau))$ induces an $\cH(\sig)$-action on $M_\infty(\sig)$ via the morphism
\begin{align*}
    \cR_{\sig^\circ(\tau)}^\sig : \cH(\sig^\circ(\tau)) \epi \cH(\sig),
\end{align*}
and this coincides with the natural $\cH(\sig)$-action. 
By the property (2) above, this implies that the composition $\cH(\sig^\circ(\tau)) \ra R_{\rhobar}^{\eta,\tau} \ra R_{\rhobar}^\sig$ factors through $\cR_{\sig^\circ(\tau)}^\sig$, and thus induces the desired morphism $\ov{\Psi}'_\sig$.
\end{proof}

\begin{proof}[Proof of Theorem \ref{thm:modpLGC}]
    Since $(M_\infty(\sig)/\fm)^\vee \simeq \Hom_{\rmK}(\sig,\pi(\rbar)|_{\rmK})$, this follows from Theorem \ref{thm:modpLGC-family}.
\end{proof}

\subsection{Breuil--M\'ezard cycles and the categorical $p$-adic local Langlands conjecture}\label{subsec:BMcycle}

As the final remark, we give a mostly speculative discussion on the ring of global functions on the Breuil--M\'ezard cycle associated with a Serre weight $\sigma$. As discussed in \S\ref{sec:intro}, Emerton--Gee--Hellmann conjectured that there exists a fully faithful functor $\fA$ from a certain derived category of smooth representations of $\GL_n(K)$ to a certain derived category of coherent sheaves on $\cX_n$ realizing a $p$-adic local Langlands correspondence. 

Let $\cZ_{\sig}$ be the scheme-theoretic support of $\fA(\cind_\rmK^\rmG \sig)$. It is more natural to relate $\cH(\sig)$ with $\cO(\cZ_\sig)$ rather than $\cO(\cC_\sig)$. It is expected that $\cZ_\sig$ realizes the conjectural Breuil--M\'ezard cycle associated with $\sig$. We briefly discuss what is known about Breuil--M\'ezard cycles. 

When $n=2$, \cite[Thm.~8.6.2]{EGstack} and \cite[Thm.~1.2]{cegsA} show that $\cZ_{\sig}$ is reduced and equals $\cC_{\sig}$ unless $\sig$ is Steinberg. When $n=3$ and $K/\Qp$ is unramified, \cite[Thm.~1.2.2]{LLLMGL3} shows that $\cZ_{\sig}$ is equal to $\cC_{\sig}$ for sufficiently generic $\sig$. However, starting from $n=4$, $\cZ_{\sig}$ is in general expected to be not equal to $\cC_{\sig}$ even when $\sig$ is generic (see \cite[Rmk.~1.5.11]{LLLMlocalmodel}). 

In \cite{LLLMlocalmodel}, the authors proved the geometric Breuil--M\'ezard conjecture for arbitrary $n$ and $K/\Qp$ unramified under an inexplicit genericity condition. Moreover, they proved that the underlying topological space $\abs{\cZ_{\sig}}$ contains $\abs{\cC_\sig}$ and is contained in the union of $\abs{\cC_{\sig'}}$ for $\sig'$ covered by $\sig$ in the sense of Def.~2.3.10 in \loccit. (Recently, \cite{FLH} proved the geometric Breuil--M\'ezard conjecture under an explicit genericity assumption. See \S12.2 in \loccit~for a result on the decomposition of $\cZ_\sig$ into irreducible components using geometric representation theory.)

Another important result in \cite{LLLMlocalmodel} (again proven under an inexplicit genericity condition) is that the set of semisimple $\rhobar:G_K\ra\GL_n(\Fpbar)$ in $\cC_{\sig}(\Fpbar)$ (in other words, the set of closed points in $\cC_\sig$) is equal to the set of $\rhobar$ such that $\sig \in W^?(\rhobar|_{I_K})$ (\cite[Thm.~4.7.6]{LLLMlocalmodel}). Here, $W^?(\rhobar|_{I_K})$ is the set of Serre weights combinatorially defined by Herzig (\cite{HerzigDuke}). It is given by the set $\cR(\JH(\osig(\rhobar|_{I_K})))$ where $\cR$ is an operator on the set of Serre weights sending $F(\mu)$ to $F(w_0\cdot (\mu-p\eta))$ (see the paragraph before Def.~9.2.5 in \cite{GHS-JEMS-2018MR3871496}). 

Assuming these two expectations (that are verified under an inexplicit genericity condition), we prove that $\cO(\cZ_\sig)$ is isomorphic to $\cH(\sig)$ up to nilpotency.

\begin{prop}\label{prop:BMcycle}
    Assume that $K/\Qp$ is unramified. Let $\sig$ be a $(3(n-1)+1)$-deep Serre weight and $\cZ_\sig\subset \cX_{n,\F}$ be a closed substack. We assume that
    \begin{enumerate}
        \item $\abs{\cZ_\sig}$ is a union of irreducible components in $\cX_{n,\red}$;
        \item $\abs{\cZ_\sig}$ contains $\abs{\cC_\sig}$ and is contained in the union of $\abs{\cC_{\sig'}}$ for all $\sig'$ covered by $\sig$;
        \item for $\sig'$ covered by $\sig$, all closed points of $\sig'$ are exactly those semisimple $\rhobar:G_K\ra\GL_n(\Fpbar)$ such that $\rhobar|_{I_K}$ is $2(n-1)$-generic and $\sig'\in W^?(\rhobar|_{I_K})$.
    \end{enumerate}
    Then the restriction map $\cO(\cZ_\sig)\ra\cO(\cC_\sig)$ is surjective with nilpotent kernel.
\end{prop}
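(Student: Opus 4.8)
The plan is to prove surjectivity and then control the kernel via the closed points of $\cZ_\sig$. First I would establish surjectivity of the restriction map $\cO(\cZ_\sig) \to \cO(\cC_\sig)$. By assumption (1), $\cZ_\sig$ is (topologically) a union of irreducible components of $\cX_{n,\red}$, one of which is $\cC_\sig$; write $\cZ_\sig = \cC_\sig \cup \cZ'$ where $\cZ'$ is the union of the other components. Since $\cX_{n,\red}$ is a Noetherian algebraic stack and the $\cC_{\sig'}$ are its irreducible components, the intersection $\cC_\sig \cap \cZ'$ is a proper closed substack of $\cC_\sig$, hence (as $\cC_\sig$ is irreducible of dimension $\tfrac{n(n-1)}{2}[K:\Qp]$) of strictly smaller dimension. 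I would then use that $\cC_\sig$, by the results recalled in \S\ref{subsec:GLn} together with Theorem \ref{thm:spec-satake}, has $\cO(\cC_\sig) \simeq \cH(\sig) \simeq \F[y_1,\dots,y_{n-1},y_n^\pm]$ — in particular it is a normal domain and its global functions separate the dense open $\cU_\sig$. Any global function on $\cC_\sig$ extends by zero-free considerations: more precisely, I would argue that $\cO(\cZ_\sig) \to \cO(\cC_\sig)$ is surjective by a Mayer--Vietoris / gluing argument, lifting a function $f \in \cO(\cC_\sig)$ together with the zero function on $\cZ'$, using that the two agree on the lower-dimensional overlap up to the reduced structure. (One must be a little careful that $\cZ_\sig$ need not be reduced, but since we only claim surjectivity onto $\cO(\cC_\sig)$ — and $\cC_\sig$ is reduced — it suffices to produce a regular function on $\cZ_{\sig,\red}$ restricting correctly, then lift along the nilpotent thickening $\cZ_{\sig,\red} \hookrightarrow \cZ_\sig$, which is possible because $\cZ_\sig$ is proper over nothing but affine in the relevant sense — actually one invokes that $\cO(\cZ_\sig) \to \cO(\cZ_{\sig,\red})$ is surjective with nilpotent kernel for any Noetherian stack.)

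Next I would identify the kernel $\fa := \ker(\cO(\cZ_\sig) \to \cO(\cC_\sig))$ and show it is nilpotent, equivalently that $\cZ_\sig$ and $\cC_\sig$ have the same underlying reduced substack, i.e.\ $|\cZ_\sig| = |\cC_\sig|$. By assumption (2), $|\cZ_\sig|$ is sandwiched between $|\cC_\sig|$ and $\bigcup_{\sig' \text{ covered by } \sig} |\cC_{\sig'}|$; so it suffices to rule out that any $\cC_{\sig'}$ with $\sig' \neq \sig$ covered by $\sig$ is contained in $\cZ_\sig$. Here I would use assumption (3): the closed points of $\cC_{\sig'}$ are exactly the semisimple $\rhobar$ with $\rhobar|_{I_K}$ suitably generic and $\sig' \in W^?(\rhobar|_{I_K})$. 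The strategy is to find a closed point $\rhobar^{ss} \in \cC_{\sig'}(\Fpbar)$ that does \emph{not} lie in $|\cC_\sig|$ — and then observe that because $\fA(\cind_\rmK^\rmG \sig)$ is supported on $\cZ_\sig$, and because of the expected compatibility of $\fA$ with patched modules, the fiber of $\cZ_\sig$ at such a $\rhobar^{ss}$ would force $\sig$ to be a modular weight for a Galois representation reducing to $\rhobar^{ss}$, contradicting the combinatorial Serre weight recipe: $\sig \in W^?(\rhobar^{ss}|_{I_K})$ would have to hold, but by the deepness hypothesis and the explicit description of $W^?$ via the operator $\cR$ on $\JH(\osig(\rhobar|_{I_K}))$, the set $W^?(\rhobar^{ss}|_{I_K})$ for $\rhobar^{ss} \in \cC_{\sig'}$ generic does not contain $\sig$ when $\sig' \neq \sig$. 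More directly, since $\cC_\sig$ and $\cC_{\sig'}$ are distinct irreducible components, a generic closed point of $\cC_{\sig'}$ lies outside $\cC_\sig$; one then shows such a point cannot lie in $|\cZ_\sig|$ by comparing closed points: the closed points of $\cZ_\sig$ which lie on $\cC_{\sig'}$ but not $\cC_\sig$ would be semisimple $\rhobar$ with $\sig \in W^?(\rhobar|_{I_K})$ (using the Breuil--M\'ezard multiplicity-one type input implicit in the setup), yet $W^?(\rhobar|_{I_K}) \cap \{\sig\} = \emptyset$ for $\rhobar$ generic on $\cC_{\sig'}$, a contradiction.

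More carefully, I would run the argument at the level of closed points. A closed point $x \in |\cZ_\sig|$ corresponds to a semisimple $\rhobar: G_K \to \GL_n(\Fpbar)$ (closed points of $\cX_{n,\red}$ being semisimple by \cite[Thm.~6.6.3]{EGstack}). By (1), $x$ lies on some component $\cC_{\sig''} \subset \cZ_\sig$, and by (2), $\sig''$ is covered by $\sig$ (or equals $\sig$). If $\sig'' \neq \sig$, then by (3) and the genericity coming from $\sig$ being $(3(n-1)+1)$-deep, $\rhobar|_{I_K}$ is $2(n-1)$-generic and $\sig'' \in W^?(\rhobar|_{I_K})$. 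Now the key combinatorial input: for $\rhobar|_{I_K}$ sufficiently generic, the weights $\sig''$ with $\sig'' \in W^?(\rhobar|_{I_K})$ and $\sig''$ covered by $\sig$ and $\sig$ itself $(3(n-1)+1)$-deep cannot have $\sig$ and $\sig''$ both in $W^?(\rhobar|_{I_K})$ unless $\sig = \sig''$ — this follows from the structure of $W^?$ as $\cR(\JH(\osig(\rhobar|_{I_K})))$ and the fact that the covering relation strictly decreases the relevant weight. Hence $|\cC_{\sig''}| \not\subset |\cZ_\sig|$ for $\sig'' \neq \sig$, forcing $|\cZ_\sig| = |\cC_\sig|$, so $\cZ_{\sig,\red} = \cC_\sig$ and $\fa$ is nilpotent.

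The main obstacle I anticipate is the second part: rigorously excluding the extraneous components $\cC_{\sig'}$ from $|\cZ_\sig|$ purely from assumptions (1)--(3) and the deepness of $\sig$. The delicate point is that assumption (2) only says $|\cZ_\sig|$ is \emph{contained in} $\bigcup_{\sig' \text{ covered by }\sig}|\cC_{\sig'}|$, not that it equals $|\cC_\sig|$; so one genuinely needs the closed-point description in (3) plus a sharp statement about $W^?$ to show no $\sig' \neq \sig$ actually occurs. I expect this to require a careful case analysis of the shape $\tilw(\rhobar,\tau)$ combinatorics and the covering relation — essentially checking that if $\rhobar$ is a generic closed point of $\cC_{\sig'}$ with $\sig'$ covered by $\sig$, then $\sig \notin W^?(\rhobar|_{I_K})$, which one can verify using \cite[Thm.~4.7.6]{LLLMlocalmodel} and the explicit recipe for $W^?$. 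The surjectivity part, by contrast, I expect to be routine once the topological identification is in hand, reducing to the normality of $\cC_\sig$ and the standard fact that restriction of global functions along a closed immersion of a component is surjective when the complementary locus meets it in smaller dimension.
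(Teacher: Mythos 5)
The proposal contains a fundamental error in the approach to the nilpotency of the kernel. You aim to prove that $\abs{\cZ_\sig} = \abs{\cC_\sig}$ by excluding the extra components $\cC_{\sig'}$ (for $\sig' \neq \sig$ covered by $\sig$) from $\abs{\cZ_\sig}$. But this is not what the proposition asserts, and it is in fact \emph{false} in general: the paper itself emphasizes earlier in \S\ref{subsec:BMcycle} that ``starting from $n=4$, $\cZ_{\sig}$ is in general expected to be not equal to $\cC_{\sig}$ even when $\sig$ is generic.'' So the target you set for yourself is unachievable.

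The pivotal incorrect step is your claim that, for a generic closed point $\rhobar^\ss \in \cC_{\sig'}(\Fpbar)$ with $\sig'$ covered by $\sig$ and $\sig' \neq \sig$, one has $\sig \notin W^?(\rhobar^\ss|_{I_K})$, and hence $\rhobar^\ss \notin \cC_\sig$. The truth is the exact opposite. The paper's proof hinges on the citation to \cite[Prop.~2.3.12]{LLLMlocalmodel}, which (combined with hypothesis (3)) yields that \emph{every} closed point of $\cC_{\sig'}$ (for $\sig'$ covered by $\sig$) is contained in $\cC_\sig$. This is not a contradiction with $\cC_{\sig'}$ and $\cC_\sig$ being distinct irreducible components: in the Emerton--Gee stack, the closed points are precisely the semisimple $\rhobar$, while the generic (non-closed) points of a component are the maximally non-split ones; an entire component $\cC_{\sig'}$ can have all its closed points land inside another component $\cC_\sig$ while still being topologically distinct. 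Your argument implicitly treats the stack as a Jacobson scheme where closed points detect topological containment of components, which is precisely where the intuition fails.

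The correct route — and the one the paper takes — is much shorter and orthogonal to your strategy: one does \emph{not} try to shrink $\abs{\cZ_\sig}$. Rather, one observes that since the extra components $\cC_{\sig'}$ appearing in $\cZ_\sig$ have all their closed points already in $\cC_\sig$, every closed point of $\cZ_\sig$ lies in $\cC_\sig$. Combined with the fact that a global function on the Noetherian finite-type $\F$-stack $\cZ_\sig$ is determined by its values at closed points up to nilpotency, this immediately shows the kernel of $\cO(\cZ_\sig) \to \cO(\cC_\sig)$ is nilpotent. Your Mayer--Vietoris surjectivity argument is also shaky: gluing $f$ on $\cC_\sig$ with $0$ on $\cZ'$ requires agreement on the overlap $\cC_\sig \cap \cZ'$, which is a nontrivial constraint that a general $f$ will not satisfy. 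The surjectivity in the paper's setting follows again from the closed-point observation together with the reducedness of $\cC_\sig$, not from a dimension-count gluing argument.
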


\begin{proof}
    Any function $f\in \cO(\cZ_\sig)$ is determined by its value at all closed points of $\cZ_\sig$ up to nilpotency. By item (3) and \cite[Prop.~2.3.12]{LLLMlocalmodel}, all closed points in $\cC_{\sig'}$ for $\sig'$ covered by $\sig$ are contained in $\cC_\sig$. Thus, any closed point in $\cZ_{\sig}$ is contained in $\cC_{\sig}$. This proves the claim.
\end{proof}

\bibliographystyle{alpha}
\bibliography{mybib}
\end{document}